\renewcommand*{\backref}[1]{}
\renewcommand*{\backrefalt}[4]{%
    \ifcase #1 (Not cited.)%
    \or        (Cited on page~#2.)%
    \else      (Cited on pages~#2.)%
    \fi}
\newcommand{\UnderlinedTocSection}[1]{%
  \addtocontents{toc}{\protect\addvspace{1pc}%
         \protect\contentsline {chapter}%
        {\protect\Large\scshape\mdseries #1}{}{}%
        \hfill\par\protect\addvspace{-18pt}%
        \noindent\hrulefill\hspace*{24pc}\par}}
\renewcommand{\scalebox}[2]{#2}
\begin{document}
\thispagestyle{empty}

\pretitle{ \begin{center}}
\title{\vspace{-6pc} \hrulefill \\ {\scshape \LARGE Patterns in Permutations and
    Involutions\\[6pt]}}
\posttitle{\par {\Large \scshape A Structural and Enumerative Approach}
\\[-4pt] \hrulefill \vfill \end{center}}
\author{{\small by} \\[4pc] Cheyne Homberger}
\predate{\vfill\begin{center}\normalsize}

\date{\vspace{4pc} A Dissertation Presented to the Graduate School of the University
of Florida in Partial Fulfillment of the Requirements for the Degree of \\
Doctor of Philosophy \\[1pc]
University of Florida \\[1pc]
2014}
\maketitle
\thispagestyle{empty}

\cleardoublepage 

\thispagestyle{empty}

\begin{center} \itshape
  \vspace*{.4\textheight}
  To Carol and Fred Gropper, my grandparents
\end{center}
\cleardoublepage

\frontmatter

\cleardoublepage

\chapter*{Acknowledgements}
\addcontentsline{toc}{chapter}{Acknowledgements}
\acknowledge{%
    
  First and foremost I'd like to thank my advisor, Mikl\'os B\'ona, for his
  guidance and encouragement throughout the research process, and for his
  patience and understanding during my meandering course through
  graduate school. 
  I also thank Vince Vatter, whose long discussions, advice, and friendship have
  helped make my time here more productive and more enjoyable.  Thanks also
  to Michael Albert for his support and suggestions during our collaborations,
  and to the remainder of my supervisory committee: Andrew Vince, Meera Sitharam,
  and Kevin Keating, each of whom have helped me to become a more well-rounded
  researcher.

  This dissertation is a product of the combined support of those around me, each
  of whom have left a profound impact both on this work and on my time in
  graduate school.  The graduate student community, with its many seminars and
  happy hours, has made the last five years more fun than it should have been.  I
  am grateful for all of my friends and colleagues, both for their support during
  the busy times and for their distractions during the slow.

  My time at the University of Florida has been marked with frequent
  diversions ---  organizing seminars and serving on administrative committees
  has kept me busy and interested, and I am thankful for all of the coworkers and
  friends I've met along the way.  Special thanks to Margaret, Connie, and
  the rest of the math department staff for helping me to find more travel
  funding than any graduate student deserves.  Finally, I am thankful for my
  students, who taught me to never stop looking for a simpler way to present a
  problem.

  I am grateful for my wonderfully supportive family, who have always always
  encouraged me in every endeavor, fostered every interest, and listened to me
  long before I had anything to say.  Finally, thank you to my best friend and
  favorite travel partner Elizabeth, for her support, editing skills, and
  understanding during the last five years, and for pushing me to be better in
  every way. 
}

\cleardoublepage

\tableofcontents

\cleardoublepage

\listoftables

\cleardoublepage

\listoffigures

\newpage

\cleardoublepage

\chapter*{Abstract}
\addcontentsline{toc}{chapter}{Abstract}
\begin{abstract}
  This dissertation presents a multifaceted look into the structural
  decomposition of permutation classes. The theory of permutation patterns is a
  rich and varied field, and is a prime example of how an accessible and
  intuitive definition leads to increasingly deep and significant line of
  research. The use of geometric structural reasoning, coupled with analytic
  and probabilistic techniques, provides a concrete framework from which to
  develop new enumerative techniques and forms the underlying foundation to
  this study. 

  This work is divided into five chapters. The first chapter introduces these
  techniques through working examples, both motivating the use of structural
  decomposition and showcasing the utility of their combination with
  analytic and probabilistic methods. The remaining chapters apply these
  concepts to separate aspects of permutation classes, deriving new
  enumerative, statistical, and structural results. These chapters are largely
  independent, but build from the same foundation to construct an overarching
  theme of building structure upon disorder.

  The main results of this study are as follows. Chapter~\ref{chap:expat}
  investigates the average number of occurrences of patterns with permutation
  classes, and proves that the total number of 231-patterns is the same in the
  classes of 132- and 123-avoiding permutations. Chapter~\ref{chap:involutions}
  applies structural decomposition to enumerate pattern avoiding involutions.
  Chapter~\ref{chap:polyclass} uses the theory of grid classes to develop an
  algorithm to enumerate the so-called polynomial permutation classes, and
  applies this to the biological problem of genetic evolutionary distance.
  Finally, we end in Chapter~\ref{chap:fixpat} with an exploration of
  pattern-packing, and determine the probability distribution for the number
  of distinct large patterns contained in a permutation. 
\end{abstract}



\UnderlinedTocSection{Chapters}

\mainmatter

\cleardoublepage
\typeout{******************}
\typeout{**  Chapter 1   **}
\typeout{******************}

  \chapter{Preliminaries}
  \label{chap:prelim}

    Permutations are a fundamental mathematical concept used productively
    throughout the sciences to encode and understand disorder and rearrangement. 
    The theory of permutation patterns captures this geometric notion of
    disorder, and has yielded a wide variety of productive and surprising
    research over the past several decades. This dissertation presents several
    interrelated projects within this interesting and rapidly developing field.
    Structural, analytic, and probabilistic combinatorics are central to this
    work, and combine to provide unique insight into pattern enumeration. 

    This dissertation is organized as follows: Chapter~\ref{chap:prelim} provides
    an accessible introduction to the ideas and methods at play, followed by four 
    illustrative examples which serve to motivate and introduce the material
    to come. The following four chapters represent self-contained projects
    utilizing these techniques.  Each of these chapters is based partly on
    separate publications~\cite{me-expat,me-polyclass,me-fixpat,me-involutions},
    but together they speak to the utility of structural methods coupled with
    multivariate analysis.  Recursive structural decomposition intersected with
    modern analytic and probabilistic techniques has proven exceptionally useful
    in investigating patterns within permutations, and each chapter focuses on a
    separate facet of this productive combination. 
    
    For an accessible introduction to the field of combinatorics, the reader is
    directed to B\'ona~\cite{BonaWalk}. Stanley~\cite{Stanley1, Stanley2}
    provides a more advanced treatment to the subject as a whole, while
    B\'ona~\cite{BonaPerm} focuses on the combinatorics of permutations. 
    Wilf~\cite{wilfbook} gives an excellent introduction to the theory of
    generating functions, while Petkov\v{s}ek, Wilf, and Zeilberger~\cite{A=B}
    provide a survey of algorithmic methods. 
    Finally, analytic methods in combinatorics are presented best by Flajolet and
    Sedgewick~\cite{flajolet} and by Pemantle and Wilson~\cite{pemantle}, who
    focus on single- and multi-variate methods, respectively.

  \section{Permutation Classes}
  \label{prelim:sec:perms}

    \index{permutation}
    Permutations owe much of their rich structure to their variety of equivalent
    representations.  In this section we establish some of the basic notation and
    definitions of permutations and permutation classes. Throughout this
    dissertation, let $\Zgeq$ denote the non-negative integers $\{0, 1, 2, 3,
    \dots \}$, $\Pos$ the positive integers $\{1, 2, 3, 4, \dots\}$, and, for a given
    integer $n \in \Pos$, let $[n]$ denote the integers $\{1, 2, \dots n\}$. 

    \subsection{Permutations and Patterns}

      \begin{definition}\label{prelim:def:perm}
        For a given integer $n \in \Pos$, a \emph{permutation of length $n$}
        is a sequence $\pi = \pi_1 \pi_2 \dots \pi_n$ in which
        $\pi_i \in [n]$ and each integer of $[n]$ is used exactly once. There are
        $n!$ permutations of length $n$, the set of all of which is denoted $\S_n$. 
      \end{definition}

      For example, the six permutations of length three are as follows: 
      $$ \S_3 = \{123, 132, 213, 231, 312, 321\}. $$

      Permutations can be represented in many different ways, each leading to
      different generalizations. The above definition is known as the
      \emph{one-line representation} in the literature, and this approach leads
      naturally to the theory of permutation patterns. We start by presenting
      formal definitions of patterns before providing a geometric motivation.

      \begin{definition} \label{prelim:def:orderiso}
        For a positive integer $n$ any two sequences of distinct numbers $\alpha
        = \alpha_1 \alpha_2 \dots \alpha_n$ and $\beta = \bta_1 \bta_2 \dots
        \bta_n$, we say that $\alpha$ and $\bta$ are \emph{order isomorphic}
        (denoted $\alpha \sim \bta$) if
        $$ \alpha_i < \alpha_j \quad \text{if and only if} 
            \quad \bta_i < \bta_j.$$
      \end{definition}

      For example, the sequences $\alp = 9\ 2\ 4$ is order isomorphic to $\bta  = 5\
      1\ 3$, because their entries share the same relative order: the first is
      the biggest, the second is smallest, and the third lies in between. 

      It follows that each sequence $\alpha$ of $n$ distinct numbers is order
      isomorphic to a unique permutation of length $n$, called the \emph{standardization}
      of $\alpha$, and  denoted $\std(\alpha)$. For a given sequence $\alpha$, the
      standardization can be constructed by relabelling the smallest entry of of
      $\alpha$ by $1$, the second smallest by $2$, and so on (i.e., $\std(9\ 2\
      4) = 3\ 1\ 2$). We can now present the formal definition of permutation
      patterns. 

      \begin{definition} \label{prelim:def:patterns}
        Let $n, k \in \Pos$ with $k \leq n$, and let $\pi = \pi_1 \pi_2 \dots
        \pi_n \in \S_n$ and $\sg = \sg_1 \sg_2 \dots \sg_k \in
        \S_k$. Say that $\sg$ is \emph{contained as a pattern} in $\pi$ (denoted
        $\sg \prec \pi$) if there is some subsequence $1 \leq i_1 < i_2 < \dots
        <i_k \leq n$ such that 
        $$ \pi_{i_1} \pi_{i_2} \dots \pi_{i_k} \sim \sg_1 \sg_2 \dots \sg_k.$$
      \end{definition}

      Note that pattern containment is \emph{reflexive}
      ($\pi \prec \pi$ for all permutations $\pi$), \emph{transitive} ($\ro \prec
      \sg$, $\sg \prec \pi$ implies $\ro \prec \pi$), and \emph{anti-symmetric}
      ($\sg \prec \pi$ and $\pi \prec \sg$ implies $\sg = \pi$). These three
      properties mean that the set of all permutations, equipped with this
      ordering, forms a partially ordered set (a poset) known as the
      \emph{pattern poset}.

      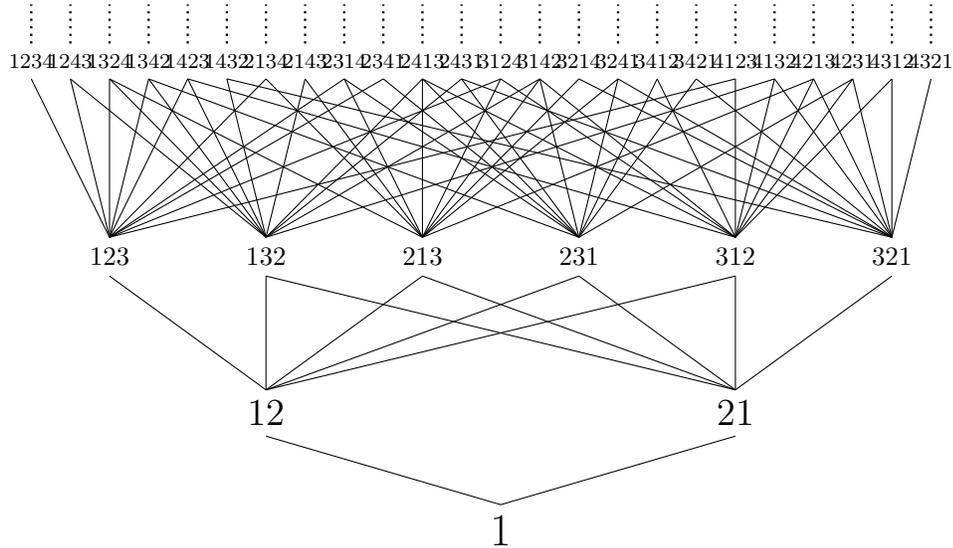
\begin{figure}[t] \centering
        \scalebox{.4}{
          \begin{tikzpicture}[
          every node/.style={},
           scale=.52]
            \node (1) at (12,0) {\Large 1};

            \node (12) at (6,3)  {\large 12};
            \node (21) at (18,3) {\large 21};

            \draw (12.south)-- (1.north);
            \draw (21.south) -- (1.north);

            \node (123) at (2 ,7) {\footnotesize 123};
            \node (132) at (6 ,7) {\footnotesize 132};
            \node (213) at (10,7) {\footnotesize 213};
            \node (231) at (14,7) {\footnotesize 231};
            \node (312) at (18,7) {\footnotesize 312};
            \node (321) at (22,7) {\footnotesize 321};

            \draw (123.south) -- (12.north);
            \draw (132.south) -- (12.north);
            \draw (213.south) -- (12.north);
            \draw (231.south) -- (12.north);
            \draw (312.south) -- (12.north);
            
            \draw (132.south) -- (21.north);
            \draw (213.south) -- (21.north);
            \draw (231.south) -- (21.north);
            \draw (312.south) -- (21.north);
            \draw (321.south) -- (21.north);
            
            \node (1234) at (0 ,12) {\tiny 1234};
            \node (1243) at (1 ,12) {\tiny 1243};
            \node (1324) at (2 ,12) {\tiny 1324};
            \node (1342) at (3 ,12) {\tiny 1342};
            \node (1423) at (4 ,12) {\tiny 1423};
            \node (1432) at (5 ,12) {\tiny 1432};

            \node (2134) at (6 ,12) {\tiny 2134};
            \node (2143) at (7 ,12) {\tiny 2143};
            \node (2314) at (8 ,12) {\tiny 2314};
            \node (2341) at (9 ,12) {\tiny 2341};
            \node (2413) at (10,12) {\tiny 2413};
            \node (2431) at (11,12) {\tiny 2431};

            \node (3124) at (12,12) {\tiny 3124};
            \node (3142) at (13,12) {\tiny 3142};
            \node (3214) at (14,12) {\tiny 3214};
            \node (3241) at (15,12) {\tiny 3241};
            \node (3412) at (16,12) {\tiny 3412};
            \node (3421) at (17,12) {\tiny 3421};

            \node (4123) at (18,12) {\tiny 4123};
            \node (4132) at (19,12) {\tiny 4132};
            \node (4213) at (20,12) {\tiny 4213};
            \node (4231) at (21,12) {\tiny 4231};
            \node (4312) at (22,12) {\tiny 4312};
            \node (4321) at (23,12) {\tiny 4321};

            \foreach \p in {1234, 1243, 1324, 1342, 1423, 2134, 2314, 2341, 3124, 4123}
              \draw (\p.south) -- (123.north);
            \foreach \p in {1243, 1324, 1342, 1423, 1432, 2143, 2413, 2431, 3142, 4132}
              \draw (\p.south) -- (132.north);
            \foreach \p in {1324, 2134, 2143, 2314, 2413, 3124, 3142, 3214, 3241, 4213}
              \draw (\p.south) -- (213.north);
            \foreach \p in {1342, 2314, 2341, 2413, 2431, 3142, 3241, 3412, 3421, 4231}
              \draw (\p.south) -- (231.north);
            \foreach \p in {1423, 2413, 3124, 3142, 3412, 4123, 4132, 4213, 4231, 4312}
              \draw (\p.south) -- (312.north);
            \foreach \p in {4321, 3421, 4231, 2431, 3241, 4312, 4132, 1432, 4213, 3214}
              \draw (\p.south) -- (321.north);

            \foreach \p in {1234, 1243, 1324, 1342, 1423, 1432,
                            2134, 2143, 2314, 2341, 2413, 2431,
                            3124, 3142, 3214, 3241, 3412, 3421,
                            4123, 4132, 4213, 4231, 4312, 4321}
              \draw[dotted, line width=.3mm] (\p.north) -- ++(0,1);
          \end{tikzpicture}
          }
      \caption[The first four levels of the permutation pattern poset.]{
            The first four levels of the permutation pattern poset. Two
            permutations are connected by a line if one is contained in the other
            as a pattern.}
      \label{prelim:fig:poset}
      \end{figure}

      The first four levels of this poset are shown in
      Figure~\ref{prelim:fig:poset}. Note that the number of lines going up from
      each permutation depends only on the length of the permutation, while the
      number going down varies. This will be a topic of study in
      Chapter~\ref{chap:fixpat}, where we will establish the probability
      distribution for the number of large patterns contained within randomly
      selected permutations. 

      If a permutation $\pi$ does not contain a pattern $\sg$, we say that $\pi$
      \emph{avoids} $\sg$. The set of all permutations which avoid a fixed
      pattern $\sg$ is denoted $\Av(\sg)$. Transitivity of pattern containment
      implies that if $\pi \in \Av(\sg)$ and $\ro \prec \pi$, then $\ro \in
      \Av(\sg)$. This relationship motivates our next definition.

      \begin{definition}\label{prelim:def:class}
        Let $\cP$ be a poset. A subset $S \subseteq \cP$ is called a
        \emph{downset} if it is closed downwards. That is, if $x \in S$ and $y
        \prec x$, then $y \in S$. 
        A downset of the permutation pattern poset is called a \emph{permutation
        class}. For a permutation class $\C$, denote by $\C_n$ the set of
        permutations of length $n$ in $\C$. 
      \end{definition}

      The set of all patterns which avoid some specified set of patterns are
      known as the \emph{avoidance classes}, and were first introduced by
      Knuth~\cite{Knuth} in the context of stack sorting.  The investigation of
      these and other classes has sparked a wide range of research over the past
      several decades, with a focus on enumeration.  In
      particular, the question of `which pattern is easiest to avoid?' has been a
      major open question for many years, and a variety of techniques have been
      developed to provide partial answers. The Marcus-Tardos
      Theorem~\cite{MarcusTardos} (which stood open as the Stanley-Wilf
      Conjecture for two decades) motivates much of this work.

      \begin{definition} \label{prelim:def:growthrate}
        Let $\C$ be a permutation class. The \emph{(upper) growth rate} of $\C$ is
        defined as the limit
        $$ \limsup_{n \ra \infty} \sqrt[n]{|\C_n|}.$$
      \end{definition}

      \begin{theorem}[Marcus, Tardos~\cite{MarcusTardos}] \label{thm:MarcusTardos}
        Every proper permutation class has a finite growth rate. 
      \end{theorem}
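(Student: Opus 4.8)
The plan is to reduce the statement to a bound on single‑pattern avoidance classes and then to prove that bound (the Stanley--Wilf estimate) through the matrix reformulation of Marcus and Tardos. First I would observe that a proper permutation class $\C$, being a downset (Definition~\ref{prelim:def:class}) that is not all of the pattern poset, must omit some permutation $\sigma$; since $\C$ is closed downward, every permutation containing $\sigma$ also lies outside $\C$, so $\C \subseteq \Av(\sigma)$. As $|\C_n| \le |\Av_n(\sigma)|$ for every $n$, the growth rate of $\C$ (Definition~\ref{prelim:def:growthrate}) is at most that of $\Av(\sigma)$, and it therefore suffices to show $|\Av_n(\sigma)| \le c^n$ for a constant $c = c(\sigma)$.

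Next I would pass to permutation matrices, identifying each $\pi \in \S_n$ with the $n \times n$ zero--one matrix carrying a $1$ in position $(i,\pi_i)$, so that $\sigma \prec \pi$ becomes the statement that the permutation matrix $P$ of $\sigma$ appears as a submatrix (there are rows and columns meeting a $1$ at every position where $P$ has one). The heart of the argument is the Füredi--Hajnal linear bound: writing $k$ for the length of $\sigma$ and $f(n)$ for the maximum number of $1$s in an $n \times n$ zero--one matrix that avoids $P$, I would show $f(n) = O(n)$. To do so I partition the matrix into blocks of side $k^2$ and contract each block to a single cell of an $(n/k^2)\times(n/k^2)$ matrix $B$, recording a $1$ exactly where a block is nonempty. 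Any copy of $P$ in $B$ would lift to a copy of $P$ in the original (select one $1$ from each relevant block), so $B$ avoids $P$ and hence has at most $f(n/k^2)$ nonempty blocks.

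I would then bound the $1$s block by block. Call a block \emph{wide} if its $1$s occupy at least $k$ distinct columns and \emph{tall} if they occupy at least $k$ distinct rows. A pigeonhole argument shows that each block-column contains fewer than $(k-1)\binom{k^2}{k}$ wide blocks: if it had more, then $k$ of them would share a common set of $k$ occupied columns, and since each of these $k$ vertically stacked blocks then carries a point in each of those columns, one reads off a copy of $P$ by selecting, block by block, the point matching each $1$ of $P$. Hence the wide blocks (and, symmetrically, the tall blocks) number only $O(n)$ and contribute $O(n)$ ones. Every remaining nonempty block is neither wide nor tall, so it has fewer than $k$ occupied rows and columns and carries at most $(k-1)^2$ ones; as there are at most $f(n/k^2)$ nonempty blocks in all, these contribute at most $(k-1)^2 f(n/k^2)$. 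Combining gives the recursion $f(n) \le (k-1)^2 f(n/k^2) + O(n)$, and since $(k-1)^2/k^2 < 1$ iterating sums a convergent geometric series and yields $f(n) \le c_\sigma\, n$.

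Finally I would convert this linear bound into an exponential count via Klazar's reduction. Let $L(n)$ denote the number of $n \times n$ zero--one matrices avoiding $P$; since every permutation matrix avoiding $P$ is such a matrix, $|\Av_n(\sigma)| \le L(n)$. Given an avoiding $2n \times 2n$ matrix, group its rows and columns into consecutive pairs and contract the resulting $2\times 2$ blocks to an $n \times n$ matrix $B$, which again avoids $P$ and so, by the linear bound, has at most $c_\sigma\, n$ nonempty blocks. The original matrix is recovered by choosing $B$ (at most $L(n)$ possibilities) and then filling each nonempty block with one of the $15$ nonzero $2\times 2$ patterns, giving $L(2n) \le L(n)\cdot 15^{c_\sigma n}$. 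Iterating over powers of two yields $L(n) \le c^n$ for a suitable constant $c$, whence $|\Av_n(\sigma)| \le c^n$; together with the reduction of the first paragraph this bounds the growth rate of every proper class. I expect the main obstacle to be the pigeonhole step that forces a copy of $P$ from repeated wide blocks, together with the bookkeeping needed to keep the multiplicative constant in the recursion below $k^2$ so the geometric series converges; once the linear bound is in hand, Klazar's reduction is comparatively routine.
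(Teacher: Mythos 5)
Your proposal is correct, but there is nothing in the dissertation to compare it against: Theorem~\ref{thm:MarcusTardos} is stated as a cited result of Marcus and Tardos, with no proof given in the text. What you have written is, in all essentials, the published argument itself: the observation that a proper class, being a downset, omits some $\sigma$ and hence sits inside $\Av(\sigma)$; Klazar's reduction of the exponential (Stanley--Wilf) bound to the F\"uredi--Hajnal linear bound via the $2\times 2$ contraction and the estimate $L(2n)\le L(n)\cdot 15^{c_\sigma n}$; and the Marcus--Tardos block argument for the linear bound, with the correct ingredients in place --- the contraction to side $k^2$ is legitimate because $\sigma$'s matrix is a \emph{permutation} matrix (the $k$ blocks of a lifted copy occupy distinct block rows and columns, so arbitrary representatives inherit the pattern), the pigeonhole on $k$-subsets of the $k^2$ columns of a block-column forces $k$ stacked wide blocks sharing occupied columns and hence a copy of the pattern, and the decisive point that the recursion coefficient $(k-1)^2$ falls strictly below the shrink factor $k^2$, so iterating $f(n)\le (k-1)^2 f(n/k^2)+O(n)$ sums a convergent geometric series. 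Two bits of bookkeeping if you write this out in full: the wide-block count per block-column should be stated as ``at most $(k-1)\binom{k^2}{k}$'' (which is exactly what your pigeonhole proves; your phrasing ``fewer than'' is off by one), and you should pad $n$ up to a multiple of $k^2$, and to a power of two for the doubling step, so the contractions are defined --- neither point affects the linear bound or the resulting exponential estimate, and with them your argument is a complete proof of the theorem the paper only quotes.
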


    \subsection{Wilf-Equivalence}
      
      Though Theorem~\ref{thm:MarcusTardos} says that all proper permutation classes
      have a finite growth rate, finding and classifying these growth
      rates is difficult. Of particular interest is identifying
      those patterns which have the same enumeration, i.e., $\bta, \tau$ such
      that $\Av_n(\bta) = \Av_n(\tau)$ for all $n$. Such a pair $\bta, \tau$ are
      called \emph{Wilf-equivalent}, and the set of all Wilf-equivalent
      permutations form a \emph{Wilf class}. Though showing Wilf-equivalence can be
      hard in general, many equivalences arise from eight trivial symmetries. 

      \begin{definition} \label{prelim:def:symmetries}
        Let $\pi = \pi_1 \pi_2 \dots \pi_n$ a permutation. The
        \emph{reverse}, the \emph{complement}, and the \emph{inverse} of $\pi$
        (denoted $\pi^r$, $\pi^c$, and $\pi^{-1}$, respectively) are defined as
        follows:
        $$ \begin{aligned}
          \left(\pi^r\right)_i &= \pi_{n-i + 1}, \\
          \left(\pi^c\right)_i &= n - \pi_{i} + 1, \text{ and } \\
          \left(\pi^{-1}\right)_{\pi_i} &= i. \\
        \end{aligned} $$
      \end{definition}
      
      Each of these operations map the set of permutations to itself, and each
      \emph{preserves pattern containment}. That is, if $\sg \prec \pi$, then
      $\sg^i \prec \pi^i$, for each $i \in \{r, c, -1\}$. It follows than that
      the class of permutations avoiding a pattern are in bijection with the
      class avoiding any symmetry of this pattern. These three symmetries
      thus generate an automorphism group of the pattern poset, which is
      isomorphic to the dihedral group of order eight. Of these three, only the
      inversion map has any fixed points; a permutation which is its own inverse
      is called an \emph{involution}.  It follows from Smith~\cite{Smith2006}
      that this is the complete set of automorphisms which respect pattern
      containment. Note that further order-respecting isomorphisms between classes
      are explored in Albert, Atkinson and Claesson~\cite{Claesson2013}. Note
      further that Wilf-classes need not contain bases of the same size: 
      Burstein and Pantone~\cite{pantone2014} recently showed the
      Wilf-equivalence of $\Av(1324, 3416725)$ and 
      $\Av(2143,3142,246135)$. 

      For permutations of length three, $123$ and $321$ are complements (and
      reverses) of each other, and thus the classes $\Av(123)$ and $\Av(321)$ 
      have the same enumeration (i.e., $|\Av_n(123)| = |\Av_n(321)|$ for all $n
      \in \Zgeq$). The permutation $132$ can be reversed to obtain $231$ or
      complemented to obtain $312$, and $312$ can be complemented to obtain
      $213$. Therefore the permutations $\{132, 213, 231, 312\}$ are
      Wilf-equivalent, and so there are at most two Wilf classes for length $3$
      permutations. 

      MacMahon, in 1915/16~\cite{PercyBook} enumerated the $123$-avoiding
      permutations while Knuth, in 1968~\cite{Knuth}, enumerated the
      $231$-avoiding permutations, leading to the first non-trivial Wilf
      equivalence. A bijection between $123-$ and $132$-avoiding permutations was
      presented by Simion and Schmidt~\cite{Simion1985} in 1985.

      \begin{theorem}[MacMahon, Knuth~\cite{PercyBook, Knuth}]
      \label{prelim:thm:knuth-equiv}
        The number of permutations of length $n$ avoiding $123$ is equal to the number
        avoiding $231$. 
      \end{theorem}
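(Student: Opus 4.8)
The plan is to prove the two classes are equinumerous by combining a clean structural recursion on one side with the trivial symmetries of Definition~\ref{prelim:def:symmetries} and a bijection on the other. It will be convenient to route the argument through the class $\Av(132)$, which is a symmetry of $\Av(231)$.

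I would first enumerate $\Av(231)$ by decomposing a permutation about the position of its largest entry. Suppose $\pi \in \Av_n(231)$ has its maximum in position $k$. If some entry $a$ lying to the left of position $k$ were larger than some entry $b$ lying to its right, then $a$, the maximum, and $b$ would form a $231$ pattern; hence every entry before the maximum is smaller than every entry after it. The entries in positions $1, \dots, k-1$ are therefore exactly $\{1, \dots, k-1\}$ and those in positions $k+1, \dots, n$ are exactly $\{k, \dots, n-1\}$, and each block must itself avoid $231$. Summing over the position of the maximum gives
$$ |\Av_n(231)| = \sum_{k=1}^{n} |\Av_{k-1}(231)| \cdot |\Av_{n-k}(231)|, $$
which, together with $|\Av_0(231)| = 1$, is exactly the recurrence for the Catalan numbers. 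Thus $|\Av_n(231)| = C_n$, and since $231$ is the reverse of $132$, the reversal map gives $|\Av_n(231)| = |\Av_n(132)|$ for free.

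It remains to show $|\Av_n(123)| = |\Av_n(132)|$, and this is where I expect the main difficulty. The maximum-position decomposition fails for $\Av(123)$: placing the maximum forces the prefix to be decreasing, but an increasing pair straddling the maximum can still complete a $123$ pattern, so the two blocks are not independent and no product recurrence results. Instead I would build a length-preserving bijection directly, following Simion and Schmidt~\cite{Simion1985}. The starting observation is that a $123$-avoider is characterized by its left-to-right minima: one checks that $\pi$ avoids $123$ precisely when the entries that are \emph{not} left-to-right minima form a decreasing subsequence, since any ascent among two such entries extends to a $123$ pattern by prepending a smaller earlier minimum.

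The bijection then keeps the positions and values of the left-to-right minima fixed and re-fills the remaining positions. Among all ways of assigning the remaining values to the remaining positions without creating a new left-to-right minimum, exactly one avoids $123$ (the ``as decreasing as possible'' filling above) and exactly one avoids $132$; matching these two fillings over a common left-to-right-minima configuration defines the map and its inverse. The crux of the argument --- the step deserving the most care --- is verifying this uniqueness: that for every admissible configuration of left-to-right minima the pattern-avoiding completion exists and is unique on \emph{both} sides, and that the configurations arising from $123$-avoiders are precisely those arising from $132$-avoiders. Granting this, the composite $\Av_n(123) \to \Av_n(132) \to \Av_n(231)$ is a length-preserving bijection and the theorem follows; equivalently, it re-proves that $|\Av_n(123)| = C_n$ as well.
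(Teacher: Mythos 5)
Your proposal is correct, but it reaches the theorem by a genuinely different route from the paper's. The paper uses no counting recurrence at all: it enumerates $\Av(132)$ via its simple permutations $\{1,12,21\}$ and the substitution decomposition (Theorem~\ref{thm:subsdecomp}), solving $f = z + fz + z(f+1)f$ to obtain the Catalan generating function, and it enumerates $\Av(123)$ geometrically through the lattice-path bijection $\varphi'$ to Dyck paths --- remarkably, the very same path description that works for $\Av(132)$ --- with a second derivation by inflating the infinitely many simples arising from the staircase decomposition. You instead dispatch the $231$ side with the elementary maximum-position decomposition, correctly verifying that nothing straddles the maximum, which yields the Catalan recurrence $\sum_{k} C_{k-1}C_{n-k}$ with no generating functions; and you build the Simion--Schmidt bijection by hand on the $123$--$132$ side by freezing left-to-right minima. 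It is worth noting that your bijection and the paper's are the same map in disguise: the paper observes that $\varphi^{-1}\circ\varphi'$ is equivalent to Simion and Schmidt's bijection and that it preserves the locations of left-to-right minima, which is precisely the invariant your construction fixes. Your route buys economy and self-containedness; the paper's route is deliberately heavier because its machinery --- Dyck-path encodings, simples and inflations, the staircase decomposition --- is reused throughout Chapters~\ref{chap:expat} and~\ref{chap:involutions}. One point to watch in your plan: the uniqueness-of-completion lemma you flag really is where all the work lives, since not every assignment of minima positions and values admits \emph{any} completion without creating new minima (e.g.\ minima $3$ and $1$ at positions $1$ and $4$ in length $4$ leave the value $2$ unplaceable), so existence and uniqueness must be argued jointly over the admissible configurations on both sides; but you have located this step precisely, and it is exactly the content of Simion and Schmidt's argument, so the plan closes.
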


      We explore this result further in Sections~\ref{prelim:sec:av132}
      and~\ref{prelim:sec:av123}, and rederive this result using geometric
      constructions. Note that two Wilf-equivalent classes can have sharply
      contrasting structure, as we will soon see is the case for $\Av(123)$ and
      $\Av(132)$. 
      Theorem~\ref{prelim:thm:knuth-equiv} shows that there is only one Wilf
      class for length three patterns, which gives false hope for longer
      patterns.
      As we see here, the situation becomes much more complicated as patterns get
      longer. 

      Of the twenty-four patterns of length four, the trivial symmetries show
      that there are at most eight Wilf classes. 
      Non-trivial theorems from Babson and West~\cite{BabsonWest} and
      West~\cite{WestDiss} (and generalized in Backelin, West, and
      Xin~\cite{Backelin2007}) reduce this number to four, and 
      a result of Stankova~\cite{Stankova1994} shows that two of these
      remaining classes are Wilf-equivalent. This leaves the patterns of length
      four partitioned into three Wilf classes. That these three classes do in
      fact have different enumerations can be seen in the data presented in
      Table~\ref{prelim:tab:four-classes}. 

      \begin{table}[t] \centering
        \caption{Enumerations of the three Wilf classes for patterns of length
        four.}
        \label{prelim:tab:four-classes}
        \begin{tabular}{rrrrrrrrr}
        \multicolumn{9}{c}{$|\Av_n(\bta)|$} \\ 
         $n = $ & 1  & 2 & 3 & 4  & 5 & 6 & 7 & 8
          \\ \hline
        $\bta = 1342$ &
          1 & 2 & 6 & 23 & 103 & 512 & 2740 & 15485 \\
        $\bta = 1234$ &
          1 & 2 & 6 & 23 & 103 & 513 & 2761 & 15767 \\
        $\bta = 1324$ &
          1 & 2 & 6 & 23 & 103 & 513 & 2762 & 15793 \\
        \end{tabular}
      \end{table}

      Note that the monotone pattern is neither the easiest nor hardest to avoid,
      as one might expect.  These three cases speak to the complexity involved in
      enumerating permutation classes. The class $\Av(1342)$ was first counted by
      B\'ona~\cite{Bona1997}, and was found to have an algebraic generating
      function and an exponential growth rate  of $8$. The class $\Av(1234)$ was
      enumerated by Gessel~\cite{Gessel1990} and Regev~\cite{Regev}, who provided
      an exact formula the number of permutations of a given length in the class
      and showed that the exponential growth rate is $9$, but showed that the
      generating function is D-finite but nonalgebraic. Finally, the class
      $\Av(1324)$ has not been enumerated and the growth rate is unknown, except
      that it is between $9.42$ (Albert et.  al.~\cite{1324LowerBound}) and
      $13.93$ (B\'ona~\cite{1324UpperBound}). 

      The permutation $1324$ is a \emph{layered} permutation, meaning it can be
      written as a sequence of decreasing runs, the entries of which are each
      larger than the previous layer. Layered permutations were conjectured by
      Arratia~\cite{Arratia1999} to be the easiest to avoid, i.e., their
      avoidance classes have the fastest growth. This conjecture led to interest
      in these patterns~\cite{Claesson2012, Bona2007, Elder2005}, but was
      recently overturned by Fox~\cite{Fox}, who showed that the situation is
      much more complex than small examples suggest. 
      In Chapter~\ref{chap:involutions}
      we consider the problem of finding growth rates of pattern avoiding
      \emph{involutions}, and determine the growth rates of two such sets
      avoiding patterns of length four.

    \subsection{Geometric Motivation}
      
      The investigation and classification of Wilf classes is a deep and complex
      research program.  The primary focus of this dissertation, however, is on
      the \emph{geometric structure} of permutation classes, and the use of this
      structure to understand and explore pattern containment. The concepts
      presented above can all be reconsidered in a geometric context which allows
      for a more intuitive description of permutations
      and their patterns and symmetries. This geometric approach helps to
      illuminate new directions of research, is central to this work.

      \begin{definition} \label{prelim:def:permplot}
        The \emph{plot} of the permutation $\pi$ of length $n$ is
        the set of points $(i, \pi_i) \in \bR^2$ for each $i \in [n]$. 
      \end{definition}

      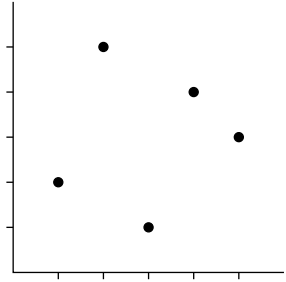
\begin{figure}[t]\centering 
        \begin{tikzpicture}
          [scale = .3, line width = .5pt]

          \draw (0,12) -- (0,0) -- (12,0);
          \foreach \num in {2, 4,..., 10}
          {
            \draw (\num, 0) -- (\num, -.3);
            \draw (0, \num) -- (-.3, \num);
          }
          \foreach \y [count = \x] in {2,5,1,4,3}
            \draw[fill = black] (2*\x,2*\y) circle (2mm);
        \end{tikzpicture}
        \caption{The plot of the permutation $\pi = 2\ 5\ 1\ 4\ 3$.}
        \label{prelim:fig:plot}
      \end{figure}

      The plot of a permutation is shown in Figure~\ref{prelim:fig:plot}. 
      Say that a set of $n$ points in $\bR^2$ is \emph{generic} if no two points
      lie on the same horizontal or vertical line. 
      Say that two generic sets $P$ and $T$ are \emph{order isomorphic} (written
      $P \sim T$) if the axes can be stretched or shrunk in some way to transform
      one into the other. 
      
      It follows that every generic point set is order isomorphic to
      a unique permutation plot, and that order isomorphism is an equivalence
      relation.
      The set of all $n$-element generic point sets, modulo this relation, is
      therefore in bijection with the set of all permutations of length $n$. This
      correspondence allows us to identify a permutation with its plot, and
      provides an alternate geometric definition of permutation patterns,
      illustrated in Figure~\ref{prelim:fig:plotpattern}.

      \begin{definition} 
        Let $n,k \in \Pos$ with $k \leq n$, and let $\pi \in \S_n$ and $\sg \in
        \S_k$. Let $P, T$ be the points in the plots of $\pi$ and $\sg$,
        respectively. Say that $\sg \prec \pi$ if there is some subset $R
        \subseteq S$ for which $R \sim T$. 
      \end{definition}

      \begin{figure}[t]
        \centering
        \begin{tikzpicture}[scale = .4,
                            every node/.style={circle, fill=black, inner sep=.5mm},
                            greyed/.style={fill=lightgray}]
          \node at (2,5) {};
          \node at (3,1) {};
          \node at (5,3) {};
        \end{tikzpicture}
          \hspace{1pc}
          \raisebox{2pc}{$\subset$}
          \hspace{1pc}
        \begin{tikzpicture}[scale = .4,
                            every node/.style={circle, fill=black, inner sep=.5mm},
                            greyed/.style={fill=lightgray}]
          \node[greyed] at (1,2) {};
          \node[] at (2,5) {};
          \node at (3,1) {};
          \node[greyed] at (4,4) {};
          \node at (5,3) {};
        \end{tikzpicture}
          \hspace{1pc}
          \raisebox{2pc}{$= $}
          \hspace{1pc}
        \begin{tikzpicture}[scale = .4,
                            every node/.style={circle, fill=black, inner sep=.5mm},
                            greyed/.style={fill=gray}]
          \foreach \y [count = \x] in {2,5,1,4,3}
          \node at (\x,\y) {};
        \end{tikzpicture}

        \caption{The permutation $\sg = 312$ is contained in the permutation $\pi =
                  25143$. }
        \label{prelim:fig:plotpattern}
      \end{figure}
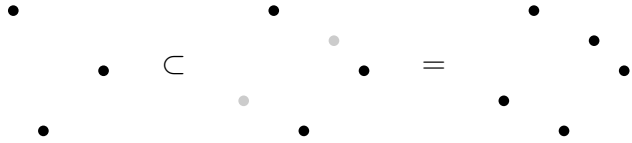

      Many operations on permutations are easier to understand
      through these geometric plots. For example, the plot of a permutation can
      be reflected and rotated to produce new permutations. Letting $\pi = \pi_1
      \pi_2 \dots \pi_n$ be a permutation, the reverse of $\pi$ is obtained by
      reflecting the dots across a vertical line, the complement by reflecting
      across a horizontal line, and the inverse is obtained by reflecting across
      the line $y = x$. That these operations generate a group of automorphisms
      isomorphic to the dihedral group of order eight is clear when
      viewing permutations as plots within a square. It is equally clear from
      this viewpoint that these operations respect pattern containment

      We can also define operations which act on pairs of permutations, combining
      two or more permutations into a single new one, and these operations can
      also be described entirely at the geometric level.  Two such examples are
      the \emph{direct sum} and \emph{skew sum} of permutations. 

      \begin{definition} \label{prelim:def:sums}
        Let $n,k \in \Pos$, and let $\pi \in \S_n$ and $\sg \in \S_k$. The
        \emph{direct sum} of $\pi$ and $\sg$, written $\pi \dsum \sg$, is the
        permutation defined by 
        $$ (\pi \dsum \sg)_i = 
            \left\{ \begin{array}{cc} 
              \pi_i & \text{if} \ i \leq n \\
              \sg_{i-n} + n & \text{if} \ i > n. 
              \end{array} \right. $$
        The \emph{skew sum}, written $\pi \ssum \sg$ is defined similarly:
        $$ (\pi \dsum \sg)_i = 
            \left\{ \begin{array}{cc} 
              \pi_i + k & \text{if} \ i \leq n \\
              \sg_{i-n} & \text{if} \ i > n. 
              \end{array} \right. $$
        A sum-indecomposable (resp. skew-indecomposable) permutation is one which
        \emph{cannot} be written as a direct (resp. skew) sum,
      \end{definition}

      Geometrically, $\pi \dsum \sg$ is the permutation whose plot is represented
      by placing the plot of $\pi$ below and to the left of the plot of $\sg$,
      while $\pi \ssum \sg$ places the plot of $\pi$ above and to the left of
      $\sg$, as shown in Figure~\ref{prelim:fig:sums}.  

      \begin{figure}[t] \centering
        \begin{tikzpicture}[scale=.3]
          \draw[dotted, fill = black!10] 
                (0,6) -- (6,6) -- (6,0) -- (0,0) -- cycle;
          \draw[dotted, fill = black!10] 
                (6,6) -- (6,12) -- (12,12) -- (12,6) -- cycle;
          \draw (0,0) -- (12,0) -- (12,12) -- (0,12) -- cycle;
          \node at (3,3) {$\pi$};
          \node at (9,9) {$\sg$};
        \end{tikzpicture}
        \hspace{4pc}
        \begin{tikzpicture}[scale=.3]
          \draw[dotted, fill = black!10] 
                (0,6) -- (6,6) -- (6,12) -- (0,12) -- cycle;
          \draw[dotted, fill = black!10] 
                (6,0) -- (6,6) -- (12,6) -- (12,0) -- cycle;
          \draw (0,0) -- (12,0) -- (12,12) -- (0,12) -- cycle;
          \node at (3,9) {$\pi$};
          \node at (9,3) {$\sg$};
        \end{tikzpicture}

        \caption{The plots of $\pi \dsum \sg$ and $\pi \ssum \sg$, respectively.}
        \label{prelim:fig:sums}
      \end{figure}

      These definitions will prove essential when describing permutation
      classes. In his thesis~\cite{SteveWaton2007}, Waton describes and explores
      classes defined entirely by points plotted on specified geometric shapes.
      We focus here, however, on more general classes. 

      Direct sums and skew sums are simple examples of the so called
      \emph{inflation} operation. A non-geometric definition of inflation is
      technical and unillustrative, but is natural when viewed as an operation of
      permutation plots. Before defining inflations, we need another definition
      which will itself prove useful. 

      \begin{definition}\label{prelim:def:interval}
        Let $\pi = \pi_1 \pi_2 \dots \pi_n \in \S_n$. An \emph{interval} of
        $\pi$ is a contiguous sequence of entries $\pi_i \pi_{i+1} \dots
        \pi_{i+k}$ whose values form a contiguous sequence of integers. 
      \end{definition}

      For example, in the permutation $\pi = 2743516$, the third,
      fourth, and fifth entries ($435$) form an interval. 
      Every permutation has an interval of size $n$ (the entire permutation) and
      intervals of size one (each entry).  Permutations which have only these
      trivial intervals are especially significant.

      \begin{definition} \label{prelim:def:simple}
        An permutation $\pi \in \S_n$ whose only intervals have size $1$ and $n$ is
        called \emph{simple}. 
      \end{definition}
      
      Simple intervals are useful for describing permutation classes,
      as we will see.  Monotone intervals will be investigated further in
      Chapters~\ref{chap:polyclass} and~\ref{chap:fixpat},
      and simplicity will be a major topic of Chapter~\ref{chap:involutions}. We
      can now define inflations, which will used throughout this dissertation. 

      \begin{definition}\label{prelim:def:inflation}
        Let $\pi \in S_n$, and let $\alp_1, \alp_2 \dots \alp_n$ be 
        permutations of any length. The \emph{inflation} of $\pi$ by the
        permutations $\alp_i$ is defined as the permutation obtained by replacing
        the $i$th entry of $\pi$ with an interval which is order isomorphic to
        the permutation $\alp_i$.  This inflation is denoted 
        $$ \pi[\alp_1, \alp_2, \dots \alp_n].$$
      \end{definition}

      \begin{figure}[t] \centering
        \begin{tikzpicture}[scale=.3]
          \draw (.5,.5) -- (9.5,.5) -- (9.5,9.5) -- (.5,9.5) -- cycle;
          \foreach \i in {2.8, 5, 7.2}{
            \draw[dotted] (.5,\i) -- (9.5, \i);
            \draw[dotted] (\i,.5) -- (\i, 9.5);
          }
          \draw[fill=black] (2, 4) circle (2mm);
          \draw[fill=black] (4, 8) circle (2mm);
          \draw[fill=black] (6, 2) circle (2mm);
          \draw[fill=black] (8,6) circle (2mm);
        \end{tikzpicture}
        \hspace{1pc}
        \begin{tikzpicture}[scale=.3]

          \draw[fill=black!15, dotted] 
              (.5,3.5)--(3.5,3.5)--(3.5,6.5)--(.5,6.5)--cycle;
          \draw[fill=black!15, dotted] 
              (3.5,7.5)--(5.5,7.5)--(5.5,9.5)--(3.5,9.5)--cycle;
          \draw[fill=black!15, dotted] 
              (5.5,.5)--(8.5,.5)--(8.5,3.5)--(5.5,3.5)--cycle;
          \draw[fill=black!15, dotted] 
              (8.5,6.5)--(9.5,6.5)--(9.5,7.5)--(8.5,7.5)--cycle;

          \draw (.5,.5) -- (9.5,.5) -- (9.5,9.5) -- (.5,9.5) -- cycle;
          \foreach \i in {3.5, 5.5, 8.5}{
            \draw[dotted] (\i,.5) -- (\i, 9.5);
          }
          \foreach \i in {3.5, 6.5, 7.5}{
            \draw[dotted] (.5,\i) -- (9.5, \i);
          }
          \foreach \y [count = \x] in {5,4,6,9,8,1,3,2,7}
            \draw[fill = black] (\x,\y) circle (2mm);
        \end{tikzpicture}
        \hspace{1pc}
        \begin{tikzpicture}[scale=.3]
        \draw (.5,.5) -- (9.5,.5) -- (9.5,9.5) -- (.5,9.5) -- cycle;

          \foreach \y [count = \x] in {5,4,6,9,8,1,3,2,7}{
            \draw[fill = black] (\x,\y) circle (2mm);
            \draw[dotted] (\x + 0.5, 0.5) -- (\x + 0.5, 9.5);
            \draw[dotted] (0.5, \x + 0.5) -- (9.5, \x + 0.5);
            }
      
        \end{tikzpicture}
        \caption[The simple permutation $2413$ and its inflation]{
                  The simple permutation $2413$ and its inflation
                  $2413[213, 21, 132, 1] = 546\ 98\ 132\ 7$.} 
        \label{prelim:fig:inflation}
      \end{figure}
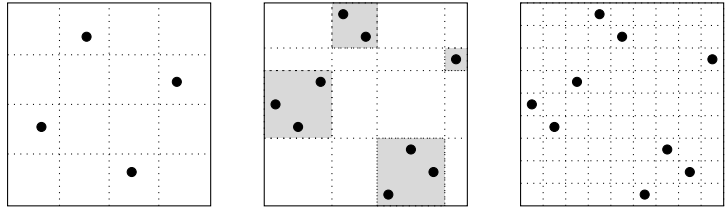

      For example, for any two permutations $\pi$ and $\sg$, $\pi \dsum \sg =
      12[\pi, \sg]$ and $\pi \ssum \sg = 21[\pi, \sg]$. A more complicated
      example is shown in Figure~\ref{prelim:fig:inflation}. While simple
      permutations and inflations are useful for working with and describing
      permutations, their true utility is illustrated in the following theorem,
      which has generalizations to a wider range of combinatorial
      objects~\cite{SubsDecomp}.


      \begin{theorem}[Substitution Decomposition~\cite{Brignall2008}]
        \label{thm:subsdecomp}
        Every permutation $\pi$ can be written as the inflation of a
        unique simple permutation. Further, if $\pi = \sg[\alp_1, \dots \alp_m]$,
        where each $\alp_i$ is a permutation of length $\geq 1$ and
        $m \geq 4$, then the permutations $\alp_i$ are uniquely determined as
        well. 
      \end{theorem}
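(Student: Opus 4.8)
The plan is to organize everything around \emph{intervals} (Definition~\ref{prelim:def:interval}) and the passage to a \emph{quotient}. Given a partition of the positions $[n]$ into intervals $B_1, B_2, \ldots, B_k$ listed left to right, the blocks are linearly ordered both by position and, since each occupies a contiguous band of values, by value; hence the relative order of the blocks is itself a permutation $\sigma \in \S_k$, and setting $\alpha_i = \std(B_i)$ yields $\pi = \sigma[\alpha_1, \ldots, \alpha_k]$ by construction, with every inflation arising this way. The one combinatorial fact I will lean on is the \emph{block lemma}: if $I$ and $J$ are intervals of $\pi$ with $I \cap J \neq \emptyset$, then $I \cup J$, $I \cap J$, $I \setminus J$, and $J \setminus I$ are again intervals. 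This is immediate, since the intersection of two contiguous position-ranges (resp. value-ranges) is contiguous and overlapping blocks must share a value.

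For existence (the length-one permutation $1$ being itself simple) I would prove the trichotomy that every $\pi$ with $|\pi| \geq 2$ is plus-decomposable ($\pi = \alpha \dsum \beta$ with $\alpha, \beta$ nonempty), minus-decomposable ($\pi = \alpha \ssum \beta$), or an inflation of a simple permutation of length $\geq 4$. In the first two cases take $\sigma = 12$ and $\sigma = 21$. In the remaining case I would take the maximal proper intervals of $\pi$ and show, via the block lemma, that when $\pi$ is neither plus- nor minus-decomposable no two of them overlap: two overlapping maximal proper intervals $I, J$ would have an interval union strictly containing each, forcing $I \cup J = [n]$, after which inspecting the top/bottom split of the proper interval $I \cap J$ exhibits a $\dsum$ or $\ssum$ decomposition, a contradiction. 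Hence the maximal proper intervals are disjoint and, as every singleton is an interval, partition $[n]$; the resulting quotient $\sigma$ has only trivial intervals (a nontrivial one would pull back to a proper interval of $\pi$ strictly containing a maximal one), so $\sigma$ is simple, and being neither $1$, $12$, nor $21$ it has length $m \geq 4$.

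Uniqueness of $\sigma$ splits into two checks. First, the three cases are mutually exclusive: a permutation of length $\geq 2$ cannot be both plus- and minus-decomposable, because a plus-split at position $j_1$ forces the prefix values to be $\{1, \ldots, j_1\}$ while a minus-split at $j_2$ forces the length-$j_2$ prefix values to be $\{n-j_2+1, \ldots, n\}$, and comparing the nested prefixes yields a contradiction; moreover a simple quotient of length $\geq 4$ precludes either monotone split, since by the block lemma such a split must either respect the block boundaries (making $\sigma$ itself decomposable) or cut a block (making $\sigma$ carry a nontrivial interval), both impossible. Second, in the simple case the partition into maximal proper intervals is intrinsic to $\pi$, so $\sigma$ is determined, and in the monotone cases $\sigma \in \{12, 21\}$ is likewise forced.

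Finally, for uniqueness of the $\alpha_i$ when $m \geq 4$, the key point is that the blocks of any such decomposition are exactly the maximal proper intervals. Suppose $\pi = \sigma[\alpha_1, \ldots, \alpha_m] = \sigma[\alpha_1', \ldots, \alpha_m']$ via block partitions $\{B_i\}$ and $\{B_i'\}$. Because $\sigma$ is simple of length $\geq 4$, no single entry of $\sigma$ spans a nontrivial interval, so each $B_i$ and each $B_i'$ is a maximal proper interval of $\pi$; were some $B_i$ and $B_j'$ to overlap without coinciding, the block lemma would manufacture a nontrivial interval of $\sigma$, a contradiction. Thus the partitions agree and $\alpha_i = \alpha_i'$. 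I expect this rigidity step to be the main obstacle: it is precisely where simplicity of $\sigma$ is indispensable, and it is what fails when $\sigma \in \{12, 21\}$, where a block may be split or two adjacent blocks merged (as in $123 = 1 \dsum 12 = 12 \dsum 1$) without disturbing the monotone quotient, which is exactly why the hypothesis $m \geq 4$ is required.
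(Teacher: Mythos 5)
The paper gives no proof of this theorem --- it is imported from Brignall's survey --- so I am measuring your proposal against the standard Albert--Atkinson argument, which is exactly the route you chose: quotient by the maximal proper intervals, an overlap lemma, and the trichotomy (sum-decomposable, skew-decomposable, or simple quotient of length at least four). Your existence argument is sound: overlapping maximal proper intervals force $I \cup J = [n]$, whence the three pieces $I \setminus J$, $I \cap J$, $J \setminus I$ stack monotonically and $\pi$ is $\oplus$- or $\ominus$-decomposable; and your mutual-exclusivity checks (nested prefixes for $\oplus$ versus $\ominus$; a monotone split either respecting block boundaries or cutting one block) hold up. Two small repairs: your block lemma is false as stated when one interval contains the other --- in $\pi = 1234$, the position sets $\{1,2,3\}$ and $\{2\}$ are intervals but $\{1,3\}$ is not --- so the set differences require \emph{proper} overlap, which is fortunately the only case you use; and the step ``$\sigma$ is not $1$, $12$, or $21$, hence $m \geq 4$'' silently uses the (easy, but worth one line) fact that no simple permutation of length three exists.

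The genuine gap is the rigidity step, which you rightly call the main obstacle but then dispatch in one unsupported sentence: ``the block lemma would manufacture a nontrivial interval of $\sigma$.'' The block lemma produces intervals of $\pi$, not of $\sigma$, and an arbitrary interval of $\pi$ need not be a union of blocks, so nothing yet projects to the quotient. What is needed is a trace lemma: if an interval $I$ of $\pi$ meets two blocks, then --- because the values of $I$ form a contiguous range and every value in that range is attained at a position of $I$ --- $I$ swallows whole every block whose value range lies between theirs; hence $\{k : B_k \cap I \neq \emptyset\}$ is an interval of $\sigma$ of size at least $2$, which simplicity forces to equal $[m]$. One must then still rule out $I$ being proper: $I$ can cut only the positionally extreme blocks $B_1$ and $B_m$, a cut block must also be value-extreme for $I$, and $\sigma_1, \sigma_m \notin \{1,m\}$ for simple $\sigma$ (else $\{2,\dots,m\}$ is a nontrivial interval), so no block is cut and $I = [n]$, a contradiction. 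This trace-and-endpoint analysis is precisely what justifies your parallel assertion that every $B_i$ and $B_j'$ is a maximal proper interval --- the phrase ``no single entry of $\sigma$ spans a nontrivial interval'' does not engage with intervals of $\pi$ that cut across block boundaries --- and it is where the hypothesis $m \geq 4$ and simplicity genuinely enter. Filling in this step would complete an otherwise correct proof.
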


  \section{Dyck Paths and the Catalan Numbers}

      Before exploring two examples of permutation classes, we take a brief
      detour and investigate another set of combinatorial objects known as 
      \emph{Dyck paths}. These paths will be used throughout this dissertation,
      and provide a convenient and flexible means of encoding recursive and
      structural information. 
      
      These paths are enumerated by the so-called
      \emph{Catalan numbers}, a ubiquitous and useful sequence of integers.
      Stanley~\cite{Stanley2} has famously collected a series a sixty-six
      examples of combinatorial objects, each enumerated by these numbers. Their
      pervasiveness is due in part to their multiple recursive descriptions.

    \subsection{Paths on the Integer Lattice}
      
      At its most formal, a \emph{Dyck path} of semilength $n$ is a sequence
      $\vec v_1, \vec v_2, \dots \vec v_{2n}$ of vectors $ \vec v_i \in
      \{\vect{1,1}, \vect{1,-1}\}$, satisfying $ \sum_{n=1}^{2n} \vec v_i =
      \vect{2n, 0}$ and, for all integers $k \in [2n]$ and $\vect{x,y} =
      \sum_{n=1}^k \vec v_i$, we have that $y \geq 0$. 

      As usual, a more intuitive definition will be useful. 
      Suppose that, starting from the point $(0,0) \in \mathbb{R}^2$, we want to
      travel to the point $(2n,0)$. Suppose further that are only allowed to walk
      diagonally northeast 
      (from a point $(x,y)$ to $(x+1, y+1)$) or southeast (from a point $(x,y)$
      to $(x+1, y-1)$). 
      Call a northeast step an \emph{upstep} and a southeast step a
      \emph{downstep}. The total number of walks from $(0,0)$ to $(2n,0)$ is then
      $\binom{2n}{n}$, since the number of up steps must equal the number of down
      steps, and so we need only specify which of the $2n$ steps are up. Dyck
      paths can now be defined as follows. 

      \begin{definition}\label{prelim:def:dyckpath}
        A \emph{Dyck path} of semilength $n$ (or of length $2n$) is path $p =
        s_1s_2 \dots s_{2n}$ from $(0,0)$ to $(2n,0)$ using the steps $u =
        \vect{1,1}$ and $d = \vect{1,-1}$ which \emph{never passes below the
        line $y=0$}. 
      \end{definition}

      These paths can be represented as a string of symbols from the alphabet
      $\{u,d\}$, representing upsteps and downsteps, respectively. The path $p =
      uuuddududduuddd$ is shown in Figure~\ref{prelim:fig:dyckpath}. 

      \begin{figure}[t]\centering
        \begin{tikzpicture} [scale = .4]
          \draw (0,0) -- (16,0);
          \draw[->, >= latex] (0,0) -- (1,1);
          \draw[->, >= latex] (1,1) -- (2,2);
          \draw[->, >= latex] (2,2) -- (3,3);
          \draw[->, >= latex] (3,3) -- (4,2);
          \draw[->, >= latex] (4,2) -- (5,1);
          \draw[->, >= latex] (5,1) -- (6,2);
          \draw[->, >= latex] (6,2) -- (7,1);
          \draw[->, >= latex] (7,1) -- (8,2);
          \draw[->, >= latex] (8,2) -- (9,1);
          \draw[->, >= latex] (9,1) -- (10,0);
          \draw[->, >= latex] (10,0) -- (11,1);
          \draw[->, >= latex] (11,1) -- (12,2);
          \draw[->, >= latex] (12,2) -- (13,3);
          \draw[->, >= latex] (13,3) -- (14,2);
          \draw[->, >= latex] (14,2) -- (15,1);
          \draw[->, >= latex] (15,1) -- (16,0);
        \end{tikzpicture}
        \caption{A Dyck path of semilength $8$.}
        \label{prelim:fig:dyckpath}
      \end{figure}
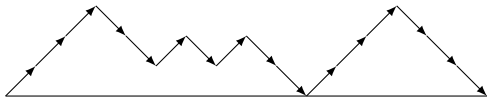

    \subsection{Enumerating Dyck Paths}

      Dyck paths are a fundamental combinatorial object, and their properties
      have been studied extensively~\cite{Chapman1999, Deutsch1999, Denise1995}. 
      Their well understood structure makes them (and their generalizations) a
      useful intermediate object for building bijections between other
      objects~\cite{Claesson2008, bloomvince}. To illustrate their recursive
      structure, we derive their enumeration here. 

      In order to count Dyck paths, we first need to consider their structure,
      and how they can be broken down into smaller pieces. We focus on two
      separate decompositions, which lead to two different recursive
      descriptions, each of which leads to the Catalan numbers. 

      First, let $p = s_1 s_2 \dots s_{2n}$ be a Dyck path, and let $s_i$ be the
      first step which brings it back to the line $y = 0$. Such a step must
      exist, since $s_{2n}$ always ends at this line. It follows
      then that $i$ is even, $s_1 = u$, $s_i = d$, and $s_{i+1} s_{i+2} \dots
      s_{2n}$ is a Dyck path of length $2n-i$. 
      Further, since $s_i$ is the \emph{first} time
      the path touches the line $y=0$, each of the steps $s_2, s_3, \dots
      s_{i-1}$ have a height greater than or equal to $1$, which implies that
      $s_2 s_3 \dots s_{i-1}$ is a Dyck path. This implies that for every Dyck
      path $p$, there exist two smaller Dyck paths $p_1, p_2$ such that $$ p = u
      p_1 d\ p_2 .$$

      It follows that if $\cP$ is the \emph{language} of Dyck paths (i.e., the
      set of all strings of the letters $u,d$ which represent valid Dyck paths),
      then $\cP = u \cP d \cP + \eps$, where the $\eps$ represents the empty
      path. This leads immediately to a generating function relation: if we let
      $c_n$ be the number of Dyck paths of semilength $n$ and $C(z) = \sum_{n\geq
      0} c_n z^n$, then this relation leads to the equation 

      \begin{equation}  \label{prelim:eqn:firstpass}   
        C(z) = z C(z)^2 + 1. 
      \end{equation}

      Before investigating further, we present an alternate decomposition. Let $p
      = s_1 s_2 \dots s_{2n}$ be a Dyck path, and let $i_1, i_2, \dots i_k$ be
      all of the indices with the property that the step $s_i$ ends on the line
      $y=0$. It follows then that each subword $s_{i_j + 1} s_{i_j + 2} \dots
      s_{i_{j+1} -1}$ stays above the line $y=1$, and is therefore itself a Dyck
      path. Therefore, for all Dyck paths $p$, there exist some integer $k$ and
      Dyck paths $p_1, p_2, \dots p_k$ such that 
      $$ p = u p_1 d\  u p_2 d\  \dots u p_k d.$$
      This gives an alternate relation for the generating function $C(z)$
      enumerating Dyck paths:
      
      \begin{equation} \label{prelim:eqn:allpass}
        C(z) = 1 + zC(z) + z^2 C(z)^2 + z^3 C(z)^3 + \dots = \frac{1}{1 - zC(z)}.
      \end{equation}

      The equivalence of equations~\ref{prelim:eqn:allpass}
      and~\ref{prelim:eqn:firstpass} is immediately obvious --- one can be
      rearranged into the other.
      It follows then that these two seemingly different recursions are in fact
      equivalent, and so any object exhibiting either of these recursive
      descriptions are counted by the same numbers. 
      With Dyck paths, both recurrences are clear;
      with other objects, however, they are less transparent. Dyck paths are
      useful in part because of the simplicity of their decompositions, and
      Catalan numbers are ubiquitous because they capture so many of these
      recursions. 

    \subsection{The Catalan Numbers}
    \label{prelim:sub:catalan}
      
      The generating function presented above
      (equation~\ref{prelim:eqn:firstpass}) can be solved using the quadratic
      formula, yielding the following (note that the quadratic formula actually
      yields two solutions, but we discard the one which does not have a series
      expansion with positive integer coefficients) 

      \begin{equation} \label{eqn:catalan-gfn}
        C(z) = \sum_{n \geq 0} c_n z^n = \frac{1 - \sqrt{1 - 4z}}{2z}.
      \end{equation}

      The first few coefficients in the expansion of $C(z)$ are
      $1,1,2, 5, 14, 42, 132, \dots$, and are \OEIS{A000108}.  The generating
      function recurrence $C(z) = zC(z)^2 + 1$ translates to $c_0 =1$ and
      $c_{n+1} = \sum_{k=0}^n c_{k} c_{n-k}$, and this uniquely defines this
      sequence.  The binomial theorem can be used to obtain an exact formula for
      $c_n$ from equation~\ref{eqn:catalan-gfn} above:

      \begin{equation} \label{eqn:catalan-exact}
        c_n = \frac{1}{n+1} \binom{2n}{n} = \binom{2n}{n} - \binom{2n}{n-1}.
      \end{equation}

      We note that the generating function presented above
      (equation~\ref{eqn:catalan-gfn}) has a singularity at $z=1/4$. It follows
      that, when expanded as a power series about $z=0$, $C(z)$ has a radius of
      convergence of $1/4$. The exponential growth rate of a sequence is equal to
      the reciprocal of the radius of convergence, which implies that $\limn
      \sqrt[n]{c_n} = 4$. 
      While Stirling's approximation for the factorials gives a simpler means of
      calculating this growth rate (and allow for the derivation of the
      subexponential growth rate), analytic techniques, summarized in the
      textbook of Flajolet and Sedgewick~\cite{flajolet}, provide a wide
      framework for deriving these exponential growth rates.

  \section{Four Case Studies}

    The advantage to this geometric focus is best illustrated through 
    examples. In this section we present four case studies, each of which
    corresponds roughly to the subject of a later chapter. Together these provide
    motivation and a gentle introduction to the methods used throughout this
    dissertation.
    
    We begin by deriving the 
    enumeration of the classes of $132$- and $123$-avoiding permutations. Though
    they share the same enumeration, these two classes present starkly different
    decompositions. We then combine these ideas and explore the class of
    \emph{$123$- and $231$}-avoiding permutations, motivating the investigation
    of polynomial permutation classes. Finally, we examine an example of the use of
    probabilistic techniques and structural decomposition in finding
    statistical information about classes.

    \subsection{Permutations Avoiding 132}
    \label{prelim:sec:av132}

      We start with the enumeration of the class $\Av (132)$.  The study of
      simples within a permutation class has been a deep and productive line of
      research in recent years~\cite{Brignall2008, Atkinson2005, Brignall2007}.  
      Further, this investigation has seen numerous applications in the
      enumeration of classes~\cite{pantone2013,pantone2014, Albert2012}.
      %
      %
      While the vast majority of this machinery is not needed for the class $\Av
      (132)$, but in the interest of exposition we hit a small nail with a large
      hammer.  The enumeration of a class using its simples is the core idea of
      Chapter~\ref{chap:involutions}, where we apply it to sets of
      pattern-avoiding involutions. 
      
      A plot of a permutation within $\Av (132)$ has strict restrictions: every
      element to the left of the highest point must be higher than every element
      to the right, since otherwise we would have a $132$ pattern with the
      highest element playing the role of the 3. This highest element then
      divides the plot into two sides. It follows that every entry after the
      peak forms an interval, which implies that the only simples in $\Av (132)$
      are $\{1, 12, 21\}$.

      By describing the simple permutations in the class, we can often obtain a
      full enumeration. The class $\Av (132)$ is uncomplicated enough to be
      described entirely using direct and skew sums, but it falls into a larger
      set of classes, those which have only finitely many simple permutations.
      Such permutation classes posess a number of useful properties, including
      the following theorem, due to Albert and Atkinson. 

      \begin{theorem}[Albert, Atkinson~\cite{Atkinson2005}]
        If a class contains only finitely many simple permutations, then its
        enumeration is given by an algebraic generating function. 
      \end{theorem}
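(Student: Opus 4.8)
The plan is to reconstruct the generating function
$f(x)=\sum_{n\ge 1}|\C_n|\,x^n$ directly from the Substitution Decomposition
(Theorem~\ref{thm:subsdecomp}), stratifying every nonempty permutation of $\C$
by the unique simple permutation sitting at the top of its decomposition.
Because $\C$ contains only finitely many simples, say
$\sigma^{(1)},\dots,\sigma^{(t)}$ of length $\ge 4$ together with the degenerate
simples $1,12,21$, this produces a \emph{finite} splitting
\[
f \;=\; x \;+\; f_{\dsum} \;+\; f_{\ssum} \;+\; \sum_{j=1}^{t} f_{\sigma^{(j)}},
\]
where $x$ records the single point, $f_{\dsum}$ and $f_{\ssum}$ count the sum-
and skew-decomposable members, and $f_{\sigma^{(j)}}$ counts those whose top
simple is $\sigma^{(j)}$. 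Since $12$ and $21$ do not have uniquely determined
blocks, I would first secure uniqueness in the sum and skew strata by the
standard indecomposable refinement: writing each sum-decomposable permutation
uniquely as $\alpha_1 \dsum \cdots \dsum \alpha_k$ with $k\ge 2$ and every
$\alpha_i$ sum-indecomposable, and dually for $\ssum$. For the length-$\ge 4$
simples the blocks $\alpha_1,\dots,\alpha_m$ are already unique by
Theorem~\ref{thm:subsdecomp}, so there is no overcounting to repair.

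For each stratum the remaining task is to compute the generating function of the
admissible block-tuples $(\alpha_1,\dots,\alpha_m)$, i.e.\ those for which
$\sigma^{(j)}[\alpha_1,\dots,\alpha_m]\in\C$. The naive guess is that this set is
a product $\C_1\times\cdots\times\C_m$ of subclasses, giving a product of
generating functions; \emph{this is the main obstacle}, and it is false in
general. The point is that membership in $\C$ does not decouple across blocks:
two blocks can jointly realise a forbidden pattern even when each block alone is
harmless. (For $\C=\Av(1234)$ and $\sigma=2413$, each inflation of a single block
by $12$ stays in $\C$, yet $2413[12,12,1,1]$ contains $1234$.) Analysing an
occurrence of a basis element $\beta$ inside an inflation shows precisely why:
such an occurrence uses points from some blocks $i_1<\cdots<i_r$ and factors as
$\beta=\rho[\gamma_1,\dots,\gamma_r]$ with $\rho\preceq\sigma^{(j)}$ and each
$\gamma_a\preceq\alpha_{i_a}$ nonempty, so genuine coupling arises exactly when
two blocks must each contribute $\ge 2$ points.

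To resolve the coupling I would work inclusion--exclusion over these cross-block
obstructions. First I would establish (from the well-quasi-order enjoyed by a
class with finitely many simples) that $\C$ is finitely based; then only finitely
many patterns $\beta$, and hence finitely many factorisations
$\rho[\gamma_1,\dots,\gamma_r]$, are relevant, so for each simple the relevant
per-block sub-patterns $\gamma$ range over a finite set. Consequently the set of
admissible tuples is a finite Boolean combination of product sets
$\C_1\times\cdots\times\C_m$, where each $\C_i$ is $\C\cap\Av(\Gamma_i)$ for a
finite pattern set $\Gamma_i$. Expanding this Boolean combination by
inclusion--exclusion writes every $f_{\sigma^{(j)}}$ as a finite
$\mathbb{Z}$-linear combination of products of generating functions of such
subclasses, and each of those subclasses is again a class with only finitely many
simples.

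Finally I would close the bookkeeping into a finite system. Introducing one
unknown generating function for each pair consisting of a simple permutation and
one of the finitely many block-restriction profiles that can occur, the
decomposition and the inclusion--exclusion above express each unknown as a
polynomial in $x$ and in the other unknowns; finiteness of the simples and of the
profiles keeps the number of unknowns and equations finite. Any power-series
solution of such a finite polynomial system is algebraic over $\mathbb{Q}(x)$
(the same principle that turned $C(z)=zC(z)^2+1$ into the algebraic function
\eqref{eqn:catalan-gfn}), and since $f$ is a $\mathbb{Z}$-combination of these
unknowns it is algebraic as well. The crux throughout is the second step: keeping
the cross-block coupling under control so that the system stays finite, for which
the finite-basis/well-quasi-order input is essential.
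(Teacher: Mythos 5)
The dissertation states this theorem without proof, citing Albert and Atkinson~\cite{Atkinson2005}, and your proposal correctly reconstructs their actual argument: stratification by the unique top simple via the substitution decomposition (with the sum/skew-indecomposable refinement for $12$ and $21$), identification of the cross-block coupling through factorisations $\beta = \rho[\gamma_1,\dots,\gamma_r]$ of basis elements, finite basis obtained from the well-quasi-order theory of classes with finitely many simples, and inclusion--exclusion over the finitely many avoidance profiles to close a finite polynomial system whose power-series solutions are algebraic. Your emphasis on the coupling obstacle (with the $2413[12,12,1,1]$-type counterexample to naive block-independence) is exactly the crux of the original proof, so the proposal matches the cited source's approach.
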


      In addition to theoretical results, the investigation of simple
      permutations and decomposition has led to practical enumeration techniques.
      Once the simples of a class have been obtained, one needs only determine
      the manner in which each simple can be inflated in order to fully describe
      the class. While much of this work has focused on enumerating classes, it
      can also be used to obtain statistical information about the class.
      Section~\ref{prelim:sec:ascents-example} gives an introductory example to
      this technique, while Chapter~\ref{chap:expat} explores the concept
      further. 

      Returning now to the class $\Av(132)$, note that arbitrary inflations of
      the simple permutations $\{1, 12, 21\}$ do not lead to $132$-avoiding
      permutations. Letting $\pi \in \Av(132)$, recall that every entry after the
      maximal entry must have a smaller value than every entry before. The
      substitution decomposition (Theorem~\ref{thm:subsdecomp}) implies that 
      each permutation can be defined as an inflation of precisely one of these:
      the simple permutation $1$ can only be inflated to the length $1$
      permutation, inflations of
      $12$ are the sum-decomposable elements, and the skew-decomposable elements
      are the inflations of $21$. 
      
      For an inflation of $12$, the $2$ can only be inflated by an
      increasing run of entries, or else would contain a $21$ pattern, creating a
      $132$ occurrence with any entry of the inflation of the $1$, which can be
      inflated by any $132$-permutation.  Recall that the substitution
      decomposition does not guarantee uniqueness when inflating the simple
      permutations $12$ and $21$, so we have to be careful. To ensure uniqueness,
      only allow the $2$ of $12$ to be inflated by a single element (if there is
      an increasing run, take it to be part of the $1$).

      Finally, when inflating $21$, the $1$ can be inflated by any
      $132$-permutation, while the $2$ can be inflated by any $132$-avoiding
      permutation which ends in its last element, which can be represented as the
      direct sum of a $132$-avoiding permutation (or the empty permutation) with
      the permutation $1$. We express this as follows, letting $\C$ denote $\Av
      (132)$ and $\eps$ denote the empty permutation:

      $$ \C =  1\big[1\big] \ \bigcup \ 12\big[\C, 1\big] \ \bigcup \
          21\big[(\C \cup \eps) \dsum 1, \C\big]. $$

      Letting $f$ denote the generating function $\sum_{n \geq 0} |\Av_n (132)|
      z^n$, this leads to the following expression

      $$ f =  z + fz  + z(f+1)f .$$

      Solving for $f$ using the quadratic formula gives that $f$ is the generating
      function for the Catalan numbers with the constant term subtracted off.
      This gives an exact formula for the enumeration of $\Av (132)$, 
      as originally derived by Knuth~\cite{Knuth}. 
      
      \begin{theorem} \label{prelim:thm:av132}
        The number of permutations of length $n$ avoiding $132$ is the $n$th Catalan
        number $c_n = \frac{1}{n+1}\binom{2n}{n}$. 
      \end{theorem}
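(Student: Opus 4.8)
The plan is to derive the generating function $f = \sum_{n \geq 0} |\Av_n(132)| z^n$ directly from the structural decomposition already set up in the excerpt, and then extract the coefficients. The excerpt establishes that every $132$-avoiding permutation is uniquely an inflation of one of the simple permutations $\{1, 12, 21\}$, with tightly constrained inflations. Rather than simply quoting the functional equation, I would first re-derive it carefully to make the bookkeeping transparent. The empty permutation contributes $1$; the single point $1[1]$ contributes $z$. An inflation of $12$ (taken uniquely by inflating only the $1$ by an arbitrary element of $\C$ and leaving the $2$ as a single point) contributes $zf$. An inflation of $21$, where the top block is any nonempty $132$-avoider ending in its maximum --- encoded as $(\C \cup \eps) \oplus 1$, with generating function $z(f+1)$ --- and the bottom block is any element of $\C$, contributes $z(f+1)f$. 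Collecting these pieces yields exactly the stated equation, which (after including the empty permutation so that $f$ has constant term $1$) reads $f = 1 + zf + z(f+1)f$.

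Next I would solve this quadratic. Rearranging gives $zf^2 + (2z - 1)f + 1 = 0$, and applying the quadratic formula produces two roots; I would select the branch that is analytic at $z = 0$ and has nonnegative integer Taylor coefficients, discarding the spurious root exactly as the excerpt does for the Catalan generating function in equation~\eqref{eqn:catalan-gfn}. The resulting closed form should coincide with the Catalan generating function $C(z) = \frac{1 - \sqrt{1 - 4z}}{2z}$ already recorded in the excerpt (up to the convention about whether the empty permutation is counted). At this point I would invoke equation~\eqref{eqn:catalan-exact}, which gives $c_n = \frac{1}{n+1}\binom{2n}{n}$, to read off that $|\Av_n(132)| = c_n$ for all $n$.

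Finally, to make the argument airtight I would confirm the base cases and the bijective uniqueness of the decomposition, since the substitution decomposition theorem (Theorem~\ref{thm:subsdecomp}) does \emph{not} guarantee uniqueness when inflating $12$ or $21$. The excerpt handles this by the convention of absorbing any increasing run at the top of a $12$-inflation into the inflation of the $1$, and by the specific $(\C \cup \eps) \oplus 1$ encoding for the $21$ case; I would verify explicitly that under these conventions every $132$-avoiding permutation is counted exactly once and that every term on the right-hand side does indeed avoid $132$ (the latter being guaranteed by the observation that everything after the maximal entry is smaller than everything before it).

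The main obstacle is precisely this uniqueness accounting. The functional equation is only correct if the decomposition partitions $\Av(132)$ rather than overcounting, so the delicate step is justifying that the ``take the increasing run to be part of the $1$'' convention removes the ambiguity inherent in sum- and skew-decompositions. Once uniqueness is secured, the passage from the functional equation to the Catalan formula is routine algebra matching the computation already performed in Section~\ref{prelim:sub:catalan}.
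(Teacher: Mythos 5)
Your route is the paper's own: decompose $\Av(132)$ as inflations of the simples $\{1,12,21\}$ under the stated uniqueness conventions, convert to a quadratic functional equation, and solve; your discussion of the uniqueness subtlety is also exactly the paper's. However, the step where you ``include the empty permutation'' is botched, and the quadratic you then solve does not have the Catalan generating function as a root. With $f$ denoting the constant-term-free series (as in the paper), the decomposition gives $f = z + zf + z(f+1)f$, i.e.\ $zf^2 + (2z-1)f + z = 0$, whose branch analytic at $z=0$ is $\frac{1-2z-\sqrt{1-4z}}{2z} = C(z)-1$. If you instead set $g = f+1$ so that the empty permutation is counted, the correct transformed equation is $g = 1 + zg^2$ (the standalone $z$ and the $+1$ in the factor $(f+1)$ are both absorbed by the substitution), which is the Catalan recursion of equation~\ref{prelim:eqn:firstpass}. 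The equation you wrote, $f = 1 + zf + z(f+1)f$ with $f(0)=1$, is neither of these: it lets the bottom block of the skew case be empty (recounting the sum-decomposables and the single point) and counts the empty top block twice through $f+1$.

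Concretely, your equation rearranges to $zf^2 + (2z-1)f + 1 = 0$, with constant term $1$ rather than $z$; its discriminant is $1-8z+4z^2$, not $1-4z$, and the relevant root is $\frac{1-2z-\sqrt{1-8z+4z^2}}{2z} = 1 + 3z + 12z^2 + \cdots$, which is not $C(z)$ --- already the coefficient of $z$ is $3$, while $|\Av_1(132)| = 1$. So the claim that solving your quadratic ``should coincide with'' the Catalan generating function fails as written; the base-case verification you promise at the end would have exposed this immediately. The repair is pure bookkeeping, not a new idea: either keep the paper's constant-term-free $f$ and solve $zf^2+(2z-1)f+z=0$, or carry out the substitution $g=f+1$ carefully to land on $g=1+zg^2$ and then quote equation~\ref{eqn:catalan-exact} as you intended.
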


      Note that this result can be obtained using more elementary methods. 
      It follows that a permutation is $132$-avoiding if and only if it can be
      written as $(\pi \dsum 1) \ssum \sg$, where $\pi$ and $\sg$ are
      $132$-avoiding permutations (or empty). Applying this characterization
      iteratively provides a recursive description of the $132$-avoiding
      permutations, shown in Figure~\ref{prelim:fig:av132-first}, and in fact
      characterizes this class.

      \begin{figure}[t] \centering
        \begin{tikzpicture}[scale = .3]
          \draw[dotted, fill=black!2] (0,0) -- (12,0) -- (12,12) -- (0, 12) -- cycle;
          \node at (6,6) {$\C$};

        \end{tikzpicture}
        \hspace{1pc}
        \raisebox{4pc}{$= $}
        \hspace{1pc}
        \begin{tikzpicture}[scale = .3]
          \draw (0,0) -- (12,0) -- (12,12) -- (0, 12) -- cycle;

          \draw[fill = black] (6,12) circle (2mm);
          \draw[dotted, fill=black!5] (.25,11.75) -- (5.75,11.75) 
                                -- (5.75,6.25) -- (.25,6.25) -- cycle;
          \draw[dotted, fill=black!5] (6.25,5.75) -- (11.75,5.75) 
                                -- (11.75,.25) -- (6.25,.25) -- cycle;
          \node at (3,9) {$\C$};
          \node at (9,3) {$\C$};
        \end{tikzpicture}
        \hspace{1pc}
        \raisebox{4pc}{$= $}
        \hspace{1pc}
        \begin{tikzpicture}[scale = .3]
          \draw (0,0) -- (12,0) -- (12,12) -- (0, 12) -- cycle;

          \draw[fill = black] (6,12) circle (2mm);
          \draw[dotted, fill=black!5] (.25,11.75) -- (5.75,11.75) 
                                -- (5.75,6.25) -- (.25,6.25) -- cycle;
          \draw[dotted, fill=black!5] (6.25,5.75) -- (11.75,5.75) 
                                -- (11.75,.25) -- (6.25,.25) -- cycle;
          
          \draw[fill=black] (3, 11.75) circle (2mm);
          \draw[fill=black] (9, 5.75) circle (2mm);

          \draw[fill=black!20] (.5, 11.5) -- (2.9,11.5)
                                -- (2.9, 9.1) -- (.5,9.1) -- cycle;
          \draw[fill=black!20] (3.1, 8.9) -- (5.5,8.9)
                                -- (5.5, 6.5) -- (3.1,6.5) -- cycle;

          \draw[fill=black!20] (6.5, 5.5) -- (8.9, 5.5) 
                                -- (8.9, 3.1) -- (6.5, 3.1) -- cycle;
          \draw[fill=black!20] (9.1, 2.9) -- (11.5, 2.9) 
                                -- (11.5, .5) -- (9.1, .5) -- cycle;

          \node at (1.7, 10.3) {$\C$};
          \node at (4.3, 7.7) {$\C$};
          \node at (7.7, 4.3) {$\C$};
          \node at (10.3, 1.7) {$\C$};

        \end{tikzpicture}
      \caption{A geometric description of the class $\C = \Av 132$.}
      \label{prelim:fig:av132-first}
      \end{figure}
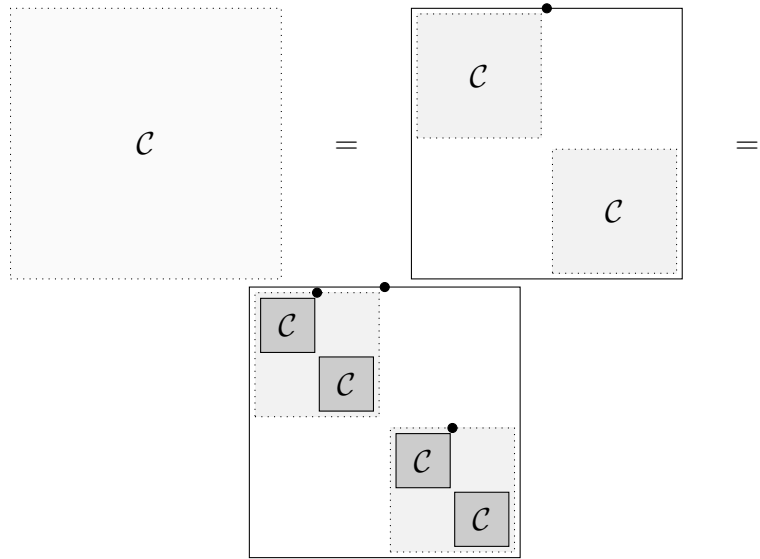

      This recursive decomposition can be used to generate a recursively defined
      bijection $\phi: \Av_n(132) \ra \cD_n$ from permutations in $\Av_n (132)$
      to Dyck paths of semilength $n$, thus reproving 
      Theorem~\ref{prelim:thm:av132} once again. Let $\pi \in \Av_n(132)$, and $\pi = (\pi_1
      \dsum 1) \ssum \pi_2$ be the decomposition defined above. Then define 
      $$\phi(\pi) = u \ \phi(\pi_1) \ d \ \phi(\pi_2).$$
      This recursive definition was originally presented by Knuth~\cite{Knuth}.
      For example, 
      $$ \begin{aligned}
        \phi(74352681) 
          &= u\phi(743526)d\phi(1) \\
          &= u (ud \phi(43526)) d ud \\
          &= uud (u\phi(4352)d) dud \\
          &= uudu (u\phi(43)d)\phi(2) dud\\
          &= uuduu (ud\phi(3))d (ud) dud \\
          &= uuduuud (ud) duddud \\
          &= uuduuududduddud \in \cD_n.
      \end{aligned} $$
      
      There is an alternate, non-recursive bijection $\varphi$, first presented
      in an alternate, non-geometric form by
      Krattenthaler~\cite{Krattenthaler2001},
      whose equivalence to the above definition follows from the work of Claesson
      and Kitaev~\cite{Claesson2008}. Let $\pi \in \Av_n(132)$, and define
      $\varphi(\pi)$ as follows. First, plot $\pi$ and define a lattice path from
      $(1,n)$ to $(n,1)$ using the steps $\{\vect{0,-1}, \vect{1,0}\}$. Take this
      to be the unique path using these steps which maximizes the area underneath
      the path, while remaining below and to the left of each entry of the
      plotted permutation. Finally, translate this to a Dyck path by mapping each
      $\vect{0,-1}$ to be an up step, and each $\vect{1,0}$ to be a down step.
      See Figure~\ref{prelim:fig:dyckbiject-geom} for an example. 

      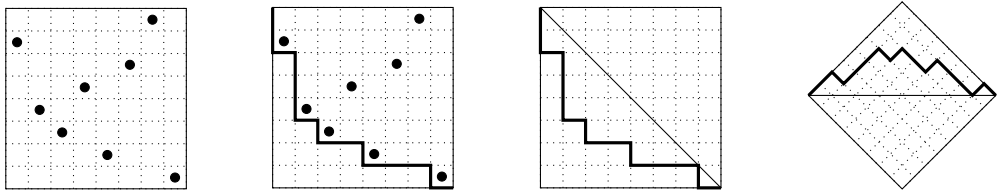
\begin{figure}[t] \centering
        \begin{tikzpicture}[scale=.3]
          \draw (1,1) -- (1,9) -- (9,9) -- (9,1) -- cycle;
          \foreach \y [count = \x] in {7,4,3,5,2,6,8,1}{
            \draw[fill = black] (\x+.5,\y+.5) circle (2mm);
          }
          \foreach \i in {1, ...,9}{
            \draw[dotted] (\i,1) -- (\i,9);
            \draw[dotted] (1,\i) -- (9,\i);
          }
        \end{tikzpicture}
        \hspace{2pc}
        \begin{tikzpicture}[scale=.3]
          \draw (1,1) -- (1,9) -- (9,9) -- (9,1) -- cycle;
          \foreach \y [count = \x] in {7,4,3,5,2,6,8,1}{
            \draw[fill = black] (\x+.5,\y+.5) circle (2mm);
          }
          \foreach \i in {1, ...,9}{
            \draw[dotted] (\i,1) -- (\i,9);
            \draw[dotted] (1,\i) -- (9,\i);
          }
          \draw[very thick] (1,9) --++ (0,-2) --++ (1,0) --++ (0,-3)
                            --++ (1,0) --++(0,-1) --++(2,0) 
                            --++ (0,-1) --++ (3,0) --++(0,-1) 
                            --++ (1,0);
        \end{tikzpicture}
        \hspace{2pc}
        \begin{tikzpicture}[scale=.3]
          \draw (1,1) -- (1,9) -- (9,9) -- (9,1) -- cycle;
          \foreach \i in {1, ...,9}{
            \draw[dotted] (\i,1) -- (\i,9);
            \draw[dotted] (1,\i) -- (9,\i);
          }
          \draw (1,9) -- (9,1);
          \draw[very thick] (1,9) --++ (0,-2) --++ (1,0) --++ (0,-3)
                            --++ (1,0) --++(0,-1) --++(2,0) 
                            --++ (0,-1) --++ (3,0) --++(0,-1) 
                            --++ (1,0);
        \end{tikzpicture}
        \hspace{2pc}
        \begin{tikzpicture}[scale=.22]
        \begin{scope}[shift={(3,5)},rotate=-45, yscale=-1]
          \draw (1,1) -- (1,9) -- (9,9) -- (9,1) -- cycle;
          \foreach \i in {1, ...,9}{
            \draw[dotted] (\i,1) -- (\i,9);
            \draw[dotted] (1,\i) -- (9,\i);
          }
          \draw (1,9) -- (9,1);
          \draw[very thick] (1,9) --++ (0,-2) --++ (1,0) --++ (0,-3)
                            --++ (1,0) --++(0,-1) --++(2,0) 
                            --++ (0,-1) --++ (3,0) --++(0,-1) 
                            --++ (1,0);
          \end{scope}
          \end{tikzpicture}
      \caption{The construction of the Dyck path $\varphi(74352681)$.  }
      \label{prelim:fig:dyckbiject-geom}
      \end{figure}

    \subsection{Permutations Avoiding 123}
    \label{prelim:sec:av123}

      Despite having the same enumeration, the class $\Av(123)$ presents a stark
      contrast to the class $\Av (132)$. First, there are infinitely many
      simple permutations in the class, which prevents us from using many of the
      tools from the previous example. Enumerating and describing these simples
      is the central idea of Chapter~\ref{chap:involutions}. We first present a
      bijective enumeration of the class, before analyzing the structure. 
      
      As a further example highlighting the benefit of the geometric viewpoint
      note that, remarkably, the bijection $\varphi$ described in
      Figure~\ref{prelim:fig:dyckbiject-geom} leads to a bijection
      $\varphi':\Av_n(123) \ra \cD_n$, using \emph{exactly the same description}.
      See Figure~\ref{prelim:fig:123biject} for an example. Note that
      $\varphi^{-1}
      \circ \varphi'$ is a bijection from $\Av_n (123)$ to $\Av_n (132)$, which is
      equivalent to the one presented by Simion and Schmidt~\cite{Simion1985},
      and shows that the locations of left-to-right minima has the same
      distribution in both classes. 

      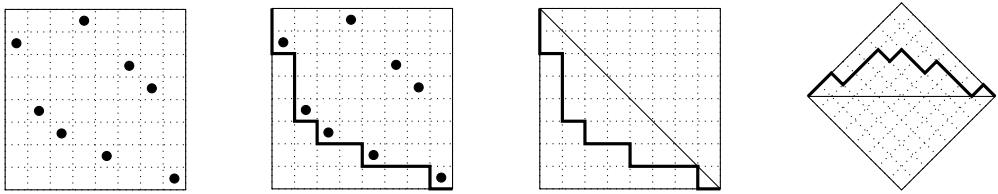
\begin{figure}[t] \centering
        \begin{tikzpicture}[scale=.3]
          \draw (1,1) -- (1,9) -- (9,9) -- (9,1) -- cycle;
          \foreach \y [count = \x] in {7,4,3,8,2,6,5,1}{
            \draw[fill = black] (\x+.5,\y+.5) circle (2mm);
          }
          \foreach \i in {1, ...,9}{
            \draw[dotted] (\i,1) -- (\i,9);
            \draw[dotted] (1,\i) -- (9,\i);
          }
        \end{tikzpicture}
        \hspace{2pc}
        \begin{tikzpicture}[scale=.3]
          \draw (1,1) -- (1,9) -- (9,9) -- (9,1) -- cycle;
          \foreach \y [count = \x] in {7,4,3,8,2,6,5,1}{
            \draw[fill = black] (\x+.5,\y+.5) circle (2mm);
          }
          \foreach \i in {1, ...,9}{
            \draw[dotted] (\i,1) -- (\i,9);
            \draw[dotted] (1,\i) -- (9,\i);
          }
          \draw[very thick] (1,9) --++ (0,-2) --++ (1,0) --++ (0,-3)
                            --++ (1,0) --++(0,-1) --++(2,0) 
                            --++ (0,-1) --++ (3,0) --++(0,-1) 
                            --++ (1,0);
        \end{tikzpicture}
        \hspace{2pc}
        \begin{tikzpicture}[scale=.3]
          \draw (1,1) -- (1,9) -- (9,9) -- (9,1) -- cycle;
          \foreach \i in {1, ...,9}{
            \draw[dotted] (\i,1) -- (\i,9);
            \draw[dotted] (1,\i) -- (9,\i);
          }
          \draw (1,9) -- (9,1);
          \draw[very thick] (1,9) --++ (0,-2) --++ (1,0) --++ (0,-3)
                            --++ (1,0) --++(0,-1) --++(2,0) 
                            --++ (0,-1) --++ (3,0) --++(0,-1) 
                            --++ (1,0);
        \end{tikzpicture}
        \hspace{2pc}
        \begin{tikzpicture}[scale=.22]
        \begin{scope}[shift={(3,5)},rotate=-45, yscale=-1]
          \draw (1,1) -- (1,9) -- (9,9) -- (9,1) -- cycle;
          \foreach \i in {1, ...,9}{
            \draw[dotted] (\i,1) -- (\i,9);
            \draw[dotted] (1,\i) -- (9,\i);
          }
          \draw (1,9) -- (9,1);
          \draw[very thick] (1,9) --++ (0,-2) --++ (1,0) --++ (0,-3)
                            --++ (1,0) --++(0,-1) --++(2,0) 
                            --++ (0,-1) --++ (3,0) --++(0,-1) 
                            --++ (1,0);
          \end{scope}
          \end{tikzpicture}
      \caption{The construction of the Dyck path $\varphi'(74382651)$.
                  }
      \label{prelim:fig:123biject}
      \end{figure}

      A modification of this bijection is central to Chapter~\ref{chap:expat},
      and will be used to count pattern \emph{occurrences} within the class.
      Dyck paths can be used to encode structural information about the
      permutations they represent, and can be easily enumerated.

      \begin{figure}[t] \centering
        \begin{tikzpicture}[scale=.3]
          \foreach \y [count = \x] in {9,7,10,5,8,3,6,1,4}{
            \draw[fill = black] (\x+.5,\y+.5) circle (2mm);
          }

          \foreach \i in {1,2,3}
            \draw[fill = black] (10 + 0.4*\i, 2.2 - 0.4*\i) circle (.5mm);

        \end{tikzpicture}
      \caption{The decreasing oscillations.}
      \label{prelim:fig:oscillation}
      \end{figure}
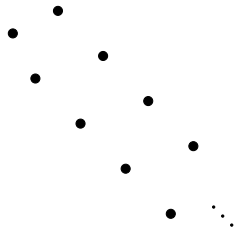

      To see that $\Av (123)$ contains infinitely many simple permutations, we
      define the \emph{decreasing oscillations}, a family of simples which are
      contained within the class. Figure~\ref{prelim:fig:oscillation} gives a
      graphical description of these permutations. 
      Though the simples are not as easily described as in our previous example,
      $\Av(123)$ exhibits a different kind of geometric structure which will be
      equally useful. Since a $123$-avoiding permutation does not contain any
      three increasing entries, it follows that it can be written as the union of
      two decreasing sequences of entries. It follows further that we can
      partition the plot of such a permutation into an alternating sequence of
      monotone decreasing runs. We formalize this in
      Chapter~\ref{chap:involutions}, but for now present an diagram of the
      so-called \emph{staircase decomposition}~\cite{Albert2010, me-involutions}
      in Figure~\ref{prelim:fig:staircase}.

      \begin{figure}[t] \centering
        \begin{tikzpicture}[scale=.25]
          \foreach \x/\y in {0/0, 5/0, 5/-5, 10/-5, 10/-10, 15/-10}{
            \draw[very thick, color=lightgray] (\x, \y) rectangle (\x + 5, \y + 5);
            \draw (\x+.5, \y+4.55) -- (\x + 4.5, \y + .5 );
          }
          \foreach \i in {1,2,3}
            \draw[fill = black] (20 + 0.8*\i, -10 - 0.8*\i) circle (1mm);

        \end{tikzpicture}
      \caption{The class $\Av(123)$ is precisely those permutations which can be
               plotted on descending lines of the diagram.}
      \label{prelim:fig:staircase}
      \end{figure}
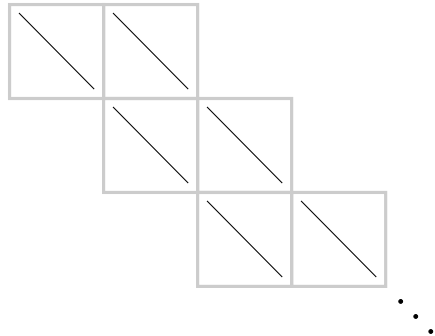

      This decomposition will be used to enumerate and describe the simple
      permutations within the class, which will then be used to enumerate pattern
      avoiding involutions in Chapter~\ref{chap:involutions}. 
      We present one final method of enumerating the class $\Av(123)$, by
      inflating the (infinitely many) simples. In Chapter~\ref{chap:involutions}
      we use the staircase decomposition to enumerate the simples of the class,
      and find that their generating function
      (equation~\ref{eqn:genfcn-simple123perms}) is given by 
      $$ f = \sum_{\substack{ \sg \in \Av(123) \\ \sg \text{ simple}}}
          z^{|\sg|} = \frac{1 - z - \sqrt{1 - 2z - 3z^2}}{2z} .$$

      Each entry of a simple permutation in the class can be inflated only by
      decreasing runs, whose generating functions are given by $\frac{z}{1-z}$.
      It follows then that, since each $z$ in the above generating function
      represents an entry of a simple permutation, replacing $z$ by
      $\frac{z}{1-z}$, we obtain the generating function for all permutations of
      the class. Indeed, after simplifying, we find that this composition gives
      the generating function for the Catalan numbers, with the constant term
      (representing the empty permutation) removed: 
      $$ f\left(\frac{z}{1-z}\right) = \frac{1 - 2z - \sqrt{1-4z}}{2z}.$$

    \subsection{Permutations Avoiding 123 and 231}
    \label{prelim:sec:av123+231}

      Our next example enumerates the class $\Av(123, 231)$ of
      permutations which avoid both $123$ and $231$, using a structural
      description of the class. This example motivates the exploration of the
      \emph{polynomial classes} (the classes whose enumeration is given by a
      polynomial). This will be investigated more fully in
      Chapter~\ref{chap:polyclass}, where an algorithm will be presented which,
      given a structural description, enumerates the class. 

      Since we have already shown that the only simples in the class $\Av (231)$
      are $\{1, 12, 21\}$ (because it is a symmetry of $\Av(132)$), the fact that
      $\Av(123, 231) \subset \Av(231)$ implies that these are the same simples in
      $\Av(123, 231)$. The added restriction of avoiding $123$ changes the way
      these simples can be inflated. Both entries of $12$ can only be inflated by
      decreasing runs, to avoid constructing an occurrence of $123$. Finally, the
      first entry of a $21$ can be inflated only by a decreasing run (to avoid
      $231$), while the second can be inflated by any element from the class. 
      
      After accounting for uniqueness, it follows that every permutation in the
      class can be obtained by inflating the permutation $312$ with (possibly
      empty) descending permutations. Therefore, this class is precisely those
      permutations which can be drawn on the diagram shown in
      Figure~\ref{prelim:fig:polygrid}

      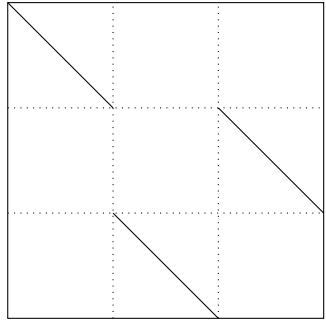
\begin{figure}[t] \centering
        \begin{tikzpicture}[scale=.35]
          \draw (0,0) -- (12,0) -- (12,12) -- (0,12) -- cycle;
          \draw[dotted] (4,0) -- (4,12);
          \draw[dotted] (8,0) -- (8,12);
          \draw[dotted] (0,4) -- (12,4);
          \draw[dotted] (0,8) -- (12,8);
          \draw (0,12) -- (4,8);
          \draw (4,4) -- (8,0);
          \draw (8,8) -- (12,4);
        \end{tikzpicture}
      \caption{The class $\Av(123, 231)$ is precisely those permutations which can be
               plotted on descending lines of the diagram.}
      \label{prelim:fig:polygrid}
      \end{figure}

      This is a simple example of a \emph{grid class}
      ~\cite{MurphyVatter}, a useful concept which has produced many new
      enumerations in recent years. It is known~\cite{SophieVince,GridClasses},
      and is presented formally in Theorem~\ref{polyclass:thm:tfae}, that a
      permutation class is enumerated by a polynomial if and only if it is a
      union or intersection of classes which can be represented with such a
      diagram, with only one nonempty cell per row and column. 

      Returning to Figure~\ref{prelim:fig:polygrid}, it is trivial to enumerate
      those permutations which have at least one element in each block: the
      generating function for a single block is $z/(1-z)$, and so the
      generating function for those with no empty blocks are
      $z^3/(1-z)^3$. If the first block is empty, then we have the
      generating function $z^2/(1-z)^2$. If either the second or third
      block is empty, the entire permutation is a single decreasing run, with
      generating function $z/(1-z)$. Therefore, the generating function for
      the entire class is simply the sum of these three:

      \begin{equation} \label{prelim:eqn:polyclass}
        \sum_{n \geq 1} |\Av_n (123,231)| z^n = 
          \frac{z^3}{(1-z)^3} + \frac{z^2}{(1-z)^2} + \frac{z}{1-z} =
          \frac{z^3- z^2 + z}{(1-z)^3}.
      \end{equation}

      Equation~\ref{prelim:eqn:polyclass} expanded using the binomial theorem to
      produce an exact equation for the number of permutations of each length in
      the class. 

      $$ | \Av_n (123, 231) | = \frac{n^2 - n + 2}{2} = \binom{n}{2} + 1.$$

      More complicated decompositions lead to a number of technical obstacles, but
      this same general idea can be used to calculate the polynomials enumerating all
      such classes. This will be presented in Chapter~\ref{chap:polyclass}, and
      an implementation of the algorithm is available
      online~\cite{polyclass-algo}. 

    \subsection{Ascents in 132-Avoiding Permutations}
    \label{prelim:sec:ascents-example}
    
      We end this chapter with an illustrative example which utilizes a class's
      structural decomposition to investigate the distribution of a permutation
      statistic. This example, while relatively simple, serves to showcase the
      techniques which will be used throughout the following chapters, and
      is particularly pertinent to Chapter~\ref{chap:expat}.

      A \emph{permutation statistic} is any function $\chi: \S_n \ra \bR$. In
      practice, we often consider statistics that map from permutations to
      non-negative integers which capture some structural trait of the
      permutation. Examples include the location of the largest element, number
      of cycles, value of the first entry, and number of inversions. In this
      section we consider the number of ascents of a permutation. An
      \emph{ascent} of a permutation $\pi = \pi_1 \pi_2 \dots \pi_n$ is an index
      $i$ such that $\pi_i < \pi_{i+1}$, and the number of ascents in a
      permutation $\pi$ is denoted $\asc(\pi)$.

      For a given permutation $\pi$ of length $n$, it follows that $\asc(\pi) \in \{0, 1,
      \dots n-1\}$. If $i$ is an ascent of $\pi$ then $i$ is a descent of
      $\pi^c$, and so the number of permutations of lengh $n$ with $k$ ascents is equal to
      the number of such permutations with $k$ descents (or $n - k - 1$ ascents).
      This implies in particular that the \emph{average} number of ascents in a
      randomly selected permutation from $\S_n$ is $(n-1)/2$. When we restrict to a
      proper permutation class, however, the distribution can be more difficult
      to compute. 

      For a finite set $S$ of permutations and a statistic $f$, the
      \emph{generating polynomial} for $f$ on $S$ in indeterminate $u$ is 
      $$ \sum_{\pi \in S} u^{f(\pi)}.$$
      For example, if $\S_3 = \{123, 132, 213, 231, 312, 321\}$ then the generating
      polynomial for the number of ascents is $u^2 + 4u + 1$, since there is one
      permutation with two ascents, four with one ascent, and one permutation
      with no ascents. There is one crucial observation: if we take the
      derivative (with respect to $u$) of the generating polynomial and set $u=1$
      we obtain a weighted sum which evaluates to the expected value, or average,
      of the statistic on $S$. Further, by differentiating twice before setting
      $u=1$, and then dividing by two, we obtain the first factorial moment of
      the statistic, which can be used to compute the variance. This process can
      be iterated to calculate higher moments of the distribution. 

      Extending to permutation classes, let $|\pi|$ denote the length of a
      permutation $\pi$ and define the generating function for a statistic $f$
      across a class $\C$ as
      $$ \sum_{\pi \in \C} z^{|\pi|} u^{f(\pi)}.$$
      The coefficient of $z^n$ in this bivariate generating function is precisely
      the generating polynomial for the statistic $f$ on the set $\C_n$, and so
      it follows that by differentiating with respect to $u$ and plugging in
      $u=1$, we can obtain generating functions whose coefficients represent the
      moments of the distribution on $\C_n$. Asymptotic analysis can then be used
      to compute the limiting distribution as $n$ approaches infinity.

      Throughout this section, let $a_{n,k}$ be the number of $132$-avoiding
      permutations of length $n$ which contain exactly $k$ ascents, and let
      $$f(z,u) = \sum_{\pi \in \Av(132)} z^{|\pi|} u^{\asc(\pi)} = 
                \sum_{n \geq 0} \sum_{k \geq 0} a_{n,k} u^k z^n.$$

      Our goal is to derive a closed expression for $f$, and use this to analyze
      the distribution of descents across $\Av(132)$. Consider the recursive
      description of the class, shown in Figure~\ref{prelim:fig:av132-first}, and
      let $\pi = (\ro \dsum 1) \ssum \sg$ be a $132$-avoiding permutation. It
      follows that the number of ascents of $\pi$ is equal to the sum of ascents
      in $\ro$ and $\sg$, plus one \emph{if} $\ro$ is nonempty (otherwise the
      permutation starts with its biggest entry).
      This relationship leads to the following functional equation. 

      $$ f = zf + uz(f - 1) f + 1.$$

      The first term on the right hand side is the case where $\ro$ is empty, the
      second is when $\ro$ is non-empty, and the constant term accounts for the
      empty permutation. We can solve for $f$ above to find the following:
      $$ \begin{aligned}
      f(z,u) &= \frac{1 + (u-1)z - \sqrt{(u^2 - 2u + 1)z^2 - 2(u+1)z + 1}}{2uz} \\
        &= 1 + z + (u+1)z^2 + (u^2 + 3u + 1)z^3 + (u^3 + 6u^2 + 6u + 1)z^4 +
        \dots. \\
        \end{aligned} $$

      Note that substituting $u=1$ gives the generating function for the Catalan
      numbers, as expected. The coefficient of $z^3$ is $(u^2 + 3u + 1)$, as
      there is one $132$-avoiding permutation with two ascents ($123$), three
      with one ascent ($213, 231, 312$), and one with no ascents ($321$).
      Finally, we can obtain the \emph{total} number $a_n$ of ascents in all
      $132$-avoiding permutations of length $n$ by differentiating with respect to $n$ and
      setting $u=1$:

      $$ \begin{aligned} 
         \sum_{n \geq 0} a_n z^n  &= \partial_u f(z,u) \uisone \\
         &= \frac{1 - 3z - (z - 1)\sqrt{1 - 4z}}{1 + z\sqrt{1 - 4z}} \\
         &= \sum_{n \geq 0} \binom{2n-1}{n-2} z^n \\
         &= z^2 + 5z^3 + 21z^4 + 84z^5 + 330 z^6 + 1287z^6 \dots .
      \end{aligned} $$
      
      It follows then that the \emph{average} number of ascents in a randomly
      selected $132$-avoiding permutation is given by this total divided by
      the total number of such permutations, the Catalan numbers. Therefore the
      average is given by 
      $$ \binom{2n-1}{n-2} \frac{n+1}{\binom{2n}{n}} = \frac{n-1}{2}.$$

      Note that this expectation is identical to the average number of ascents in
      a random permutation chosen from the set $\S_n$, and so it follows that
      the property `avoids $132$' is independent from the random variable $\asc$.
      This can also proven bijectively, by constructing a map from
      $\Av_n(132)$ to itself which maps ascents to descents (by mapping the
      permutations to unlabelled binary trees, and then reflecting the tree), but
      the above approach can be extended and generalized to other statistics and
      classes, as we will soon see. 
      
      In Chapter~\ref{chap:expat} we explore how pattern-avoidance changes the
      distribution of other statistics. These same techniques will be revisited
      in Chapter~\ref{chap:fixpat} and used to compute the distribution of
      intervals of size two, which relates to the number of distinct patterns
      within a permutation.

\cleardoublepage
\typeout{******************}
\typeout{**  Chapter 2   **}
\typeout{******************}

\chapter{Pattern Expectation}
  \label{chap:expat}
    

    In the set of all permutations of length $n$, all patterns of a fixed length 
    \emph{occur} the same number of times. However, if we restrict to smaller
    classes of permutation, the situation quickly becomes more interesting.  The
    investigation of pattern occurrences within permutations is a recent and
    productive research topic. This chapter explores this new area, and uses it
    to develop connections between permutation classes.

    In particular, we examine the classes of $123$- and $132$-avoiding
    permutations, and show that the number of $231$ patterns is identical in each.
    This identity extends an earlier result of Mikl\'os B\'ona~\cite{Bona2012},
    and its derivation sheds further light on the distribution of pattern
    occurrences within permutation classes. Further, this chapter brings to light
    new equivalences between these classes, building on those presented by
    Elizalde~\cite{sergithesis}, and forming a foundation for
    further study~\cite{Elizalde2013, Rudolph2013, Janson2014}.  This chapter is
    based partly on~\cite{me-expat}.

  \section{Pattern Occurrences}
  \label{expat:occurrences}
    
    Our primary concern in this chapter (and much of Chapter~\ref{chap:fixpat})
    will be the number of \emph{occurrences} of a pattern within a permutation.
    The number of occurrences is the number of copies of the pattern we can
    find within a permutations; formally, we define this as follows: 
    
    \begin{definition} \label{def:occurrence} \index{pattern occurrences}
      Let $\sg = \sg_1 \sg_2 \dots \sg_k$ be a pattern of length $k$, and $\pi =
      \pi_1 \pi_2 \dots \pi_n$ a permutation of length $n$. An \emph{occurrence} of the
      pattern $\sg$ in $\pi$ is a subsequence $i_1 < i_2 < \dots < i_k$ such
      that 
      $$ \pi_{i_1} \pi_{i_2} \dots \pi_{i_k} \sim \sg_1 \sg_2 \dots \sg_k. $$ 
      The number of occurrences of $\sg$ in $\pi$, denoted by $\num_\sg(\pi)$, is
      the number of such subsequences.  
    \end{definition}
      
    For example, the permutation $\pi = 462513$ contains $2$ occurrences of the
    pattern $213$, since the first, third, and fourth, as well as the third,
    fifth, and sixth, entries of $p$ form $213$ patterns. Thus,
    $\num_{213}(462513) = 2$.

    Clearly, for permutations $\pi$ of length $n$ and $\sg$ of length $k$, we have
    that $\num_q(\pi)$ is bounded below by $0$ and above by $\binom{n}{k}$. This
    minimum value is realized by taking $\pi$ to be any $\sg$-avoiding
    permutation, and the maximum is attained, for example, when both $\pi$ and
    $\sg$ are ascending permutations.  Our primary concern will be the average
    number of occurrences of a pattern over a set of permutations. In the interest
    of brevity, we will abuse the above notation to apply to sets:
    
    \begin{definition} \label{def:set-occurrence} For a given pattern $\sg$ and a
    set $S$ of permutations, let $\num_q(S)$ denote the total number of
    occurrences of $\sg$ within the set $S$. That is, $$ \num_\sg(S) = \sum_{\pi
    \in S} \num_\sg(\pi).$$ \end{definition}
    
    For example, letting $S = \{2341, 4321, 1234\}$, we have that 
    $$ \num_{123}(S) = 1 + 0 + 4 = 5.$$

    \subsection{Pattern Expectation}
        
      Counting the total number of occurrences of a pattern within a set of
      permutations has an alternate, probabilistic interpretation. The
      \emph{expectation} \index{expectation} of a pattern within a set is defined
      to be the average number of occurrences of the pattern within a randomly
      selected element from the set. Clearly, we have that the expectation of a
      pattern $\sg$ in a set $S$ is equal to $\num_\sg(S) / |S|$. 
      
      This probabilistic interpretation motivates many questions, several of which
      have yielded interesting and surprising answers. We start with an
      illustrative example, whose derivation showcases some of the ideas which
      will be useful later. In particular, \emph{linearity of expectation}
      \index{linearity of expectation} will prove useful. 
      
      \begin{proposition} \label{expat:prop:allperms}
        Let $\sg$ be any pattern of length $k$, and let $n \geq k$.  Then 
        $$ \num_\sg(\S_n) = \frac{n!}{k!} \binom{n}{k}.$$
      \end{proposition}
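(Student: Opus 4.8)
The plan is to evaluate the total $\num_\sg(\S_n) = \sum_{\pi \in \S_n} \num_\sg(\pi)$ by interchanging the order of summation, which is precisely the combinatorial form of the \emph{linearity of expectation} that the surrounding text advertises. Rather than counting occurrences permutation-by-permutation, I would count them position-set-by-position-set: every occurrence of $\sg$ is pinned down by a choice of $k$ positions $i_1 < \cdots < i_k$ together with the requirement that $\std(\pi_{i_1}\cdots\pi_{i_k}) = \sg$. Thus I would write
$$\num_\sg(\S_n) = \sum_{\pi \in \S_n} \sum_{i_1 < \cdots < i_k} \big[\std(\pi_{i_1}\cdots\pi_{i_k}) = \sg\big] = \sum_{i_1 < \cdots < i_k} \#\{\pi \in \S_n : \std(\pi_{i_1}\cdots\pi_{i_k}) = \sg\},$$
reducing the problem to a single inner count that, crucially, will turn out to be the same for every position set.

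The central step is to show that for any \emph{fixed} set of $k$ positions, exactly $n!/k!$ permutations realize $\sg$ there. I would establish this by constructing such a permutation in two independent stages: first choose which $k$ of the values $\{1, \dots, n\}$ occupy the chosen positions ($\binom{n}{k}$ ways), observing that once this value-set is fixed there is exactly one way to arrange those values in the $k$ positions so their relative order matches $\sg$; then fill the remaining $n-k$ positions with the leftover values in any of the $(n-k)!$ orders. This yields $\binom{n}{k}(n-k)! = n!/k!$ permutations per position set. Equivalently, phrased probabilistically: the relative order of the entries in any fixed window of $k$ positions of a uniformly random permutation is uniform over all $k!$ patterns of length $k$, so the probability of seeing $\sg$ there is $1/k!$, giving $n!\cdot(1/k!)$ realizing permutations.

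Assembling the pieces, since there are $\binom{n}{k}$ choices of position set and each contributes the identical count $n!/k!$, summation gives $\num_\sg(\S_n) = \binom{n}{k}\cdot\frac{n!}{k!}$, as claimed. The one point needing genuine (if mild) justification is the uniformity claim --- that restricting a uniform random permutation to a fixed set of $k$ positions produces a uniformly distributed relative order, independent both of which positions are chosen and of $\sg$ itself. I expect this to be the main obstacle, though a modest one: it is settled outright by the explicit two-stage construction above, which exhibits a transparent bijection between the realizing permutations and the data consisting of a $k$-subset of values together with an ordering of the complementary values. Everything else is bookkeeping.
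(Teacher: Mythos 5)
Your proof is correct and is essentially the paper's own argument: both interchange the sum over permutations with the sum over $\binom{n}{k}$ position sets (the combinatorial restatement of the linearity-of-expectation computation in the text) and both hinge on the fact that a fixed window of $k$ positions realizes $\sg$ in exactly $n!/k!$ permutations. Your explicit two-stage construction merely fills in the detail behind the paper's one-line claim that ``for any specified set of indices, all patterns are equally likely,'' so $\Ex{X_P} = 1/k!$.
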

      \begin{proof}
        We show that the expectation of the pattern $\sg$ is equal to 
        $\binom{n}{k}/k!$, which will imply the desired result. 
        Let $\pi$ be a (uniformly) randomly selected permutation in $\S_n$, and let
        $X$ be the random variable denoting the number of occurrences of $\sg$
        within $\pi$. 

        There are $\binom{n}{k}$ sets of positions of $\pi$ in which a $\sg$
        pattern could possibly occur. For each set $P$, let 
        $$ X_P = \left\{ \begin{array}{cc} 
                  1 & \text{ the entries of $P$ form a $\sg$ pattern} \\
                  0 & \text{ otherwise} 
                \end{array}\right..
        $$

        It now follows that $X = \sum_{P} X_P$, and so by linearity of expectation,
        we have that 
        $$ \Ex{X} = \sum_P \Ex{X_P}.$$

        Finally, for any specified set of indices, all patterns are equally likely.
        Therefore, $\Ex{X_P} = 1 / k!$. Combining, we see that 
        $$ \Ex{X} = \sum_P \frac{1}{k!} = \binom{n}{k}\frac{1}{k!}. $$

        Therefore, we have that 
        $$\num_\sg(\S_n) = \frac{|\S_n|}{k!} \binom{n}{k} =
        \frac{n!}{k!} \binom{n}{k}.$$
      \end{proof}

      Fact \ref{expat:prop:allperms} shows that the total number of pattern occurrences
      within the set of all permutations depends \emph{only} on the length of the
      pattern specified. This contrasts sharply with the fact that the numbers of
      permutations which avoid a given pattern varies widely based on the choice of
      pattern. This discrepancy can be explained in part by the fact that certain
      patterns are better able to overlap with themselves, so that a smaller number
      of permutations contains a higher concentration of pattern occurrences. 
      
      The problem of pattern packing will be discussed in more detail in Chapter
      \ref{chap:fixpat}. In this chapter we examine the pattern expectation of
      of small patterns within avoidance classes. In particular we seek insight to
      the following question, first posed by Joshua Cooper: ``How does the absence
      of one pattern affect the expectation of another?''

    \subsection{Background and Data}
      
      The total number of length $3$ patterns in the sets $\Av_n(123)$ and
      $\Av_n(132)$ are shown below, for $1 \leq n \leq 7$.  

      \begin{table}[t] \label{expat:tab:data}
      \caption[Total number of pattern occurrences]{Total number of pattern
          occurrences for length $3$ patterns in $123$- and $132$-avoiding
          permutations.}
      $$
      \begin{array}{ccccccc}
          \multicolumn{7}{c}{\Av_n(123) } \\
          \text{length} & \num_{123} & \num_{132} & \num_{213}
          & \num_{231} & \num_{312} & \num_{321} \\
          \hline
          3  & 0     &    1  &    1 &    1 &    1 &    1  \\
          4  & 0     &    9  &    9 &   11 &   11 &   16  \\
          5  & 0     &    57 &   57 &   81 &   81 &  144  \\
          6  & 0     &   312 &  312 &  500 &  500 & 1016  \\
          7  & 0     &  1578 & 1578 & 2794 & 2794 & 6271
        \end{array}
      $$

      \vspace{1pc}

      $$
      \begin{array}{ccccccc}
          \multicolumn{7}{c}{\Av_n(132) } \\
          \text{length} & \num_{123} & \num_{132} & \num_{213}
          & \num_{231} & \num_{312} & \num_{321} \\
          \hline
         3  & 1     &    0  &    1 &    1 &    1 &    1  \\
         4  & 10    &    0  &   11 &   11 &   11 &   13  \\
         5  & 68    &    0  &   81 &   81 &   81 &  109  \\
         6  & 392   &    0  &  500 &  500 &  500 &  748  \\
         7  & 2063  &    0  & 2794 & 2794 & 2794 & 4570
       \end{array}
      $$
      \end{table}

      Since both $123$ and $132$ are involutions, \index{involution} inversion
      maps each set to itself, and maps patterns to their inverse. This implies
      the identity $\num_{231} = \num_{312}$ in both sets of permutations.  
      Mikl\'os B\'ona~\cite{Bona2010, Bona2012} investigated the set
      $\Av_n(132)$ and enumerated the total occurrences of each length 3 pattern.
      In particular, he established the identity $\num_{213}(\Av_n(132))
      = \num_{231}(\Av_n(132))$. 

      This implies that the statistics \index{statistic} $\num_{213}$ and
      $\num_{231}$ have the same expectations \index{expectation} over the set of
      $132$-avoiding permutations of length $n$. This identity is surprising in part because
      these two statistics have \emph{different distributions} over this set, but
      share the same average value. 

      The main motivation for Section~\ref{expat:av123} is establishing the
      identity 
      $$\num_{231}(\Av(132)) = \num_{231}(\Av(123)).$$
      This identity extends B\'ona's result, and presents another example of two
      permutation statistics with different distributions having the same mean.

  \section{123-avoiding Permutations}
  \label{expat:av123}

      In this section, we derive exact and asymptotic values for $\num_\sg
      (\Av_n(123))$ for $|\sg| \leq 3$ and $n \geq 0$. In addition, we show that
      for $k \geq 1$, the pattern $k\ (k-1)\ (k-2)\ \dots 2\ 1$ has a higher
      expectation than any other pattern of length $k$ for large enough permutations. 
      Finally, applying recent results of Mikl\'os B\'ona, we show that 
      the total number of $231$ patterns is identical within the sets of
      $132$-avoiding and $123$-avoiding permutations of length $n$.

      Throughout this sections, let $n$ be some fixed positive integer. For
      simplicity of notation, we use $\num_\sg$ to denote $\num_\sg (\Av_n(123))$.

    \subsection{Class Structure}

      The class of $123$-avoiding permutations has a rigid structure, which we will
      use to investigate pattern occurrences. 
      Recall (Section~\ref{prelim:sub:catalan}) that $|\Av_n(123)| = c_n$, where $c_n$
      is the $n$th Catalan number.  \index{Catalan number}.  For a permutation
      $\pi = \pi_1 \pi_2 \dots \pi_n$, we say that the entry $\pi_i$ is a
      \emph{left-to-right minimum} (ltr-min) \index{left-to-right minimum} if it
      is smaller than all of the elements to its left, and a \emph{right-to-left
      maximum} (rtl-max) \index{left-to-right maximum} if it is larger than all of
      the elements to its right. 
      
      In a $123$-avoiding permutation $\pi$, \emph{every} element is either a
      ltr-min or a rtl-max (or possibly both), since otherwise it would have a bigger
      element to its right and a smaller element to its left, which would form a
      $123$ pattern. By definition, the sets of ltr-min and of rtl-max are both
      decreasing when read from left to right. Therefore, every $123$-avoiding
      permutation is the union of two decreasing sequences of entries. 

      Breaking down permutations into these two decreasing sequences will prove
      useful in the following sections. However, the possibility of an element
      being both a ltr-min and a rtl-max poses problems. Further restricting our
      permutations will alleviate this issue. 

      \begin{definition} \label{def:indecomposable}
        A permutation $\pi = \pi_1 \pi_2 \dots p_n$ is \emph{skew-decomposable}  if
        there exist permutations $\sg$ and $\ph$ for which $\pi = \sg \ssum \ph$.
        Otherwise, we say that $\pi$ is \emph{skew-indecomposable}.  Denote the set of
        indecomposable $123$-avoiding permutation by $\Avns$. 
        Sum (in)decomposability is defined similarly. 
      \end{definition}
      
      In this chapter we consider only
      skew-(in)decomposability, and so we drop the word `skew' for the simplicity
      of notation.

      Note that if any element of $\pi$ is both a ltr-min and a rtl-max, then $\pi$
      is decomposable. It follows then that every indecomposable 123-avoiding
      permutation can be uniquely decomposed into its left-to-right minima and its
      left-to-right maxima. Further, it follows that \emph{every} 123-avoiding
      permutation can be written as a skew sum of indecomposable 123-avoiding
      permutations. We use this fact to enumerate these permutations.

      \begin{proposition} \label{expat:prop:indecomposable}

        The number of indecomposable $123$-avoiding permutations is $c_{n-1}$, the
        $(n-1)$st Catalan number \index{Catalan number}.
      \end{proposition}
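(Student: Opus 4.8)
The plan is to leverage the unique skew decomposition of $123$-avoiding permutations into indecomposable blocks, noted immediately before the statement, and push it through to generating functions. Write $C(z)=\sum_{n\ge0}c_nz^n$ for the Catalan generating function, so that $\sum_{n\ge1}|\Av_n(123)|z^n=C(z)-1$, and let $I(z)=\sum_{n\ge1}|\Avns_n|z^n$ be the generating function of the indecomposable $123$-avoiders. Since every nonempty permutation in $\Av(123)$ is \emph{uniquely} a skew sum $\alpha_1\ssum\alpha_2\ssum\cdots\ssum\alpha_k$ of indecomposable blocks $\alpha_i\in\Avns$, and skew summing adds lengths, the nonempty $123$-avoiders form the $\mathrm{SEQ}_{\ge1}$ construction applied to $\Avns$, giving $C(z)-1=I(z)/(1-I(z))$.

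The step that actually needs justification --- and the one I expect to be the crux --- is that this sequence construction ranges over \emph{exactly} the $123$-avoiding indecomposables, i.e. that a skew sum $\sg\ssum\ph$ lies in $\Av(123)$ if and only if both $\sg$ and $\ph$ do. One direction is immediate, since $\sg$ and $\ph$ are each patterns of $\sg\ssum\ph$. For the converse I would argue geometrically: in the plot of $\sg\ssum\ph$ every point of $\sg$ lies strictly above and to the left of every point of $\ph$, so reading left to right the values drop at the block boundary and no \emph{increasing} pair can straddle it. Consequently any three increasing entries (a $123$ occurrence) must lie entirely within a single block, and hence $\sg\ssum\ph$ avoids $123$ precisely when each block does. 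Combined with the standard uniqueness of the skew decomposition into indecomposable pieces, this makes the generating-function identity rigorous.

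It then remains only to solve for $I(z)$. Clearing denominators in $C-1=I/(1-I)$ gives $C-1=I\cdot C$, so $I(z)=(C(z)-1)/C(z)$. Invoking the defining relation $C(z)=1+zC(z)^2$, equivalently $C(z)-1=zC(z)^2$, this collapses to $I(z)=zC(z)=\sum_{n\ge1}c_{n-1}z^n$. Extracting the coefficient of $z^n$ yields $|\Avns_n|=c_{n-1}$ for every $n\ge1$, with the base case $n=1$ (the single permutation $1$, matching $c_0=1$) recovered automatically. I would also remark that the shape $I=zC$ invites a direct bijection between indecomposable $123$-avoiders of length $n$ and arbitrary $123$-avoiders of length $n-1$, obtained by deleting a distinguished entry; but the generating-function route above is the most economical and fits the decomposition already set up.
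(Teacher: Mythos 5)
Your proposal is correct and follows essentially the same route as the paper: unique skew decomposition into indecomposable blocks, the sequence construction $C = 1/(1-C^*)$ (your $C-1 = I/(1-I)$ is the same identity), and the Catalan relation $C(z) = zC(z)^2 + 1$ to collapse $(C-1)/C$ to $zC$. Your explicit verification that $\Av(123)$ is skew-closed is a detail the paper takes for granted, but it does not change the argument.
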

      \begin{proof}
        Let 
        $$ C^*(x) = \sum_{n \geq 0} |\Avns| x^n.$$
        We know that $|\Avn| = c_n$, and so 
        $$ \sum_{n \geq 0} |\Avn| x^n = \frac{1 - \sqrt{1-4x}}{2x} = C(x).$$
        Since every permutation $\pi \in \Avn$ can be written as $sg_1 \ssum \sg_2
        \dots \sg_k$ for some $\sg_1, \sg_2, \dots \sg_k \in \Avns$ and some $k
        \geq 1$, it follows that 
        $$ C(x) = 1 + C^*(x) + (C^*)(x))^2 + (C^*(x))^3 + \dots 
          = \frac{1}{1 - C^*(x)}.$$
        Rearranging this equation leads to 
        $$ C^*(x) = \frac{C(x) - 1}{C(x)} = xC(x).$$
        The second equality follows from the identity $C(x) = xC(x)^2 + 1$. 

        Therefore, 
        $$ \sum_{n \geq 0} |\Avns|x^n = C^*(x) = xC(x) = \sum_{n \geq 1}
        c_{n-1}x^n.$$
      \end{proof}

    \subsection{Patterns of Length 2}  

      To start, we compute the values $\num_{12}$ and $\num_{21}$. Since every pair
      of entries must form either a $12$ or a $21$ pattern, the sum $\num_{12} +
      \num_{21}$ is equal to the total number of pairs of entries amongst the set
      of all $123$-avoiding permutations. Therefore, we have 
      $$ \num_{12} + \num_{21} = \binom{n}{2}.$$

      An \emph{inversion} \index{inversion} of a permutation is an occurrence
      of the pattern $21$. Inversions are a well-known and well-studied
      permutation statistic, and the total number of inversions amongst the set
      $\Av_n(321)$ is known. 

      \begin{theorem}[Cheng, Eu, Fu~\cite{Cheng2007}] 
        The total number of inversions in the set $\Av_n(321)$ is given by 
        $$ \num_{21}(\Av_n(321)) = 4^{n-1} - \binom{2n - 1}{n}.$$
        The generating function for this sequence is as follows:
        $$ \sum_{n \geq 0} \num_{21}(\Av_n(321)) x^n = 
          \frac{x^2 C(x)^2}{1 - 4x}.$$
      \end{theorem}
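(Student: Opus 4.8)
The plan is to introduce a catalytic variable marking inversions and to exploit the fact that the inversion count is additive under direct sums. Write
$$ F(x,q) = \sum_{\pi \in \Av(321)} x^{|\pi|}\, q^{\num_{21}(\pi)}, $$
so that the desired total is $\num_{21}(\Av_n(321)) = [x^n]\,\partial_q F(x,q)|_{q=1}$. Every $321$-avoider is uniquely a direct sum of sum-indecomposable $321$-avoiders, and no inversion straddles two summands of a direct sum (everything in an earlier block is smaller \emph{and} to the left), so $q^{\num_{21}}$ is multiplicative across the summands. Hence $F = 1/(1-G)$, where $G(x,q)$ is the same generating function restricted to the sum-indecomposable members. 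By the reverse symmetry with $\Av(123)$ there are $c_{n-1}$ of these of length $n$ (the mirror of Proposition~\ref{expat:prop:indecomposable}), so $G(x,1) = xC(x)$ and $F(x,1) = C(x)$. Differentiating at $q=1$ and using $(1-xC(x))^{-1} = C(x)$ gives $\partial_q F|_{q=1} = C(x)^2\,\partial_q G|_{q=1}$, so everything reduces to the single series $J(x) := \partial_q G(x,q)|_{q=1}$, the generating function for the total number of inversions carried by the sum-indecomposable $321$-avoiders.

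The crux, and what I expect to be the main obstacle, is to show $J(x) = x^2/(1-4x)$ — equivalently, that the sum-indecomposable $321$-avoiders of length $n$ carry exactly $4^{n-2}$ inversions in total. The small cases are reassuring: length two gives $\{21\}$ with one inversion, and length three gives $\{231, 312\}$ with $2+2 = 4$. I would establish the general claim by peeling a finer recursive structure off an indecomposable $321$-avoider — tracking the first entry together with the position of the value $1$, or equivalently passing through the standard bijection to Dyck paths under which sum-indecomposables are the prime paths $u\,P\,d$ — and deriving a small functional equation for $G(x,q)$; the target is the four-to-one recursion $J_n = 4 J_{n-1}$ with $J_2 = 1$. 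Getting the inversion bookkeeping exactly right under whichever decomposition is chosen is the delicate step.

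Granting $J(x) = x^2/(1-4x)$, the generating function is immediate: $C(x)^2 J(x) = x^2 C(x)^2/(1-4x)$. For the closed form I would extract coefficients directly. Using the Catalan identity $xC(x)^2 = C(x)-1$ to rewrite $x^2 C(x)^2 = x(C(x)-1) = \tfrac12\bigl(1 - 2x - \sqrt{1-4x}\bigr)$, the generating function splits cleanly as
$$ \frac{x^2 C(x)^2}{1-4x} = \frac{1-2x}{2(1-4x)} - \frac{1}{2\sqrt{1-4x}}. $$
Reading off coefficients from $\frac{1}{\sqrt{1-4x}} = \sum_{n\geq 0}\binom{2n}{n}x^n$ and from $\frac{1-2x}{1-4x} = 1 + \sum_{n\geq 1} 2\cdot 4^{n-1} x^n$ then gives $[x^n] = 4^{n-1} - \tfrac12\binom{2n}{n} = 4^{n-1} - \binom{2n-1}{n}$ for $n \geq 1$, exactly the claimed formula, with the $n=0$ coefficient correctly vanishing.
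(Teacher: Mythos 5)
Your reduction scaffolding is sound: inversions are indeed additive across direct sums, the transfer $F = 1/(1-G)$ with $\partial_q F|_{q=1} = C(x)^2 J(x)$ is correct, and your coefficient extraction at the end (using $x^2C(x)^2 = x(C(x)-1)$ and $\binom{2n}{n} = 2\binom{2n-1}{n}$) checks out. But there is a genuine gap, and you have located it yourself: the identity $J(x) = x^2/(1-4x)$ --- that the sum-indecomposable $321$-avoiders of length $n$ carry exactly $4^{n-2}$ inversions in total --- is the entire content of the theorem, and your proposal only verifies it for $n=2,3$ and announces a plan. The plan as sketched underestimates the difficulty: inversions do \emph{not} decompose additively under the finer decompositions you name (peeling the first entry, or the position of the value $1$, creates cross-inversions whose count depends on values, not just lengths of the parts), and the full bivariate series $\sum_\pi x^{|\pi|}q^{\num_{21}(\pi)}$ over $\Av(321)$ satisfies no simple algebraic equation --- it obeys only a $q$-difference-type relation involving the substitution $x \mapsto qx$, so extracting $\partial_q$ at $q=1$ drags in a $\partial_x$ term via the chain rule. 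That computation can be carried out, but it \emph{is} the proof, not a formality; note also that the paper itself does not prove this statement, it imports it from Cheng, Eu, and Fu.

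If you want to close the gap within the toolkit of this dissertation, the machinery of Lemma~\ref{lemma:213pats} does it directly. Reversing, your $J(x)$ counts $12$-patterns in skew-indecomposable $123$-avoiders, and in such a permutation every $12$-pattern consists of an entry in the span of a right-to-left maximum together with that maximum (the larger entry of a $12$ cannot be a left-to-right minimum, and two rtl-maxima always form a $21$). Since the bijection of that lemma sends a rtl-max of span $k$ to a peak of height $k$, the total is $\sum_k k\, h_{n-1,k}$, i.e., $x\,\partial_u H(x,u)|_{u=1}$ with $H(x,u) = uxC/(1-uxC-xC)$; using $1-2xC(x) = \sqrt{1-4x}$ this evaluates to
$$ \frac{x^2 C(x)}{\sqrt{1-4x}} + \frac{x^2\bigl(C(x)-1\bigr)}{1-4x}, $$
which equals $x^2/(1-4x)$ precisely because $C(x) = 2/\bigl(1+\sqrt{1-4x}\bigr)$. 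That supplies the missing four-to-one recursion $J_n = 4J_{n-1}$ you were targeting, and the rest of your argument then completes the theorem. So: right framework, correct endgame, but the heart of the proof is currently a conjecture checked in two cases.
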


      By reversing permutations, we see that $\num_{21}(\Av_n(321)) =
      \num_{12}(\Av_n(123))$. This allows us to establish exact answers for the
      number of occurrences of length $2$ patterns within $\Avn$. 

      \begin{proposition} \label{prop:2-patterns}
        The total number of $12$ patterns in $\Avn$ is given by 
        $$ \num_{12} = 4^{n-1} - \binom{2n - 1}{n}.$$
        Further, since 
        $ \num_{21}(\Av_n(321)) = 4^{n-1} - \binom{2n - 1}{n},$
        it follows that 
        $$ \num_{21} = \binom{n}{2} c_n - 4^{n-1} + \binom{2n - 1}{n}.$$
      \end{proposition}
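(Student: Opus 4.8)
The plan is to reduce both formulas to the Cheng--Eu--Fu theorem together with an elementary count of pairs, so that essentially no new work is required beyond correctly applying a symmetry.

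First I would exploit the reverse symmetry. The reverse map $\pi \mapsto \pi^r$ is an involution on $\S_n$ that preserves pattern containment, so it restricts to a bijection between $\Av_n(123)$ and $\Av_n(321)$, since the reverse of $123$ is $321$. Moreover, the reverse of the pattern $12$ is $21$, so each occurrence of $12$ in $\pi$ corresponds under reversal to an occurrence of $21$ in $\pi^r$, and conversely; hence $\num_{12}(\pi) = \num_{21}(\pi^r)$ for every $\pi$. Summing this identity as $\pi$ ranges over $\Av_n(123)$ — equivalently as $\pi^r$ ranges over $\Av_n(321)$ — gives
$$ \num_{12}(\Av_n(123)) = \num_{21}(\Av_n(321)). $$
Substituting the Cheng--Eu--Fu value $4^{n-1} - \binom{2n-1}{n}$ yields the first claimed formula for $\num_{12}$.

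For the second formula I would use the observation that, within any single permutation, each of the $\binom{n}{2}$ pairs of positions forms either a $12$ or a $21$ pattern, and never both. Summing over all $c_n = |\Av_n(123)|$ permutations of the class gives $\num_{12} + \num_{21} = \binom{n}{2} c_n$. Solving for $\num_{21}$ and inserting the value of $\num_{12}$ just obtained produces
$$ \num_{21} = \binom{n}{2} c_n - 4^{n-1} + \binom{2n-1}{n}. $$

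There is no genuine obstacle here; the substantive content is the Cheng--Eu--Fu theorem, which we are permitted to cite. The only points demanding care are bookkeeping. I would verify explicitly that reversal interchanges $12$- and $21$-occurrences one-for-one, so that no occurrences are lost or double counted in passing to the sum, and I would track the factor $c_n$ in the pair count, since each of the $c_n$ permutations contributes $\binom{n}{2}$ pairs. (In particular, the displayed relation $\num_{12}+\num_{21} = \binom{n}{2}$ in the text just before the statement should read $\binom{n}{2}c_n$, consistent with the stated answer for $\num_{21}$.)
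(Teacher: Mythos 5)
Your proposal is correct and follows essentially the same route as the paper: reversal carries $\Av_n(321)$ bijectively onto $\Av_n(123)$ while exchanging $21$- and $12$-occurrences, giving the first formula from Cheng--Eu--Fu, and the pair count $\num_{12}+\num_{21}=\binom{n}{2}c_n$ then yields the second. You are also right that the paper's displayed relation omits the factor $c_n$ (a typo, as the proposition's final formula confirms).
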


    \subsection{Patterns of Length 3}

      Deriving the number of occurrences for length three patterns is considerably
      more involved, but utilizes some of the same ideas. In this section we find
      both the asymptotic and exact values for the total occurrences of $\num_\sg$
      for each $\sg \in \S_3$.  The key idea will be derive the total number of
      occurrences of a single pattern, and then use the class structures to develop
      the other values. 
      Let 
      $$a_n = \num_{213}, \quad b_n = \num_{231}, \quad \num_{321}.$$

      We start by finding the generating function for the numbers
      $\num_{213}(\Avns)$. While this may seem arbitrary, this will in fact lead
      to generating functions for all other patterns.  Let $\pi$ be a permutation
      in $\Avns$. Recall that each entry in $\pi$ is either a ltr-min or a
      rtl-max, and no entry is both. An occurrence of $213$ within $\pi$ must
      consist of two left-to-right minima followed by a right-to-left maximum. By
      counting the number of entries to the left and below each rtl-max we can
      exactly determine the number of $213$ patterns within $\pi$. 
      
      \begin{lemma} \label{lemma:213pats}
        The generating function $A^*(x)$ for the number of $213$ patterns in
        $\Avns$ is given by
        $$ A^*(x) = \sum_{n\geq 0} \num_{213}(\Avns) =
        \frac{x^3C(x)}{(1-4x)^{3/2}} = \frac{x^2}{2(1-4x)^{3/2}} -
        \frac{x^2}{2(1-4x)}.$$
      \end{lemma}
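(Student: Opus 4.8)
The plan is to first reduce the statistic $\num_{213}$ on an indecomposable $123$-avoider to a weighted count over its right-to-left maxima, and then convert that count into a generating function. Recall (from the discussion preceding Proposition~\ref{expat:prop:indecomposable}) that in an indecomposable $123$-avoiding permutation $\pi$ every entry is either a left-to-right minimum or a right-to-left maximum, and \emph{no} entry is both, so these two sets partition the entries. In any occurrence of $213$, the largest entry (the ``$3$'') cannot be a ltr-min, since it has a smaller entry to its left, so it is a rtl-max; and each of the two smaller entries (the ``$2$'' and ``$1$'') has the larger entry to its right, so neither can be a rtl-max, forcing both to be ltr-minima. Thus every $213$ occurrence consists of a rtl-max $m$ together with two ltr-minima lying to its lower-left (earlier in position, smaller in value). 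Conversely, since ltr-minima are decreasing, \emph{any} unordered pair of ltr-minima lying to the lower-left of a fixed rtl-max $m$ forms a $213$ with $m$. Writing $\ell(m)$ for the number of entries lying to the lower-left of $m$, I would observe that such an entry must itself be a ltr-min (a rtl-max to the left of $m$ is necessarily above $m$, as rtl-maxima decrease), so $\ell(m)$ counts exactly the eligible ltr-minima, and hence
$$ \num_{213}(\pi) = \sum_{m\ \mathrm{rtl\text{-}max}} \binom{\ell(m)}{2}. $$

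Next I would promote this to a generating function by marking. Define $G(x,u)=\sum_{\pi\in\Avns}\sum_{m}u^{\ell(m)}x^{|\pi|}$, the sum over indecomposable $123$-avoiders with each rtl-max weighted by $u$ to its lower-left count. Since $\binom{\ell}{2}=\tfrac12\left.\partial_u^2\, u^{\ell}\right|_{u=1}$, the target is $A^*(x)=\tfrac12\left.\partial_u^2 G(x,u)\right|_{u=1}$. To obtain $G$ I would pass through the Dyck-path bijection $\varphi'$ of Figure~\ref{prelim:fig:123biject}, restricted to the indecomposable class (where the skew-indecomposability should correspond to a first-return condition on the path), under which the rtl-maxima and the lower-left counts $\ell(m)$ become a height/return statistic on the path that is tractable by a kernel or first-return decomposition; alternatively one can decompose $\pi$ by the position of its maximum $n$, whose prefix is a decreasing run of ltr-minima, and set up a recursion directly. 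As an independent check on the final answer I would use that $\num_{213}$ is \emph{additive over skew sums} (a $213$ in $\sigma\ssum\phi$ must lie entirely in one block, since its largest entry is rightmost and hence cannot sit in the upper-left block), which gives $A(x)=C(x)^2A^*(x)$ for the total over all $123$-avoiders and lets me match low-order coefficients against Table~\ref{expat:tab:data}.

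The final step is extraction and simplification. I expect the indecomposable ``frame'' to contribute the factor $xC(x)=C^*(x)$ from Proposition~\ref{expat:prop:indecomposable}, while the double $u$-derivative of the lower-left weighting produces the central-binomial growth $(1-4x)^{-3/2}$ (the sums $\sum_n\binom{2n}{n}x^n=(1-4x)^{-1/2}$ and its derivatives), yielding $A^*(x)=\dfrac{x^{3}C(x)}{(1-4x)^{3/2}}$. To reach the second, purely algebraic form I would substitute $C(x)=\dfrac{1-\sqrt{1-4x}}{2x}$ and split, giving $\dfrac{x^{2}}{2(1-4x)^{3/2}}-\dfrac{x^{2}}{2(1-4x)}$, using $\sqrt{1-4x}/(1-4x)^{3/2}=1/(1-4x)$.

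The main obstacle is the middle step: computing $G(x,u)$ exactly, that is, capturing the joint distribution of a rtl-max together with the number of ltr-minima to its lower-left across all indecomposable $123$-avoiders. The reduction of the first paragraph is elementary, and the derivative-and-simplify of the last paragraph is routine; the genuine work is finding a faithful lattice-path (or recursive) model in which $\ell(m)$ becomes a clean summable statistic so that the $(1-4x)^{-3/2}$ factor emerges with the correct numerator.
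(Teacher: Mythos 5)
Your opening reduction is exactly the one the paper uses: in an indecomposable $123$-avoider the ``3'' of any $213$ occurrence must be a rtl-max and the ``2'' and ``1'' must be ltr-minima in its lower-left region, so $\num_{213}(\pi)=\sum_{m}\binom{\ell(m)}{2}$ summed over rtl-maxima (the paper calls your $\ell(m)$ the \emph{span} of the rtl-max), and your closing algebra and the skew-additivity check $A(x)=C(x)^2A^*(x)$ are both sound. But the step you explicitly defer --- computing the joint distribution of rtl-maxima and their lower-left counts --- is the entire content of the lemma, so as written the attempt is incomplete rather than wrong. The paper closes precisely this gap with two concrete facts you would still need. First, the rotated Krattenthaler-type bijection $\phi$ sends indecomposable $123$-avoiders of length $n$ to \emph{all} Dyck paths of semilength $n-1$ (consistent with $|\Avns|=c_{n-1}$ from Proposition~\ref{expat:prop:indecomposable}); note this is a drop in semilength exploiting indecomposability, not a first-return restriction on paths of semilength $n$ as you guessed. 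Second, under $\phi$ a rtl-max of span $k$ becomes a peak at height $k$, so your marked series $G(x,u)$ is, up to the shift by one in semilength, the peak-height generating function $H(x,u)=\sum_{n,k}h_{n,k}x^nu^k$, and
$$ A^*(x)=\frac{x}{2}\,\partial_u^2 H(x,u)\Big|_{u=1}. $$

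With that identification the ``tractable first-return decomposition'' you hoped for does materialize: writing each nonempty path as $P=uQdR$, every peak of height $k-1$ in $Q$ and of height $k$ in $R$ gives a peak of height $k$ in $P$, whence
$$ H(x,u)=ux\bigl(H(x,u)+1\bigr)C(x)+xH(x,u)C(x), \qquad
   H(x,u)=\frac{uxC(x)}{1-uxC(x)-xC(x)}, $$
and double differentiation at $u=1$ produces $\frac{x^3C(x)}{(1-4x)^{3/2}}$ as you anticipated. So your plan coincides with the paper's proof in outline, and the reduction and consistency checks are correct; to turn it into a proof you must (i) verify the span-to-peak-height correspondence under the bijection, including the semilength bookkeeping, and (ii) solve the first-return functional equation for $H$, neither of which can be waved through, since the $(1-4x)^{-3/2}$ factor and the numerator $x^3C(x)$ both come out of that computation.
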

      \begin{proof}
        The proof consists of three parts: First, we examine the structure of
        permutations in $\Avns$, and find a simple way of counting the number of
        $213$ patterns. Second, we build a bijection onto Dyck paths which maps
        $213$ patterns to a path statistic. Finally, we find the weighted sum of
        all Dyck paths with respect to this statistic.

        The idea of the proof is as follows: We build a bijection from the set of
        permutations $\Avns$ to the set $\D$ of elevated Dyck paths of semilength
        $n$, find a statistic on these paths which corresponds to $213$ patterns,
        and then find the weighted sum of all Dyck paths with respect to this
        statistic.

        Let $\pi$ be a permutation in $\Avns$, and consider the plot of $\pi$.
        Note that, by the indecomposability of $\pi$, there is no entry which is
        simultaneously a ltr-min and a rtl-max. Construct a Dyck path $\phi(\pi)$
        of semilength $n-1$ as follows. First, build a path from $(1,n)$ to
        $(n,1)$ using the steps $\{\vect{1,0}, \vect{0,1} \}$. Let this path be
        the be the unique path which minimizes the area underneath itself while
        lying above all of the entries of $\pi$. This path, a variation of the
        construction presented in Section~\ref{prelim:sec:av123}, is then
        uniquely defined by the locations of the right-to-left maxima, which in
        turn uniquely define the permutation. Finally, rotate each $\vect{1,0}$
        step to be an up step, and each $\vect{0,-1}$ to become a downstep in
        the path $\phi(\pi)$. See Figure~\ref{expat:fig:dyckproof} for an example
        construction. 
        
        This path is a slight modification of the path given by Krattenthaler's
        bijection~\cite{Krattenthaler2001}, taking advantage of the
        indecomposability of the permutation to yield a more geometric description.
        This geometric interpretation of the bijection gives some additional
        insight into the number of $213$ patterns.

      \begin{figure}[t] \centering
        \begin{tikzpicture}[scale=.3, yscale=-1, xscale=-1]
          \draw (1,1) -- (1,9) -- (9,9) -- (9,1) -- cycle;
          \foreach \y [count = \x] in {7,4,3,8,2,6,1,5}{
            \draw[fill = black] (\x+.5,\y+.5) circle (2mm);
          }
          \foreach \i in {1, ...,9}{
            \draw[dotted] (\i,1) -- (\i,9);
            \draw[dotted] (1,\i) -- (9,\i);
          }
        \end{tikzpicture}
        \hspace{2pc}
        \begin{tikzpicture}[scale=.3, yscale=-1, xscale=-1]
          \draw (1,1) -- (1,9) -- (9,9) -- (9,1) -- cycle;
          \foreach \y [count = \x] in {7,4,3,8,2,6,1,5}{
            \draw[fill = black] (\x+.5,\y+.5) circle (2mm);
          }
          \foreach \i in {1, ...,9}{
            \draw[dotted] (\i,1) -- (\i,9);
            \draw[dotted] (1,\i) -- (9,\i);
          }
          \draw[very thick] (1,8) --++ (0,-1) --++ (1,0) --++ (0,-3)
                            --++ (1,0) --++(0,-1) --++(2,0) 
                            --++ (0,-1) --++ (2,0) --++(0,-1) 
                            --++ (1,0);
        \end{tikzpicture}
        \hspace{2pc}
        \begin{tikzpicture}[scale=.3, yscale=-1, xscale=-1]
          \draw (1,1) -- (1,9) -- (9,9) -- (9,1) -- cycle;
          \foreach \i in {1, ...,9}{
            \draw[dotted] (\i,1) -- (\i,9);
            \draw[dotted] (1,\i) -- (9,\i);
          }
          \draw (1,8) -- (8,1);
          \draw[very thick] (1,8) --++ (0,-1) --++ (1,0) --++ (0,-3)
                            --++ (1,0) --++(0,-1) --++(2,0) 
                            --++ (0,-1) --++ (2,0) --++(0,-1) 
                            --++ (1,0);
        \end{tikzpicture}
        \hspace{2pc}
        \begin{tikzpicture}[scale=.22, xscale=-1]
        \begin{scope}[shift={(3,5)},rotate=-45, yscale=-1]
          \draw (1,1) -- (1,9) -- (9,9) -- (9,1) -- cycle;
          \foreach \i in {1, ...,9}{
            \draw[dotted] (\i,1) -- (\i,9);
            \draw[dotted] (1,\i) -- (9,\i);
          }
          \draw (1,8) -- (8,1);
          \draw[very thick] (1,8) --++ (0,-1) --++ (1,0) --++ (0,-3)
                            --++ (1,0) --++(0,-1) --++(2,0) 
                            --++ (0,-1) --++ (2,0) --++(0,-1) 
                            --++ (1,0);
          \end{scope}
          \end{tikzpicture}
      \caption{The construction of the Dyck path $\phi(48371652) =
                  uduuduududddud$. }
      \label{expat:fig:dyckproof}
      \end{figure}
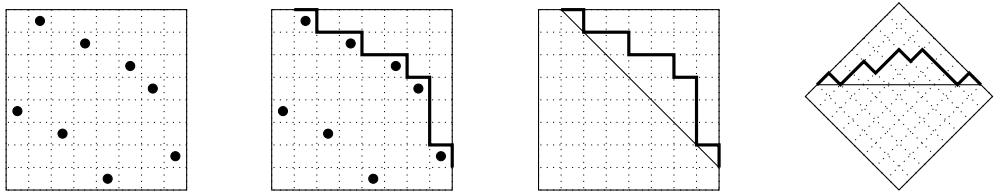

        Note that each rtl-max in $\pi$ produces a peak in $P$. If $\pi_i$ is a
        rtl-max, let the \emph{span of $\pi_i$} ($\Sp \pi_i$) denote the number
        of entries to the left and below this entry. It follows then that 
        $\pi_i$ corresponds to a peak of height $\Sp \pi_i$ above the $x$-axis in $P$.
        An occurrence of $213$ must have a rtl-max as its $3$ entry, and it
        follows then that the $21$ entries must lie in the span of this entry. We
        therefore see that every rtl-max is involved in $\binom{\Sp \pi_i}{2}$
        occurrences of $213$, since we need only choose any two elements in its
        span to act as the $21$. 
        Therefore, if we let $h_{n,k}$ denote the total number of peaks of height
        $k$ in all Dyck paths of semilength $n$, we have that
        $$ \num_{213}(\Avns) =\sum_{k = 1}^{n-1} \binom{k}{2} h_{n-1,k}.$$

        Finally, we can compute $H(x,u) = \sum_{n,k \geq 0} h_{n,k} x^n u^k$ as
        follows. First, note that since each Dyck path begins with an upstep it has
        a unique first point at which the path returns to the $x$-axis, so we can
        decompose each path $P$ of length $n$ into the concatenation of two shorter
        paths $Q$ and $R$. This gives that $P = uQdR$, where $u$ denotes an upstep
        and $d$ a downstep, and each peak of height $k-1$ in $Q$ and height $k$ in
        $R$ leads to a peak of height $k$ in $P$. With this in mind, we have the
        following generating function relation: $$ H(x,u) = ux(H(x,u)+1)C(x) +
        xH(x,u)C(x) .$$ Here the first term counts the peaks from the $uQd$ part,
        including the case when $Q$ is empty. The second term counts the
        contribution from the $R$ part.  Rearranging leads to 
        $$ H(x,u) = \frac{uxC(x)}{1-uxC(x)-xC(x)}.$$

        Now, to count $213$ patterns, we need to count each peak with weight
        $\binom{k}{2}$. By taking derivatives twice with respect to $u$, setting
        $u=1$, dividing by two and scaling by $x$, we find that
        $$ \begin{aligned}
        \sum_{n,k \geq 0} \binom{k}{2} h_{n-1,k}x^n &
        = x \frac{\left. \partial_u ^2 H(x,u)\right|_{u=1}}{2}
        = \frac{x^3C(x)}{(1-4x)^{3/2}} \\
        & = x^3 + 7x^4 + 38x^5 + 187x^6 + 874x^7 + \dots \ .
        \end{aligned}
        $$

        The sequence $0,0,1,7,38,187\dots$ is \OEIS{A000531}.  Finally,
        the correspondence between peaks and $213$ patterns completes the proof.
      \end{proof}

      Now, it is relatively simple to move from the set of indecomposable
      $123$-avoiding permutations to the larger set of all $123$-avoiding
      permutations.

      \begin{theorem} \label{expat:thm:213pats}
        Let $a_n$ be the number of $213$ patterns in $\Av_n 123$. Then
        $$ \sum_{n\geq 0} a_n x^n = \frac{x^3C(x)^3}{(1-4x)^{3/2}} =
            \frac{x-1}{2(1-4x)} - \frac{3x-1}{2(1-4x)^{3/2}}.$$
      \end{theorem}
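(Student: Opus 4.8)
The plan is to reduce the count over all of $\Av(123)$ to the count over the indecomposable permutations $\Avns$, whose generating function $A^*(x)$ is already supplied by Lemma~\ref{lemma:213pats}. The bridge is the fact, recorded just before Proposition~\ref{expat:prop:indecomposable}, that every $123$-avoiding permutation decomposes uniquely as a skew sum $\sg_1 \ssum \sg_2 \ssum \dots \ssum \sg_k$ of indecomposables. The key structural observation I would establish first is that \emph{no occurrence of $213$ can straddle two distinct skew blocks}. In a skew sum the earlier blocks sit strictly above and to the left of the later ones, so whenever two entries lie in different blocks the left entry carries the larger value. In a $213$ occurrence the largest of the three values occupies the rightmost position; were it in a later block than either of the other two entries it would instead be the smaller of the pair, a contradiction. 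Hence all three entries of any $213$ pattern lie in a single block $\sg_j$, and consequently $\num_{213}$ is additive across the skew decomposition.

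Given additivity, I would package the count in a bivariate generating function. Let
$$ G(x,u) = \sum_{\sg \in \Avns} x^{|\sg|} u^{\num_{213}(\sg)}, \qquad F(x,u) = \sum_{\pi \in \Av(123)} x^{|\pi|} u^{\num_{213}(\pi)}. $$
Because $\pi$ is a sequence of indecomposable blocks and the pattern count is additive over those blocks, the usual sequence construction yields $F(x,u) = 1/(1 - G(x,u))$. Specializing $u=1$ recovers $G(x,1) = C^*(x) = xC(x)$ and $F(x,1) = 1/(1 - xC(x)) = C(x)$, consistent with Proposition~\ref{expat:prop:indecomposable}.

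The total count $\sum_n a_n x^n$ is then $\partial_u F(x,u)\big|_{u=1}$. Differentiating the sequence relation gives
$$ \partial_u F(x,u) = \frac{\partial_u G(x,u)}{(1 - G(x,u))^2}, $$
and at $u=1$ we have $\partial_u G(x,u)\big|_{u=1} = A^*(x)$ and $(1 - G(x,1))^2 = (1 - xC(x))^2 = 1/C(x)^2$. Therefore
$$ \sum_{n \geq 0} a_n x^n = A^*(x)\, C(x)^2 = \frac{x^3 C(x)}{(1-4x)^{3/2}} \cdot C(x)^2 = \frac{x^3 C(x)^3}{(1-4x)^{3/2}}, $$
using Lemma~\ref{lemma:213pats} for $A^*(x)$.

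Finally I would convert this into the stated closed form. Writing $s = \sqrt{1-4x}$, the Catalan identity $C(x) = (1 - \sqrt{1-4x})/(2x)$ gives $xC(x) = (1-s)/2$, so $x^3 C(x)^3 = (1-s)^3/8$ and the expression becomes $(1-s)^3/(8 s^3)$; expanding $(1-s)^3$ and rewriting in terms of $x$ via $s^2 = 1 - 4x$ yields $\frac{x-1}{2(1-4x)} - \frac{3x-1}{2(1-4x)^{3/2}}$. The one genuinely substantive step is the block-locality of $213$ patterns; once that is in hand, the remainder is the standard derivative-of-a-sequence bookkeeping together with a routine algebraic simplification.
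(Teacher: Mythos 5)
Your proposal is correct and follows essentially the same route as the paper: reduce to the skew-indecomposable permutations via block-locality of $213$ occurrences (which you, unlike the paper, actually justify rather than assert), then invoke Lemma~\ref{lemma:213pats} for $A^*(x)$. The only cosmetic difference is that you package the additivity in a bivariate generating function $F(x,u) = 1/(1-G(x,u))$ and differentiate at $u=1$, whereas the paper writes the first-block decomposition $\pi = \sg \ominus \ph$ directly as the linear relation $A(x) = A^*(x)C(x) + xC(x)A(x)$; both yield $A(x) = C(x)^2 A^*(x)$ and the same closed form.
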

      \begin{proof}
        Let $A(x)$ be the generating function for the numbers $a_n$, and let
        $A^*(x)$ denote the generating function for the number of $213$ patterns
        in \emph{indecomposable} $123$-avoiding permutations.

        Now, any permutation $\pi$ in $\Av(123)$ can be written uniquely as a
        skew sum of a nonempty indecomposable $123$-avoiding permutation $\sg$
        and another, possibly empty, $123$-avoiding permutation $\ph$. Now, it is
        clear that any $213$ pattern in $\pi$ must be contained entirely in
        either $\sg$ or $\ph$. This leads to the following relation:
        $$ A(x) = A^*(x)C(x) + xC(x)A(x).$$
        Solving for $A$ gives
        $$ A(x) = \frac{A^*(x) C(x)}{1-xC(x)} = C^2(x) A^*(x).$$
        Lemma~\ref{lemma:213pats} now implies
        $$ A(x) = \frac{x^3 C(x)^3}{(1-4x)^{3/2}}.$$
      \end{proof}

      From here, we obtain the generating functions of the other patterns simply
      by relating their enumerations with the one already obtained. The following
      two observations provide linear relations between these numbers. The first
      follows from the simple fact that any three entries must form \emph{some}
      $3$-pattern. 

      \begin{lemma} \label{expat:lem:totalpats}
        On the set $\Avn$, we have that 
        $$ \num_{132} + \num_{213} + \num_{231} + \num_{312} + \num_{321} 
          =  c_n \binom{n}{3}. $$
      \end{lemma}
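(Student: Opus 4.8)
The plan is a straightforward double-counting of position triples. I would fix an arbitrary permutation $\pi \in \Avn$ and consider the $\binom{n}{3}$ subsequences $\pi_{i_1}\pi_{i_2}\pi_{i_3}$ indexed by triples of positions $i_1 < i_2 < i_3$. By Definition~\ref{prelim:def:orderiso}, each such subsequence is order isomorphic to exactly one element of $\S_3$, namely its standardization $\std(\pi_{i_1}\pi_{i_2}\pi_{i_3})$. Thus the $\binom{n}{3}$ triples are partitioned according to which of the six length-$3$ patterns they realize.

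The key step is to observe that the pattern $123$ never occurs: since $\pi$ avoids $123$ by hypothesis, no triple standardizes to $123$. Hence every one of the $\binom{n}{3}$ triples realizes one of the remaining five patterns, and since these five cases are disjoint and cover all triples, their occurrence counts sum to the total number of triples. Summing over all triples in the single permutation $\pi$ therefore gives
$$ \num_{132}(\pi) + \num_{213}(\pi) + \num_{231}(\pi) + \num_{312}(\pi) + \num_{321}(\pi) = \binom{n}{3}. $$

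Finally I would sum this identity over all $\pi \in \Avn$. By Definition~\ref{def:set-occurrence} the left-hand side becomes $\num_{132} + \num_{213} + \num_{231} + \num_{312} + \num_{321}$ in the abbreviated notation $\num_\sg = \num_\sg(\Avn)$, while the right-hand side becomes $|\Avn|\binom{n}{3} = c_n\binom{n}{3}$, using $|\Avn| = c_n$ as recalled from Section~\ref{prelim:sub:catalan}. This yields the claim. There is no genuine obstacle: the only place the $123$-avoidance hypothesis enters is the vanishing of the $123$ term, and everything else is the elementary fact that any three entries form exactly one $3$-pattern, combined with the additivity of $\num_\sg$ over the set as in Definition~\ref{def:set-occurrence}.
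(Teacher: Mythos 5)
Your proof is correct and is essentially the paper's own argument: both double-count position triples, noting that each triple realizes exactly one $3$-pattern, that $123$ is excluded by hypothesis, and that summing $\binom{n}{3}$ over all $c_n$ permutations in $\Avn$ gives the right-hand side. Your write-up merely makes the per-permutation identity and the summation over the set explicit, which the paper leaves implicit.
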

      \begin{proof}
        Both sides count the total number of $3$-patterns within the class
        $\Avn$.  The right-hand-side is the total number of ways of choosing
        three indices in any $123$-avoiding permutation. Each of these choices 
        is an occurrence of a $3$-patterns other than $123$, which is counted by
        the left-hand-side. 
      \end{proof}

      The next lemma provides a relationship between the numbers $\num_{132},
      \num_{213}, \num_{231}$, and $\num_{312}$ by counting the total number of
      $3$-patterns which contain a \emph{non-inversion} (an occurrence of $12$).

      \begin{lemma} \label{expat:lem:twopats}
        The following equality holds on the set $\Avn$: 
        $$ 2 \num_{132} + 2 \num_{213} + \num_{231} + \num_{312} = 
          (n-2) \num_{12} .$$
      \end{lemma}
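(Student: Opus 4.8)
The plan is to prove this identity by double counting. Both sides will count the number of pairs $(P, q)$, where $P$ is an occurrence of a $3$-pattern within a permutation of $\Avn$ and $q$ is a distinguished occurrence of $12$ (a non-inversion) among the three entries of $P$.

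First I would compute the left-hand side by organizing these pairs according to the pattern type of $P$. Since $\Avn$ contains no occurrence of $123$, the pattern $P$ must be one of $132$, $213$, $231$, $312$, $321$. A direct inspection shows that $132$ and $213$ each contain exactly two occurrences of $12$, that $231$ and $312$ each contain exactly one, and that $321$ contains none. Summing the number of such pairs over all $3$-pattern occurrences therefore gives exactly $2\num_{132} + 2\num_{213} + \num_{231} + \num_{312}$, which is the left-hand side.

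Next I would count the same pairs by choosing the non-inversion $q$ first. There are $\num_{12}$ occurrences of $12$ in total across $\Avn$. Given such an occurrence at positions $i < j$, every choice of a third index $k \in [n] \setminus \{i,j\}$ produces a triple whose entries form a $3$-pattern containing $q$ as a non-inversion; since the permutation avoids $123$ this pattern is never $123$ itself, so it is one of the five patterns above. Conversely, each pair $(P,q)$ determines $q$ together with the unique remaining index of $P$, so this correspondence is a bijection. As there are $n-2$ choices for $k$, the total equals $(n-2)\num_{12}$, matching the right-hand side.

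The argument is essentially a bookkeeping bijection, so the only point requiring care is confirming that the correspondence between $(P,q)$ pairs and (non-inversion, third index) pairs is genuinely one-to-one. In particular, one must check that extending a non-inversion by an arbitrary third position can never create a forbidden $123$ occurrence — which is guaranteed precisely because we are working inside $\Avn$ — so that the two counts range over identical sets of pairs.
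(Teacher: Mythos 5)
Your proof is correct and takes essentially the same approach as the paper: the paper likewise double counts pairs consisting of a $12$-occurrence and a third entry against $3$-pattern occurrences weighted by their number of non-inversions, merely phrasing it as counting $3$-patterns containing at least one non-inversion and correcting the overcount coming from $132$ and $213$. Your formulation via distinguished pairs $(P,q)$ is just a cleaner bookkeeping of the identical argument.
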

      \begin{proof}
        Rewrite this equation as 
        $$ 
        (n-2) \num_{12} - (\num_{132} + \num_{213})
        = \num_{132} + \num_{213} + \num_{231} + \num_{312}.
        $$
        Both sides count the total number of length 3 patterns which contain at
        least one non-inversion. Indeed, the right-hand-side
        counts all 3-patterns except for $321$. The left-hand-side builds such a
        pattern by first choosing a $12$ pattern, and then adding another entry
        to create a 3-pattern. However, this overcounts the patterns $132$ and
        $213$, since each of these contains two $12$-patterns, so we subtract
        these off to correct the equality. 
      \end{proof}

      The generating functions for the numbers $c_n \binom{n}{3}$ and $(n-2)
      \num_{12}$ can be determined from the generating functions we already have.
      These equations can be obtained using techniques explained in Section
      \ref{prelim:sec:ascents-example}. 

      \begin{lemma} \label{expat:lem:genfcns}
        Letting $J(x) = \sum_{n \geq 0} \num_{12}(\Avn) x^n$, the following
        identities hold:
        $$ \begin{aligned}
        \sum_{n \geq 0} c_n \binom{n}{3} 
        &= \frac{x^3 \ddxc (C(x))}{6} \\ 
        \sum_{n \geq 0} (n-2)\num_{12}(\Avn) 
        &= x^3 \ddx \left(\frac{J(x)}{x^2} \right). 
        \end{aligned} $$
      \end{lemma}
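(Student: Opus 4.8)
The plan is to recognize both identities as instances of a single standard principle from the theory of generating functions --- the same coefficient-scaling technique invoked in Section~\ref{prelim:sec:ascents-example}: applying an appropriate differential operator to a generating function multiplies each coefficient by a prescribed polynomial in the index. Concretely, if $F(x) = \sum_{n \geq 0} a_n x^n$ is any formal power series, then the Euler operator $x\,\ddx$ acts on coefficients as multiplication by $n$, and more generally $\frac{x^k}{k!}$ times the $k$th derivative scales the coefficient of $x^n$ by $\binom{n}{k}$. Both claims follow by applying the correct such operator and reading off coefficients, so the bulk of the work is merely identifying which operator produces each target factor.

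For the first identity, I would apply the case $k = 3$ to $C(x) = \sum_{n \geq 0} c_n x^n$. Since the third derivative satisfies
$$ \ddxc(C(x)) = \sum_{n \geq 3} n(n-1)(n-2)\, c_n\, x^{n-3}, $$
multiplying by $x^3/6$ converts the factor $n(n-1)(n-2)/6$ into $\binom{n}{3}$ and restores the exponent to $x^n$, yielding $\sum_{n \geq 0} \binom{n}{3} c_n x^n$ as claimed. The terms with $n < 3$ vanish automatically, so no separate bottom-of-range bookkeeping is required.

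For the second identity, the target is to scale $\num_{12}(\Avn)$ by the \emph{linear} factor $(n-2)$. The key observation is that a $12$ pattern requires at least two entries, so $\num_{12}(\Avn) = 0$ for $n \leq 1$; hence $J(x)/x^2 = \sum_{n \geq 2} \num_{12}(\Avn)\, x^{n-2}$ is a genuine power series with no negative powers of $x$. Differentiating brings down the exponent $(n-2)$, and multiplying by $x^3$ returns the exponent to $n$, giving exactly $\sum_{n \geq 0} (n-2)\num_{12}(\Avn)\, x^n$. Equivalently, one can expand $x^3\, \ddx(J(x)/x^2) = x J'(x) - 2 J(x)$ and match coefficients directly, since $x J'(x)$ contributes the factor $n$ and $-2J(x)$ the factor $-2$.

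The computations here are entirely routine; the only point demanding any care --- and the closest thing to an obstacle --- is confirming that $J(x)/x^2$ is a legitimate element of the ring of formal power series (i.e.\ that its two lowest coefficients vanish), which I verify above, together with the observation that term-by-term differentiation is valid in this ring. Once the correct differential operator is identified in each case, both identities are immediate.
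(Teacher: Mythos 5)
Your proof is correct and is exactly the argument the paper intends: the paper states this lemma without a written proof, remarking only that the identities ``can be obtained using techniques explained in Section~\ref{prelim:sec:ascents-example},'' namely the standard coefficient-scaling via differential operators that you carry out. Your verification that $J(x)/x^2$ is a genuine power series (since $\num_{12}(\Av_n(123))=0$ for $n\leq 1$), and the equivalent form $xJ'(x)-2J(x)$, supply precisely the routine bookkeeping the paper leaves to the reader.
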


      Lemmas~\ref{expat:lem:twopats} and ~\ref{expat:lem:totalpats}, coupled with
      Lemma~\ref{expat:lem:genfcns}, establish a system of linear equations with
      three unknowns, $\num_{213}, \num_{231}$, and $\num_{321}$.
      Any new linear relation or solution to one of these would solve the system,
      giving generating functions and exact formulas for the number of all length
      $3$ patterns within $\Av_n(123)$. 

      The calculation of the $\num_{213}$ provides that missing piece, but we
      note that there are many other identities which, once these lemmas are
      established, are equivalent to Theorem~\ref{expat:thm:213pats}. We collect
      some of these in Corollary~\ref{expat:cor:equivalences}. A direct proof of
      any of them could help to simplify the arguments presented here while
      retaining all of the same results, and provide further insight into the
      connections between $\Av(123)$ and $\Av(132)$. While each of these
      seem tractable to bijective methods, they have resisted many attempts at a
      direct proof  and we include them here partly out of spite.
      First, we present the generating functions for the occurrences of $231$ and
      $321$, which follow by routine (but technical) computation.

      \begin{theorem} \label{expat:thm:231pats}
        The number of $231$ (or $312$) occurrences is given by 
        $$\sum_{n \geq 0} \num_{231}(\Av_n(123)) z^n=
        \frac{3z-1}{(1-4z)^{2}} - \frac{4z^2 - 5z + 1}{(1-4z)^{5/2}}.$$
      \end{theorem}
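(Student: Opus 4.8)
The plan is to express $\num_{231}$ in terms of quantities already computed, by first collapsing the five unknown length-three pattern counts on $\Avn$ using the symmetries that fix the class. The pattern $123$ is fixed by both the inverse map and the reverse--complement map, so each of these is a bijection of $\Avn$ that carries an occurrence of a pattern $\tau$ to an occurrence of the corresponding image of $\tau$. Inversion sends $231$ to $312$, giving $\num_{231} = \num_{312}$ (as already noted), while the reverse--complement map sends $213$ to $132$ and $231$ to $312$, yielding in addition
$$ \num_{132} = \num_{213}. $$
Writing $a_n = \num_{213}$ and $b_n = \num_{231}$, these two identities reduce the problem to the single unknown $b_n$.

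With these identities, Lemma~\ref{expat:lem:twopats} simplifies dramatically. Its left-hand side $2\num_{132} + 2\num_{213} + \num_{231} + \num_{312}$ becomes $4a_n + 2b_n$, so the lemma reads
$$ 4a_n + 2b_n = (n-2)\,\num_{12}, $$
and therefore
$$ b_n = \tfrac12\bigl((n-2)\,\num_{12} - 4a_n\bigr). $$
This is the crux: it writes $\num_{231}$ entirely in terms of $a_n = \num_{213}$, known from Theorem~\ref{expat:thm:213pats}, and $\num_{12}$, known from Proposition~\ref{prop:2-patterns}. Note that I do not need Lemma~\ref{expat:lem:totalpats} here; that relation is reserved for recovering $\num_{321}$.

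Next I would translate this to generating functions. Let $B(x) = \sum_{n\ge0} b_n x^n$ and recall $J(x) = \sum_n \num_{12}(\Avn)\,x^n = x^2 C(x)^2/(1-4x)$. The differentiation identity of Lemma~\ref{expat:lem:genfcns} gives $\sum_n (n-2)\num_{12}\,x^n = x^3 \ddx\!\left(J(x)/x^2\right)$, so that
$$ B(x) = \frac12\left[\, x^3 \ddx\!\left(\frac{J(x)}{x^2}\right) - 4A(x) \,\right], $$
with $A(x) = x^3 C(x)^3/(1-4x)^{3/2}$ supplied by Theorem~\ref{expat:thm:213pats}. Everything on the right is now explicit.

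The remaining step, and the only genuinely delicate one, is to simplify this expression into the stated closed form; I expect the bookkeeping here to be the main obstacle. The efficient route is to clear $C(x)$ using the two standard identities $x C(x)^2 = C(x) - 1$ and $\sqrt{1-4x} = 1 - 2x C(x)$, which turn every summand into a rational function of $x$ times an integer or half-integer power of $(1-4x)$. Differentiating $J(x)/x^2 = C(x)^2/(1-4x)$ produces a $(1-4x)^{-3/2}$ contribution, and after combining with $4A(x)$ the half-integer powers consolidate, leaving the two terms $\frac{3z-1}{(1-4z)^2}$ and $-\frac{4z^2-5z+1}{(1-4z)^{5/2}}$ once $x$ is renamed $z$. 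As a consistency check I would expand to a few orders and match the coefficients $1, 11, 81, 500, 2794$ in the $\num_{231}$ column of Table~\ref{expat:tab:data}.
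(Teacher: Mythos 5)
Your method is exactly the paper's: the dissertation derives this theorem from the same linear system, namely Lemma~\ref{expat:lem:twopats} together with Lemma~\ref{expat:lem:genfcns} and the $\num_{213}$ generating function of Theorem~\ref{expat:thm:213pats}, with the reductions $\num_{132}=\num_{213}$ (reverse--complement fixes $123$) and $\num_{231}=\num_{312}$ (inversion fixes $123$), the rest being declared ``routine (but technical) computation.'' Your streamlining observation that Lemma~\ref{expat:lem:totalpats} is only needed afterwards to recover $\num_{321}$ is correct, and your key relation $4a_n+2b_n=(n-2)\num_{12}$ checks against the data (e.g.\ $n=4$: $36+22=2\cdot 29$).

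The problem is the one step you deferred: the ``bookkeeping'' is where the proof lives, and it does not land on the displayed formula. Carrying the algebra out with $s=\sqrt{1-4x}$, $C^2=(1-2x-s)/(2x^2)$ and $C^3=\bigl(1-3x-(1-x)s\bigr)/(2x^3)$, one finds
$$ x^3\ddx\left(\frac{C(x)^2}{1-4x}\right) = \frac{1-5x}{(1-4x)^{3/2}} - \frac{1-7x+8x^2}{(1-4x)^{2}}, \qquad 4A(x) = \frac{2(1-3x)}{(1-4x)^{3/2}} - \frac{2(1-x)}{1-4x}, $$
so that
$$ B(z) \;=\; \frac{1-3z}{2(1-4z)^{2}} - \frac{4z^2-5z+1}{2(1-4z)^{5/2}} \;=\; \frac{1-3z}{2(1-4z)^{2}} - \frac{1-z}{2(1-4z)^{3/2}}, $$
the last equality because $4z^2-5z+1=(1-z)(1-4z)$. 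This expands as $z^3+11z^4+81z^5+500z^6+\cdots$, matching the data, and its dominant singular part $\frac{1}{8}(1-4z)^{-2}$ yields the asymptotic $\frac{n}{2}4^{n-1}$ stated later in the chapter. By contrast, the formula as printed in the theorem statement cannot be correct: its series begins $-2-10z-48z^2-222z^3-\cdots$, which has negative coefficients and a nonzero constant term — evidently a dropped factor of $\frac{1}{2}$ and a sign flip in the first term. The coefficient check against $1, 11, 81, 500, 2794$ that you proposed but did not perform would have exposed this immediately; as written, your proof ends by asserting agreement with a target that the computation in fact refutes. Complete the calculation, state the corrected closed form, and flag the discrepancy rather than claiming the terms ``consolidate'' into the printed ones.
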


      \begin{corollary} \label{expat:cor:bridge}
        The total number of $231$ occurrences in $\Av_n(123)$ is equal to the
        number in $\Av_n(132)$. 
      \end{corollary}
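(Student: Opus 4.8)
The plan is to establish the identity by comparing the generating function for $\num_{231}(\Av_n(123))$ supplied by Theorem~\ref{expat:thm:231pats} with the corresponding generating function for $\num_{231}(\Av_n(132))$, which can be extracted from the work of B\'ona~\cite{Bona2010, Bona2012}. Since Theorem~\ref{expat:thm:231pats} already gives the left-hand count in closed form, the entire content of the corollary reduces to producing the right-hand count in a comparable form and checking that the two agree as formal power series (equivalently, coefficient by coefficient in $z^n$).

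First I would recall B\'ona's enumeration of the length-three patterns in the $132$-avoiding class. B\'ona established the nontrivial identity $\num_{213}(\Av_n(132)) = \num_{231}(\Av_n(132))$, and since $132$ is an involution the inversion symmetry gives $\num_{231}(\Av_n(132)) = \num_{312}(\Av_n(132))$ as well; thus all three of these counts coincide, as is already visible in the data of Table~\ref{expat:tab:data}. B\'ona's analysis furnishes an explicit generating function (or exact formula) for this common value, and it is this object that I would place beside Theorem~\ref{expat:thm:231pats}.

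The core of the argument is then the verification that B\'ona's generating function for $\num_{231}(\Av_n(132))$ simplifies to exactly
$$ \frac{3z-1}{(1-4z)^{2}} - \frac{4z^2 - 5z + 1}{(1-4z)^{5/2}}. $$
I expect the main obstacle to be purely algebraic: B\'ona's formula is likely presented in a different but equivalent shape, so the work lies in rewriting both expressions over a common denominator involving the powers $(1-4z)^{2}$ and $(1-4z)^{5/2}$ and matching the resulting polynomial numerators. Once this rational-plus-algebraic identity is confirmed, the equality of the two generating functions is immediate, and extracting the coefficient of $z^n$ yields $\num_{231}(\Av_n(123)) = \num_{231}(\Av_n(132))$ for every $n$, which is the claim. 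No bijective insight is needed for this route; as the remark preceding the statement observes, a direct combinatorial proof has so far been elusive, so the generating-function comparison is the pragmatic path.
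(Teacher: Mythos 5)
Your proposal is correct and follows essentially the same route as the paper: the corollary is presented there as an immediate consequence of Theorem~\ref{expat:thm:231pats}, obtained precisely by comparing that generating function with B\'ona's enumeration of $\num_{231}(\Av_n(132))$ from~\cite{Bona2010, Bona2012}. The paper supplies no further argument beyond this comparison, so your generating-function matching (together with the observation that a bijective proof remains open) is exactly the intended proof.
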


      \begin{theorem}
        The number of $321$ occurrences is given by 
        $$
          \sum_{n\geq 0} \num_{321}(\Av_n (123)) z^n =
          \frac{ 8z^3 - 20z^2 + 8z - 1}{(1-4z)^{2}}
          - \frac{36z^3 - 34z^2 + 10z - 1}{(1-4z)^{5/2}}.
        $$
      \end{theorem}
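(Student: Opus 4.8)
The plan is to obtain $\num_{321}$ as a linear combination of totals whose generating functions are already in hand, rather than counting $321$-occurrences directly. Two trivial symmetries of $\Av_n(123)$ collapse the five nonmonotone three-patterns into only three distinct totals. Inversion fixes the class (since $123^{-1}=123$) and sends $231$-occurrences to $312$-occurrences, so $\num_{231}=\num_{312}$, exactly as already noted. The reverse-complement also fixes the class (reversing $123$ gives $321$, whose complement is again $123$) and sends $132$-occurrences to $213$-occurrences, because $132^{rc}=213$; hence $\num_{132}=\num_{213}$. Both identities are visible in the $\Av_n(123)$ data table. With these reductions the only independent totals are $\num_{213}$, $\num_{231}$, and $\num_{321}$, and the first two are already determined by Theorems~\ref{expat:thm:213pats} and~\ref{expat:thm:231pats}.

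Next I would feed these reductions into Lemma~\ref{expat:lem:totalpats}, which accounts for every three-pattern other than $123$. Substituting $\num_{132}=\num_{213}$ and $\num_{312}=\num_{231}$ turns that identity into
$$2\,\num_{213} + 2\,\num_{231} + \num_{321} = c_n\binom{n}{3},$$
and hence
$$\num_{321} = c_n\binom{n}{3} - 2\,\num_{213} - 2\,\num_{231}.$$
Passing to generating functions, the right-hand side is completely determined: $\sum_n \num_{213}z^n$ comes from Theorem~\ref{expat:thm:213pats}, $\sum_n \num_{231}z^n$ from Theorem~\ref{expat:thm:231pats}, and $\sum_n c_n\binom{n}{3}z^n$ from the first identity of Lemma~\ref{expat:lem:genfcns} (namely $\tfrac{z^3}{6}$ times the third derivative of $C$). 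Assembling
$$\sum_{n\ge0}\num_{321}(\Av_n(123))z^n = \frac{z^3}{6}\,C'''(z) - 2\,\frac{z^3C(z)^3}{(1-4z)^{3/2}} - 2\left(\frac{3z-1}{(1-4z)^2} - \frac{4z^2-5z+1}{(1-4z)^{5/2}}\right)$$
should then yield the claimed closed form after simplification.

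The one genuinely laborious step is this final simplification. Every ingredient is a rational function of $z$ and $\sqrt{1-4z}$, so I would rewrite $C(z)=\tfrac{1-\sqrt{1-4z}}{2z}$ and its third derivative in terms of powers of $(1-4z)^{1/2}$, clear everything over the common denominator $(1-4z)^{5/2}$, and collect the two kinds of terms — those with an integer power of $(1-4z)$ and those with a half-integer power — separately. The half-integer parts must combine into $-\tfrac{36z^3-34z^2+10z-1}{(1-4z)^{5/2}}$ and the rational parts into $\tfrac{8z^3-20z^2+8z-1}{(1-4z)^2}$; keeping these two families apart is what makes the bookkeeping tractable, and this reconciliation of the half-integer powers is the main obstacle. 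Since the algebra is routine but error-prone, I would finally check the closed form against the tabulated values $\num_{321}=1,16,144,1016,6271$ for $n=3,\dots,7$ — each of which equals $c_n\binom{n}{3}-2\num_{213}-2\num_{231}$ (for instance $56-40=16$ at $n=4$) — confirming the coefficients of the generating function.
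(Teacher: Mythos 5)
Your proposal is correct and takes essentially the same route as the paper: the paper obtains this theorem (together with Theorem~\ref{expat:thm:231pats}) by solving the very linear system you describe --- Lemma~\ref{expat:lem:totalpats} reduced by the inversion and reverse-complement symmetries, the generating-function identities of Lemma~\ref{expat:lem:genfcns}, and the independently computed $\num_{213}$ of Theorem~\ref{expat:thm:213pats} --- and explicitly describes the final elimination only as a ``routine (but technical) computation.'' Your explicit reverse-complement justification of $\num_{132}=\num_{213}$, the assembled identity $\num_{321}=c_n\binom{n}{3}-2\num_{213}-2\num_{231}$, and the numerical spot-checks simply fill in the details the paper leaves implicit.
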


      \begin{corollary} \label{expat:cor:equivalences}
        The following identities hold 
        $$ \num_{21}(\Avn) = 2\num_{213}(\Avns) $$
        $$ \num_{213}(\Avn) + \num_{231}(\Avn) = \num_{231} (\Av
        _{n-1}^*(123))$$
        $$ C(z) \left(\sum_{n\geq 0} \num_{213}(\Avn) z^n \right) =
          z C'(z) \left(\sum_{n \geq 0} \num_{12}(\Avn)z^n \right) $$
        $$ \sum_{n\geq 0} \num_{213} (\Av_n^* (132)  z^n) =
          \sum_{n \geq 0} \big(\num_{132}(\Avns) +
          \num_{231}(\Avns)\big)z^n. $$
      \end{corollary}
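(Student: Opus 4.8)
The plan is to prove all four identities by the same mechanism the author has already set up: each is an equality of two integer sequences, and for every sequence involved we either possess or can quickly produce a closed-form generating function built from $C(z)=\frac{1-\sqrt{1-4z}}{2z}$. Thus the whole corollary reduces to a handful of algebraic identities among rational and algebraic functions of $z$, which I would verify using only the defining relation $C=1+zC^2$ and its standard consequences $1-zC=1/C$, $1-2zC=\sqrt{1-4z}$, and (by differentiating $C=1+zC^2$) $C'(z)=C^2/\sqrt{1-4z}$.

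The ingredients already in hand are $A^*(z)=\sum\num_{213}(\Avns)z^n=\frac{z^3C}{(1-4z)^{3/2}}$ (Lemma~\ref{lemma:213pats}), $A(z)=\sum\num_{213}(\Avn)z^n=\frac{z^3C^3}{(1-4z)^{3/2}}$ (Theorem~\ref{expat:thm:213pats}), $J(z)=\sum\num_{12}(\Avn)z^n=\frac{z^2C^2}{1-4z}$ (Proposition~\ref{prop:2-patterns}), and the $231$ generating function of Theorem~\ref{expat:thm:231pats}. The remaining identities also involve the \emph{indecomposable} analogues for the patterns $12$, $231$, and $132$, whose generating functions I would derive exactly as $A=A^*C^2$ was derived in Theorem~\ref{expat:thm:213pats}: decompose each $\pi\in\Av(123)$ uniquely as $\sg\ssum\ph$ with $\sg$ nonempty indecomposable and $\ph$ arbitrary, and sum the pattern count over this decomposition. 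The one subtlety is that some patterns \emph{straddle} the skew sum while others cannot. A $12$ or a $213$ never straddles (the global maximum sits in the left block), giving $J=J^*C^2$ and hence $J^*(z)=z^2/(1-4z)$; by contrast a $231$ can straddle, its ascent lying in $\sg$ and its bottom point in $\ph$, contributing a cross term $J^*(z)\,zC'(z)$ and yielding a linear equation that determines $B^*(z)=\sum\num_{231}(\Avns)z^n$ in terms of $B$, $C$, and $C'$.

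With these in place the individual identities become short. The third identity is the cleanest, and I would present it first as the model: substituting the closed forms turns $C(z)A(z)=zC'(z)J(z)$ into $\frac{z^3C^4}{(1-4z)^{3/2}}=\frac{z^3C^2C'}{1-4z}$, which collapses to exactly $C'=C^2/\sqrt{1-4z}$. The inversion identity becomes $K^*(z)=2A^*(z)$ (where $K^*$ counts $21$'s on the indecomposable pieces) after writing that count as (total pairs) $-\,J^*$, and the mixed identity reduces, via the relation for $B^*$ just derived, to an equality already forced by the known forms of $A$ and $B$; both are then routine. Throughout I would keep a coefficient check (the data in Table~\ref{expat:tab:data} together with the indecomposable counts $A^*=z^3+7z^4+\cdots$ and $B^*=2z^4+\cdots$) to catch indexing slips.

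The hard part will be the fourth identity, the only one crossing between the two classes: it equates $213$-occurrences in indecomposable $132$-avoiders with $\num_{132}+\num_{231}$ on indecomposable $123$-avoiders. Here no single generating function is already recorded, so I would first obtain the $\Av(132)$ side from that class's recursive structure $\pi=(\ro\dsum 1)\ssum\sg$ together with B\'ona's occurrence counts, and then compare it with $M^*(z)+B^*(z)$, where $M^*$ is the indecomposable $132$-count derived by the same skew-sum bookkeeping. Matching the two sides amounts to aligning the decompositions of the two classes so that the straddling contributions correspond — precisely the point at which a clean bijective proof has proven elusive, as the author notes. Absent such a bijection, the generating-function route, reducing everything to Corollary~\ref{expat:cor:bridge} and the identity $C'=C^2/\sqrt{1-4z}$, remains the reliable fallback.
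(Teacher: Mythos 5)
Your mechanism is the right one, and it is in fact all the paper has: no proof of this corollary appears in the text, which records the identities as being equivalent to Theorem~\ref{expat:thm:213pats} once Lemmas~\ref{expat:lem:totalpats}, \ref{expat:lem:twopats}, and~\ref{expat:lem:genfcns} are in place, remarking that direct (bijective) proofs have resisted all attempts. Your detailed pieces are correct where you carry them out: the third identity does collapse, after substituting $A=z^3C^3/(1-4z)^{3/2}$ and $J=z^2C^2/(1-4z)$, to exactly $C'=C^2/\sqrt{1-4z}$; your straddling analysis of the skew decomposition $\pi=\sg\ssum\ph$ is sound (no $12$, $213$, or $132$ can straddle, a straddling $231$ is a $12$ of $\sg$ paired with any entry of $\ph$), giving $J=J^*C^2$, hence $J^*=z^2/(1-4z)$, and the cross term $J^*(z)\,zC'(z)$; solving your linear equation and using $C'=C^2/\sqrt{1-4z}$ gives the tidy relation $B^* = B/C^2 - A^*$, since $J^*zC'/C = z^3C/(1-4z)^{3/2} = A^*$.

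The genuine issue you must confront is that two of the four identities are \emph{false exactly as printed}, so the verification you propose cannot terminate in an equality without amending the statements. For the first, $\num_{21}(\Av_3(123))=9$ while $2\num_{213}(\Av_3^*(123))=2$; the left side must be $\num_{21}(\Avns)$, and your own reduction ``(total pairs) $-\,J^*$'' tacitly assumes this, since $\binom{n}{2}c_{n-1}-[z^n]J^*$ is the inversion count on the \emph{indecomposables} (with that reading it checks: at $n=4$, $6\cdot 5-16=14=2\cdot 7$); you should flag the correction rather than silently make it. For the second, the index must be $n+1$, not $n-1$: at $n=3$ the left side is $2$ while $\num_{231}(\Av_2^*(123))=0$. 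In the corrected form the identity reads $z(A+B)=B^*$, and your relation $B^*=B/C^2-A^*$ together with $1+zC^2=C$ and $2-C=C\sqrt{1-4z}$ collapses it to $B=A/\sqrt{1-4z}$, confirmed by the tabulated data ($1,11,81,500,2794$ against $1,9,57,312,1578$). Related pitfall: you list ``the $231$ generating function of Theorem~\ref{expat:thm:231pats}'' among your ingredients, but that closed form is itself corrupted in print (its constant term is $-2$); recompute it as $B=z^3C^3/(1-4z)^2$ before using it, or your checks of the second and fourth identities will fail spuriously. Finally, the fourth identity is easier than you fear: the skew-indecomposable $132$-avoiders are precisely the permutations $\ro\dsum 1$ with $\ro\in\Av(132)$, so the left side is $z\sum_n\left(\num_{213}(\Av_n(132))+\num_{21}(\Av_n(132))\right)z^n$, both counts available from B\'ona's work, while the right side is $M/C^2+B^*$ (with $M$ the $\num_{132}$ generating function over $\Avn$, and $M=M^*C^2$ because $132$ cannot straddle); matching the two is then the same routine algebra over $C=1+zC^2$, and no aligning bijection is needed.
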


    Now we can do some analysis of the main sequences. Using some
    standard generating function analysis \cite{flajolet}, we find
    that the asymptotic growth of the number of length $3$ patterns are
    as follows:

    \vspace{1pc}

    $$ \begin{aligned}
     \num_{213} (\Avn) &\sim \sqrt{\frac{n}{\pi}} 4^{n-1} \\
     \num_{231} (\Avn) &\sim \frac{n}{2} 4^{n-1}  \\
     \num_{321} (\Avn) &\sim \frac{2}{3} \sqrt{\frac{n^3}{\pi}} 4^{n-1}. 
    \end{aligned} $$

    We see that the three sequences each differ by a factor of
    approximately $\sqrt{n}$. Surprisingly, this is the same factor that
    the sequences $\num_{123}, \num_{231}, \num_{321}$ differ by in the
    class $\Av (132)$, as seen in \cite{Bona2012}.

    Each of these generating functions are simple enough that exact
    formulas can be obtained with relatively little hassle. One could
    argue that the asymptotic values are more interesting and
    provide more insight than the complicated formulas, but we present
    them here for completeness.

    \begin{corollary}
      Let $a_n = \num_{132}(\Avn)$, $b_n = \num_{213}(\Avn)$, and $d_n =
      \num_{321}(\Avn)$. Then we have that
      $$ a_n = \frac{n+2}{4} \binom{2n}{n} - 3 \cdot 2^{2n-3} $$

      \vspace{1pc}

      $$ b_n = (2n-1) \binom{2n-3}{n-2} - (2n+1)\binom{2n-1}{n-1} +
         (n+4) \cdot 2^{2n-3}$$
      
      \vspace{1pc}

      $$ \begin{aligned} d_n
          = \frac{1}{6} \binom{2n+5}{n+1} \binom{n+4}{2}
          &- \frac{5}{3} \binom{2n+3}{n} \binom{n+3}{2}
          + \frac{17}{3} \binom{2n+1}{n-1} \binom{n+2}{2} \\
          &- 6\binom{2n-1}{n-2} \binom{n+1}{2} - (n+1) \cdot 4^{n-1}.
        \end{aligned}
      $$
    \end{corollary}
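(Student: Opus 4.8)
The plan is to read each formula off as the coefficient of $x^{n}$ in a generating function we already possess, and then to simplify. The analytic engine is the short list of generalized-binomial expansions
$$[x^n]\frac{1}{1-4x}=4^n,\qquad [x^n]\frac{1}{(1-4x)^2}=(n+1)4^n,$$
$$[x^n]\frac{1}{(1-4x)^{3/2}}=(2n+1)\binom{2n}{n},\qquad [x^n]\frac{1}{(1-4x)^{5/2}}=\frac{1}{3}\binom{n+2}{2}\binom{2n+3}{n+1},$$
together with the rule that multiplying by $x^{j}$ shifts $n\mapsto n-j$ in every coefficient. Since each generating function in hand is a rational function of $x$ times one of $1$, $(1-4x)^{-1/2}$, $(1-4x)^{-3/2}$, $(1-4x)^{-5/2}$, reading off $[x^n]$ is purely a matter of expanding the numerator and applying these formulas.

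For $b_n=\num_{213}(\Avn)$ I would begin from the closed form $\frac{x-1}{2(1-4x)}-\frac{3x-1}{2(1-4x)^{3/2}}$ supplied by Theorem~\ref{expat:thm:213pats}. Splitting each fraction by its numerator and extracting coefficients (with the shift $n\mapsto n-1$ on the terms carrying a factor $x$) gives one power-of-two term together with central-binomial terms; standard identities such as $\binom{2n}{n}=2\binom{2n-1}{n-1}$ then recast the result into the displayed form. For $d_n=\num_{321}(\Avn)$ the same procedure applied to the generating function for $\num_{321}$ derived just above splits into a half carried by $(1-4z)^{-2}$, whose coefficients are polynomials in $n$ times $4^{n}$ and which collect into the displayed power-of-four summand, and a half carried by $(1-4z)^{-5/2}$, whose coefficients are products of two binomial coefficients; the four index shifts $j=0,1,2,3$ dictated by the numerator $36z^{3}-34z^{2}+10z-1$ are exactly what produce the four binomial-product terms.

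The pattern $132$ is the one case with no generating function of its own, so for $a_n=\num_{132}(\Avn)$ I would first isolate it algebraically. Feeding the inversion symmetry $\num_{312}=\num_{231}$ into Lemma~\ref{expat:lem:twopats} yields $\num_{132}=\frac{1}{2}(n-2)\num_{12}-\num_{213}-\num_{231}$; substituting $\num_{12}=4^{n-1}-\binom{2n-1}{n}$ from Proposition~\ref{prop:2-patterns} and the already-extracted expressions for $\num_{213}$ and for $\num_{231}$ (Theorem~\ref{expat:thm:231pats}) then writes $a_n$ entirely in closed terms. The striking feature is that almost everything cancels, leaving the two-term answer $\frac{n+2}{4}\binom{2n}{n}-3\cdot 2^{2n-3}$; verifying this collapse is the crux.

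Indeed, the conceptual content of the corollary is nil — the real labour, and the only place to go wrong, is the binomial bookkeeping. The contributions arrive in mismatched normalizations (mixtures of $\binom{2n}{n}$-type, $\binom{2n-1}{n-1}$-type, and $4^{n}$-type terms) and must be driven to a common form before they can be added, while the wholesale cancellation producing the clean $a_n$ (where three separately unwieldy sequences must reduce to two terms) is delicate. I would tame this by normalizing every intermediate coefficient as a rational multiple of a single reference quantity before combining, and I would pin down the final constants — and catch any transcription slip in the generating functions — by checking each formula against the tabulated values for $n\le 7$ in Table~\ref{expat:tab:data}, which determine them uniquely.
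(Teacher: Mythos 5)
Your proposal matches the paper's own treatment essentially verbatim: the corollary is presented there without further proof, as routine coefficient extraction (via exactly the Newton expansions you list) from the generating functions of Theorem~\ref{expat:thm:213pats} and its companions, with the linear relations of Lemmas~\ref{expat:lem:totalpats} and~\ref{expat:lem:twopats} supplying the one pattern lacking a generating function of its own. One caution that your planned check against Table~\ref{expat:tab:data} would indeed catch: extracting $[x^n]$ from the $213$ generating function simplifies directly to $\frac{n+2}{4}\binom{2n}{n}-3\cdot 2^{2n-3}$, i.e.\ the displayed $a_n$ (consistent with $\num_{132}=\num_{213}$ on $\Av_n(123)$ under reverse-complement), while the displayed $b_n$ agrees numerically with the $\num_{231}$ column --- matching the section's earlier convention $b_n=\num_{231}$ --- so the discrepancy lies in the corollary's labels (and a transcription slip in the stated $231$ generating function), not in your method.
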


  \subsection{Larger Patterns}

    Some of these same techniques are applicable to larger patterns.  For
    example, we can easily modify Lemmas~\ref{expat:lem:twopats}
    and~\ref{expat:lem:totalpats} to apply to patterns of all sizes. This leads to
    increasingly complicated expressions, but this simple idea can be used to
    prove the following proposition. 

    \begin{proposition}
      Let $k \in \mathbb{Z}^+$, and $\sg$ be any permutation in $\S_k$
      other than the decreasing permutation. Then for $n$ large enough,
      we have that
      $$\num_{k \dots 3 2 1}(\Avn) > \num_{\sg}(\Avn).$$
    \end{proposition}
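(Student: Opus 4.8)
The plan is to show that, among all length-$k$ patterns, the decreasing one accounts for asymptotically \emph{all} occurrences, so that it eventually dominates each competitor. First I would dispose of the trivial case: if $\sigma$ contains $123$ then $\num_\sigma(\Avn)=0$ for every $n$, while $\num_{k \dots 3 2 1}(\Avn)>0$ once $n\ge k$ (the permutation $n\,(n-1)\cdots 1$ lies in $\Avn$ and contains $k\cdots 21$), so the inequality is immediate. Hence I may assume $\sigma\in\Av_k(123)$ with $\sigma\neq\delta$, where I write $\delta=k\cdots 21$ for the decreasing pattern.

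The heart of the argument is a pair of counting identities generalizing Lemmas~\ref{expat:lem:totalpats} and~\ref{expat:lem:twopats}. Generalizing Lemma~\ref{expat:lem:totalpats}, note that every $k$-element subsequence of a $123$-avoiding permutation is itself $123$-avoiding, so summing over the class gives $\sum_{\tau\in\Av_k(123)}\num_\tau(\Avn)=c_n\binom{n}{k}$. Generalizing Lemma~\ref{expat:lem:twopats}, I would bound the combined contribution of the non-decreasing patterns: a $k$-subsequence fails to be decreasing exactly when it contains at least one non-inversion (a $12$), so building such a subsequence by first selecting a $12$ and then choosing the remaining $k-2$ positions overcounts each non-decreasing occurrence at least once. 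Since $\delta$ is the \emph{unique} decreasing pattern of length $k$, it is the only one omitted from this sum, and the overcounting yields $\sum_{\tau\neq\delta}\num_\tau(\Avn)\le \num_{12}(\Avn)\binom{n-2}{k-2}$.

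Combining the two identities finishes the proof by a growth-rate comparison. By Proposition~\ref{prop:2-patterns}, $\num_{12}(\Avn)=4^{n-1}-\binom{2n-1}{n}\le 4^{n-1}$, so the right-hand side above is $O(n^{\,k-2}4^{n})$. On the other hand $c_n\binom{n}{k}=\Theta(n^{\,k-3/2}4^{n})$, since $c_n\sim 4^{n}/(\sqrt{\pi}\,n^{3/2})$ and $\binom{n}{k}=\Theta(n^{k})$. Subtracting, $\num_\delta(\Avn)=c_n\binom{n}{k}-\sum_{\tau\neq\delta}\num_\tau(\Avn)=\Theta(n^{\,k-3/2}4^{n})$, whereas every competitor satisfies $\num_\sigma(\Avn)\le \sum_{\tau\neq\delta}\num_\tau(\Avn)=O(n^{\,k-2}4^{n})$. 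These differ by a factor of order $\sqrt{n}$, so $\num_\delta(\Avn)>\num_\sigma(\Avn)$ for all sufficiently large $n$, as claimed.

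The main obstacle is the second identity: one must verify that the overcounting argument bounds \emph{all} non-decreasing patterns simultaneously and that $\delta$ is the sole omission. The conceptual content is the observation that $\num_{12}(\Avn)$ grows only like $4^{n}$, a full factor of $\sqrt{n}$ slower than the total $c_n\binom{n}{2}$; equivalently, a uniformly random $k$-subset of a uniformly random $123$-avoider is decreasing with probability tending to $1$. Everything else is routine singularity analysis of the Catalan generating function.
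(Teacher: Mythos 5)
Your proposal is correct and follows essentially the same route as the paper's proof: both generalize Lemmas~\ref{expat:lem:totalpats} and~\ref{expat:lem:twopats} to length-$k$ patterns, so that $c_n\binom{n}{k}$ counts all $k$-pattern occurrences while $\binom{n-2}{k-2}\,\num_{12}(\Avn)$ (over)counts every non-decreasing one, and both conclude by comparing $\num_{12}(\Avn)=O(4^n)$ against $c_n\binom{n}{k}=\Theta\left(n^{k-3/2}4^n\right)$, a gap of order $\sqrt{n}$. Your separate disposal of patterns containing $123$ and your use of a direct overcounting inequality in place of the paper's exact linear-combination identity with positive coefficients $e_i$ are cosmetic refinements rather than a different method.
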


    \begin{proof}
      Let $\cD$ be the set of permutation in $\S_k$ which are not the
      decreasing permutation.  As in Lemma~\ref{expat:lem:twopats},  we can
      express the number $\binom{n-2}{k-2}\num_{12}(\Avn)$ as a positive linear
      combination of all of $\num_{\sg} (\Avn)$ where $\sg \in \cD$. As in
      Lemma~\ref{expat:lem:totalpats}, we can express $\binom{n}{k} c_n$ as the sum
      of all $\num_{\ro} (\Avn)$ where $\ro \in \S_n$.  It follows that there is
      a positive integer $m$ and positive
      integers $e_i$ such that
      $$ \binom{n}{k} c_n - m\binom{n-2}{k-2} \num_{12} (\Avn) = \num_{k \dots
      321} - \sum_{\sg \in \cD} e_i \num_{\sg} (\Avn).$$ Asymptotic analysis
      shows that the left hand side is eventually positive, and so the first term
      on the right side eventually outgrows the second term, which completes the
      proof.
    \end{proof}

\cleardoublepage
\typeout{******************}
\typeout{**  Chapter 3   **}
\typeout{******************}

  \chapter{Pattern Avoiding Involutions} 
  \label{chap:involutions}

    In this chapter, we investigate sets of pattern-avoiding \emph{involutions}
    \index{involution}. While the enumeration of pattern-avoiding permutations
    has become a major topic of research in recent years, involutions have been
    largely overlooked.  In particular, we focus on finding the Stanley-Wilf
    limit for sets of involutions which avoid patterns of length four. 
    
    Pattern-avoiding involutions were first considered by Simion and
    Schmidt~\cite{Simion1985}, who enumerated the involutions avoiding any length 
    three pattern. As in the case for permutations, the situation quickly becomes
    more complicated for longer patterns. 
    We begin this chapter by examining the simple $123$
    involutions, which will be our primary tool. This chapter is based in part
    on~\cite{me-involutions}.

  \section{Definitions and Context}
  \label{involutions:sec:intro}

    \begin{definition}\label{involutions:def:invclass}
      For a given permutation $\bta$, let $\Av^I(\bta)$ denote the set of
      \emph{$\bta$-avoiding involutions}, the set of involutions (permutations
      which are their own inverse) which do not contain $\bta$. 
      Let $\Av_n^I (\bta)$ be the set of permutations of length $n$ within this set. 
    \end{definition}

    Note that $\Av^I(\bta)$ is not necessarily a \emph{class}, as the set of all
    involutions is not closed under the pattern ordering. However we can apply
    many of the same ideas in order to enumerate these sets.  Clearly, $\Av^I
    (\bta) \subseteq \Av(\bta)$, and so the Marcus-Tardos theorem states that
    each set has a finite upper growth rate. Note that due to the symmetry of
    inversion ($\sg \prec \pi$ if and only if $\sg^{-1} \prec \pi^{-1}$), these
    classes are not principally based in the classical sense. Indeed,
    $\AvnI(\bta) = \AvnI(\bta, \bta^{-1})$ for any permutation $\bta$. For
    simplicity of notation, and to parallel the work done in permutations, we
    write such a set with only a single basis element. 

    \subsection{Previous Results}

      Two patterns $\bta, \tau$ are \emph{involution Wilf-equivalent} if
      $|\AvnI(\bta)| = |\AvnI(\tau)|$. 
      Simion and Schmidt completed the classification of the involution
      Wilf-equivalence classes of patterns of length three in their 1985
      paper~\cite{Simion1985} by showing that, for all patterns $\bta \in
      \{123,132,213,321\}$ and $\sg \in \{231, 312\}$,  
      $$ | \AvnI(\bta)| = \binom{n}{\floor{n/2}} \quad 
            \text{and} \quad |\AvnI(\sg) | = 2^{n-1}.$$

      Extending the work of Guibert, Pergola, and Pinzani~\cite{Guibert2001},
      Jaggard~\cite{Jaggard2002} classified the eight involution Wilf-equivalence
      classes for length four patterns.  Of these classes, only two have been
      successfully enumerated: Gessel~\cite{Gessel1990} counted the set
      $\AvnI(1234)$, while Brignall, Huczynska, and Vatter~\cite{Brignall2008}
      provided the enumeration for $\AvnI(2413)$. In this chapter we enumerate two
      of these unknown sets ($\AvnI(1342)$ and $\AvnI(2341)$), and provide bounds
      for a third ($\AvnI(1324)$). 

      Jaggard~\cite{Jaggard2002} computed the values $|\AvnI(\bta)|$ for each $\bta$
      of length four, up to $n=11$. This data (Table~\ref{involutions:tab:Jaggard})
      suggests an ordering on the eight classes, which we will show is misleading.
      For example, it seems clear from his data that there are more involutions
      avoiding $2341$ than avoiding $1234$. However, there are exponentially more $1234$
      avoiding involutions, as we will soon show.

      \begin{table}[t]
      \caption[The enumeration of involutions avoiding a pattern]{The
                enumerations of involutions avoiding a pattern $\beta$ of length
                $4$ for $n=5$, $\dots$, $11$, as presented by
                Jaggard~\cite{Jaggard2002} (ordered by the last row).}
      \label{involutions:tab:Jaggard}
        $$
        \begin{array}{ccccccccc}
        &&&&&&&&\\[-10pt]
        &\bm{1324}&\bm{1234}&\bm{4231}&\bm{2431}&\bm{1342}&\bm{2341}&\bm{3421}&\bm{2413}\\[1pt]
        \hline
        &&&&&&&&\\[-11pt]
        |\Av^I_{5}(\beta)|&21&21&21&24&24&25&25&24\\[1pt]
        &&&&&&&&\\[-11pt]
        |\Av^I_{6}(\beta)|&51&51&51&62&62&66&66&64\\[1pt]
        &&&&&&&&\\[-11pt]
        |\Av^I_{7}(\beta)|&126&127&128&154&156&170&173&166\\[1pt]
        &&&&&&&&\\[-11pt]
        |\Av^I_{8}(\beta)|&321&323&327&396&406&441&460&456\\[1pt]
        &&&&&&&&\\[-11pt]
        |\Av^I_{9}(\beta)|&820&835&858&992&1040&1124&1218&1234\\[1pt]
        &&&&&&&&\\[-11pt]
        |\Av^I_{10}(\beta)|&2160&2188&2272&2536&2714&2870&3240&3454\\[1pt]
        &&&&&&&&\\[-11pt]
        |\Av^I_{11}(\beta)|&5654&5798&6146&6376&7012&7273&8602&9600\\[1pt]
        \end{array}
        $$
      \end{table}

    \subsection{Simple Involutions}

      Our primary tool will be the substitution decomposition. Inflations and
      involutions are linked by the following theorem, which provides a recipe
      for constructing new involutions from simples. 

      \begin{theorem}[Brignall, Huczynska, Vatter~\cite{Brignall2008}]
        \label{thm:subsdecomp-inv}
        Let $\sg \neq 21$ be a simple permutation of length $m$, and $\alp_1, 
        \alp_2, \dots \alp_m$. Then $\pi = \sg[\alp_1, \alp_2, \dots \alp_m]$ is
        an involution if and only if $\sg$ is an involution and $\alp_i =
        \alp_{\sg_i}^{-1}$. 
        Further, the skew decomposable involutions are either of the form
        $21[\alp_1, \alp_2]$ with $\alp_1 = \alp_2^{-1}$ or $321[\alp_1, \alp_2,
        \alp_3]$ with $\alp_1 = \alp_3^{-1}$ and $\alp_2 = \alp_2^{-1}$. 
      \end{theorem}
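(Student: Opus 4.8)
The plan is to reduce the whole statement to one structural identity describing how inversion interacts with inflation, and then to feed it into the uniqueness half of the substitution decomposition (Theorem~\ref{thm:subsdecomp}). First I would prove the key lemma: for every $\sg \in \S_m$ and all permutations $\alp_1, \dots, \alp_m$,
$$ \left(\sg[\alp_1, \dots, \alp_m]\right)^{-1} = \sg^{-1}\!\left[\alp_{\sg^{-1}_1}^{-1}, \dots, \alp_{\sg^{-1}_m}^{-1}\right]. $$
This is transparent geometrically. Inverting a permutation reflects its plot across the line $y = x$, interchanging the horizontal position-band and the vertical value-band of each inflated block while reflecting the block's own plot. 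The block occupying position-band $i$ and value-band $\sg_i$ is therefore carried to the block in position-band $\sg_i$ and value-band $i$, with contents $\alp_i^{-1}$; reindexing by the new left-to-right position $j = \sg_i$ gives the displayed formula. The same reflection sends intervals to intervals, so $\sg$ is simple if and only if $\sg^{-1}$ is simple, and (taking $\sg$ to be a decreasing permutation) yields the reversal formula $(\mu_1 \ssum \cdots \ssum \mu_k)^{-1} = \mu_k^{-1} \ssum \cdots \ssum \mu_1^{-1}$.

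With the lemma in hand the main equivalence is immediate when $\sg$ is simple of length $m \geq 4$, since then the blocks of a substitution decomposition are uniquely determined and $\sg^{-1}$ is again simple of the same length. Thus $\pi = \pi^{-1}$ forces, by uniqueness, both $\sg = \sg^{-1}$ --- so $\sg$ is an involution --- and $\alp_j = \alp_{\sg^{-1}_j}^{-1} = \alp_{\sg_j}^{-1}$ for every $j$; conversely these two conditions substitute directly into the key lemma to yield $\pi^{-1} = \pi$. The two remaining simple permutations with $\sg \neq 21$, namely $\sg = 1$ and $\sg = 12$, I would dispatch by hand: $1[\alp_1] = \alp_1$ is an involution exactly when $\alp_1 = \alp_1^{-1}$, and $\alp_1 \dsum \alp_2$ is an involution exactly when $\alp_1$ and $\alp_2$ are each involutions (here the failure of block-uniqueness is harmless, since every splitting into a direct sum respects the two lengths). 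In both cases this matches the asserted condition $\alp_i = \alp_{\sg_i}^{-1}$.

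For the skew-decomposable involutions --- precisely the permutations whose simple skeleton is $21$, where block-uniqueness genuinely fails --- I would argue from the skew-indecomposable decomposition. Write $\pi = \mu_1 \ssum \mu_2 \ssum \cdots \ssum \mu_k$ as its unique expression with each $\mu_i$ skew-indecomposable and $k \geq 2$. By the reversal formula, $\pi^{-1} = \mu_k^{-1} \ssum \cdots \ssum \mu_1^{-1}$, and each $\mu_i^{-1}$ is again skew-indecomposable (inversion carries skew sums to skew sums), so this is the skew-indecomposable decomposition of $\pi^{-1}$; uniqueness then forces $\mu_i = \mu_{k+1-i}^{-1}$ for all $i$. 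Grouping the components by halves finishes the proof. If $k = 2t$, set $\alp_1 = \mu_1 \ssum \cdots \ssum \mu_t$ and $\alp_2 = \mu_{t+1} \ssum \cdots \ssum \mu_{2t}$; the matching relations give $\alp_2 = \alp_1^{-1}$, so $\pi = 21[\alp_1, \alp_2]$ with $\alp_1 = \alp_2^{-1}$. If $k = 2t+1$, the middle component satisfies $\mu_{t+1} = \mu_{t+1}^{-1}$, and taking $\alp_1 = \mu_1 \ssum \cdots \ssum \mu_t$, $\alp_2 = \mu_{t+1}$, and $\alp_3 = \mu_{t+2} \ssum \cdots \ssum \mu_{2t+1}$ yields $\pi = 321[\alp_1, \alp_2, \alp_3]$ with $\alp_1 = \alp_3^{-1}$ and $\alp_2 = \alp_2^{-1}$. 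The converse is immediate, since each of the two shapes is visibly fixed by the reversal formula.

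The main obstacle I anticipate is bookkeeping rather than ideas. Pinning down the index substitution $\alp_i \mapsto \alp_{\sg^{-1}_i}^{-1}$ in the key lemma unambiguously --- it is easy to reflect once too often, or to reverse the roles of $\sg$ and $\sg^{-1}$ --- is the step most prone to error, and everything downstream depends on getting it right. The second point demanding care is that the uniqueness clause of Theorem~\ref{thm:subsdecomp} applies only once $m \geq 4$, which is exactly why the cases $\sg \in \{1, 12\}$ and the skew case $\sg = 21$ cannot be folded into the generic argument and must be treated on their own.
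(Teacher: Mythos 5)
Your proof is correct. Note that the paper itself contains no proof of this statement --- it is imported verbatim from Brignall, Huczynska, and Vatter~\cite{Brignall2008} --- so there is nothing internal to compare against; your argument is essentially the standard one from that source: the commutation identity $\left(\sg[\alp_1,\dots,\alp_m]\right)^{-1}=\sg^{-1}\big[\alp_{\sg^{-1}_1}^{-1},\dots,\alp_{\sg^{-1}_m}^{-1}\big]$ combined with the uniqueness clause of the substitution decomposition (Theorem~\ref{thm:subsdecomp}) for $m\geq 4$, hand checks for $\sg\in\{1,12\}$, and the unique factorization into skew-indecomposable summands (with the palindromic condition $\mu_i=\mu_{k+1-i}^{-1}$ and the parity split giving the $21$ and $321$ shapes) for the skew-decomposable case. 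Your indexing in the key lemma is right, and you correctly identify the two genuine pressure points: that block-uniqueness is only guaranteed for $m\geq 4$, and that for $\sg=12$ equality of lengths $|\alp_1|=|\alp_1^{-1}|$ is what rescues the argument despite non-unique decomposition.
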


      Describing classes as restricted inflations of their simple permutations is
      a new and useful method for enumerating classes of
      \emph{permutations}~\cite{Albert2012}, and we adapt this method to pattern-avoiding
      \emph{involutions}. As we will show, the simple $2341$-avoiding and
      $1342$-avoiding involutions are (almost) the same as the simple
      $123$-avoiding involutions. The enumerations of these sets can then be
      obtained by appropriately inflating these $123$-avoiding involutions.

  \section{Simple 123-Avoiding Permutations}
  \label{involutions:sec:perms}

    We step back from involutions briefly, and investigate the simple
    $123$-avoiding \emph{permutations}. This investigation, while interesting on
    its own, provides a gentle introduction to the generating function techniques
    of Section~\ref{involutions:sec:123simples}. In particular, we mirror the
    techniques used by Albert and Vatter~\cite{Vatter2013} to construct and
    analyze a generating function for the $123$-avoiding permutations. 

    \subsection{The Staircase Decomposition}
      
      In Section~\ref{prelim:sec:av123} we investigated the geometric
      structure of the class $\Av 123$, and showed that it contains infinitely
      many simple permutations. While this class is not a grid
      class~\cite{GridClasses}, it can be defined using similar language. The
      \emph{staircase decomposition} of $\Av 123$ allows one to utilize many of
      the specialized techniques which are typically only applicable to grid
      classes, and is central to our study.  

      Every permutation $\pi \in \Av 123$ can be written as a union of two
      increasing sequences of entries (the left-to-right minima and the
      right-to-left maxima). The plot of such a permutation can be fit into a
      \emph{descending staircase} of blocks, the contents of which are monotone
      decreasing. See Figure~\ref{involutions:fig:staircase}. In general, such a
      decomposition is not unique, but for \emph{simple} $123$-avoiding
      permutations we can define a unique gridding as follows: let the first cell
      contain the longest decreasing prefix of the permutation, each eastward
      cell contain all entries whose value is greater than the smallest in the
      previous cell, and each southward cell contain all entries to the left of
      the rightmost entry of the previous cell. 
    
      \begin{figure}[t] \centering
        \begin{tikzpicture}[scale=.25, yscale=-1]
          \foreach \x/\y in {0/0, 5/0, 5/5, 10/5}{
            \draw[very thick, color=lightgray] (\x, \y) rectangle (\x + 5, \y + 5);
          }
          \node[closed] at (2,2) {};
          \node[closed] at (4,4) {};
          \node[closed] at (5.6,.6) {};
          \node[closed] at (5.9,5.9) {};
          \node[closed] at (6.2,1.2) {};
          \node[closed] at (7.6,7.6) {};
          \node[closed] at (8.4,3.4) {};
          \node[closed] at (10.5,5.5) {};
          \node[closed] at (12,7) {};
        \end{tikzpicture}
      \caption{The staircase decomposition for the permutation $759381642$.}
      \label{involutions:fig:staircase}
      \end{figure}
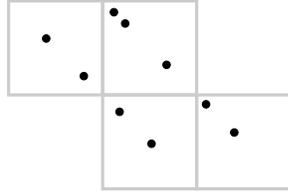

      This staircase decomposition was first introduced in~\cite{Albert2010} in
      the study of subclasses of $\Av 321$. As $123$ is the complement of $321$,
      our decomposition is a mirror image of theirs. Note that this decomposition
      separates the left-to-right minima and right-to-left maxima. We will use
      this fact later to build a bivariate generating function that keeps track
      of these entries separately. 

    \subsection{Iterative Process}

      Let $f = \sum_{\pi \in \Av_n 123} x^{n}$. We follow the exposition
      presented by Albert and Vatter in~\cite{Vatter2013} by first giving an
      \emph{almost} correct derivation, then fixing two small errors to obtain
      the correct result. 

      We can build a simple $123$-avoiding permutations iteratively using the
      staircase decomposition by filling one cell at the time. We must, however,
      be careful to ensure simplicity at each step along the way. To this end, we
      fill up an infinite staircase with \emph{filled dots} and \emph{hollow
      dots}; a filled dot represents an entry of the permutation, while a hollow
      dot represents a region which must be filled by at least one entry in order
      to maintain simplicity. Filled dots can be filled with a monotone run of
      entries, but each pair must be \emph{split} by a hollow dot in the next
      cell. Such a diagram with no hollow dots represents a simple $123$-avoiding
      permutations, while a diagram with hollow dots is still a work in progress.
      Since there are only two cells `active' at a time (the current one, and the
      next one), we can represent this process as an iterative system, and our
      goal is then to find a fixed point of the iteration. 

      We build $f$ one cell at a time. At step one, we have a single hollow dot
      in the first cell. At step two, we can fill this hollow dot with a
      descending run of filled dots, but each pair of these necessitates a hollow
      dot in the next cell to split them. During step three, each hollow dot in
      cell two can be filled with a descending run, but again we must place
      hollow dots in cell three to maintain simplicity. See
      Figure~\ref{involutions:fig:perm-iteration} for an example of this
      development.

      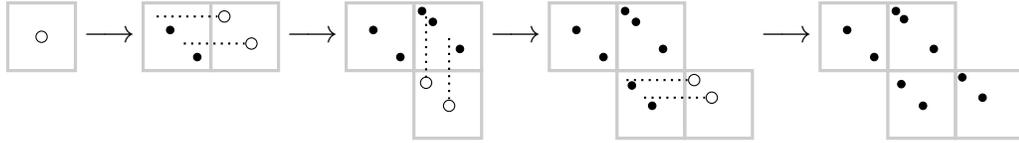
\begin{figure}[t] \centering
          \begin{tikzpicture}[scale=.18, yscale=-1]

            \foreach \x/\y in {0/0}{
              \draw[very thick, color=lightgray] (\x, \y) rectangle (\x + 5, \y + 5);
            }
            \node[open2] at (2.5,2.5) {};
            
          \node at (7.5,2.5) {$\longrightarrow$};

          \begin{scope}[shift={(10,0)}]
            \foreach \x/\y in {0/0, 5/0}{
              \draw[very thick, color=lightgray] (\x, \y) rectangle (\x + 5, \y + 5);
            }
            \node[closed] at (2,2) {};
            \node[closed] at (4,4) {};
            \node[open2] at (6,1) {};
            \node[open2] at (8,3) {};
            \draw[dotted, thick] (3,3)--(7.5,3);
            \draw[dotted, thick] (1,1)--(5.5,1);
          \end{scope}
          
          \node at (22.5,2.5) {$\longrightarrow$};
          
          \begin{scope}[shift={(25,0)}]
            \foreach \x/\y in {0/0, 5/0, 5/5}{
              \draw[very thick, color=lightgray] (\x, \y) rectangle (\x + 5, \y + 5);
            }
            \node[closed] at (2,2) {};
            \node[closed] at (4,4) {};
            \node[closed] at (5.6,.6) {};
            \node[open2] at (5.9,5.9) {};
            \node[closed] at (6.4,1.4) {};
            \node[open2] at (7.6,7.6) {};
            \node[closed] at (8.4,3.4) {};
            \draw[dotted, thick] (5.9,1)--(5.9,5.5);
            \draw[dotted, thick] (7.6,2.6)--(7.6,7.1);
          \end{scope}
          
          \node at (37.5,2.5) {$\longrightarrow$};

          \begin{scope}[shift={(40,0)}]
            \foreach \x/\y in {0/0, 5/0, 5/5, 10/5}{
              \draw[very thick, color=lightgray] (\x, \y) rectangle (\x + 5, \y + 5);
            }
            \node[closed] at (2,2) {};
            \node[closed] at (4,4) {};
            \node[closed] at (5.6,.6) {};
            \node[closed] at (6.1,6.1) {};
            \node[closed] at (6.4,1.4) {};
            \node[closed] at (7.6,7.6) {};
            \node[closed] at (8.4,3.4) {};
            \node[open2] at (10.7,5.7) {};
            \node[open2] at (12,7) {};
            \draw[dotted, thick] (7,7)--(11.5,7);
            \draw[dotted, thick] (5.7,5.7)--(10.2,5.7);
          \end{scope}
          
          \node at (57.5,2.5) {$\longrightarrow$};

          \begin{scope}[shift={(60,0)}]
            \foreach \x/\y in {0/0, 5/0, 5/5, 10/5}{
              \draw[very thick, color=lightgray] (\x, \y) rectangle (\x + 5, \y + 5);
            }
            \node[closed] at (2,2) {};
            \node[closed] at (4,4) {};
            \node[closed] at (5.6,.6) {};
            \node[closed] at (6,6) {};
            \node[closed] at (6.2,1.2) {};
            \node[closed] at (7.6,7.6) {};
            \node[closed] at (8.4,3.4) {};
            \node[closed] at (10.5,5.5) {};
            \node[closed] at (12,7) {};
          \end{scope}

          \end{tikzpicture}
        \caption{The evolution of the permutation $759381642$ by our recurrence.}
        \label{involutions:fig:perm-iteration}
      \end{figure}

      Let $f_i$ be the generating function at stage $i$ of this evolution, with the
      exponent of $x$ indicating the number of filled dots and the exponent of
      $y$ indicating hollow dots (so $f_1 = y$). A hollow dot can be filled with
      a run of filled dots, each pair of which requires a hollow dot, and we have
      the option of placing a new hollow dot above the run. It follows then that
      in each step, each occurrence of $u$ will be replaced by 
      $$ x(1+y) + x^2(y+y^2) + x^3 (y^2 + y^3) + \dots = \frac{x(1+y)}{1-xy}. $$ 
      Thus, we have
      {\small
      $$ f_1(x,y) = y, \quad f_2 
        = f_1\left(x, \frac{x(1+y)}{1-xy}\right) = \frac{x(1+y)}{1 - xy}, \quad 
        f_{i+1} = f_i\left(x, \frac{x(1+y)}{1-xy}\right) \dots .$$
      }

      Since we are interested in permutations with arbitrarily many staircase
      cells, we want to find the limit $f = \limn f_i $. It follows then that $f$
      is a \emph{fixed point} of the iteration $x \ra x; y \ra \frac{x(y+1)}{1 -
      xy}$. Since $f(x,y) = f\left(x, \frac{x(y+1)}{1 - xy}\right)$, can solve
      for $x$ to find
      $$ y = \frac{x(y+1)}{1 - xy} \implies 
         y = \frac{1 - x - \sqrt{1 - 2x - 3x^2}}{2x}.$$

      Thus we have
      $$ \begin{aligned}
        f &= f_1\left(x,\frac{1 - x - \sqrt{1 - 2x - 3x^2}}{2x}\right)\\
          &= \frac{1 - x - \sqrt{1 - 2x - 3x^2}}{2x} \\
          &= x + x^2 + 2x^3 + 4x^4 + 9x^5 + 21x^6 + \dots.
      \end{aligned} $$

      These coefficients are the Motzkin numbers, a well-studied and understood
      sequence (\OEIS{A001006}), but are unfortunately \emph{not} the number of
      simple $123$-avoiding permutations.  This is due to the aforementioned
      errors, which we will now correct.

    \subsection{Correcting the Errors}
    \label{involutions:sub:errors}

      Our iteration was correct, but there are some slight discrepancies arising
      in the first two steps of the iteration which must be accounted for. In the
      second step, the `optional' hollow dot above the topmost element is
      actually required, else the permutation will start with its largest entry
      (and therefore not be simple). Furthermore, when this required dot is
      inflated in the third step, the optional dot is in fact forbidden, else we
      will violate the greediness of the gridding. See
      Figure~\ref{involutions:fig:errors} for an illustration.

      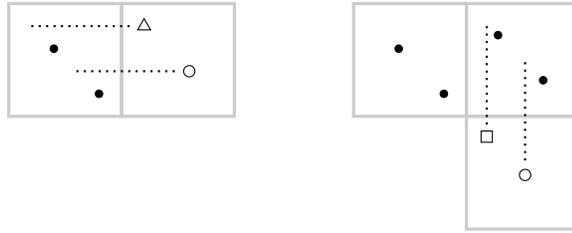
\begin{figure}[t] \centering
        \begin{tikzpicture}[scale=.3, yscale=-1]
          \foreach \x/\y in {0/0, 5/0}{
            \draw[very thick, color=lightgray] (\x, \y) rectangle (\x + 5, \y + 5);
          }
          \draw[color=white] (-1,-1) -- (10.5,-1) -- (10.5,10.5) -- (-1,10.5) -- cycle;
          \node[closed] at (2,2) {};
          \node[closed] at (4,4) {};
          \node[open2, regular polygon,regular polygon sides=3] at (6,1) {};
          \node[open2] at (8,3) {};
          \draw[dotted, thick] (3,3)--(7.5,3);
          \draw[dotted, thick] (1,1)--(5.5,1);
          \node[white] at (5,10) {};
        \end{tikzpicture}
          \hspace{2pc} 
        \begin{tikzpicture}[scale=.3, yscale=-1]
          \foreach \x/\y in {0/0, 5/0, 5/5}{
            \draw[very thick, color=lightgray] (\x, \y) rectangle (\x + 5, \y + 5);
          }
          \draw[color=white] (-1,-1) -- (10.5,-1) -- (10.5,10.5) -- (-1,10.5) -- cycle;
          \node[closed] at (2,2) {};
          \node[closed] at (4,4) {};
          \node[zball, minimum size=.15cm] at (5.9,5.9) {};
          \node[closed] at (6.4,1.4) {};
          \node[open2] at (7.6,7.6) {};
          \node[closed] at (8.4,3.4) {};
          \draw[dotted, thick] (5.9,1)--(5.9,5.5);
          \draw[dotted, thick] (7.6,2.6)--(7.6,7.1);
        \end{tikzpicture}
        \caption[The hollow triangle represents the location of the hollow dot
                  which is required]{
            The hollow triangle represents the location of the hollow dot
            which is required, and the hollow square represents the location of
            the hollow dot which is forbidden.}
      \label{involutions:fig:errors}
      \end{figure}
    
      Fortunately, however, these issues only affect the first three iterations:
      afterwards, the iteration works as initially described. We can therefore
      compensate by simply computing the first three by hand, and then plug in
      the value of $y$ which leads to the fixed point, as found above. 
      As above, we have $f_1 = y$. Since the next optional point is required and
      will be treated differently in the next step, we mark it with a $t$ to
      differentiate it from the standard hollow dots. Thus 
      $$ f_2(x,y,t) = \frac{xt}{1 - xy}.$$
      To compute $f_3$, we perform the standard iteration on the variable $y$,
      and change the variable $t$ into a generating function representing runs of
      filled dots with no option to place one above. This leads to 
      $$ f_3 = f_2 \left( x, \frac{x(y+1)}{1-xy}, \frac{x}{1-xy}\right).$$
    
      At this point the standard iteration, taken to infinity, produces the
      correct generating function, which can be used to enumerate the class $\Av
      (123)$, as shown in Section~\ref{prelim:sec:av123}.

      \begin{equation} \label{eqn:genfcn-simple123perms}
      \begin{split}
        f(z) &= f_3\left(x, \frac{1 - x - \sqrt{1 - 2x - 3x^2}}{2x}\right) \\
          &= \frac{2x^2}{1 + x^2 + (1+x)\sqrt{1 - 2x - 3x^2}} \\
          &= x^2 + 2x^4 + 2x^5 + 7x^6 + 14x^7 + 37x^8 + \dots.
        \end{split}
      \end{equation}
      The coefficients are \OEIS{A187306}.

  \section{Simple 123-Avoiding Involutions}
  \label{involutions:sec:123simples}

    We return now to the problem of enumerating the simple $123$-avoiding
    involutions. Though this is more difficult, the iterative development of the
    generating function for the simple $123$-avoiding permutations presented above
    forms the basis for our study. As we will eventually be inflating these
    involutions to enumerate the avoiding sets, we want to keep track of
    left-to-right minima ($\ltrm$), right-to-left maxima ($\rtlm$), and fixed
    points ($\fp$) separately.  Our goal here will be to find the generating
    functions $s^{(i)}(u,v)$, defined below
    $$ s^{(i)}(u,v) := \sum_{\substack{
              \text{simple } \sg \in \AvnI 123 \\
              \text{with } \fp(\sg) = i }}
              u^{\ltrm(\sg)} v^{\rtlm(\sg)} .$$

    \subsection{Extending the Iteration} 

      We proceed defining an iterative process similar to the development
      presented in Section~\ref{involutions:sec:perms}. 
      This iterative process can be extended in a variety of ways, as we will
      soon see. Note, for example, that we could have used a two-part recurrences
      to keep track of the top cells and bottom cells separately; it follows then
      that this process can be used to enumerate the left-to-right minima
      \emph{separately} from the right-to-left maxima with a more technical (but
      no more conceptually difficult) computation. The following sections will
      rely on some tedious and technical calculations, but the core ideas are
      relatively easy to express. 

      Geometrically, an involution is a permutation whose plot is symmetric about
      the line $y = x$ through the plane. As such, we can build a simple
      $123$-avoiding involution using the staircase decomposition \emph{starting
      from the center}, and building out in both directions.
      Figure~\ref{involutions:fig:staircase-center} shows the two possible cases.
      When there is a single fixed point, the case is uniquely determined by
      considering whether the fixed point is a \rtlmax{} or \ltrmin{}.

      \begin{figure}[t]
      \centering
        \begin{tikzpicture}[scale=.45]
          \draw[thick,fill=lightgray] (0,0) rectangle (2,2);
          \draw[thick] (-2,0) rectangle (0,2);
          \draw[thick] (0,0) rectangle (2,-2);
          \draw[thick] (-2,2) rectangle (0,4);
          \draw[thick] (2,-2) rectangle (4,0);
          \node at (-2.6,4.7) {$\ddots$};
          \node at (4.55,-2.15) {$\ddots$};
        \end{tikzpicture}
        \hspace{4pc}
        \begin{tikzpicture}[scale=.45,cm={-1,0,0,-1,(0,0)}]
          \draw[thick,fill=lightgray] (0,0) rectangle (2,2);
          \draw[thick] (-2,0) rectangle (0,2);
          \draw[thick] (0,0) rectangle (2,-2);
          \draw[thick] (-2,2) rectangle (0,4);
          \draw[thick] (2,-2) rectangle (4,0);
          \node at (-2.55,4.2) {$\ddots$};
          \node at (4.55,-2.6) {$\ddots$};
        \end{tikzpicture}
        \caption[The diagrams on which we can draw simple permutations]{
          The diagrams on which we can draw simple permutations $\sigma
          \in \Av^I(123)$ that contain a single fixed point.  The starting point
          of the iteration is the shaded cell.}
        \label{involutions:fig:staircase-center}
      \end{figure}
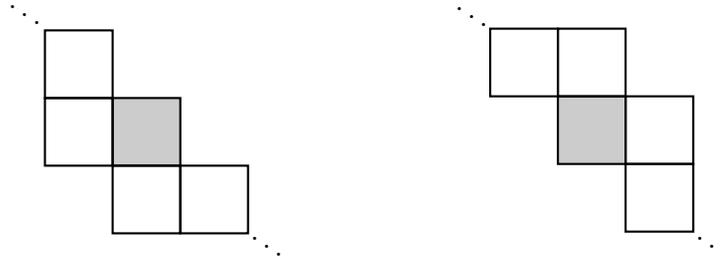
    
      As in Section~\ref{involutions:sec:perms}, we start with a single hollow
      dot in the center cell, and proceed outwards in both directions
      simultaneously while mainaining symmetry. However, the number of fixed
      points determines how we proceed from here. In the interest of clarity, we
      develop the single fixed point case in detail, and give a sketch of the
      details of the other cases. 

    \subsection{Single Fixed Point}

      We first develop the generating function $\htso(z) = s^{(1)}(z,z)$, which
      only keeps track of the number of such permutations of each length and
      ignores the ltr-min and rtl-max, and then indicate how to obtain the
      more general $\htso(u,v)$. The set of all simple $(123)$-avoiding
      involutions with exactly one fixed point can be partitioned based on
      whether the fixed point is a ltr-min or a rtl-max. These two sets are
      in bijection with each other, as mapping a permutation to its reverse
      complement maps one set to the other. Therefore it suffices to enumerate
      those in which the fixed point is a rtl-max, and then simply double the
      result to obtain the full generating function (or in the case of $s^{(1)}$,
      add the result to itself with the rtl-max and ltr-min switched). 
    
      Assume that the fixed point is a rtl-max. The first hollow dot must then
      be inflated by an odd number of filled dots (with the fixed point at the
      center). The hollow dots here behave a bit differently than in the previous
      section: each pair of filled dots can be split either below or to the left,
      or both. Of these possible splittings, one of them (see
      Figure~\ref{involutions:fig:bad-case} yields a skew decomposable
      permutation, violating the simplicity condition. We can account for this
      with a calculation which takes the symmetry into account. 

      \begin{figure}[t]
      \centering
        \begin{tikzpicture}[scale=.15]
          \foreach \x/\y in {-10/0, 0/0, 0/-10}{
            \draw[very thick, color=lightgray] (\x, \y) rectangle (\x + 10, \y + 10);
          }
          \draw[dotted] (-9,-9) -- (11,11);

          \foreach \i in {-4, -2, 0, 2, 4}{
            \node[closed] at (5-\i,5+\i) {};
          }
          \foreach \i in {1,3}{
            \node[open] at (-5-\i, 5+\i) {};
            \draw[color=gray] (-4.5-\i,5+\i) -- (5.5-\i, 5+\i);
          }
          \foreach \i in {-1,-3}{
            \node[open_y] at (5-\i, -5+\i) {};
            \draw[color=gray] (5-\i,-4.5+\i) -- (5-\i, 5.5+\i);
          }
        \end{tikzpicture}
      \caption[An example of a bad placement]{
          An example of a bad placement of splitting entries that leads to
          a skew decomposable permutation.} 
      \label{involutions:fig:bad-case}
      \end{figure}
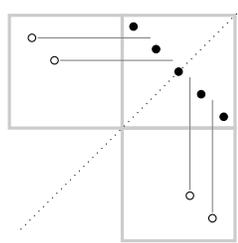

      Suppose that the initial cell (which contains the fixed point) contains a total
      of $2k+1$ entries. It follows that $k$ of these entries lie below and to the
      right of the fixed point. Because $\sigma$ is simple, each of the $2k$ adjacent
      pairs of entries in this cell must be separated by entries in the cell below,
      by entries in the cell to the left, or by both types of entries. Each adjacent
      pair lying above and to the left of the fixed point has a corresponding
      adjacent pair (its image under inversion) which lies below and to the right of
      the fixed point; if we split the former to the left, then the inverse-image of
      the separating entry splits the latter below, and vice versa.

      We can split each adjacent pair with as few as $k$ entries in the cell below
      the fixed point, and this can be done in $2^k$ ways by picking which of each
      two corresponding pairs of entries to split below. Similarly, the number of
      ways to have $k+i$ separating entries in the cell below is given by $2^{k-i}{k
      \choose i}$, since we can first pick which of the $i$ corresponding pairs of
      gaps between entries are split both to the left and below, then we choose which
      of each of the remaining $k-i$ corresponding pairs are split below or to the
      left.

      As in the derivation in Section~\ref{involutions:sub:errors}, there are a
      few slight difficulties we must take into account, but again they only
      arise in the first three steps of the iteration. We therefore construct
      these three steps by hand, before letting the iteration go to infinity. 

      Every choice of separating entries leads to a simple permutation except
      one: if we split all of the pairs of entries to the right of the fixed point by
      entries below the initial cell and split no other pairs, then the resulting
      permutation will be skew decomposable, as shown in
      Figure~\ref{involutions:fig:bad-case}. We compensate for these ``bad cases'' by
      subtracting the term $x/(1 - x^2y)$.

      It follows that
      $$ \begin{aligned}
        \htso_2(x,y,z)
        &=
        \frac{2z}{y}\left(\sum_{k=0}^\infty \left(x^{2k+1}\sum_{i=0}^k
            2^{k-i}{k\choose i}y^{k+i}\right) - \frac{x}{1-x^2y}\right)\\ 
        &=
        \frac{2x^3z(1+y)}{(1-x^2y)(1-2x^2y-x^2y^2)}.
      \end{aligned} $$

      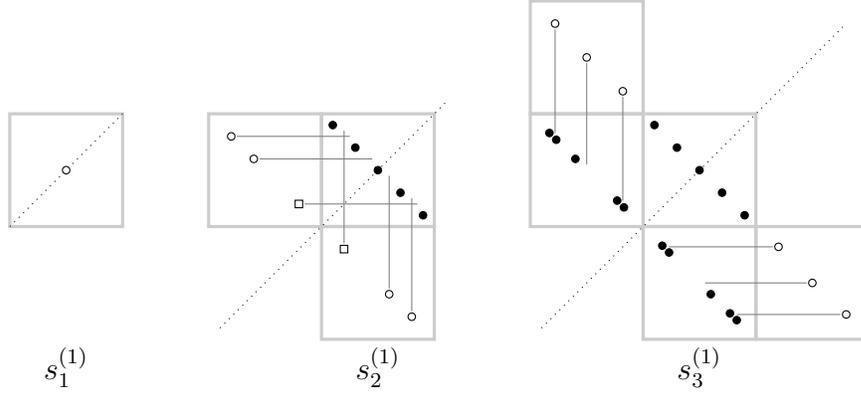
\begin{figure}[t]
        \centering
        \begin{tikzpicture}[scale=.15]

          \foreach \x/\y in {0/0}
          \draw[very thick, color=lightgray] 
          (\x, \y) -- (\x + 10, \y) -- (\x + 10, \y + 10) -- (\x, \y + 10) -- cycle;
          \draw[dotted] (0,0) -- (10,10);

          \node[open_y] at (5,5) {};
          \node[fill=none] at (5,-12.5) {$s^{(1)}_1$};
        \end{tikzpicture}
        \hspace{2em}
        \begin{tikzpicture}[scale=.15]
          \foreach \x/\y in {-10/0, 0/0, 0/-10}
          \draw[very thick, color=lightgray] 
          (\x, \y) -- (\x + 10, \y) -- (\x + 10, \y + 10) -- (\x, \y + 10) -- cycle;
          \draw[dotted] (-9,-9) -- (11,11);

          \foreach \i in {-4, -2, 0, 2, 4}{
            \node[closed] at (5-\i,5+\i) {};
          }
          \foreach \i in {1,3}{
            \node[open] at (-5-\i, 5+\i) {};
            \draw[color=gray] (-4.5-\i,5+\i) -- (5.5-\i, 5+\i);
          }
          \foreach \i in {-1,-3}{
            \node[open_y] at (5-\i, -5+\i) {};
            \draw[color=gray] (5-\i,-4.5+\i) -- (5-\i, 5.5+\i);
          }
          \node[zball] at (2,-2) {};
          \node[zball] at (-2,2) {};
          \draw[color=gray] (2,-1.5) -- (2, 8.5);
          \draw[color=gray] (-1.5,2) -- (8.5,2);

          \node[fill=none] at (5,-12.5) {$s^{(1)}_2$};
        \end{tikzpicture}
        \hspace{2em}
        \begin{tikzpicture}[scale=.15] 
          \foreach \x/\y in {-10/10, -10/0, 0/0, 0/-10, 10/-10}
          \draw[very thick, color=lightgray] 
          (\x, \y) -- (\x + 10, \y) -- (\x + 10, \y + 10) -- (\x, \y + 10) -- cycle;
          \draw[dotted] (-9,-9) -- (18,18);

          \foreach \i in {-4, -2, 0, 2, 4}{
            \node[closed] at (5-\i,5+\i) {};
          }
          \foreach \i in {1,-3.3,-2.7, 2.7,3.3}{
            \node[closed] at (-5-\i, 5+\i) {};
          }
          \foreach \i in {2.7,3.3, -1,-2.7, -3.3}{
            \node[closed] at (5-\i, -5+\i) {};
          }

          \foreach \i in {-3, 3}{
            \draw[color=gray] (14.5-\i, -4.8+\i) -- (5.3-\i, -4.8+\i);
            \draw[color=gray] (-4.8+\i, 14.5-\i) -- (-4.8+\i, 5.3-\i);
            \node[open_y] at (15-\i, -4.8+\i) {};
            \node[open] at (-4.8+\i, 15-\i) {};
          }
          \draw[color=gray] (14.5,-5) -- (5.5, -5);
          \draw[color=gray] (-5,14.5) -- (-5, 5.5);
          \node[open_y] at (15,-5) {};
          \node[open] at (-5,15) {};
          \node[fill=none] at (5,-12.5) {$s^{(1)}_3$};
        \end{tikzpicture}
      \caption{Three stages of the recurrence, in the case when the single
              fixed point is a right-to-left maximum.}
      \label{involutions:fig:3stages}
      \end{figure}

      The $2$ in $\htso_2$ accounts for both cases, where the fixed point is a
      \rtlmax{} and a \ltrmin{}, while the $z/y$ factor counts the topmost
      hollow dot in the cell below the fixed point by $z$ instead of $y$, as it
      will require special care. By our definition of greediness, this topmost
      hollow dot, shown as a hollow square in
      Figure~\ref{involutions:fig:3stages}, is not allowed to produce an
      hollow dot above it in the next cell. Therefore, when substituting for $z$ to
      obtain $\htso_3$, we substitute $x^2/(1-x^2y)$ instead of
      $x^2(1+y)/(1-x^2y)$. As such, we obtain

      $$ \htso_3(x,y)
          =s_2\left(x, \frac{x^2(1+y)}{1-x^2y}, \frac{x^2}{1-x^2y}\right). $$

      After this point, the same iteration leads from $\htso_i$ for $\htso_{i+1}$
      for all $i \geq 3$. Since the filled dots above the center cell are
      completely determined by those below, we need only consider the expansion
      of hollow dots in the bottom cell. Their expansion is exactly as in
      Section~\ref{involutions:sec:perms}, except that each expansion of a hollow
      dot adds dots in both the bottommost cell and the topmost. Letting $i \geq
      3$, this leads to the relation
      \begin{equation}\label{involutions:eqn:iteration-inv}
        \htso_{i+1}(x,y) = \htso_i \left(x, \frac{x^2(y+1)}{1-x^2y} \right).
      \end{equation}

      To find the limit of this iteration, it suffices to find at fixed point,
      and plug it in for $y$ in the expression $\htso_3(x,y)$. 
      This leads to 
      \begin{equation} \label{involutions:eqn:htso}
        \begin{split}
        \htso(x) 
        &= \htso_3 \left(x, \frac{1 - x^2 - \sqrt{1-2x^2 - 3x^4}}{2x^2}\right) \\
        &= \frac{2x^5 \left(1 + x^2 + \sqrt{1 - 2x^2-3x^4}\right)}{%
            (1+x^2)^2 \left(1 - 3x^2 + (1-2x^2)\sqrt{1-2x^2-3x^4}\right)} \\
        &= 2x^5 + 2x^7 + 10x^9 + 22x^{11} + 68x^{13} + 184x^{15} + 530x^{17}
        +\dots \\
        \end{split}
      \end{equation}

      Note that an involution with only a single fixed point is necessarily of
      odd length, and so the power series in equation~\ref{involutions:eqn:htso}
      contains no terms with even powers.

      Rather than repeat this full derivation to find $\sone(u,v)$, we simply
      indicate the changes to make to the above calculation.  Recall that $u$
      (resp. $v$) represents a filled dot which is a ltr-min (reps. rtl-max), and
      introduce new variables $y_u$ and $y_v$ which represent hollow dots which
      are ltr-min and rtl-max, respectively.  We can assume that the fixed point
      is a rtl-max, because then we can just add this generating function to
      itself with the $u$ and $v$ swapped to obtain the full generating function
      $\sone(u,v)$. 

      A hollow dot in a lower cell, represented by $y_u$, then leads to filled
      dots in the lower cell (represented by $u$) and hollow dots in an upper
      cell (represented by $y_v$s).  A similar description of hollow dots in an
      upper cell leads to the iterations
      \begin{equation}\label{involutions:eqn:dual-iterations}
        \begin{split}
        y_u &\mapsto \frac{u^2 (1 + y_v)}{1 - u^2 y_v} \\ 
        y_v &\mapsto \frac{v^2 (1 + y_u)}{1 - v^2 y_u}.
        \end{split}
      \end{equation}

      To find the fixed point of this iteration, we can compute two iterations
      and solve. That is, solve for $y_v$ in the expression 

      $$ \begin{aligned} 
          y_v &=  \frac{v^2 (1 + y_u)}{1 - v^2 y_u} \\
          &= \frac{v^2 \left(1 + \frac{u^2 (1 + y_v)}{1 - u^2 y_v}\right)}{%
            1-v^2 \frac{u^2 (1 + y_v)}{1 - u^2 y_v}} \\
          &= \frac{v^2 (1 + u^2}{%
              1 - u^2v^2 - u^2y_v - u^2v^2y_v}. 
      \end{aligned} $$

      Solving this system yields the fixed point of the iteration: 
      \begin{equation} \label{involutions:eqn:dualfixed}
        y_v = \frac{1 - u^2v^2 - \sqrt{1 - 6u^2v^2 - 4u^2v^4 - 
                                       4u^2v^2 - 3u^4v^4}}{%
                    2u^2(1+v^2)}.
      \end{equation}

      Mirroring the construction of $\htso$, we can derive
      $\sone_3(u,v,y_u, y_v)$ by hand using these extra variables. Note that
      there will be no $y_u$ terms in this expression, because at the third stage
      the only hollow dots will be in cells corresponding to left-to-right
      minima. The limit of the iteration is then given by plugging in the fixed
      point to this expression. This gives the generating function for the case
      when the fixed point is a rtl-max, but by swapping occurrences of $u$ and
      $v$ and then adding it back to itself, we obtain the full generating
      function $\sone$. 

      {\small
      \begin{equation} \label{involutions:eqn:sone}
        \begin{split}
        \sone(u,v) &= 
          \frac{u^2v^3(1+u^2)(1+2v^2+u^2v^2+r)}
          {(1+v^2)(1-6u^2v^2-4u^2v^4-4u^4v^2-3u^4v^4+(1-3u^2v^2-2u^4v^2)r)}\\
          &\text{where} \quad r := \sqrt{1-6u^2v^2-4u^2v^4-4u^4v^2-3u^4v^4}.
        \end{split}
      \end{equation}
      }

    \subsection{Zero and Two Fixed Points}
    \label{involutions:sub:02fp}
      
      We now turn to the remaining two cases, in which the involution has no
      fixed points or two fixed points. The derivation is largely the same as the
      single fixed point case, so we simply sketch the changes that must be made. 
      Each of these has their own idiosyncrasies, but they can be dealt with
      easily. 

      First, consider the case of involutions with no fixed points. Such a
      permutation cannot be uniquely gridded, because the diagonal line on which
      the fixed points would lie can be taken to pass through either a lower or
      upper central cell. It follows, however, that every involution with no
      fixed points can be decomposed in both ways, and so it suffices to assume
      that the diagonal line passes through an upper cell, and take this to be
      our initial cell. 

      Since there is no fixed point, this initial cell must have an even number
      of elements. We build the first three iterations by hand, in the same
      manner as the one fixed point case, before substituting the fixed point of
      the iteration. A similar bad case (Figure~\ref{involutions:fig:bad-case})
      must be accounted for, and the same restriction applies to the topmost
      hollow dot of the second cell, as shown in
      Figure~\ref{involutions:fig:3stages}. 

      The generating function $\htsz$ enumerating the class according to length, and the
      corresponding bivariate generating function $\szero$ enumerating the
      ltr-min and ltr-max entries are given below. 

      {\small
      \begin{equation} \label{involutions:eqn:htszero}
        \begin{split}
        \htsz(x) &=
        \frac{2x^6(1+x^2-\sqrt{1-2x^2-3x^4})}{2-2x^2-10x^4-6x^6+(2-6x^4-4x^6)\sqrt{1-2x^2-3x^4}}\\
        &= x^8+2x^{10}+8x^{12}+22x^{14}+68x^{16}+198x^{18}+586x^{20}+\cdots.
        \end{split}
      \end{equation}

      \begin{equation} \label{involutions:eqn:szero}
        \begin{split}
        \szero(u,v)
        &= \frac{2u^2v^4(1+u^2)(1+2u^2+u^2v^2-r)}
        {(1-u^2v^2+r)(1-6u^2v^2-4u^2v^4-4u^4v^2-3u^4v^4+(1+2v^2+u^2v^2)r)} \\
          &\text{where} \quad r := \sqrt{1-6u^2v^2-4u^2v^4-4u^4v^2-3u^4v^4}.
        \end{split}
      \end{equation}
      }

      Finally, we consider the case of involutions with two fixed points. As with
      the case of no fixed points, such a permutation can be drawn on either of
      the two diagrams shown in Figure~\ref{involutions:fig:staircase}. To ensure
      uniqueness, break our own rules slightly to say that the topmost fixed
      point is the center of the initial cell, while the bottom fixed point lies
      on the southwest corner of this cell. See
      Figure~\ref{involutions:fig:twofix} for an example, and note that in this
      case, the hollow square is allowed to produce a hollow dot above itself in
      the next cell, as this no longer violates the greediness of the
      decomposition (because of the lower fixed point).

      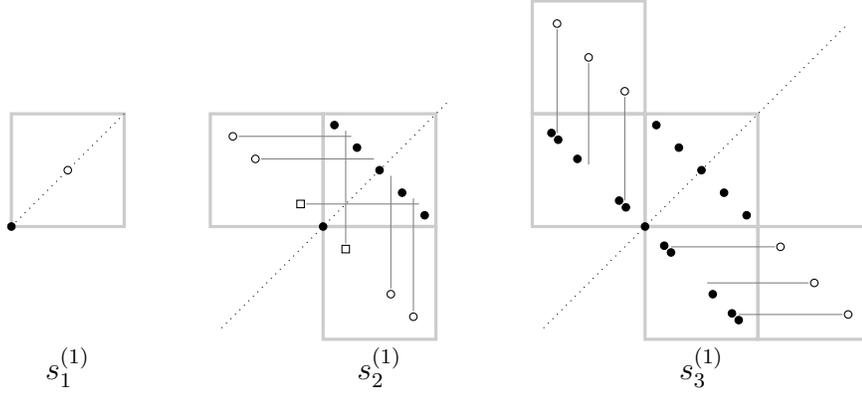
\begin{figure}[t]
        \centering
        \begin{tikzpicture}[scale=.15]

          \foreach \x/\y in {0/0}
          \draw[very thick, color=lightgray] 
          (\x, \y) -- (\x + 10, \y) -- (\x + 10, \y + 10) -- (\x, \y + 10) -- cycle;
          \draw[dotted] (0,0) -- (10,10);

          \node[open_y] at (5,5) {};
          \node[closed] at (0,0) {};
          \node[fill=none] at (5,-12.5) {$s^{(1)}_1$};
        \end{tikzpicture}
        \hspace{2em}
        \begin{tikzpicture}[scale=.15]
          \foreach \x/\y in {-10/0, 0/0, 0/-10}
          \draw[very thick, color=lightgray] 
          (\x, \y) -- (\x + 10, \y) -- (\x + 10, \y + 10) -- (\x, \y + 10) -- cycle;
          \draw[dotted] (-9,-9) -- (11,11);
          \node[closed] at (0,0) {};

          \foreach \i in {-4, -2, 0, 2, 4}{
            \node[closed] at (5-\i,5+\i) {};
          }
          \foreach \i in {1,3}{
            \node[open] at (-5-\i, 5+\i) {};
            \draw[color=gray] (-4.5-\i,5+\i) -- (5.5-\i, 5+\i);
          }
          \foreach \i in {-1,-3}{
            \node[open_y] at (5-\i, -5+\i) {};
            \draw[color=gray] (5-\i,-4.5+\i) -- (5-\i, 5.5+\i);
          }
          \node[zball] at (2,-2) {};
          \node[zball] at (-2,2) {};
          \draw[color=gray] (2,-1.5) -- (2, 8.5);
          \draw[color=gray] (-1.5,2) -- (8.5,2);

          \node[fill=none] at (5,-12.5) {$s^{(1)}_2$};
        \end{tikzpicture}
        \hspace{2em}
        \begin{tikzpicture}[scale=.15] 
          \foreach \x/\y in {-10/10, -10/0, 0/0, 0/-10, 10/-10}
          \draw[very thick, color=lightgray] 
          (\x, \y) -- (\x + 10, \y) -- (\x + 10, \y + 10) -- (\x, \y + 10) -- cycle;
          \draw[dotted] (-9,-9) -- (18,18);
          \node[closed] at (0,0) {};

          \foreach \i in {-4, -2, 0, 2, 4}{
            \node[closed] at (5-\i,5+\i) {};
          }
          \foreach \i in {1,-3.3,-2.7, 2.7,3.3}{
            \node[closed] at (-5-\i, 5+\i) {};
          }
          \foreach \i in {2.7,3.3, -1,-2.7, -3.3}{
            \node[closed] at (5-\i, -5+\i) {};
          }

          \foreach \i in {-3, 3}{
            \draw[color=gray] (14.5-\i, -4.8+\i) -- (5.3-\i, -4.8+\i);
            \draw[color=gray] (-4.8+\i, 14.5-\i) -- (-4.8+\i, 5.3-\i);
            \node[open_y] at (15-\i, -4.8+\i) {};
            \node[open] at (-4.8+\i, 15-\i) {};
          }
          \draw[color=gray] (14.5,-5) -- (5.5, -5);
          \draw[color=gray] (-5,14.5) -- (-5, 5.5);
          \node[open_y] at (15,-5) {};
          \node[open] at (-5,15) {};
          \node[fill=none] at (5,-12.5) {$s^{(1)}_3$};
        \end{tikzpicture}
      
      \caption{The decomposition of an involution with two fixed points.} 
      \label{involutions:fig:twofix}
      \end{figure}

      Note also that the `bad case' (Figure~\ref{involutions:fig:bad-case}) is no
      longer a bad case, as the lower fixed point maintains simplicity. Also, we
      are now allowed to add a hollow dot in the second cell immediately to the
      right of the lower fixed point, as long as we insert a hollow dot above
      this entry in the third cell. Taking these factors into consideration, we
      have the following generating functions for $\htsz$ and $\szero$. 

    \begin{equation} \label{involutions:eqn:htstwo}
      \begin{split}
      \htsz(x)
      &=
      \frac{x^4(2+5x^2+3x^4-(2+x^2)\sqrt{1-2x^2-3x^4})}
      {1-x^2-5x^4-3x^6+(1+2x^2+x^4)\sqrt{1-2x^2-3x^4}}\\
      &=
      3x^6+4x^8+15x^{10}+36x^{12}+105x^{14}+288x^{16}+819x^{18}+\cdots.
      \end{split}
    \end{equation}

    \begin{equation} \label{involutions:eqn:stwo}
      \begin{split}
      \szero(u,v)
      &= \frac{uv^3\pa{2+7u^2+4u^2v^2+4u^4+3u^4v^2-(2+u^2)r}}
      {1-6u^2v^2-4u^2v^4-4u^4v^2-3u^4v^4+(1+2v^2+u^2v^2)r} \\
      &\text{where} \quad r := \sqrt{1-6u^2v^2-4u^2v^4-4u^4v^2-3u^4v^4}.
      \end{split}
    \end{equation}

    We can now combine the generating functions $\szero, \sone, \stwo$ to obtain
    a generating function for all simple $123$-avoiding permutations, enumerated
    by number of left-to-right minima and right-to-left maxima. However, it will
    be convenient to keep these separate, because in the next section we will
    explore inflations of these permutations, and oftentimes fixed points have
    different inflation rules from other entries.

  \section{Enumerating Pattern Avoiding Involutions}
  \label{involutions:sec:enumerations}
    
    We are now in position to enumerate the sets $\Av^I(1342)$ and $\Av^I(2341)$.
    Our tool for both of these is to first show that the simples in each set
    (almost) coincides with the simples within $\Av^I(123)$. This allows us to
    describe each of these sets by inflations of these simples, and so we need
    only determine what inflations are allowed to enumerate the sets.

    \subsection{Involutions Avoiding 1342}
    \label{involutions:sub:1342}

      Clearly, every involution avoiding $1342$ must also avoid $1342^{-1} =
      1423$. We first show that the set of simples in this set are precisely
      the $123$-avoiding simple involutions. This will be easy once we establish
      suitable notation. 

      \begin{definition} \label{involutions:def:subs-closure}
        Given a permutation class $\C$, define its \emph{substitution closure}
        $\vect{\C}$ to be the largest class with the same simple permutations as
        $\C$. 
      \end{definition}

      By definition, since $\Av(123) \subseteq \Av(1342, 1423)$, we have that
      the $123$-avoiding simples are contained in $\Av(1342, 1423)$.
      Atkinson, Ru\v{s}kuc, and Smith~\cite{Atkinson2011} investigated
      substitution closures, and found that 
      $$ \vect{\Av(123)} = \Av(24153, 25314, 31524, 41352, 246135, 415263).$$
      Each of these basis elements contains either $1342$ or $1423$, and so we
      have the following relation and its consequences. 
      $$ \Av(1342, 1423) \subseteq \vect{\Av(123)}.$$ 

      \begin{proposition} \label{involutions:prop:1342simpleperms}
        The simple permutations within $\Av(1342, 1423)$ are precisely the same
        as the simple permutations within $\Av(123)$. 
      \end{proposition}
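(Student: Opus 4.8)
The plan is to prove the proposition by a short sandwiching argument that exploits the defining property of the substitution closure. Writing $\mathrm{Si}(\mathcal{X})$ for the set of simple permutations lying in a class $\mathcal{X}$, I will establish the chain of class inclusions
$$ \Av(123) \subseteq \Av(1342, 1423) \subseteq \vect{\Av(123)}, $$
and then observe that the two outer classes have identical simple permutations, which forces the middle class to agree with them as well.

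First I would verify the left-hand inclusion. Both $1342$ and $1423$ contain $123$ as a pattern (for instance the prefix $134$ of $1342$, and the subsequence $123$ formed by the first, third, and fourth entries of $1423$), so by transitivity of pattern containment any $123$-avoiding permutation avoids both $1342$ and $1423$. Hence $\Av(123) \subseteq \Av(1342,1423)$, and in particular $\mathrm{Si}(\Av(123)) \subseteq \mathrm{Si}(\Av(1342,1423))$, since a simple permutation lying in the smaller class certainly lies in the larger one.

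For the right-hand inclusion I would invoke the result of Atkinson, Ru\v{s}kuc, and Smith giving
$$ \vect{\Av(123)} = \Av(24153, 25314, 31524, 41352, 246135, 415263), $$
together with the finite check that each of these six basis elements contains a copy of $1342$ or of $1423$. By transitivity, any permutation that contains one of these basis elements then contains $1342$ or $1423$; contrapositively, a permutation in $\Av(1342,1423)$ avoids all six basis elements and so lies in $\vect{\Av(123)}$. This gives $\Av(1342,1423) \subseteq \vect{\Av(123)}$ and hence $\mathrm{Si}(\Av(1342,1423)) \subseteq \mathrm{Si}(\vect{\Av(123)})$.

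To finish, I would use the defining property of the substitution closure: $\vect{\Av(123)}$ is by definition the largest class with the same simple permutations as $\Av(123)$, so $\mathrm{Si}(\vect{\Av(123)}) = \mathrm{Si}(\Av(123))$. Combining the three inclusions of simple-permutation sets now sandwiches $\mathrm{Si}(\Av(1342,1423))$ between two copies of $\mathrm{Si}(\Av(123))$, forcing equality. The only genuinely computational step is the finite verification that each of the six basis elements contains $1342$ or $1423$; this is routine but is precisely the crux that makes the upper inclusion work, and I would carry it out explicitly (noting that it is already recorded in the discussion preceding the statement).
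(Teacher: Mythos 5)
Your proposal is correct and takes essentially the same route as the paper, which likewise establishes the sandwich $\Av(123) \subseteq \Av(1342,1423) \subseteq \vect{\Av(123)}$ by citing the Atkinson--Ru\v{s}kuc--Smith basis $\vect{\Av(123)} = \Av(24153, 25314, 31524, 41352, 246135, 415263)$ and observing that each basis element contains $1342$ or $1423$, then concludes from the defining property of the substitution closure. Your explicit note that the simple-permutation sets of the outer classes coincide and hence force the middle one to agree merely spells out the step the paper leaves implicit, so there is no gap.
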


      \begin{corollary} \label{involutions:cor:1342simple}
        The simple involutions within $\Av(1342, 1423)$ are precisely the same
        as the simple involutions within $\Av(123)$. 
      \end{corollary}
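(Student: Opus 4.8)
The plan is to derive this immediately from Proposition~\ref{involutions:prop:1342simpleperms}, since the corollary is nothing more than that proposition restricted to involutions. The crucial observation is that both of the defining conditions here --- being \emph{simple} and being an \emph{involution} --- are intrinsic properties of a permutation $\pi$ itself, depending only on the one-line form of $\pi$ and not on which class we regard $\pi$ as living in. Simplicity depends only on the interval structure of $\pi$ (Definition~\ref{prelim:def:simple}), and the involution condition $\pi = \pi^{-1}$ depends only on $\pi$; neither notion makes reference to an ambient class.

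Concretely, I would let $S$ denote the common set of simple permutations guaranteed by Proposition~\ref{involutions:prop:1342simpleperms}, so that $S$ is simultaneously the set of simple permutations in $\Av(1342,1423)$ and the set of simple permutations in $\Av(123)$. The simple involutions in $\Av(1342,1423)$ are then exactly those elements $\sigma \in S$ satisfying $\sigma = \sigma^{-1}$, and likewise the simple involutions in $\Av(123)$ are exactly those $\sigma \in S$ with $\sigma = \sigma^{-1}$. Since both sets are obtained by imposing the \emph{same} condition $\sigma = \sigma^{-1}$ on the \emph{same} set $S$, they coincide.

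There is essentially no obstacle to this step: the entire mathematical content has already been absorbed into Proposition~\ref{involutions:prop:1342simpleperms} (which in turn rests on the substitution-closure computation of Atkinson, Ru\v{s}kuc, and Smith~\cite{Atkinson2011} together with the inclusion $\Av(1342,1423) \subseteq \vect{\Av(123)}$). The only thing worth stating explicitly is the observation that intersecting two equal families of simple permutations with the fixed predicate ``is an involution'' preserves equality, which is immediate. Thus the proof is a single sentence invoking the proposition, and I would present it as such rather than reproving any structural facts.
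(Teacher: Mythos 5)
Your proposal is correct and matches the paper's reasoning exactly: the paper states this corollary as an immediate consequence of Proposition~\ref{involutions:prop:1342simpleperms}, with the implicit observation (which you make explicit) that being simple and being an involution are intrinsic properties of a permutation, so restricting two equal sets of simple permutations to involutions preserves equality.
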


      To enumerate the set we now need only describe the allowable inflations
      which maintain pattern avoidance and involutionicity. We divide the simples
      into three classes: first we have the inflations of $1$, which themselves
      must be simple. Then come the inflations of $12$ and $21$, the sum- and
      skew-decomposable permutations, respectively. Finally we consider
      inflations of simples of length greater than three.

      We begin by by defining $f$ to be the generating function for the class
      $\Av(1342,1423)$ and $f_\oplus$ (resp., $f_\ominus$) the generating
      function for the sum (resp., skew) decomposable permutations of this class.
      We then define $g$ to be the generating function for the set $\Av^I(1342)$
      and $g_\oplus$ (resp., $g_\ominus$) the generating function for the sum
      (resp., skew) decomposable $1342$-avoiding involutions.

      First we describe the sum decomposable permutations
      $\pi=\alpha_1\oplus\alpha_2$ counted by $g_\oplus$. By
      Proposition~\ref{thm:subsdecomp}, we can assure uniqueness of
      decomposition by requiring that $\alpha_1$ is sum indecomposable. To produce
      an involution, $\alpha_1$ and $\alpha_2$ must be involutions as well. In
      order for $\pi$ to avoid the patterns $1342$ and $1423$, it is required that
      $\alpha_1$ avoids these patterns, and that $\alpha_2$ avoids the patterns
      $231$ and $312=231^{-1}$.

      In fact, the class $\Av(231,312)$, known as the class of \emph{layered
      permutations}, consists entirely of involutions because a permutation lies in
      $\Av(231,312)$ if and only if it can be expressed as a sum of some number of
      decreasing permutations. The layered permutations of length $n$ are in
      bijection with compositions of $n$, and hence there are $2^{n-1}$
      permutations of length $n$ in $\Av(231,312)$. Therefore, $g_\oplus$ satisfies
      the equation 
      
      $$ g_\oplus = \pa{g - g_\oplus}\pa{\frac{x}{1-2x}}. $$

      From this expression it follows that

        \begin{equation}
        \label{involutions:eqn:1342-1}
        g_\oplus = \frac{gx}{1-x}.
        \end{equation}

      Next we must briefly consider the permutation class $\Av(1342,1423)$.
      Kremer~\cite{Kremer2000, KremerPS} showed that
      this class is counted by the large Schr\"oder numbers, \OEIS{A006318}, and
      has generating function 
      $$ f(x) = \frac{1-x-\sqrt{1-6x+x^2}}{2}. $$
      Since this permutation class is skew closed (because both $1342$ and $1423$
      are skew indecomposable), it follows by Proposition~\ref{thm:subsdecomp}
      that, since 
      $f_\ominus = (f - f_\ominus)f$ and $f_\ominus = \frac{f^2}{1+f}$, 

      $$ f - f_\ominus = \frac{f}{1+f} = \frac{1+x-\sqrt{1-6x+x^2}}{4}. $$

      This is the generating function for the \emph{small} Schr\"oder numbers,
      \OEIS{A001003}.

      Returning our attention to $\Av^I(1342)$, which is also skew closed, we
      note that skew indecomposable permutations in this set are of the form
      $\alpha_1\ominus\alpha_2\ominus\alpha_1^{-1}$ where $\alpha_1$ is a skew
      decomposable member of $\Av(1342,1423)$ and $\alpha_2$ is an arbitrary (and
      possibly empty) member of $\Av^I(1342)$. Therefore we see that

      \begin{equation}
        \label{involutions:eqn:1342-2}
        g_\ominus = \pa{f(x^2)-f_\ominus(x^2)}(1+g).
      \end{equation}
        
      Lastly, we must enumerate $1342$-avoiding involutions which are inflations of
      simple permutations of length at least four. Any such simple permutation must
      have at least two right-to-left maxima and by simplicity every right-to-left
      maximum must have some entry both below it and to the left. Hence to avoid
      creating a copy of $1342$ or $1423$, we may only inflate right-to-left maxima
      by decreasing intervals. An entry which is a left-to-right minimum can be
      inflated by any permutation in the class $\Av(1342,1432)$. However, to ensure
      that the inflated permutation is an involution, we must inflate each fixed
      point by an involution. Additionally, if we inflate the entry with value
      $\sigma(i)$ by the permutation $\alpha$, we must make sure to inflate the
      entry with value $i$ by $\alpha^{-1}$.

      Consider $s^{(0)}(u,v)$, which is the generating function for
      simple involutions of length at least four which avoid $123$ and have zero
      fixed points. To inflate each right-to-left maximum by a decreasing
      permutation in a way that yields an involution, we substitute 
      
      $$ v^2 = \frac{x^2}{1-x^2} .$$

      This follows because if $\sigma(i)$ is a right-to-left maximum of the simple
      $123$-avoiding involution $\sigma$ then the entry with value $i$ will also be
      a right-to-left maximum, and we must substitute a permutation and its inverse
      into this pair of entries of $\sigma$. Because the class $\Av(1342,1423)$ is
      counted by the large Schr\"oder numbers, the inflations of the simple
      involutions of length at least four with zero fixed points are counted by

      \begin{equation}
        \label{involutions:eqn:1342-3}
        \eval{s^{(0)}(u,v)}_{u^2=f(x^2),\;v^2=x^2/(1-x^2)}.
      \end{equation}
        
      Recall that $s^{(1)}(u,v)$ counts only those simple involutions
      whose single fixed point is a right-to-left maximum. Since this fixed point
      must be inflated by a decreasing permutation, we count inflations of such
      permutations by 
      
      \begin{equation}
        \label{involutions:eqn:1342-4}
        \pa{\eval{\frac{s^{(1)}(u,v)}{v}}_{u^2=f(x^2),\;v^2=x^2/(1-x^2)}}
        \cdot\frac{x}{1-x}.
      \end{equation}

      To count those simple involutions whose single fixed point is a left-to-right
      minimum, we need only swap $u$ and $v$. Thus, inflations of these are counted
      by the generating function 

      \begin{equation}
        \label{involutions:eqn:1342-5}
        \pa{\eval{\frac{s^{(1)}(v,u)}{u}}_{u^2=f(x^2),\;v^2=x^2/(1-x^2)}}\cdot g.
        \end{equation}

      Finally, we must account for inflations of those simple involutions which
      contain exactly two fixed points, one of which is a right-to-left maximum
      while the other is a left-to-right minimum. These permutations are counted by

      \begin{equation} \label{involutions:eqn:1342-6}
        \pa{\eval{\frac{s^{(2)}(u,v)}{uv}}_{u^2=f(x^2),\;v^2=x^2/(1-x^2)}}\cdot\frac{gx}{1-x}.
      \end{equation}

      By summing the contributions of 
      \eqref{involutions:eqn:1342-1}--\eqref{involutions:eqn:1342-6} 
      and accounting for the single permutation of length $1$, one finds that
      
      $$ g(x) = \frac{x\pa{1-2x+x^2+\sqrt{1-6x^2+x^4}}}{2\pa{1-3x+x^2}}.$$

      It can then be computed that the growth rate of involutions avoiding $1342$
      is $1$ plus the golden ratio, 
      
      $$ 1+\frac{1+\sqrt{5}}{2} \approx 2.62. $$

    \subsection{Involutions Avoiding 2341}
    \label{involutions:sub:2341}

      We turn our attention now to enumerating the $2341$-avoiding involutions.
      Note that each involution avoiding $2341$ must also avoid $2341^{-1} =
      4123$. We begin by examining the simple involutions which avoid these
      patterns. Note that, in this case, the simple permutations of the class
      $\Av(2341, 4123)$ are \emph{not} the same as the simples of $\Av(123)$.
      When we restrict to involutions, however, we find that the simples of
      $\Av^I(2341)$ are \emph{almost} the same as the simples of $\Av^I(123)$. 

      \begin{theorem}\label{involutions:thm:2341simples}
        The simple $2341$-avoiding involutions are precisely the union of set of
        $123$-avoiding simple involutions along with the permutation $5274163$. 
      \end{theorem}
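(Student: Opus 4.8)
The plan is to prove the asserted set equality by establishing the two inclusions separately. Write $A$ for the set of simple $2341$-avoiding involutions and $B$ for the union of the simple $123$-avoiding involutions with $\{5274163\}$.

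The inclusion $B \subseteq A$ is the routine half. Since the pattern $2341$ contains $123$ (its first three entries are order isomorphic to $123$), transitivity of $\prec$ gives $\Av(123)\subseteq\Av(2341)$, and hence $\Av^I(123)\subseteq\Av^I(2341)$. As simplicity is an intrinsic property of a permutation, every simple $123$-avoiding involution is automatically a simple $2341$-avoiding involution. It then remains only to check the single extra element directly: one verifies that $5274163$ is an involution (its cycles are $(1\,5)(3\,7)$ together with the fixed points $2,4,6$), that it has no nontrivial interval (a finite check over all contiguous windows), and that it avoids $2341$ and $4123$—indeed its only occurrence of $123$ is the triple of values $2,4,6$, which is neither preceded by a larger entry nor followed by a smaller one.

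For the hard inclusion $A\subseteq B$, I would take a simple $2341$-avoiding involution $\pi$ and suppose it contains a copy of $123$, with the goal of forcing $\pi = 5274163$. The key is to translate the two forbidden patterns into positional constraints. Recalling that $2341^{-1}=4123$ and that $\pi=\pi^{-1}$, avoiding $2341$ is equivalent to simultaneously avoiding $2341$ and $4123$; these two conditions are exchanged by the reflection of the plot across the diagonal $y=x$, which fixes $\pi$. Concretely, for every occurrence of $123$ with values $a<b<c$: avoiding $2341$ forbids any later entry with value below $a$, while avoiding $4123$ forbids any earlier entry with value above $c$. Applying these to extremal $123$ occurrences—and using the diagonal symmetry to tie the left-and-below data to the right-and-above data—confines the entries that do not lie in the $123$ pattern to narrow regions of the plot.

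The final step is to show that simplicity leaves only one possibility, and I expect this to be the main obstacle. One must argue that the constraints above, together with the no-nontrivial-interval condition and the diagonal symmetry, bound the length of $\pi$: the forbidden-pattern conditions prevent the increasing and decreasing structure from growing independently, so a simple such involution cannot be long. A finite inspection of simple involutions up to that length then isolates $5274163$ as the unique member of $A$ that is not already $123$-avoiding. The delicate point is ensuring the case analysis is exhaustive—that no longer simple involution slips through—since the reflection symmetry and the two dual constraints interact and must be bookkept carefully. An alternative route for this step is to identify a forced sub-configuration order isomorphic to $5274163$ and then show directly that simplicity, together with the involution symmetry, forbids inserting any further point.
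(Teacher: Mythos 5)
Your easy inclusion is complete and correct: $123 \prec 2341$ gives $\Av(123) \subseteq \Av(2341)$, simplicity and being an involution are intrinsic, and your direct verification of $5274163$ (cycles $(1\,5)(3\,7)$ with fixed points $2,4,6$; simple; its unique $123$ occurrence, the values $2,4,6$, has no later entry below $2$ and no earlier entry above $6$) is sound. Your setup for the hard inclusion also matches the paper's: fix an extremal $123$ occurrence, translate avoidance of $2341$ and of $4123 = 2341^{-1}$ into the dual positional constraints you state, and exploit the diagonal symmetry of the plot of an involution.

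However, the hard inclusion as written has a genuine gap: the step you defer --- ``the constraints bound the length of $\pi$, then finitely inspect'' --- is the entire content of the theorem, and nothing in your sketch produces the bound. Note that there are infinitely many simple $2341$-avoiding involutions (all the simple $123$-avoiding ones), so any length bound applies only to those containing a $123$, and extracting it requires exactly the exhaustive analysis you flag as the obstacle. The paper carries this out by casing on which of the three extremal entries (the $3$ topmost, the $1$ bottommost for that $3$, the $2$ rightmost for both) are fixed points --- a distinction your sketch never makes, and the engine of the whole proof. When all three are fixed points, symmetry empties the off-diagonal cells, simplicity forces splitting entries for the two rectangular hulls, the involution condition forces these in symmetric pairs, and then the four remaining admissible cells pairwise share no row or column, so any further entry would create an unsplittable interval: this pins $\sigma = 5274163$ exactly. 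When some extremal entry lies off the diagonal, the same forcing (inverse images of increasing pairs are increasing; hulls must be split, and symmetrically) drives the configuration to a sum-decomposable permutation, contradicting simplicity --- so these cases are empty, not merely ``short,'' which is why no a priori length bound is ever needed. Your fallback plan of locating a forced $5274163$ sub-configuration also inverts the actual logic: the permutation is built outward from the $123$ occurrence until it closes off at length $7$, not found inside a longer one. Without executing this case analysis, what you have is a correct plan rather than a proof.
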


      We delay the technical proof of this theorem to the end of this section. 

      Now that we know the simples, we need only determine the ways in which they
      can be inflated. 
      As in the previous section, we enumerate the $2341$-avoiding involutions by
      separately enumerating the sum decomposable permutations, the skew decomposable
      permutations, and the inflations of simple permutations of length at least
      four. Again we define $g$ to be the generating function for the set
      $\Av^I(2341)$ and $g_\oplus$ (resp., $g_\ominus$) the generating function for
      the sum (resp., skew) decomposable $2341$-avoiding involutions.

      In this case we see that $\Av^I(2341)$ is sum closed, so we have
      $$ g_\oplus = (g - g_\oplus)g. $$

      This then leads that 

      \begin{equation}
        \label{involutions:eqn:2341-1}
        g_\oplus = \frac{g^2}{1+g}.
      \end{equation}

      By Proposition~\ref{thm:subsdecomp-inv}, the skew decomposable permutations
      must have the form $321[\alpha_1,\alpha_2,\alpha_1^{-1}]$, where $\alpha_1$ is
      skew indecomposable and $\alpha_2$ is a (possibly empty) involution.
      Furthermore, to avoid the occurrence of a $2341$ or a $4123$ pattern, we must
      also have that $\alpha_1,\alpha_2\in \Av(123)$.

      Recall that the $123$-avoiding permutations are enumerated by the Catalan numbers, which
      have generating function 

      $$ c(x) = \frac{1-2x-\sqrt{1-4x}}{2x}.$$

      Since the class $\Av(123)$ is skew closed, when we denote the generating
      function for the skew decomposable $123$-avoiding permutation, it follows
      (as in the previous section) that

      $$ c-c_\ominus = \frac{c}{1+c} = x(c+1).$$

      As mentioned in the Section~\ref{involutions:sec:intro}, Simion and
      Schmidt~\cite{Simion1985} proved that
      $$ |\Av^I_n(123)|={n\choose \lfloor n/2\rfloor}. $$

      These terms are known as the central binomial coefficients,
      \OEIS{A001405}. These permutations thus have the generating function

      $$ \frac{1-4x^2-\sqrt{1-4x^2}}{4x^2-2x}. $$

      Therefore, the generating function which counts our choices for the pair
      $(\alpha_1,\alpha_1^{-1})$ is $x^2(c(x^2)+1)$, and the generating function for
      all skew decomposable $2341$-avoiding involutions is

      \begin{equation}
      \label{involutions:eqn:2341-2}
        g_\ominus
        =
        \pa{x^2\pa{c(x^2)+1}}
        \cdot
        \pa{\frac{1-4x^2-\sqrt{1-4x^2}}{4x^2-2x}+1}
      \end{equation}

      Next, we consider inflations of the simple permutations in $\Av^I(123)$. In
      both cases, every entry of such a simple permutation can only be inflated by a
      decreasing permutation, as any inflation by a permutation with an increase
      would create a copy of $2341$ or $4123$. Thus inflations of the simple
      permutations counted by $s^{(0)}$ contribute

      \begin{equation}
      \label{involutions:eqn:2341-3}
        \eval{s^{(0)}(u,v)}_{u^2=v^2=x^2/(1-x^2)}.
      \end{equation}

      Inflations of the simple permutations counted by $s^{(1)}$ contribute

      \begin{equation}
        \label{involutions:eqn:2341-4}
        2\pa{\eval{\frac{s^{(1)}(u,v)}{v}}_{u^2=v^2=x^2/(1-x^2)}}
        \cdot
        \frac{x}{1-x}.
      \end{equation}

      Next, inflations of simple permutations counted by $s^{(2)}$ contribute
        \begin{equation}
        \label{involutions:eqn:2341-5}
        \pa{\eval{\frac{s^{(2)}(u,v)}{uv}}_{u^2=v^2=x^2/(1-x^2)}}
        \cdot
        \pa{\frac{x}{1-x}}^2.
      \end{equation}

      Lastly, we consider inflations of $5274163$. Because this permutation has
      three fixed points, the $2341$-avoiding involutions formed by inflations of
      $5274163$ are counted by

      \begin{equation}
      \label{involutions:eqn:2341-6}
      \pa{\frac{x^2}{1-x^2}}^2\pa{\frac{x}{1-x}}^3.
      \end{equation}

      By combining the contributions
      \eqref{involutions:eqn:2341-1}--\eqref{involutions:eqn:2341-6} and
      accounting for the single permutation of length $1$, it can be computed
      that $g$ has minimal polynomial Therefore, $b$ satisfies the functional
      equation
        
      $$ b = x + \frac{b^2}{1+b} +  x^2(c(x^2)+1)\pa{\frac{1 + x +
      xc(x^2)}{\sqrt{1-4x^2}}} + I(x). $$

      From this it follows that $b$ has minimal polynomial shown below. 


      {\footnotesize
      \begin{align*}
      t^2g^2 + (48x^{16}-158x^{15}+101x^{14}+334x^{13}-627x^{12}+60x^{11}+801x^{10}-684x^9-231x^8\\
      \qquad\qquad\qquad\qquad +624x^7-221x^6-162x^5+151x^4-24x^3-17x^2+8x-1)tg \\
      \qquad\qquad +(18x^{15}-51x^{14}+16x^{13}+125x^{12}-169x^{11}-48x^{10}+256x^9-130x^8-131x^7\\
      \qquad\qquad\qquad\qquad +159x^6-11x^5-60x^4+28x^3+3x^2-5x+1)tx
      \end{align*}

    
      In the expression above, $t$ is defined as 
      \begin{align*}
        t &= 32x^{16}-120x^{15}+113x^{14}+206x^{13}-540x^{12}+223x^{11}+561x^{10}-725x^9\\
        &\qquad\qquad +26x^8+514x^7-326x^6-55x^5+141x^4-50x^3-4x^2+6x-1.
      \end{align*}
      }

      Note that though this minimal polynomial looks complicated, it is in fact
      quadratic in $g$, so it is not difficult to solve it explicitly. While the
      explicit solution is even more complicated than the minimal polynomial,
      this makes it relatively easy to compute the minimal polynomial for the
      growth rate of $\Av^I(2341,4123)$, which is

      \begin{align*}
        x^{16} - 6x^{15} + 4x^{14} + 50x^{13} - 141x^{12} + 55x^{11} + 326x^{10}
        - 514x^9 - 26x^8 + 725x^7\\
        \qquad - 561x^6 - 223x^5 + 540x^4 - 206x^3 - 113x^2 + 120x - 32.
      \end{align*}

      The growth rate itself is approximately $2.54$.


      We now return to the proof of Theorem~\ref{involutions:thm:2341simples}.
      The proof is rather technical, and relies on listing and eliminating a
      variety of cases. This was greatly assisted by Albert's
      PermLab~\cite{PermLab} software.

        \begin{proof}[Proof of Theorem~\ref{involutions:thm:2341simples}] 

          The proof of this theorem consists of the investigation of many cases
          relating to the placement of the fixed points in a $2341$-avoiding
          simple involution. Recall that such an involution must also avoid
          $2341^{-1} = 4123$. To better understand these permutations, we utilize
          \emph{permutation diagrams}, depicted in
          Figures~\ref{figure:6_1-group-1}, \ref{figure:6_1-group-2}, and
          \ref{figure:6_1-group-3}.
          Each of these diagrams consists of the plot of a permutation, together
          with a coloring of the cells. A cell is white if we are allowed to
          insert an entry without creating an occurrence of $2341$ or $4123$, and
          dark gray otherwise. A cell is light gray if we explicitly forbid any
          entries through the course of our arguments. The \emph{rectangular
          hull} of a set $S$ of entries is defined to be the smallest
          axis-parallel rectangle which contains all points of $S$. Finally, the
          \emph{inverse image} of a point $(x,y)$ is the point $(y,x)$,
          equivalent to the image of the point when reflecting across the line
          $y=x$. These tools will be useful in describing and understanding the
          various cases of this proof. 
            
          Let $\sg$ be a $2341$-avoiding simple involution, and claim that either
          $\sg$ avoids $123$ or $\sg = 5274163$. Suppose that $\sg$ contains at
          least one $123$ pattern. 
          Of all of the possible occurrences of $123$, 
          we focus on a single occurrence of this
          pattern, the one in which the $3$ is the topmost possible entry, the
          $1$ is the bottommost for the chosen $3$, and the $2$ is the rightmost
          for the chosen $1$ and $3$. It follows then that $\sg$ can be drawn on
          the diagram shown in Figure~\ref{bigproof1}. Note that, despite the
          apparent symmetry, these three entries are \emph{not necessarily} fixed
          points, because each white cell could be inflated by different numbers
          of entries. Thus, we must consider separate cases in which some
          combination of these entries lie on the diagonal.

          \textbf{Case 1:}
          For our first case, assume that each of these entries are in fact fixed
          points. Then, since $\sg$ is an involution, the cells labelled $A,B,C$
          must all be empty, since otherwise the plot would not be symmetric
          about the line passing through the diagonal. It follows then that $\sg$
          can be plotted on the diagram shown in Figure~\ref{bigproof2}. 
          We now claim that $\sg = 5274163$. 
          
          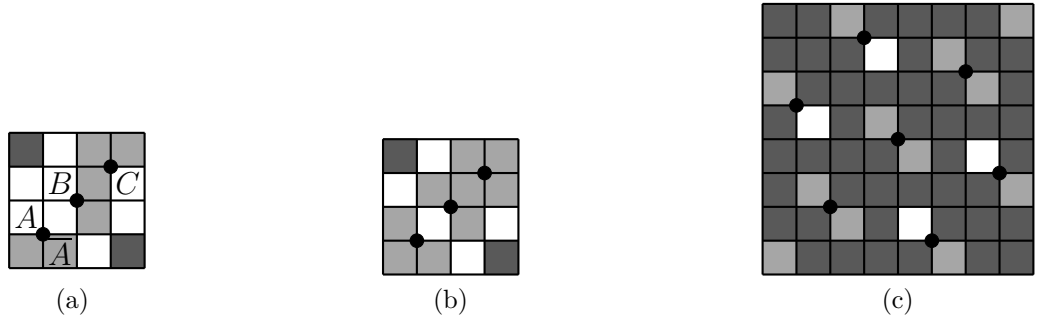
\begin{figure} \centering 
            \subfloat[]{\label{bigproof1}
            \begin{tikzpicture}[scale=.45]
              \filldraw[light-gray](0,0) rectangle (1,1);
              \filldraw[dark-gray](0,3) rectangle (1,4); 
              \filldraw[light-gray](1,0) rectangle (2,1); 
              \filldraw[light-gray](2,1) rectangle (3,2);
              \filldraw[light-gray](2,2) rectangle (3,3);
              \filldraw[light-gray](2,3) rectangle (3,4); 
              \filldraw[dark-gray](3,0) rectangle (4,1); 
              \filldraw[light-gray](3,3) rectangle (4,4); 
              \draw[black, fill=black] (1,1) circle (0.2); 
              \draw[black, fill=black] (2,2) circle (0.2); 
              \draw[black, fill=black] (3,3) circle (0.2);
              \draw[thick](0,0)--(0,4); 
              \draw[thick](0,0)--(4,0);
              \draw[thick](1,0)--(1,4); 
              \draw[thick](0,1)--(4,1);
              \draw[thick](2,0)--(2,4); 
              \draw[thick](0,2)--(4,2);
              \draw[thick](3,0)--(3,4); 
              \draw[thick](0,3)--(4,3);
              \draw[thick](4,0)--(4,4); 
              \draw[thick](0,4)--(4,4); 
              \node at (.5,1.5) {$A$}; 
              \node at (1.5,2.5) {$B$}; 
              \node at (3.5,2.5) {$C$}; 
              \node at (1.5,.5) {$\overline{A}$}; 
            \end{tikzpicture}%
            } \hfill
            \subfloat[]{\label{bigproof2}
            \begin{tikzpicture}[scale=.45]
              \filldraw[light-gray](0,0) rectangle (1,1);
              \filldraw[light-gray](0,1) rectangle (1,2);
              \filldraw[dark-gray](0,3) rectangle (1,4);
              \filldraw[light-gray](1,0) rectangle (2,1);
              \filldraw[light-gray](1,2) rectangle (2,3);
              \filldraw[light-gray](2,1) rectangle (3,2);
              \filldraw[light-gray](2,2) rectangle (3,3);
              \filldraw[light-gray](2,3) rectangle (3,4);
              \filldraw[dark-gray](3,0) rectangle (4,1);
              \filldraw[light-gray](3,2) rectangle (4,3);
              \filldraw[light-gray](3,3) rectangle (4,4); 
              \draw[black, fill=black] (1,1) circle (0.2); 
              \draw[black, fill=black] (2,2) circle (0.2); 
              \draw[black, fill=black] (3,3) circle (0.2);
              \draw[thick](0,0)--(0,4); 
              \draw[thick](0,0)--(4,0);
              \draw[thick](1,0)--(1,4); 
              \draw[thick](0,1)--(4,1);
              \draw[thick](2,0)--(2,4); 
              \draw[thick](0,2)--(4,2);
              \draw[thick](3,0)--(3,4); 
              \draw[thick](0,3)--(4,3);
              \draw[thick](4,0)--(4,4); 
              \draw[thick](0,4)--(4,4);
            \end{tikzpicture}
            } \hfill
            \subfloat[]{\label{bigproof3}
            \begin{tikzpicture}[scale=.45]
              \filldraw[light-gray](0,0) rectangle (1,1);
              \filldraw[dark-gray](0,1) rectangle (1,2);
              \filldraw[dark-gray](0,2) rectangle (1,3);
              \filldraw[dark-gray](0,3) rectangle (1,4);
              \filldraw[dark-gray](0,4) rectangle (1,5);
              \filldraw[light-gray](0,5) rectangle (1,6);
              \filldraw[dark-gray](0,6) rectangle (1,7);
              \filldraw[dark-gray](0,7) rectangle (1,8);
              \filldraw[dark-gray](1,0) rectangle (2,1);
              \filldraw[dark-gray](1,1) rectangle (2,2);
              \filldraw[light-gray](1,2) rectangle (2,3);
              \filldraw[dark-gray](1,3) rectangle (2,4);
              \filldraw[dark-gray](1,5) rectangle (2,6);
              \filldraw[dark-gray](1,6) rectangle (2,7);
              \filldraw[dark-gray](1,7) rectangle (2,8);
              \filldraw[dark-gray](2,0) rectangle (3,1);
              \filldraw[light-gray](2,1) rectangle (3,2);
              \filldraw[dark-gray](2,2) rectangle (3,3);
              \filldraw[dark-gray](2,3) rectangle (3,4);
              \filldraw[dark-gray](2,4) rectangle (3,5);
              \filldraw[dark-gray](2,5) rectangle (3,6);
              \filldraw[dark-gray](2,6) rectangle (3,7);
              \filldraw[light-gray](2,7) rectangle (3,8);
              \filldraw[dark-gray](3,0) rectangle (4,1);
              \filldraw[dark-gray](3,1) rectangle (4,2);
              \filldraw[dark-gray](3,2) rectangle (4,3);
              \filldraw[dark-gray](3,3) rectangle (4,4);
              \filldraw[light-gray](3,4) rectangle (4,5);
              \filldraw[dark-gray](3,5) rectangle (4,6);
              \filldraw[dark-gray](3,7) rectangle (4,8);
              \filldraw[dark-gray](4,0) rectangle (5,1);
              \filldraw[dark-gray](4,2) rectangle (5,3);
              \filldraw[light-gray](4,3) rectangle (5,4);
              \filldraw[dark-gray](4,4) rectangle (5,5);
              \filldraw[dark-gray](4,5) rectangle (5,6);
              \filldraw[dark-gray](4,6) rectangle (5,7);
              \filldraw[dark-gray](4,7) rectangle (5,8);
              \filldraw[light-gray](5,0) rectangle (6,1);
              \filldraw[dark-gray](5,1) rectangle (6,2);
              \filldraw[dark-gray](5,2) rectangle (6,3);
              \filldraw[dark-gray](5,3) rectangle (6,4);
              \filldraw[dark-gray](5,4) rectangle (6,5);
              \filldraw[dark-gray](5,5) rectangle (6,6);
              \filldraw[light-gray](5,6) rectangle (6,7);
              \filldraw[dark-gray](5,7) rectangle (6,8);
              \filldraw[dark-gray](6,0) rectangle (7,1);
              \filldraw[dark-gray](6,1) rectangle (7,2);
              \filldraw[dark-gray](6,2) rectangle (7,3);
              \filldraw[dark-gray](6,4) rectangle (7,5);
              \filldraw[light-gray](6,5) rectangle (7,6);
              \filldraw[dark-gray](6,6) rectangle (7,7);
              \filldraw[dark-gray](6,7) rectangle (7,8);
              \filldraw[dark-gray](7,0) rectangle (8,1);
              \filldraw[dark-gray](7,1) rectangle (8,2);
              \filldraw[light-gray](7,2) rectangle (8,3);
              \filldraw[dark-gray](7,3) rectangle (8,4);
              \filldraw[dark-gray](7,4) rectangle (8,5);
              \filldraw[dark-gray](7,5) rectangle (8,6);
              \filldraw[dark-gray](7,6) rectangle (8,7);
              \filldraw[light-gray](7,7) rectangle (8,8); 
              \draw[black, fill=black] (1,5) circle (0.2); 
              \draw[black, fill=black] (2,2) circle (0.2); 
              \draw[black, fill=black] (3,7) circle (0.2);
              \draw[black, fill=black] (4,4) circle (0.2); 
              \draw[black, fill=black] (5,1) circle (0.2); 
              \draw[black, fill=black] (6,6) circle (0.2); 
              \draw[black, fill=black] (7,3) circle (0.2);
              \draw[thick](0,0)--(0,8); 
              \draw[thick](0,0)--(8,0);
              \draw[thick](1,0)--(1,8); 
              \draw[thick](0,1)--(8,1);
              \draw[thick](2,0)--(2,8); 
              \draw[thick](0,2)--(8,2);
              \draw[thick](3,0)--(3,8); 
              \draw[thick](0,3)--(8,3);
              \draw[thick](4,0)--(4,8); 
              \draw[thick](0,4)--(8,4);
              \draw[thick](5,0)--(5,8); 
              \draw[thick](0,5)--(8,5);
              \draw[thick](6,0)--(6,8); 
              \draw[thick](0,6)--(8,6);
              \draw[thick](7,0)--(7,8); 
              \draw[thick](0,7)--(8,7);
              \draw[thick](8,0)--(8,8); 
              \draw[thick](0,8)--(8,8);
            \end{tikzpicture}
            }

            \caption{Permutation diagrams referenced in the
                     proof of Theorem~\ref{involutions:thm:2341simples}.}
            \label{figure:6_1-group-1} 
          \end{figure}

          By simplicity, the rectangular hull of the leftmost two entries shown
          in Figure~\ref{bigproof2} must be split by an entry either in the white
          cell above it or in the white cell to its right. Since $\sg$ is an
          involution, it follows then that there are in fact splitting entries in
          both of these cells. Assume that the splitting entry in the cell above
          is the topmost possible entry and the one to the right is the rightmost
          possible. A similar argument applied to the rectangular hull of the
          rightmost two entries produces a permutation diagram depicted in
          Figure~\ref{bigproof3}. 

          We now claim that we can go no further. There are only four remaining
          white cells in Figure~\ref{bigproof3}, and no two of these cells shares
          a row or column. It follows then that by placing entries in any of
          these cells, we would be creating intervals which cannot be split by
          any other entry, thus violating simplicity. It follows then that the
          only simple $2341$-avoiding involution which contains an occurrence of
          $123$ in which each entry is a fixed point is the permutation
          $5274163$, as desired. 
          

          \textbf{Case 2:}
          Now suppose that the rightmost entry of our specified $123$ occurrence
          is not a fixed point. It therefore must lie either above or below the
          reflection line, i.e., it must be either above and to the left or
          below and to the right of its inverse image. Suppose first that it is
          below this line of reflection, and so its inverse image must lie above
          and to the left. There is only one candidate cell, the result is shown
          in Figure~\ref{bigproof4}.

          Note that, in a general involution, if two entries from an increase
          (resp., a decrease) then their inverse image also forms an increase
          (resp., a decrease). It follows then that the third entry from the left
          shown in Figure~\ref{bigproof4} (the $2$ of the original $123$ pattern)
          cannot lie above or on the reflection line, and so must lie below.
          Therefore its inverse image lies above. There is only one appropriate
          white cell in which this entry can lie, as shown in
          Figure~\ref{bigproof5}. If the leftmost entry in this figure were a
          fixed point, then the permutation would begin with its smallest entry,
          violating simplicity. This entry therefore lies below the reflection
          line, and has an inverse above and to its left. This leads to
          Figure~\ref{bigproof6}, but we see that the this leads to a non simple,
          and in fact sum decomposable, permutation, because the bottom-leftmost
          three by three rectangular hull cannot be split by any other entries.
          This case therefore leads to a contradiction, and can be eliminated.

          \begin{figure} \centering 
            \subfloat[]{\label{bigproof4}
            \begin{tikzpicture}[scale=.4]
              \filldraw[dark-gray](0,3) rectangle (1,4); 
              \filldraw[dark-gray](0,4) rectangle (1,5); 
              \filldraw[light-gray](1,0) rectangle (2,1);
              \filldraw[dark-gray](2,0) rectangle (3,1); 
              \filldraw[dark-gray](2,1) rectangle (3,2); 
              \filldraw[light-gray](3,1) rectangle (4,2);
              \filldraw[dark-gray](3,2) rectangle (4,3); 
              \filldraw[light-gray](3,3) rectangle (4,4); 
              \filldraw[light-gray](3,4) rectangle (4,5);
              \filldraw[dark-gray](4,0) rectangle (5,1); 
              \filldraw[dark-gray](4,3) rectangle (5,4); 
              \filldraw[light-gray](4,4) rectangle (5,5); 
              \draw[black, fill=black] (1,1) circle (0.2); 
              \draw[black, fill=black] (2,4) circle (0.2); 
              \draw[black, fill=black] (3,2) circle (0.2);
              \draw[black, fill=black] (4,3) circle (0.2); 
              \draw[thick](0,0)--(0,5); 
              \draw[thick](0,0)--(5,0);
              \draw[thick](1,0)--(1,5); 
              \draw[thick](0,1)--(5,1);
              \draw[thick](2,0)--(2,5); 
              \draw[thick](0,2)--(5,2);
              \draw[thick](3,0)--(3,5); 
              \draw[thick](0,3)--(5,3);
              \draw[thick](4,0)--(4,5); 
              \draw[thick](0,4)--(5,4);
              \draw[thick](5,0)--(5,5); 
              \draw[thick](0,5)--(5,5); 
            \end{tikzpicture}
            } \hfill
            \subfloat[]{\label{bigproof5}
            \begin{tikzpicture}[scale=.4]
              \filldraw[light-gray](0,0) rectangle (1,1);
              \filldraw[dark-gray](0,2) rectangle (1,3);
              \filldraw[dark-gray](0,3) rectangle (1,4);
              \filldraw[dark-gray](0,4) rectangle (1,5);
              \filldraw[dark-gray](0,5) rectangle (1,6);
              \filldraw[light-gray](1,0) rectangle (2,1);
              \filldraw[dark-gray](1,2) rectangle (2,3);
              \filldraw[dark-gray](1,3) rectangle (2,4);
              \filldraw[dark-gray](2,0) rectangle (3,1);
              \filldraw[dark-gray](2,1) rectangle (3,2);
              \filldraw[dark-gray](2,4) rectangle (3,5);
              \filldraw[dark-gray](3,0) rectangle (4,1);
              \filldraw[dark-gray](3,1) rectangle (4,2);
              \filldraw[dark-gray](3,5) rectangle (4,6);
              \filldraw[dark-gray](4,0) rectangle (5,1);
              \filldraw[light-gray](4,1) rectangle (5,2);
              \filldraw[dark-gray](4,2) rectangle (5,3);
              \filldraw[light-gray](4,3) rectangle (5,4);
              \filldraw[light-gray](4,4) rectangle (5,5);
              \filldraw[dark-gray](4,5) rectangle (5,6);
              \filldraw[dark-gray](5,0) rectangle (6,1);
              \filldraw[dark-gray](5,3) rectangle (6,4);
              \filldraw[dark-gray](5,4) rectangle (6,5);
              \filldraw[light-gray](5,5) rectangle (6,6); 
              \draw[black, fill=black] (1,1) circle (0.2); 
              \draw[black, fill=black] (2,4) circle (0.2); 
              \draw[black, fill=black] (3,5) circle (0.2);
              \draw[black, fill=black] (4,2) circle (0.2); 
              \draw[black, fill=black] (5,3) circle (0.2); 
              \draw[thick](0,0)--(0,6); 
              \draw[thick](0,0)--(6,0);
              \draw[thick](1,0)--(1,6); 
              \draw[thick](0,1)--(6,1);
              \draw[thick](2,0)--(2,6); 
              \draw[thick](0,2)--(6,2);
              \draw[thick](3,0)--(3,6); 
              \draw[thick](0,3)--(6,3);
              \draw[thick](4,0)--(4,6); 
              \draw[thick](0,4)--(6,4);
              \draw[thick](5,0)--(5,6); 
              \draw[thick](0,5)--(6,5);
              \draw[thick](6,0)--(6,6); 
              \draw[thick](0,6)--(6,6);
            \end{tikzpicture}
            } \hfill
            \subfloat[]{\label{bigproof6}
            \begin{tikzpicture}[scale=.4]
              \filldraw[light-gray](0,0) rectangle (1,1);
              \filldraw[dark-gray](0,3) rectangle (1,4);
              \filldraw[dark-gray](0,4) rectangle (1,5);
              \filldraw[dark-gray](0,5) rectangle (1,6);
              \filldraw[dark-gray](0,6) rectangle (1,7);
              \filldraw[light-gray](1,0) rectangle (2,1);
              \filldraw[dark-gray](1,3) rectangle (2,4);
              \filldraw[dark-gray](1,4) rectangle (2,5);
              \filldraw[dark-gray](1,5) rectangle (2,6);
              \filldraw[dark-gray](1,6) rectangle (2,7);
              \filldraw[light-gray](2,0) rectangle (3,1);
              \filldraw[dark-gray](2,3) rectangle (3,4);
              \filldraw[dark-gray](2,4) rectangle (3,5);
              \filldraw[dark-gray](3,0) rectangle (4,1);
              \filldraw[dark-gray](3,1) rectangle (4,2);
              \filldraw[dark-gray](3,2) rectangle (4,3);
              \filldraw[dark-gray](3,5) rectangle (4,6);
              \filldraw[dark-gray](4,0) rectangle (5,1);
              \filldraw[dark-gray](4,1) rectangle (5,2);
              \filldraw[dark-gray](4,2) rectangle (5,3);
              \filldraw[dark-gray](4,6) rectangle (5,7);
              \filldraw[dark-gray](5,0) rectangle (6,1);
              \filldraw[dark-gray](5,1) rectangle (6,2);
              \filldraw[light-gray](5,2) rectangle (6,3);
              \filldraw[dark-gray](5,3) rectangle (6,4);
              \filldraw[light-gray](5,4) rectangle (6,5);
              \filldraw[light-gray](5,5) rectangle (6,6);
              \filldraw[dark-gray](5,6) rectangle (6,7);
              \filldraw[dark-gray](6,0) rectangle (7,1);
              \filldraw[dark-gray](6,1) rectangle (7,2);
              \filldraw[dark-gray](6,4) rectangle (7,5);
              \filldraw[dark-gray](6,5) rectangle (7,6);
              \filldraw[light-gray](6,6) rectangle (7,7); 
              \draw[black, fill=black] (1,2) circle (0.2); 
              \draw[black, fill=black] (2,1) circle (0.2); 
              \draw[black, fill=black] (3,5) circle (0.2);
              \draw[black, fill=black] (4,6) circle (0.2); 
              \draw[black, fill=black] (5,3) circle (0.2); 
              \draw[black, fill=black] (6,4) circle (0.2); 
              \draw[thick](0,0)--(0,7);
              \draw[thick](0,0)--(7,0); 
              \draw[thick](1,0)--(1,7);
              \draw[thick](0,1)--(7,1); 
              \draw[thick](2,0)--(2,7);
              \draw[thick](0,2)--(7,2); 
              \draw[thick](3,0)--(3,7);
              \draw[thick](0,3)--(7,3); 
              \draw[thick](4,0)--(4,7);
              \draw[thick](0,4)--(7,4); 
              \draw[thick](5,0)--(5,7);
              \draw[thick](0,5)--(7,5); 
              \draw[thick](6,0)--(6,7);
              \draw[thick](0,6)--(7,6); 
              \draw[thick](7,0)--(7,7);
              \draw[thick](0,7)--(7,7); 
            \end{tikzpicture}
            } \hfill
            \subfloat[]{\label{bigproof7}
            \begin{tikzpicture}[scale=.4]
              \filldraw[light-gray](0,0) rectangle (1,1);
              \filldraw[dark-gray](0,3) rectangle (1,4);
              \filldraw[dark-gray](0,4) rectangle (1,5);
              \filldraw[light-gray](1,0) rectangle (2,1);
              \filldraw[light-gray](2,1) rectangle (3,2);
              \filldraw[light-gray](2,2) rectangle (3,3);
              \filldraw[light-gray](2,3) rectangle (3,4);
              \filldraw[light-gray](2,4) rectangle (3,5);
              \filldraw[dark-gray](3,0) rectangle (4,1);
              \filldraw[light-gray](3,2) rectangle (4,3);
              \filldraw[light-gray](3,4) rectangle (4,5);
              \filldraw[dark-gray](4,0) rectangle (5,1);
              \filldraw[light-gray](4,3) rectangle (5,4);
              \filldraw[light-gray](4,4) rectangle (5,5); 
              \draw[black, fill=black] (1,1) circle (0.2); 
              \draw[black, fill=black] (2,2) circle (0.2); 
              \draw[black, fill=black] (3,4) circle (0.2);
              \draw[black, fill=black] (4,3) circle (0.2); 
              \draw[thick](0,0)--(0,5); 
              \draw[thick](0,0)--(5,0);
              \draw[thick](1,0)--(1,5); 
              \draw[thick](0,1)--(5,1);
              \draw[thick](2,0)--(2,5); 
              \draw[thick](0,2)--(5,2);
              \draw[thick](3,0)--(3,5); 
              \draw[thick](0,3)--(5,3);
              \draw[thick](4,0)--(4,5); 
              \draw[thick](0,4)--(5,4);
              \draw[thick](5,0)--(5,5); 
              \draw[thick](0,5)--(5,5);
            \end{tikzpicture}
            } \hfill
            \subfloat[]{\label{bigproof8}
            \begin{tikzpicture}[scale=.4]
              \filldraw[light-gray](0,0) rectangle (1,1);
              \filldraw[dark-gray](0,2) rectangle (1,3);
              \filldraw[dark-gray](0,3) rectangle (1,4);
              \filldraw[dark-gray](0,4) rectangle (1,5);
              \filldraw[dark-gray](0,5) rectangle (1,6);
              \filldraw[dark-gray](0,6) rectangle (1,7);
              \filldraw[light-gray](1,0) rectangle (2,1);
              \filldraw[dark-gray](1,2) rectangle (2,3);
              \filldraw[dark-gray](1,3) rectangle (2,4);
              \filldraw[dark-gray](1,4) rectangle (2,5);
              \filldraw[dark-gray](2,0) rectangle (3,1);
              \filldraw[dark-gray](2,1) rectangle (3,2);
              \filldraw[dark-gray](2,2) rectangle (3,3);
              \filldraw[dark-gray](2,5) rectangle (3,6);
              \filldraw[dark-gray](3,0) rectangle (4,1);
              \filldraw[dark-gray](3,1) rectangle (4,2);
              \filldraw[light-gray](3,2) rectangle (4,3);
              \filldraw[dark-gray](3,3) rectangle (4,4);
              \filldraw[dark-gray](3,4) rectangle (4,5);
              \filldraw[dark-gray](3,5) rectangle (4,6);
              \filldraw[light-gray](3,6) rectangle (4,7);
              \filldraw[dark-gray](4,0) rectangle (5,1);
              \filldraw[dark-gray](4,1) rectangle (5,2);
              \filldraw[dark-gray](4,3) rectangle (5,4);
              \filldraw[dark-gray](4,6) rectangle (5,7);
              \filldraw[dark-gray](5,0) rectangle (6,1);
              \filldraw[dark-gray](5,2) rectangle (6,3);
              \filldraw[dark-gray](5,3) rectangle (6,4);
              \filldraw[dark-gray](5,6) rectangle (6,7);
              \filldraw[dark-gray](6,0) rectangle (7,1);
              \filldraw[dark-gray](6,4) rectangle (7,5);
              \filldraw[dark-gray](6,5) rectangle (7,6);
              \filldraw[light-gray](6,6) rectangle (7,7); 
              \draw[black, fill=black] (1,1) circle (0.2); 
              \draw[black, fill=black] (2,5) circle (0.2); 
              \draw[black, fill=black] (3,3) circle (0.2);
              \draw[black, fill=black] (4,6) circle (0.2); 
              \draw[black, fill=black] (5,2) circle (0.2); 
              \draw[black, fill=black] (6,4) circle (0.2); 
              \draw[thick](0,0)--(0,7);
              \draw[thick](0,0)--(7,0); 
              \draw[thick](1,0)--(1,7);
              \draw[thick](0,1)--(7,1); 
              \draw[thick](2,0)--(2,7);
              \draw[thick](0,2)--(7,2); 
              \draw[thick](3,0)--(3,7);
              \draw[thick](0,3)--(7,3); 
              \draw[thick](4,0)--(4,7);
              \draw[thick](0,4)--(7,4); 
              \draw[thick](5,0)--(5,7);
              \draw[thick](0,5)--(7,5); 
              \draw[thick](6,0)--(6,7);
              \draw[thick](0,6)--(7,6); 
              \draw[thick](7,0)--(7,7);
              \draw[thick](0,7)--(7,7); 
            \end{tikzpicture}
            } 

            \caption{Permutation diagrams referenced in the proof of
            Theorem~\ref{involutions:thm:2341simples}.}
            \label{figure:6_1-group-2} 
          \end{figure}
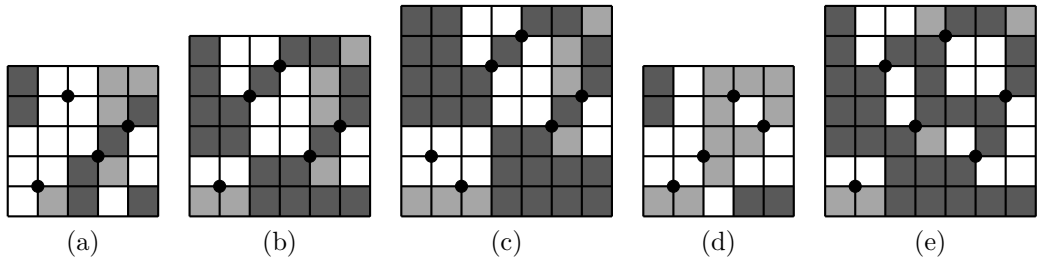

          Suppose now that instead of lying below the reflection line, the $3$ of
          our $123$ pattern shown in Figure~\ref{bigproof1} lies above, and so its
          inverse image is in a cell below and to the right, of which there are
          two. If the inverse image, however, is in the lower of these two then
          by an argument analogous to the paragraph above we reach a violation of
          simplicity. Therefore the inverse image of the rightmost entry must lie
          in the cell directly below and to its right. The fact that $\sg$ is an
          involution allows us to forbid the placing of entries into cells where
          the inverse image would create a forbidden pattern, leading to
          Figure~\ref{bigproof7}. Now the rectangular hull of the rightmost two
          entries must be split to preserve simplicity, and in fact must be split
          below and to the left to preserve involutionicity, leading to
          Figure~\ref{bigproof8}. Our situation is now analagous to that shown in
          Figure~\ref{bigproof6}, in that any placement of entries will lead to a
          sum decomposable (and hence non simple) permutation. Therefore this
          case (in which the last entry of the original $123$ is not a fixed
          point), can be discarded. 

          \textbf{Case 3:}
          Finally, we consider the case where the $3$ of the $123$ is a fixed
          point, but some other entry is not. Suppose first that the middle entry
          of Figure~\ref{bigproof1} lies above the reflection line. We are then
          forced into a situation identical to that shown in
          Figure~\ref{bigproof8} except rotated by $180$ degrees, leading to a
          contradiction. Assuming that the middle entry is above the reflection
          line leads, and recalling that the inverse image of two increasing
          points are themselves increasing, leads to Figure~\ref{bigproof9}.

          \begin{figure} \centering
            \subfloat[]{\label{bigproof9}
            \begin{tikzpicture}[scale=.4]
              \filldraw[light-gray](0,0) rectangle (1,1);
              \filldraw[dark-gray](0,4) rectangle (1,5);
              \filldraw[light-gray](1,0) rectangle (2,1);
              \filldraw[light-gray](2,0) rectangle (3,1);
              \filldraw[light-gray](2,3) rectangle (3,4);
              \filldraw[light-gray](3,1) rectangle (4,2);
              \filldraw[light-gray](3,2) rectangle (4,3);
              \filldraw[light-gray](3,3) rectangle (4,4);
              \filldraw[light-gray](3,4) rectangle (4,5);
              \filldraw[dark-gray](4,0) rectangle (5,1);
              \filldraw[light-gray](4,3) rectangle (5,4);
              \filldraw[light-gray](4,4) rectangle (5,5);
              \filldraw[light-gray](1,3) rectangle (2,4); 
              \draw[black, fill=black] (1,1) circle (0.2); 
              \draw[black, fill=black] (2,3) circle (0.2); 
              \draw[black, fill=black] (3,2) circle (0.2);
              \draw[black, fill=black] (4,4) circle (0.2); 
              \draw[thick](0,0)--(0,5); 
              \draw[thick](0,0)--(5,0);
              \draw[thick](1,0)--(1,5); 
              \draw[thick](0,1)--(5,1);
              \draw[thick](2,0)--(2,5); 
              \draw[thick](0,2)--(5,2);
              \draw[thick](3,0)--(3,5); 
              \draw[thick](0,3)--(5,3);
              \draw[thick](4,0)--(4,5); 
              \draw[thick](0,4)--(5,4);
              \draw[thick](5,0)--(5,5); 
              \draw[thick](0,5)--(5,5);
            \end{tikzpicture}
            } \hfill
            \subfloat[]{\label{bigproof10}
            \begin{tikzpicture}[scale=.4]
              \filldraw[light-gray](0,0) rectangle (1,1);
              \filldraw[light-gray](0,1) rectangle (1,2);
              \filldraw[light-gray](0,2) rectangle (1,3);
              \filldraw[dark-gray](0,4) rectangle (1,5);
              \filldraw[light-gray](1,0) rectangle (2,1);
              \filldraw[light-gray](1,3) rectangle (2,4);
              \filldraw[light-gray](2,0) rectangle (3,1);
              \filldraw[light-gray](2,3) rectangle (3,4);
              \filldraw[light-gray](3,1) rectangle (4,2);
              \filldraw[light-gray](3,2) rectangle (4,3);
              \filldraw[light-gray](3,3) rectangle (4,4);
              \filldraw[light-gray](3,4) rectangle (4,5);
              \filldraw[dark-gray](4,0) rectangle (5,1);
              \filldraw[light-gray](4,3) rectangle (5,4);
              \filldraw[light-gray](4,4) rectangle (5,5); 
              \draw[black, fill=black] (1,1) circle (0.2); 
              \draw[black, fill=black] (2,3) circle (0.2); 
              \draw[black, fill=black] (3,2) circle (0.2);
              \draw[black, fill=black] (4,4) circle (0.2); 
              \draw[thick](0,0)--(0,5); 
              \draw[thick](0,0)--(5,0);
              \draw[thick](1,0)--(1,5); 
              \draw[thick](0,1)--(5,1);
              \draw[thick](2,0)--(2,5); 
              \draw[thick](0,2)--(5,2);
              \draw[thick](3,0)--(3,5); 
              \draw[thick](0,3)--(5,3);
              \draw[thick](4,0)--(4,5); 
              \draw[thick](0,4)--(5,4);
              \draw[thick](5,0)--(5,5); 
              \draw[thick](0,5)--(5,5);
            \end{tikzpicture}
            } \hfill
            \subfloat[]{\label{bigproof11}
            \begin{tikzpicture}[scale=.4]
              \filldraw[light-gray](0,0) rectangle (1,1);
              \filldraw[dark-gray](0,1) rectangle (1,2);
              \filldraw[light-gray](0,2) rectangle (1,3);
              \filldraw[light-gray](0,3) rectangle (1,4);
              \filldraw[dark-gray](0,6) rectangle (1,7);
              \filldraw[dark-gray](1,0) rectangle (2,1);
              \filldraw[dark-gray](1,1) rectangle (2,2);
              \filldraw[light-gray](1,2) rectangle (2,3);
              \filldraw[light-gray](1,3) rectangle (2,4);
              \filldraw[dark-gray](1,6) rectangle (2,7);
              \filldraw[light-gray](2,0) rectangle (3,1);
              \filldraw[light-gray](2,1) rectangle (3,2);
              \filldraw[dark-gray](2,2) rectangle (3,3);
              \filldraw[dark-gray](2,3) rectangle (3,4);
              \filldraw[light-gray](2,4) rectangle (3,5);
              \filldraw[light-gray](2,5) rectangle (3,6);
              \filldraw[light-gray](3,0) rectangle (4,1);
              \filldraw[light-gray](3,1) rectangle (4,2);
              \filldraw[dark-gray](3,2) rectangle (4,3);
              \filldraw[dark-gray](3,4) rectangle (4,5);
              \filldraw[dark-gray](3,5) rectangle (4,6);
              \filldraw[dark-gray](3,6) rectangle (4,7);
              \filldraw[light-gray](4,2) rectangle (5,3);
              \filldraw[dark-gray](4,3) rectangle (5,4);
              \filldraw[dark-gray](4,4) rectangle (5,5);
              \filldraw[dark-gray](4,5) rectangle (5,6);
              \filldraw[dark-gray](4,6) rectangle (5,7);
              \filldraw[light-gray](5,2) rectangle (6,3);
              \filldraw[dark-gray](5,3) rectangle (6,4);
              \filldraw[dark-gray](5,4) rectangle (6,5);
              \filldraw[light-gray](5,5) rectangle (6,6);
              \filldraw[light-gray](5,6) rectangle (6,7);
              \filldraw[dark-gray](6,0) rectangle (7,1);
              \filldraw[dark-gray](6,1) rectangle (7,2);
              \filldraw[dark-gray](6,3) rectangle (7,4);
              \filldraw[dark-gray](6,4) rectangle (7,5);
              \filldraw[light-gray](6,5) rectangle (7,6);
              \filldraw[light-gray](6,6) rectangle (7,7); 
              \draw[black, fill=black] (1,5) circle (0.2); 
              \draw[black, fill=black] (2,2) circle (0.2); 
              \draw[black, fill=black] (3,4) circle (0.2);
              \draw[black, fill=black] (4,3) circle (0.2); 
              \draw[black, fill=black] (5,1) circle (0.2); 
              \draw[black, fill=black] (6,6) circle (0.2); 
              \draw[thick](0,0)--(0,7);
              \draw[thick](0,0)--(7,0); 
              \draw[thick](1,0)--(1,7);
              \draw[thick](0,1)--(7,1); 
              \draw[thick](2,0)--(2,7);
              \draw[thick](0,2)--(7,2); 
              \draw[thick](3,0)--(3,7);
              \draw[thick](0,3)--(7,3); 
              \draw[thick](4,0)--(4,7);
              \draw[thick](0,4)--(7,4); 
              \draw[thick](5,0)--(5,7);
              \draw[thick](0,5)--(7,5); 
              \draw[thick](6,0)--(6,7);
              \draw[thick](0,6)--(7,6); 
              \draw[thick](7,0)--(7,7);
              \draw[thick](0,7)--(7,7); 
            \end{tikzpicture}
            } \hfill
            \subfloat[]{\label{bigproof12}
            \begin{tikzpicture}[scale=.4]
              \filldraw[light-gray](0,0) rectangle (1,1);
              \filldraw[light-gray](0,1) rectangle (1,2);
              \filldraw[light-gray](0,2) rectangle (1,3);
              \filldraw[light-gray](0,3) rectangle (1,4);
              \filldraw[dark-gray](0,5) rectangle (1,6);
              \filldraw[light-gray](1,0) rectangle (2,1);
              \filldraw[light-gray](1,4) rectangle (2,5);
              \filldraw[dark-gray](1,5) rectangle (2,6);
              \filldraw[light-gray](2,0) rectangle (3,1);
              \filldraw[light-gray](2,4) rectangle (3,5);
              \filldraw[light-gray](3,0) rectangle (4,1);
              \filldraw[light-gray](3,4) rectangle (4,5);
              \filldraw[light-gray](4,1) rectangle (5,2);
              \filldraw[light-gray](4,2) rectangle (5,3);
              \filldraw[light-gray](4,3) rectangle (5,4);
              \filldraw[light-gray](4,4) rectangle (5,5);
              \filldraw[light-gray](4,5) rectangle (5,6);
              \filldraw[dark-gray](5,0) rectangle (6,1);
              \filldraw[dark-gray](5,1) rectangle (6,2);
              \filldraw[light-gray](5,4) rectangle (6,5);
              \filldraw[light-gray](5,5) rectangle (6,6); 
              \draw[black, fill=black] (1,2) circle (0.2); 
              \draw[black, fill=black] (2,1) circle (0.2); 
              \draw[black, fill=black] (3,4) circle (0.2);
              \draw[black, fill=black] (4,3) circle (0.2); 
              \draw[black, fill=black] (5,5) circle (0.2); 
              \draw[thick](0,0)--(0,6); 
              \draw[thick](0,0)--(6,0);
              \draw[thick](1,0)--(1,6); 
              \draw[thick](0,1)--(6,1);
              \draw[thick](2,0)--(2,6); 
              \draw[thick](0,2)--(6,2);
              \draw[thick](3,0)--(3,6); 
              \draw[thick](0,3)--(6,3);
              \draw[thick](4,0)--(4,6); 
              \draw[thick](0,4)--(6,4);
              \draw[thick](5,0)--(5,6); 
              \draw[thick](0,5)--(6,5);
              \draw[thick](6,0)--(6,6); 
              \draw[thick](0,6)--(6,6);
            \end{tikzpicture}
            }

            \caption{Permutation diagrams referenced in the proof of
            Theorem~\ref{involutions:thm:2341simples}.}
            \label{figure:6_1-group-3} 
          \end{figure}
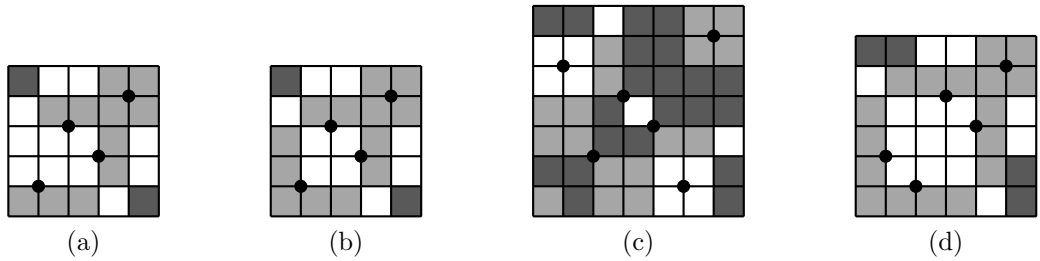

          First assume that the leftmost entry shown in Figure~\ref{bigproof9} is a
          fixed point, leading to Figure~\ref{bigproof10}. Simplicity then
          requires that there be an entry in the bottommost white cell whose
          inverse image is in the leftmost white cell, yielding
          Figure~\ref{bigproof11}. The center of this diagram, however, contains
          an interval which cannot be split, contradicting our assumption that
          the leftmost entry of Figure~\ref{bigproof9} is a fixed point.
          Letting this entry lie above the reflection line leads to a
          contradiction analogous to Figure~\ref{bigproof6}, and so let this
          entry lie below the line, with its inverse image above and to the left.
          Inspecting the various cases shows that this inverse image must lie in
          the cell immediately above and to the left, producing
          Figure~\ref{bigproof12}. The rectangular hull of the leftmost two
          entries can be split in two ways, but one of them leads to a sum
          decomposable permutation and the other leads to a non involution.

          Our final remaining case is when the middle entry of
          Figure~\ref{bigproof1} is a fixed point. Using similar methods to those
          presented above, we find that the that the leftmost entry must also be
          a fixed point. However, this case has already been investigated.

          It therefore follows that there is precisely one simple $2341$-avoiding
          involution.  Since every $123$-avoiding simple involution must also
          avoid $2341$, it follows that the set of all simple $2341$-avoiding
          involutions is equal to the set of simple $123$-avoiding involutions
          together with the permutation $5274163$. 
          
        \end{proof}

\cleardoublepage
\typeout{******************}
\typeout{**  Chapter 4   **}
\typeout{******************}

  \chapter{Polynomial Classes and Genomics}
  \label{chap:polyclass}

    This chapter examines the so called \emph{polynomial classes}, those
    permutation classes whose enumeration is given by a polynomial for large
    enough sizes. Much research in the area of permutation classes focuses on
    characterizing exponential growth rates, with a particular
    focus on the principally based classes. Considerably less attention has been
    paid to the small permutation classes~\cite{Vatter2011,Vatter2010} of which the
    polynomial classes, having subexponential growth, are an example. 

    These classes have recently found biological applications to the field of
    genomics. Evolution and mutation of organisms can be modelled as a
    rearrangement of a sequence of genes, and permutations have recently been
    applied to model these rearrangements~\cite{GenomeBook}. The physical
    mechanics of genome rearrangement have led to a variety of operations on
    permutations, and the theory of geometric grid classes~\cite{GridClasses}
    provides a geometric foundation from which to study these various operations.
    The polynomial classes are a subset of these grid classes, and arise when
    modelling the evolutionary distance. 

    Polynomial classes can characterized in a number of ways, but determining the
    actual polynomial which enumerates such a class can be computationally
    difficult. While there are several established methods for enumerating
    permutation classes, many of these are inefficient and none take advantage of
    the inherent structure in these classes.  In this chapter, we introduce an
    algorithm which quickly and efficiently enumerates a polynomial class from a
    structural description of the class.  This allows for an extension of
    existing genomic data, as well as a framework for further investigation. This
    chapter is based in part on~\cite{me-polyclass}, and the algorithm,
    implemented in Python, is freely available online~\cite{polyclass-algo}.

  \section{Class Structure}
  \label{polyclass:sec:gridclasses}

    \begin{definition} \label{polyclass:def:polyclass}
      A permutation class $\C$ is a \emph{polynomial class} if and only if the
      function $p(n) = |\C_n|$ is given by a polynomial for large enough $n$. 
    \end{definition} 

    It is not obvious that this definition gives way to a strict geometric
    description, as we shall soon see. Geometric grid classes provides a range of
    tools for analyzing the geometric properties of permutation class structure,
    and has produced new enumerative techniques for classes. To describe
    polynomial classes, however, we don't need the full machinery of geometric
    grid classes; these classes can be defined entirely using inflations
    (Definition~\ref{prelim:def:inflation}). 

    Note first that the polynomial classes fall under the purview of several
    established approaches, which could \emph{theoretically} be used to enumerate
    the classes~\cite{GridClasses, RegInsEnc, Atkinson2005, Linton2005,
    Brignall2008}. However, each of these approaches has its own drawbacks, and
    none provides an enumeration directly from a structural description of the
    class. Further, the work presented here illuminates some of the preliminary
    obstacles preventing a similar algorithmic approach to geometric grid
    classes.

    \subsection{Peg Permutations}

      Polynomial classes can be viewed by considering a set of restricted
      inflations of a finite set of permutations. In order to properly analyze
      these inflations, we introduce an additional structure on permutations which
      will be used to specify which inflations are allowed. 

      \begin{definition} \label{polyclass:def:pegperm}
        A \emph{peg permutation} $\tro$ is a permutation $\ro = \ro_1 \ro_2 \dots
        \ro_n$ in which each entry is decorated with either a $+$, $-$, or
        $\bullet$. The \emph{length} of a peg permutation $\tro$ is just the
        length of the underlying permutation $\ro$. 
      \end{definition}
      
      For example, $ \tro = \pl3 \mn1 \dt2 \pl4$ is a peg permutation of length $4$,
      and there are $3^n n!$ peg permutations of length $n$. 
      We denote peg permutations with a tilde, while the underlying permutation
      (with decoration removed) is written without.

      We allow peg permutations to be inflated with \emph{monotone} intervals. The
      entries marked with a $+$ (resp. $-$) can be inflated with ascending (resp.
      decreasing) runs. Entries marked with a $\blt$ can be inflated with a single
      entry. Note that we go against tradition and allow empty inflations. It
      follows then that such an inflation can be described simply as a peg
      permutation together with a sequence of integers which represent the number of
      elements by which to inflate each entry. We formalize this below. 

      \begin{definition} \label{polyclass:def:inflation}
        Let $\tro = \tro_1 \tro_2 \dots \tro_n$ be a peg permutation of length
        $n$, and $\vec i = (i_1, i_2, \dots i_n)$. Then let $\tro(I)$ be the
        permutation obtained by inflating entry $\tro_k$ by an interval of size
        $i_k$ according to the decoration of $\tro_k$: an ascending run if the
        decoration is a $+$, a descending run if it is a $-$, and a single entry if
        a $\blt$. If $\tro_k$ has a dot, then $i_k$ must be $0$ or $1$, otherwise
        $i_k \in \Zgeq$. 
      \end{definition}

      Recall, for example, the class $\Av(123, 231)$ examined in
      Section~\ref{prelim:sec:av123+231}. The decomposition of this class was
      shown in Figure~\ref{prelim:fig:polygrid}, and can be described as
      inflations of the peg permutation $\pl 3 \pl 1 \pl 2$. 

      Like many definitions in this dissertation, this one is best illustrated with
      a graphic example. Figure~\ref{polyclass:fig:inflation} shows a peg
      permutation being inflated and then standardized into a permutation. The
      following definition and theorem provide our desired characterization of
      polynomial classes. 

      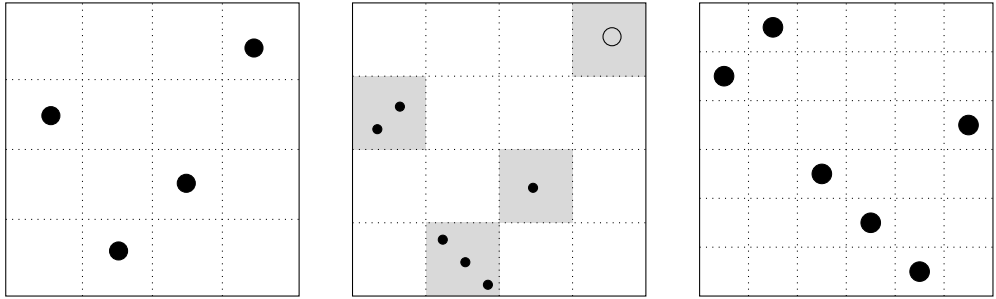
\begin{figure}[t] \centering







        \begin{tikzpicture}[scale=.3]
          \draw[black] (0,0) -- (13,0) -- (13,13) -- (0,13) -- cycle;
          \draw[fill=black] (2,8) circle (4mm);
          \draw[fill=black] (5,2) circle (4mm);
          \draw[fill=black] (8,5) circle (4mm);
          \draw[fill=black] (11,11) circle (4mm);
          \foreach \i in {3.4, 6.5, 9.6}{
            \draw[dotted] (0,\i) -- (13,\i);
            \draw[dotted] (\i,0) -- (\i,13);
          }
        \end{tikzpicture}
        \hspace{1pc}
        \begin{tikzpicture}[scale=.3]

          \draw[draw=none,fill=black!15] (0, 6.5) rectangle (3.25,9.75);
          \draw[draw=none,fill=black!15] (3.25, 0) rectangle (6.5,3.25);
          \draw[draw=none,fill=black!15] (6.5, 3.25) rectangle (9.75,6.5);
          \draw[draw=none,fill=black!15] (9.75, 9.75) rectangle (13,13);

          \draw[black] (0,0) -- (13,0) -- (13,13) -- (0,13) -- cycle;

          \foreach \i in {3.25, 6.5, 9.75}{
            \draw[dotted] (0,\i) -- (13,\i);
            \draw[dotted] (\i,0) -- (\i,13);
          }

          \draw[fill = black] (1.1,7.4) circle (2mm);
          \draw[fill = black] (2.1,8.4) circle (2mm);

          \draw[fill = black] (4, 2.5) circle (2mm);
          \draw[fill = black] (5, 1.5) circle (2mm);
          \draw[fill = black] (6, .5) circle  (2mm);

          \draw[fill = black] (8,4.8) circle (2mm);

          \draw[] (11.5,11.5) circle (4mm);
        \end{tikzpicture}
        \hspace{1pc}
        \begin{tikzpicture}[scale=.325]
          \draw[black] (0,0) -- (12,0) -- (12,12) -- (0,12) -- cycle;
          \draw[fill = black] (1,9) circle (4mm);
          \draw[fill = black] (3,11) circle (4mm);

          \draw[fill = black] (5,5) circle (4mm);
          \draw[fill = black] (7,3) circle (4mm);
          \draw[fill = black] (9,1) circle (4mm);

          \draw[fill = black] (11,7) circle (4mm);

          \foreach \i in {2,4,6,8,10} {
            \draw[dotted] (\i, 0) -- (\i, 12);
            \draw[dotted] (0,\i) -- (12,\i);
          }
        \end{tikzpicture}
      \caption{The peg permutation $\tro = \pl3 \mn1 \dt2 \pl4$ inflated by the
      vector $\vec{i} = (2,3,1,0)$ is the permutation $563214$.}
      \label{polyclass:fig:inflation}
      \end{figure}
      
      \begin{definition} \label{polyclass:def:inflations}
        For a peg permutation $\tro$, denote by $\I(\tro)$ the set of all valid
        inflations of $\tro$. Similarly, for a set $\tS$ of peg permutations, let 
        $$ \I(\tS) = \cup_{\tro \in \tS} \I(\tro).$$
        It follows that for a permutation $\pi \in \I(\tro)$, there exists some
        partition $P$ of the entries of $\pi$ into monotone intervals which are
        compatible with $\tro$. This partition is referred to as a
        $\tro$-partition of $\pi$. 
      \end{definition}

      It can be easily shown that, for a peg permutation $\tro$ of length $n$, if
      $\vec v = (v_1, v_2 \dots v_n) \in \Zgeq^n$ and $\vec w = (w_1, w_2, \dots
      w_n) \in \Zgeq^n$ are two vectors such that $v_i \leq w_i$ for all $i \in [n]$,
      then $\tro(\vec v) \prec \tro(\vec w)$ as permutations. Also, note that
      $\I(\tro)$ forms a permutation class, and in fact, as we shall soon see, a
      polynomial class. 

      \begin{theorem}[\cite{SophieVince, GridClasses}] \label{polyclass:thm:tfae}
        For a permutation class $\C$, the following are equivalent. 
        \begin{enumerate}[1)]
          \item $\C$ is a polynomial class,
          \item $\C_n < f_n$ for some $n$, where $f_n$ is the $n$th Fibonacci
          number,
          \item $\C$ does not contain arbitrarily long patterns of the forms
          shown in Figure~\ref{polyclass:fig:alternations},
          \item $\C = \I(\tS)$ for some set $\tS$ of peg permutations. 
        \end{enumerate}
      \end{theorem}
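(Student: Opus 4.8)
The plan is to prove the four conditions equivalent by establishing the cyclic chain $(1)\Rightarrow(2)\Rightarrow(3)\Rightarrow(4)\Rightarrow(1)$, leaning throughout on the grid-class machinery of Huczynska and Vatter~\cite{SophieVince, GridClasses}. Two of the links are essentially bookkeeping, one is a direct combinatorial count, and the remaining one carries all of the real content. The implication $(1)\Rightarrow(2)$ is immediate: the Fibonacci numbers satisfy $f_n \sim \varphi^n/\sqrt5$ with $\varphi > 1$, so any polynomial $p(n) = |\C_n|$ is eventually dominated by $f_n$, and in particular $|\C_n| < f_n$ for some (indeed all sufficiently large) $n$.

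For $(2)\Rightarrow(3)$ I would argue the contrapositive: if $\C$ contains arbitrarily long alternations of one of the forms in Figure~\ref{polyclass:fig:alternations}, then $|\C_n| \geq f_n$ for \emph{every} $n$, which contradicts the existence of a single witness $n$ in (2). The key lemma is purely local to one alternation: for each alternation type, a sufficiently long alternation lying in $\C$ has, among its sub-permutations of length $n$, at least $f_n$ distinct patterns. This is verified directly from the shape of the alternation, where the choices of which consecutive entries to keep or merge are counted by a Fibonacci number. Since $\C$ is a downset, all of these sub-permutations lie in $\C_n$, giving $|\C_n| \geq f_n$.

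The implication $(3)\Rightarrow(4)$ is the crux. The absence of arbitrarily long alternations yields a uniform bound $M$ on the length of any alternation occurring in $\C$, and via an Erd\H{o}s--Szekeres / Ramsey-type argument this bound forces every permutation of $\C$ to admit a gridding into a bounded $t \times t$ array of cells, each of which is monotone increasing, monotone decreasing, or empty, with $t$ depending only on $M$. Each such gridding of a member of $\C$ reads off as a peg permutation: an increasing cell contributes a $+$ entry, a decreasing cell a $-$ entry, a singleton a $\blt$ entry, and empty cells are discarded. Because $t$ is bounded, only finitely many gridding shapes occur, so the collection $\tS$ of peg permutations so realized is finite. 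One then checks that $\C = \I(\tS)$: every member of $\C$ is the inflation of the peg permutation recording its gridding, giving $\C \subseteq \I(\tS)$, while conversely every inflation of a realized peg permutation lies in $\C$ by downward closure together with the way $\tS$ was defined. This step is the main obstacle, since it is precisely where the negative hypothesis ``no long alternations'' must be converted into the positive finite description ``$\C = \I(\tS)$,'' and where the grid-class results of~\cite{SophieVince, GridClasses} genuinely do the work.

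Finally, for $(4)\Rightarrow(1)$ I would count. For a single peg permutation $\tro$ of length $m$, the inflations $\tro(\vec i)$ of total size $n$ are indexed by vectors $\vec i \in \Zgeq^m$ whose coordinates at $\blt$ entries lie in $\{0,1\}$ and whose coordinates sum to $n$; the number of such vectors is a polynomial in $n$. Distinct peg permutations, and distinct vectors for a fixed $\tro$, can produce the same permutation, so summing $|\I(\tro) \cap \S_n|$ over $\tro \in \tS$ overcounts $|\C_n|$. I would correct this by inclusion--exclusion over the finite poset of peg permutations ordered by the refinement relation (two adjacent like-decorated entries collapsing into one, and so on); each term in the resulting alternating sum is a polynomial in $n$, so their combination is a polynomial for large $n$, which is condition (1) and closes the cycle.
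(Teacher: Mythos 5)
First, a point of reference: the paper does not prove this theorem at all --- it is imported with citations to~\cite{SophieVince, GridClasses} --- so your proposal can only be measured against the arguments in those sources, whose general route (Fibonacci dichotomy plus gridding by monotone cells) you do reproduce. Implications $(1)\Rightarrow(2)$, $(2)\Rightarrow(3)$, and $(4)\Rightarrow(1)$ are fine in outline. The genuine gap is in $(3)\Rightarrow(4)$, in two distinct places. First, excluding long \emph{parallel} alternations (the first two diagrams of Figure~\ref{polyclass:fig:alternations}) is what yields, via the Ramsey-type argument, a gridding of every member of $\C$ by a bounded matrix of monotone cells; but such a gridding does not ``read off as a peg permutation'' unless each row and each column of the matrix contains at most one nonempty cell, since two monotone cells sharing a row cannot be recorded as two independent entries of a permutation. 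That extra property is precisely what the \emph{wedge} alternations (the last two diagrams) are there to enforce, and your argument never invokes them; as written it would ``prove'' that every monotone-griddable class --- for instance the class of permutations splittable by a single vertical line into two increasing parts --- is a union of peg-permutation inflations, which is false, since that class has exponential growth. Second, the inclusion $\I(\tS)\subseteq\C$ does not follow ``by downward closure together with the way $\tS$ was defined'': recording the gridding of one member $\pi\in\C$ shows only that $\pi\in\I(\tro)$; inflations of $\tro$ \emph{longer} than $\pi$ are not patterns of $\pi$ and need not belong to $\C$. Concretely, if $\C$ consists of the increasing permutations of length at most five, the gridding of $12345$ is a single increasing cell, and recording it as $\pl 1$ gives $\I(\pl 1)=\Av(21)\supsetneq\C$.

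The standard repair --- the real content hiding behind your phrase ``the way $\tS$ was defined'' --- is this: for each of the finitely many admissible gridding shapes, with underlying signed permutation $\tro$ of length $m$, consider $\cV=\{\vec v\in\Zgeq^m : \tro(\vec v)\in\C\}$. The monotonicity of inflation in the vector (stated in the chapter just before the theorem) together with downward closure of $\C$ makes $\cV$ a downset of $\Zgeq^m$; by Higman's/Dickson's lemma it has a finite basis, hence is a finite union of products of bounded intervals and copies of $\Zgeq$, and each such product is exactly $\I$ of a peg permutation obtained from $\tro$ by expanding every bounded coordinate into a run of dotted entries and retaining the sign on every unbounded one. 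This converts the negative hypothesis into the finite positive description $\C=\I(\tS)$ with $\tS$ \emph{finite}, and that finiteness is not cosmetic: you silently used it in $(4)\Rightarrow(1)$ (``the finite poset of peg permutations''), and the implication genuinely fails for infinite $\tS$, since inflating the all-dotted peg permutations of every length yields the class of all permutations, which is certainly not polynomial. With $\tS$ finite, your inclusion--exclusion sketch for $(4)\Rightarrow(1)$ is essentially the completion/compaction/cleaning procedure the paper carries out algorithmically in Chapter~4, and it goes through.
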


      \begin{figure}[t] \centering
        \begin{tikzpicture}[scale=.2]

          \foreach \i in {1, 4, 7}{
            \draw[very thick, color=lightgray] 
            (\i,\i) -- (\i, \i+3) -- (\i+3, \i+3) -- (\i+3,\i) -- cycle;
            \draw[fill = black] (\i+2, \i+1) circle (12pt);
            \draw[fill = black] (\i+1, \i+2) circle (12pt);
            }

          \draw[fill = black] 
                (10.5,10.5) circle (2pt)
                (11,11) circle (2pt)
                (11.5,11.5) circle (2pt);
        \end{tikzpicture}
          \hspace{2em}
        \begin{tikzpicture}[scale=.2]
          \foreach \i in {1, 4, 7}{
            \draw[very thick, color=lightgray] 
            (\i,12-\i) -- (\i, 12-\i-3) -- (\i+3,12-\i-3) -- (\i+3,12-\i) -- cycle;
            \draw[fill = black] (\i+2, 12-\i-1) circle (12pt);
            \draw[fill = black] (\i+1, 12-\i-2) circle (12pt);
            }

          \draw[fill = black] 
                (10.5,1.5) circle (2pt)
                (11,1) circle (2pt)
                (11.5,0.5) circle (2pt);
        \end{tikzpicture}
          \hspace{2em}
        \begin{tikzpicture}[scale = .2]
          \draw[very thick, color = lightgray]
            (6,0) -- (6,12) -- (12,12) -- (12,0) -- (0,0) -- (0,12) -- (6,12);

          \foreach \y [count = \x] in {2,4,6,8}
            \draw[fill = black] (\x, 1.1*\y) circle (12pt);
          \foreach \y [count = \x] in {1,3,5,7}
            \draw[fill = black] (\x + 6, 1.1*\y) circle (12pt);

          \foreach \x/\y in {4.5/9, 4.75/9.5, 5/10}
          \draw[fill = black]
            (\x, 1.1*\y) circle (2pt);

          \foreach \x/\y in {4.5/9, 4.75/9.5, 5/10}
          \draw[fill = black]
            (\x + 6, 1.1*\y - 1) circle (2pt);
        \end{tikzpicture}
          \hspace{2em}
        \begin{tikzpicture}[scale = .2]
          \draw[very thick, color = lightgray]
            (6,0) -- (6,12) -- (12,12) -- (12,0) -- (0,0) -- (0,12) -- (6,12);

          \foreach \y [count = \x] in {2,4,6,8}
            \draw[fill = black] (\x, 1.1*\y) circle (12pt);
          \foreach \y [count = \x] in {1,3,5,7}
            \draw[fill = black] (12-\x, 1.1*\y) circle (12pt);

          \foreach \x/\y in {4.5/9, 4.75/9.5, 5/10}
          \draw[fill = black]
            (\x, 1.1*\y) circle (2pt);

          \foreach \x/\y in {4.5/9, 4.75/9.5, 5/10}
          \draw[fill = black]
            (12- \x, 1.1*\y - 1) circle (2pt);
        \end{tikzpicture}

      \caption{If a class contains arbitrarily long patterns of any of these forms,
      it is not a polynomial class.}
      \label{polyclass:fig:alternations}
      \end{figure}
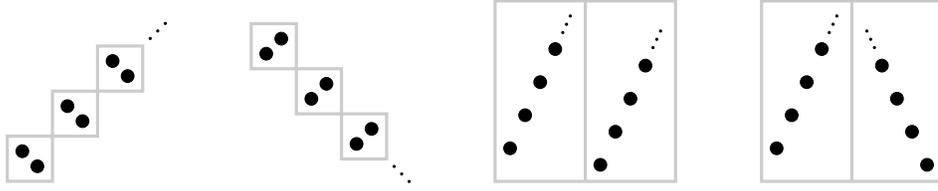

    \subsection{Peg Patterns}

      Analogous to the permutation pattern ordering, we can define an ordering on
      peg permutations. Essentially, we say that a peg permutation is contained in
      another if it can be obtained by deleting entries and changing signs to dots.

      \begin{definition} \label{polyclass:def:pegpattern}
        Let $\tro = \tro_1 \tro_2 \dots \tro_n$ and $\ttau = \ttau_1 \ttau_2 \dots
        \ttau_k$ be peg permutations. Say that $\ttau$ is contained within $\tro$ if
        there is a subsequence $\tro_{i_1} \tro_{i_2} \dots \tro_{i_k}$, whose
        entries lie in the same relative order as those of $\ttau$ and whose
        decorations are \emph{compatible}, meaning that $\tro_{i_j}$ either have
        the same decoration or $\ttau_j$ is decorated with a dot. 
      \end{definition}

      It follows from the definitions that if $\ttau \prec \tro$, then $\I(\ttau)
      \subset \I(\tro)$. However, the converse is not true. For example, letting
      $\ttau = \dt 1 \dt 2$ and $\tro = \pl 1$, we see that $\I(\ttau) = \{1, 12\}
      \subset \I(\tro)$, but $\ttau \not \prec \tro$. The core idea of the algorithm
      is the partition all permutations of the class according to peg permutation,
      and then enumerate these by enumerating integer vectors. 

      \begin{definition} \label{polyclass:def:fillingperm}
        For a peg permutation $\tro$ and a permutation $\pi$, say that $\pi$
        \emph{fills} $\tro$ if $\pi = \tro(\vec v)$ such that $\vec v_i = 1$
        whenever $\tro_i$ is decorated with a dot, and $\vec v_i \geq 2$
        otherwise. Every peg permutation $\tro$ has a unique minimal filling
        permutation, denoted $m_{\tro}$. 
      \end{definition}

    \subsection{Integer Vectors}

      Peg permutations provide a way of translating between integer vectors and
      permutations. The underlying idea of the algorithm is to formalize this
      correspondence in a way which preserves the ordering, converting permutation
      posets into posets of integer vectors. We will now establish some machinery
      for working with and enumerating integer vector posets.

      Downsets in the integer vector poset are easier to work with than permutation
      classes in part because of Higman's Theorem~\cite{HigmansThm}, which
      implies that every downset has a finite basis. The union and intersection
      of these downsets is easy to compute as well. 

      \begin{definition} \label{polyclass:def:vecposet}
        For two vectors $\vec v, \vec w \in \Zgeq^n$, say that $\vec v \prec \vec
        w$ if $v_i \leq w_i$ for each $i \in [n]$.  For a downset $\cV \subset
        \Zgeq^n$, denote by $\cB_\cV$ the set of minimal vectors in the
        complement of $\cV$. It follows then that $\cV$ can be described as
        precisely those vectors which \emph{avoid} the vectors of $\cBV$, that
        is, 

        $$ \cV := \{ \vec v \in \Zgeq^n : \vec b_i \not \prec \vec v, \frall
            \vec b_i \in \cBV \}. $$

        For two vectors $\vec v, \vec w \in \Zgeq^n$, denote by $\vec v \vee \vec
        w$ the minimal vector for which $\vec v \prec \vec v \vee \vec w$ and $\vec
        w \prec \vec v \vee \vec w$. It follows that $ (\vec v \vee \vec w)_i =
        \max(\vec v_i, \vec w_i)$ for each $i \in [n]$. 
      \end{definition}

      \begin{proposition} \label{polyclass:prop:vectorunions}
        Let $\cV, \cW$ be downsets in $\Zgeq^n$ with corresponding downsets $\cBV,
        \cBW$.  Letting $\cBM$ be the minimal vectors of the set $\{\vec v \vee
        \vec w : \vec v \in \cV, \vec w \in \cW \}$, 
        $\cBU$ the minimal vectors of the union $\cBV \cup \cBW$, and $\cM$
        and $\cU$ the downsets which avoid $\cBM$ and $\cBU$, respectively. We
        have that 
        $$ \cV \cap \cW = \cU, $$
        $$ \cV \cup \cW = \cM .$$
      \end{proposition}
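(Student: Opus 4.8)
The plan is to reduce both identities to the avoidance description of downsets from Definition~\ref{polyclass:def:vecposet}, together with one elementary lemma. Say that a vector $\vec x$ is a \emph{cover} of a set $S \subseteq \Zgeq^n$ if $\vec s \prec \vec x$ for some $\vec s \in S$; thus $\vec x$ avoids $S$ exactly when it is not a cover of $S$. The lemma I would establish first is that \emph{the covers of $S$ coincide with the covers of the set $\min(S)$ of its minimal elements}. One inclusion is immediate since $\min(S) \subseteq S$. For the other, if $\vec s \prec \vec x$ with $\vec s \in S$, then because the coordinatewise order on $\Zgeq^n$ is well-founded (a consequence of Dickson's lemma, or of Higman's Theorem~\cite{HigmansThm} as already invoked), the nonempty set $\{\vec t \in S : \vec t \prec \vec s\}$ has a minimal element $\vec m$; transitivity shows $\vec m$ is minimal in all of $S$, and $\vec m \prec \vec s \prec \vec x$ exhibits $\vec x$ as a cover of $\min(S)$. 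Consequently, replacing a basis by its set of minimal elements never changes the downset it defines.

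For the intersection, observe that $\vec x \in \cV \cap \cW$ iff $\vec x$ avoids $\cBV$ and avoids $\cBW$, i.e.\ iff $\vec x$ avoids $\cBV \cup \cBW$. By the lemma this is equivalent to $\vec x$ avoiding the minimal elements of $\cBV \cup \cBW$, which is precisely $\cBU$. Hence $\cV \cap \cW$ is exactly the downset avoiding $\cBU$, namely $\cU$.

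For the union I would argue on complements. A vector $\vec x$ lies outside $\cV \cup \cW$ precisely when it lies outside both, that is, when there exist $\vec b \in \cBV$ and $\vec c \in \cBW$ with $\vec b \prec \vec x$ and $\vec c \prec \vec x$. Since $(\vec b \vee \vec c)_i = \max(\vec b_i, \vec c_i)$, the two conditions $\vec b \prec \vec x$ and $\vec c \prec \vec x$ hold simultaneously iff $\vec b \vee \vec c \prec \vec x$. Thus $\vec x \notin \cV \cup \cW$ iff $\vec x$ is a cover of $\{\vec b \vee \vec c : \vec b \in \cBV,\ \vec c \in \cBW\}$, which by the lemma is iff $\vec x$ covers $\cBM$, the minimal elements of that set. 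Equivalently, $\vec x \in \cV \cup \cW$ iff $\vec x$ avoids $\cBM$, i.e.\ iff $\vec x \in \cM$.

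The only genuinely nontrivial ingredient is the minimal-element lemma, whose single real input is well-foundedness of the product order; the remainder is a direct unwinding of Definition~\ref{polyclass:def:vecposet} and the observation that a join dominates a pair exactly when both members of the pair are dominated. In carrying this out I would also be careful about the indexing of $\cBM$: the set whose minima are relevant is the set of pairwise joins of the \emph{basis} vectors in $\cBV$ and $\cBW$, rather than of arbitrary members of the downsets, and I would state it in that form.
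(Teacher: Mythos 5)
Your proof is correct and follows essentially the same route as the paper's: your complement argument for the union is the paper's De Morgan computation read pointwise, resting on the identical key observation that $\vec b \prec \vec x$ and $\vec c \prec \vec x$ hold simultaneously if and only if $\vec b \vee \vec c \prec \vec x$, and your intersection argument matches the paper's verbatim in substance. The two places where you go beyond the paper are both improvements rather than detours: the paper simply asserts the resulting bases, leaving implicit the reduction to minimal elements that you justify via well-foundedness of the coordinatewise order, and you correctly flag that the statement's $\{\vec v \vee \vec w : \vec v \in \cV,\ \vec w \in \cW\}$ is a misprint for joins taken over the bases $\cBV$ and $\cBW$ (as literally written, $\vec 0 \in \cV \cap \cW$ would force $\cBM = \{\vec 0\}$ and hence $\cM = \emptyset$), which is exactly how the paper's own proof treats it.
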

      \begin{proof}
        Clearly, any vector in $\cV \cap \cW$ must avoid all basis elements of
        both $\cBV$ and $\cBW$, and so the basis for $\cV \cap \cW $ is the set
        of minimal elements of the set $\cBV \cup \cBW$. For
        unions, we proceed using De Morgan's laws:
        $$ \begin{aligned}
          \cV \cup \cW
          &= \left( \bigcap_{\vec v \in \cBV} \{\vec v \mbox{-avoiding
                vectors}\}\right) \bigcup
             \left( \bigcap_{\vec w \in \cBW} \{\vec w \mbox{-avoiding
                vectors}\}\right) \\
          &= \bigcap_{\substack{ \vec v \in \cBV \\ \vec w \in \cBW }} 
            \Big( \{\vec v \mbox{-avoiding vectors}\} 
              \cup \{\vec w \mbox{-avoiding vectors}\}\Big) \\
          &= \bigcap_{\substack{ \vec v \in cBV \\ \vec w \in \cBW }} 
              \{\vec v \vee \vec w \mbox{-avoiding vectors}\}.
          \end{aligned} $$

        Therefore the basis for $\cV \cup \cW$ consists of the set $\cBM$,
        completing the proof.
        \end{proof}

        Proposition~\ref{polyclass:prop:vectorunions} can also be used to enumerate 
        downsets of integer vector classes, using inclusion exclusion. It will be
        useful to consider these downsets as collections of point-sets on an integer
        lattice, and to enumerate the classes based on the number of $n$-element
        sets they contain. We formalize this below. 

      We define the \emph{weight} of a vector as the sum of its entries. A peg
      permutation, inflated by a vector of weight $k$, produces a permutation of
      length $k$. Counting integer vectors according to weight is relatively
      simple, and is equivalent to counting ordered compositions. Letting
      $a_{n,k}$ denote the number of $k$-weight vectors in $\Zgeq^n$, we have 
      $$ \sum_{k \geq 0} a_{n,k} z^k = \frac{1}{(1 - z)^n}.$$

      Similarly, the generating function for the number of permutations which
      contain a given vector $\vec v \in \Zgeq^n$ is given by 
      $$ \frac{z^{\wt(\vec v)}}{(1 - z)^n} .$$
      It follows from this and Proposition~\ref{polyclass:prop:vectorunions} that
      that downsets can be enumerated by adding and subtracting generating
      functions of this form. This leads to the following lemma. 

      \begin{lemma} \label{polyclass:lem:fillinggfcn}
        Let $\tro$ be a peg permutation, and let $s$ be the number of signs in
        the decoration of $\tro$, and $d$ the number of dots. Then the generating
        function for the filling permutations of $\tro$ is given by 
        $$ \frac{z^{d + 2s}}{(1 - z)^s}.$$ 
      \end{lemma}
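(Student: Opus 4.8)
The plan is to count the filling permutations of $\tro$ by counting their underlying inflation vectors and then reading off the weight generating function as a product over the entries of $\tro$. First I would set up the correspondence coming straight from Definition~\ref{polyclass:def:fillingperm}: a permutation $\pi$ fills $\tro$ exactly when $\pi = \tro(\vec v)$ for some $\vec v \in \Zgeq^n$ (with $n$ the length of $\tro$) satisfying $\vec v_i = 1$ on each dotted entry and $\vec v_i \geq 2$ on each signed entry; call such a $\vec v$ a \emph{filling vector}. Since inflating $\tro$ by $\vec v$ yields a permutation of length $\wt(\vec v)$, the target generating function is $\sum_{\vec v} z^{\wt(\vec v)}$ ranging over filling vectors, provided this sum records each filling permutation the correct number of times.

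Granting that, the enumeration factors over the $n$ entries of $\tro$, exactly in the spirit of the unconstrained count $1/(1-z)^n$ recalled just above the lemma. A dotted entry forces $\vec v_i = 1$ and so contributes a factor $z$; there are $d$ of these, contributing $z^d$. A signed entry ranges over $\vec v_i \in \{2,3,4,\dots\}$ and contributes $\sum_{k \geq 2} z^k = z^2/(1-z)$; there are $s$ of these, contributing $\left(z^2/(1-z)\right)^s$. Multiplying the independent factors gives
$$ z^{d}\left(\frac{z^2}{1-z}\right)^{s} = \frac{z^{d+2s}}{(1-z)^s}, $$
which is precisely the claimed formula.

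The step that needs care — and which I expect to be the main obstacle — is the correspondence itself: I must verify that summing $z^{\wt(\vec v)}$ over filling vectors actually enumerates filling \emph{permutations}, i.e.\ that the filling data is recoverable from $\pi$. This is exactly where the condition $\vec v_i \geq 2$ on signed entries is essential: inflating a sign by at least two elements produces a genuine monotone run whose orientation ($+$ ascending, $-$ descending) is forced, so the block boundaries of a $\tro$-partition can in principle be detected from the increases and decreases of $\pi$. I would argue that this pins down $\vec v$ from $\pi$ together with its $\tro$-partition, invoking the $\tro$-partition structure developed earlier in this section; the delicate point is ruling out two filling vectors collapsing to the same permutation, and it is this injectivity, rather than the generating-function bookkeeping above, that carries the content of the lemma.
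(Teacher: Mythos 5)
Your per-entry factorization is exactly the computation the paper intends: the lemma is stated without proof, as a direct consequence of the composition-counting discussion immediately preceding it (each dotted entry is pinned to $1$ and contributes $z$; each signed entry ranges over $\{2,3,\dots\}$ and contributes $z^2/(1-z)$; the factors multiply to $z^{d+2s}/(1-z)^s$). The genuine gap is in the step you correctly flag as delicate and then propose to close: injectivity of $\vec v \mapsto \tro(\vec v)$ on filling vectors does \emph{not} hold for an arbitrary peg permutation, and your argument for it --- that $\vec v_i \geq 2$ forces each block's orientation, so the block boundaries of a $\tro$-partition can be read off from the rises and descents of $\pi$ --- fails whenever two adjacent entries of $\tro$ can merge into a single monotone interval. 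Concretely, take $\tro = \pl 1 \pl 2$ and $\pi = 123456$: the three filling vectors $(2,4)$, $(3,3)$, $(4,2)$ all produce $\pi$. Every orientation is forced, but the boundary between two adjacent ascending blocks is invisible, so under the reading ``generating function for the set of filling permutations'' the formula is false here ($3$ versus $1$ at length $6$). What the formula unconditionally counts is filling \emph{vectors} by weight, not filling permutations.

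The resolution is a hypothesis rather than a cleverer injectivity argument: uniqueness of the inflation vector is precisely \emph{compactness}, i.e.\ the equivalence of (2) and (3) in Proposition~\ref{polyclass:prop:compact} --- $\tro$ must avoid the consecutive configurations $\pl1 \pl2$, $\pl1 \dt2$, $\dt1 \pl2$ and their symmetric counterparts. (Note also that this machinery, including Lemma~\ref{polyclass:lem:intervals}, appears in Section~\ref{polyclass:sec:algo}, \emph{after} the lemma you are proving, not before it.) The algorithm only ever invokes the lemma after the compaction step has discarded non-compact peg permutations, which is why the identification of filling vectors with filling permutations is harmless in the paper's usage. So either prove the lemma as a statement about filling vectors --- which is all the inclusion--exclusion at the end of Section~\ref{polyclass:sec:algo} requires --- or add compactness as a hypothesis and cite Proposition~\ref{polyclass:prop:compact} for the bijection; the unconditional injectivity your plan rests on cannot be established.
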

      
      Lemma~\ref{polyclass:lem:fillinggfcn} will ultimately be our enumeration
      scheme for these classes. The main barrier is partitioning the class
      into categories based on which peg permutation they fill. The bulk of the
      algorithm, described in the next section, will be performing this
      partitioning.

  \section{The Algorithm}
  \label{polyclass:sec:algo}

    This section gives an overview of the enumeration algorithm, given a set of
    peg permutations as an input, and outputting a disjoint set of integer vector
    downsets, which can then be enumerated. The algorithm consists of three
    parts. First the set is \emph{completed}, then \emph{compacted}, and finally
    \emph{cleaned}, at which point we have a set of peg permutations which
    partition the class. Letting $\tS$ be a set of peg permutations, we describe
    each part in detail below, with the goal of enumerating the class $\I(\tS)$.
    A pseudocode overview of the algorithm is shown in
    Figure~\ref{polyclass:fig:pseudocode}.

    \subsection{Completing the Set}
      Say that a set $\tS$ is \emph{complete} if every permutation $\pi \in
      \I(\tS)$ fills at least one element $\tro \in tS$. For example, the set
      $\{\pl 2 \pl 1\}$ is not complete, because $1\ 2\ 3 \in \I(\pl 2 \pl 1)$
      (since $1\ 2\ 3 = \pl 2 \pl1 (3,0)$), but doesn't fill $\pl 2 \pl 1$. It
      follows from the definition of peg patterns, however, that every
      permutation in $\I(\tS)$ must fill some pattern within an element in $\tS$. 

      Therefore, the downset of any peg pattern is a complete set.  The first
      step of the algorithm completes the set $\tS$ by, 
      for each $\tro \in \tS$, we add all patterns of $\tro$ into the set $\tS$.
      After this step, the set $\tS$ is complete. 

    \subsection{Compacting the Set}

      The next obstacle in the enumeration is ensuring that every permutation in
      the class fills a unique peg permutation in the set.  Given a permutation,
      we can divide its entries up into monotone intervals in a number of ways.
      The following lemma will help to ensure uniqueness, and allow for
      enumeration. 

      \begin{lemma} \label{polyclass:lem:intervals}
        If two monotone intervals intersect, then their union and intersection
        are also monotone intervals.
      \end{lemma}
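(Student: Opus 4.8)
The plan is to reformulate monotonicity in terms of consecutive differences and then reduce to a short case analysis. First I would fix two monotone intervals and regard each as a block of consecutive positions, writing $I = [a_1,b_1]$ and $J = [a_2,b_2]$ for their position ranges and assuming without loss of generality that $a_1 \le a_2$. Since $I$ and $J$ intersect, their position ranges overlap, so $a_2 \le b_1$; if one range contains the other the intersection and union are simply the smaller and the larger interval and there is nothing to prove. Hence I may assume the proper overlap $a_1 < a_2 \le b_1 < b_2$, in which case both $I$ and $J$ have at least two entries.

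The key observation I would establish at the outset is that a monotone interval of length at least two is exactly a block of consecutive positions whose successive differences are all $+1$ or all $-1$: if $[a,b]$ is an increasing (resp.\ decreasing) interval then its values are consecutive integers listed in increasing (resp.\ decreasing) order, which forces $\pi_{i+1} - \pi_i = +1$ (resp.\ $-1$) for every $i \in [a,b-1]$. Thus to each monotone interval I attach a direction $\varepsilon \in \{+1,-1\}$, and membership is witnessed purely by the interior differences. The intersection is then immediate: $I \cap J = [a_2,b_1]$ is a contiguous sub-block of $I$, so each of its consecutive differences equals the direction of $I$, and it is a monotone interval in that same direction.

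The main work — and the step I expect to be the real obstacle — is showing that the union $I \cup J = [a_1,b_2]$ is monotone, which reduces to proving that $I$ and $J$ share a direction; once that is known, every consecutive difference on $[a_1,b_2-1]$ is equal and the union is a monotone interval. When the overlap $[a_2,b_1]$ contains at least two positions there is a common interior index lying in both interiors, which forces the two directions to agree. The delicate case is a one-point overlap $a_2 = b_1 = p$: here the interiors of $I$ and $J$ are adjacent but disjoint, so I cannot compare directions directly. Instead I would argue from injectivity of $\pi$: if the directions disagreed, say $I$ increasing and $J$ decreasing, then $\pi_{p-1} = \pi_p - 1$ and $\pi_{p+1} = \pi_p - 1$ would give $\pi_{p-1} = \pi_{p+1}$, impossible for a permutation (both $p-1$ and $p+1$ are genuine positions since $a_1 < p < b_2$); the opposite mismatch is symmetric. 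Hence the directions agree here as well, completing the argument. I would note in passing that the purely set-theoretic fact that $I \cap J$ and $I \cup J$ are intervals at all is cheap — the value set of a block of positions is its image under $\pi$, and the images of $I$ and $J$ are intersecting blocks of consecutive integers — but the reformulation via unit steps delivers the interval property and the monotone property simultaneously, and is what drives the proof.
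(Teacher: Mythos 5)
Your proof is correct, and it follows the same underlying strategy as the paper's (very terse) proof: reduce everything to showing that the two intervals share a direction, after which both the union and the intersection are immediate. The paper simply asserts that, one interval being WLOG increasing, ``the second must also be increasing'' because the entries are contiguous; your version makes this rigorous. In particular, your reformulation of a monotone interval of length at least two as a block of positions with all consecutive differences equal to $+1$ or all equal to $-1$ cleanly dispatches the case where the overlap contains two or more positions (a shared difference forces equal directions), and your injectivity argument for the single-point overlap --- where mismatched directions would force $\pi_{p-1} = \pi_{p+1}$ --- closes exactly the case the paper glosses over. So you have not taken a different route; you have supplied the missing case analysis that the paper's two-line sketch leaves implicit, which is arguably an improvement in rigor at no cost in length.
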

      \begin{proof}
        Suppose we have two monotone intervals with a non-empty intersection.
        Without loss of generality, suppose that one of them is increasing, and
        so their intersection is either increasing or consists of a single
        element. Since each interval consists of contiguous entries, the second
        entry must also be increasing, and so the union of the two is an
        increasing interval. 
      \end{proof}
      
      Lemma~\ref{polyclass:lem:intervals} implies that by greedily choosing the
      largest possible intervals, we can ensure that for each permutation $\pi$,
      there is a unique smallest peg permutation $\tro$ for which $\pi$ is in
      $\I(\tro)$, but not in $\I(\ttau)$ for any $\ttau \prec \tro$. However, not
      all peg permutations are able to fulfill this role. 

      Say that a peg permutation $\tro$ is \emph{compact} if, for all $\ttau \prec
      \tro$, we have that $\I(\ttau) \not = \I(\tro)$. For example, $\dt 2 \mn 1$
      is not compact, since $\I(\dt 2 \mn 1) = \I(\mn 1)$. The following
      lemma and proposition characterizes these peg permutations.

      \begin{proposition}\label{polyclass:prop:compact}
        For a peg permutation $\tro$, the following are equivalent:
        \begin{enumerate}[1)]
        \item $\tro$ is compact, 
        \item $\tro$ does not contain the patterns $\pl1 \pl2, \pl1 \dt2, \dt1
          \pl2$ or, symmetrically, $\mn2 \mn1, \mn2 \dt1$ or $\dt2 \mn 1$, 
        \item every permutation $\pi$ which fills $\tro$ has a unique vector
          $\vec v$ for which $\tro(\vec v) = \pi$. 
        \end{enumerate}
      \end{proposition}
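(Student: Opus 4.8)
The plan is to read condition~(2) in its operative form: $\tro$ contains one of the six listed patterns precisely when it has a \emph{reducible adjacent pair} --- two entries $\tro_i,\tro_{i+1}$ in consecutive positions whose values are consecutive integers and whose decorations make the two inflated blocks combine into a single monotone run. Concretely the ascending patterns $\pl1\pl2,\pl1\dt2,\dt1\pl2$ occur when $\tro_{i+1}=\tro_i+1$ with directions $(+,+),(+,\bullet),(\bullet,+)$, and the descending patterns $\mn2\mn1,\mn2\dt1,\dt2\mn1$ occur when $\tro_{i+1}=\tro_i-1$ with directions $(-,-),(-,\bullet),(\bullet,-)$; the $(\bullet,\bullet)$ case and the opposite-direction cases are excluded exactly because the two blocks cannot merge. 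The single engine behind every implication is the following consequence of Lemma~\ref{polyclass:lem:intervals}: if a permutation $\pi\in\I(\tro)$ admits two distinct partitions into monotone intervals each compatible with $\tro$, then at some boundary two adjacent blocks merge into one monotone interval, and by value-consecutiveness and direction-compatibility this merge is witnessed by a reducible adjacent pair of $\tro$. I will prove $\neg(2)\Rightarrow\neg(1)$ and $\neg(2)\Rightarrow\neg(3)$ by explicit local surgery, and $(2)\Rightarrow(3)$ and $(2)\Rightarrow(1)$ by this uniqueness engine, which together give $(1)\Leftrightarrow(2)\Leftrightarrow(3)$.

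For $\neg(2)\Rightarrow\neg(1)$ (equivalently $(1)\Rightarrow(2)$), suppose $\tro$ has a reducible pair at $i,i+1$. Let $\ttau$ be obtained by deleting the weaker of the two entries and keeping a single sign carrying the common direction (delete the dot when one is a dot, or delete one of two equal signs). Then $\ttau\prec\tro$ and $|\ttau|<|\tro|$. The inclusion $\I(\ttau)\subseteq\I(\tro)$ is the general fact recorded after Definition~\ref{polyclass:def:pegpattern}, and the reverse inclusion holds because in any $\tro(\vec v)$ the two blocks of entries $i$ and $i+1$ form a single monotone run by Lemma~\ref{polyclass:lem:intervals}, so $\tro(\vec v)=\ttau(\vec v')$ where $\vec v'$ replaces the coordinates $v_i,v_{i+1}$ by their sum on the surviving sign. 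Hence $\I(\ttau)=\I(\tro)$ and $\tro$ is not compact. For $\neg(2)\Rightarrow\neg(3)$ (equivalently $(3)\Rightarrow(2)$), take the minimal filling $m_{\tro}$ with its filling vector $\vec v_0$ and shift all of the mass of the reducible pair onto its sign entry: set $w_i=(v_0)_i+(v_0)_{i+1}$ and $w_{i+1}=0$ when $\tro_i$ is a sign (symmetrically otherwise). Again Lemma~\ref{polyclass:lem:intervals} gives $\tro(\vec w)=m_{\tro}$, while $\vec w\neq\vec v_0$, so the filling permutation $m_{\tro}$ has two inflation vectors.

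For the forward implications I use that $m_{\tro}$ fills $\tro$, so every entry contributes a nonempty block and the greedy maximal monotone-interval partition of any $\pi\in\I(\tro)$ is well defined. To prove $(2)\Rightarrow(3)$, let $\pi$ fill $\tro$ via $\vec v$ and suppose $\pi=\tro(\vec w)$. If some $w_k=0$ then $\pi$ is an inflation of the proper pattern obtained by deleting entry $k$, so the block that entry $k$ fills under $\vec v$ is absorbed by a neighbor; by the engine this exhibits a reducible pair, contradicting~(2). Thus all blocks are nonempty in both representations, and if the two partitions differed then some interval boundary would move, forcing two adjacent blocks to be mergeable --- again a reducible pair. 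Hence the partition, and therefore $\vec v$, is unique. To prove $(2)\Rightarrow(1)$, suppose $\tro$ is not compact, so $\I(\ttau)=\I(\tro)$ for some proper $\ttau\prec\tro$. Then $m_{\tro}\in\I(\ttau)$ decomposes $m_{\tro}$ into at most $|\ttau|<|\tro|$ monotone intervals, strictly coarser than its canonical partition into $|\tro|$ nonempty entry-blocks; two adjacent entry-blocks therefore lie in a common monotone interval and merge, producing a reducible pair and contradicting~(2).

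The main obstacle is the forward direction $(2)\Rightarrow(1)$: translating the global hypothesis $\I(\ttau)=\I(\tro)$ into the purely local statement that $\tro$ possesses a reducible adjacent pair. The work is to verify that the coarser monotone-interval partition of $m_{\tro}$ coming from $\ttau$ genuinely merges two \emph{positionally adjacent} entry-blocks whose values are \emph{consecutive} and whose directions are compatible --- i.e.\ that a merge can only occur across one of the six listed patterns, and never across two blocks separated in value by a third entry. This is exactly where Lemma~\ref{polyclass:lem:intervals} is essential, and where the bookkeeping distinguishing the $(\bullet,\bullet)$ and opposite-direction non-cases from the six genuine patterns must be carried out with care.
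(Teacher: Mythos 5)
Your proof is correct and takes essentially the same route as the paper's: both arguments run on Lemma~\ref{polyclass:lem:intervals}, proving the forward implications by showing that two distinct partitions of a filling permutation force a block of one to intersect two adjacent blocks of the other, whose monotone union realizes one of the six listed consecutive patterns, and the reverse implications by local contraction/redistribution across such a pair. The only difference is one of thoroughness: you spell out (the contracted $\ttau$, the vector surgery on $m_{\tro}$, and the treatment of zero coordinates $w_k=0$) what the paper's proof dismisses as ``clear.''
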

      \begin{proof}
        First we show that (1) and (2) are equivalent. Clearly (1) implies (2),
        so to show the reverse implication, let $\tro$ be a noncompact peg
        permutation. By definition, there exists some $\ttau \prec \tro$ such that
        $\I(\ttau) = \I(\tro)$. Let $\pi$ be a permutation which fills $\tro$,
        with $P$ the $\tro$-partition and $P'$ the $\ttau$ partition. Because
        $\ttau$ is shorter than $\tro$, it follows that there must be some part
        of $P'$ which intersects two parts of $P'$. By
        Lemma~\ref{polyclass:lem:intervals} these two form a monotone
        interval, and so must be of one of the forms listed in (2). 

        Now, we show that (2) and (3) are equivalent. If a peg permutation
        $\tro$ contains one of the patterns specifies in (2), it is clear that a
        permutation can fill $\tro$ in at least two different ways, so (3)
        implies (2). Suppose that the permutation $\pi$ fills $\tro$ with two
        different $\tro$-partitions $P$ and $P'$. It follows then that
        a block of one partition must intersect two blocks of the other. However,
        this implies (Lemma~\ref{polyclass:lem:intervals}) that intersection
        and unions are also monotone, and so must be of one of the forms given in
        (2). 
      \end{proof}

      By simply removing each of the peg permutations which contain one of the
      intervals listed in Proposition~\ref{polyclass:prop:compact}, our set of
      peg permutations becomes a set of compact peg permutations. Further, since
      our set is a full and complete downset, the definition of compact implies
      that the new set will still be complete. 

    \subsection{Cleaning the Set}

      The final step in the algorithm is bijecting our complete and compact set
      of peg permutations to a set of downsets of integer vectors. Our final
      obstacle in this bijection will be peg permutations which have intervals of
      dotted entries. For example, the peg permutation $\dt 1 \dt 2 \dt 3 \dt 4$
      produces a class which is strictly contained in $\pl 1$, but there is no
      containment at the level of peg permutations. We remedy this by using
      forbidden vectors: the peg permutation $\dt 1 \dt2 \dt 3 \dt4$ is mapped to
      the inflations of $\pl 1$ which \emph{avoid} the vector $\vect{5}$. 

      \begin{definition} \label{polyclass:def:clean}
        Say that a peg permutation $\tro$ is \emph{clean} if $\I(\tro) \not
        \subset \I(\ttau)$ for any shorter permutation $\ttau$. 
      \end{definition}

      \begin{proposition}\label{polyclass:prop:clean}
        The compact peg permutation $\tro$ is clean if and only if it does not
        contain an interval order isomorphic to $\dt 1 \dt 2$ or $\dt 2 \dt 1$. 
      \end{proposition}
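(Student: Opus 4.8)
The plan is to prove both implications, treating the ``only if'' direction as the easy one and the ``if'' direction (the one that genuinely uses compactness) as the substance. The governing heuristic throughout is that a monotone interval of an inflation can straddle the boundary between two adjacent blocks of $\tro$ exactly when those two entries are ``mergeable'' — consecutive in value and compatible in orientation — and that such mergeable adjacent pairs are precisely the eight patterns $\pl1\pl2,\pl1\dt2,\dt1\pl2,\dt1\dt2$ and their decreasing analogues. Compactness will be used to delete six of these, leaving only $\dt1\dt2$ and $\dt2\dt1$.

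For the easy direction I argue the contrapositive: if $\tro$ contains an interval order isomorphic to $\dt1\dt2$ (the case $\dt2\dt1$ being symmetric), then $\tro$ is not clean. Suppose this interval occupies adjacent positions $k,k+1$, so the two dotted entries take consecutive values in increasing order. I form $\ttau$ by deleting these two entries and inserting in their place a single entry decorated with a $+$, at the same relative position and value. Then $|\ttau|=|\tro|-1$, and I claim $\I(\tro)\subseteq\I(\ttau)$: in any inflation $\pi=\tro(\vec v)$ the two dots together contribute an ascending run of at most two points, contiguous in both position and value, and this is reproduced exactly by inflating the single $+$ entry of $\ttau$ by a run of the same length, which is legal since a $+$ admits runs of every nonnegative length. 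Every other entry is untouched, so $\pi\in\I(\ttau)$; thus $\I(\tro)\subseteq\I(\ttau)$ with $\ttau$ strictly shorter, and $\tro$ is not clean.

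For the converse, suppose $\tro$ is compact but not clean, so that $\I(\tro)\subseteq\I(\ttau)$ for some $\ttau$ with $|\ttau|<|\tro|$; I must produce a $\dt1\dt2$ or $\dt2\dt1$ interval. The key object is the minimal filling permutation $m_{\tro}$. Its $\tro$-partition is unique (because $\tro$ is compact, by Proposition~\ref{polyclass:prop:compact}) and has exactly $|\tro|$ parts, one monotone block per entry of $\tro$. Since $m_{\tro}\in\I(\ttau)$, it also admits a $\ttau$-partition, which has at most $|\ttau|<|\tro|$ parts. If every $\ttau$-part lay inside a single $\tro$-block, the $\ttau$-partition would refine the $\tro$-partition and hence have at least $|\tro|$ parts, a contradiction; so some $\ttau$-part $I$ — a monotone interval — meets two distinct $\tro$-blocks. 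As $I$ is an interval of positions, it meets a run of consecutive blocks and therefore meets some adjacent pair of blocks $B_k,B_{k+1}$.

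The final step identifies what this crossing forces. Each $B_j$ is a monotone interval, so applying Lemma~\ref{polyclass:lem:intervals} to the intersecting pairs $(I,B_k)$, $(I,B_{k+1})$ shows that $B_k\cup B_{k+1}$ lies inside a single monotone interval; being a contiguous range of positions, it is itself a monotone interval. Reading this back through $\tro$, the entries $\tro_k$ and $\tro_{k+1}$ must take consecutive values with mutually compatible orientations, so $\tro_k\tro_{k+1}$ is one of the eight mergeable adjacent pairs listed above. Compactness rules out every one of them that involves a sign, by Proposition~\ref{polyclass:prop:compact}, leaving only $\dt1\dt2$ and $\dt2\dt1$. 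Hence $\tro$ contains such an interval, completing the proof. I expect the main obstacle to be the careful bookkeeping in this last step — verifying that a monotone interval cannot cross a block boundary unless the two blocks are value-consecutive and orientation-compatible, so that after compactness eliminates the sign cases the surviving crossing is necessarily a dot–dot interval.
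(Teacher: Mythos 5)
Your proposal is correct and takes essentially the same approach as the paper: contracting the dotted pair into a single signed entry for the easy direction, and for the converse locating a monotone part of a $\ttau$-partition that straddles two blocks of the $\tro$-partition, then using Lemma~\ref{polyclass:lem:intervals} and compactness (Proposition~\ref{polyclass:prop:compact}) to force the straddled adjacent pair to be $\dt 1 \dt 2$ or $\dt 2 \dt 1$. If anything, your writeup is more complete than the paper's, which merely asserts the existence of the straddling interval where you justify it by a pigeonhole count on the (nonempty) blocks of the minimal filling permutation $m_{\tro}$.
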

      \begin{proof}
        If $\tro$ contains one of the specified intervals, then letting $\ttau$ be
        the shorter peg permutation obtained by contracting these two entries
        into a single entry with the appropriate sign, we find that $\I(\tro)
        \subseteq \I(\ttau)$. 

        For the other direction, suppose that $\I(\tro) \subseteq \I(\ttau)$ for
        some shorter peg permutation $\ttau$. Let $\pi$ be any permutation which
        fills $\tro$. In any $\ttau$-partition of $\pi$ there must be a monotone
        interval formed from entries in different parts of any $\tro$ partition.
        Because $\tro$ is compact, it follows (from
        Proposition~\ref{polyclass:prop:compact}) that $\tro$ must contain either
        $\dt 1 \dt 2$ or $\dt 2 \dt 1$, completing the proof.
      \end{proof}

      Given a complete and compact set $\tS$ of peg permutations, it is not
      possible in general to find a clean set which inflates to the same class.
      To see this, let $\tro = \dt 1 \dt 2 \dt 3$. Then there is no clean set
      whose inflation is equal to $\I(\tro)$. However, we can put the set $\tS$
      in bijection with a clean set \emph{together} with a set of allowable
      inflation vectors. We formalize this below. 

      \begin{definition}\label{polyclass:def:restricted-inflations}
        For a peg permutation $\tro$ and a set $\cV$ of vectors of the same
        length, let $\I(\tro;\cV)$ denote the inflations of $\tro$ using vectors
        from the set $\cV$. 
      \end{definition}

      \begin{lemma}\label{polyclass:lem:clean}
        For each peg permutation $\tro$, there exists a clean permutation $\ttau$
        and a vector set $\cV$ such that the set of all inflations of $\tro$ is equal
        to $\I(\ttau, \cV)$. 
      \end{lemma}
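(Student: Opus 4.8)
The plan is to construct $\ttau$ and $\cV$ explicitly by collapsing the ``dirty'' parts of $\tro$ and recording the resulting size constraints, then to verify the inflation identity and cleanness. First I would reduce to the case that $\tro$ is compact: if $\tro$ is not compact, then by the definition of compactness there is a strictly shorter peg permutation $\ttau' \prec \tro$ with $\I(\ttau') = \I(\tro)$, so replacing $\tro$ by $\ttau'$ and iterating (the length strictly decreases) yields a compact peg permutation with the same inflation set, for which it suffices to prove the lemma. Compactness is what lets me invoke Proposition~\ref{polyclass:prop:clean} at the end.

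With $\tro$ compact, I would partition its positions into maximal blocks $B_1,\dots,B_m$, where each block is a maximal run of consecutive positions whose entries are all decorated with $\blt$ and form a monotone interval. Define $\ttau$ of length $m$ by replacing each block of length $\geq 2$ by a single entry decorated $\pl$ if that block is increasing and $\mn$ if decreasing, and leaving every isolated dot or original signed entry unchanged, the relative order of the blocks giving the underlying permutation of $\ttau$. Writing $\ell_k = |B_k|$, let $\cV \subseteq \Zgeq^m$ be the downset avoiding the vectors $\{(\ell_k+1)\,\vec e_k : B_k \text{ was collapsed}\}$, i.e.\ the valid vectors $\vec w$ with $w_k \le \ell_k$ in every collapsed coordinate. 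This recovers the worked example $\dt1\dt2\dt3\dt4 \mapsto (\pl1, \text{avoid } \vect{5})$.

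Next I would verify $\I(\tro) = \I(\ttau;\cV)$ via the block-summing map $\Phi\colon \Zgeq^n \to \Zgeq^m$ given by $\Phi(\vec v)_k = \sum_{j \in B_k} v_j$. For a valid $\vec v$ (each dotted coordinate $0$ or $1$), inflating a collapsed increasing block by $\vec v$ concatenates the singletons into an ascending run of length $\sum_{j\in B_k} v_j \le \ell_k$, which is exactly the run produced by inflating the merged $\pl$ entry by $\Phi(\vec v)_k$, while on uncollapsed coordinates $\Phi$ is the identity; hence $\tro(\vec v) = \ttau(\Phi(\vec v))$ with $\Phi(\vec v)\in\cV$, giving one inclusion. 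Conversely any $\vec w\in\cV$ admits a $0/1$ preimage over each collapsed block, giving the reverse inclusion. The map is weight-preserving, so this also respects the grading by length needed for the enumeration, and $\cV$ is manifestly a downset.

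Finally I would check $\ttau$ is clean. Since every maximal dotted monotone run of length $\geq 2$ was collapsed, no two surviving dotted entries form a $\dt1\dt2$ or $\dt2\dt1$ interval, so by Proposition~\ref{polyclass:prop:clean} it remains only to confirm that $\ttau$ is compact. This is the main obstacle, because collapsing alters both positions and standardized values, so an adjacent consecutive monotone pair of $\ttau$ need not arise from an adjacent pair of $\tro$. I would resolve it by tracing such a pair back to the two boundary entries $p\in B_a$ and $q\in B_b$ of the corresponding adjacent blocks (the entries facing the shared boundary, which for a monotone block are simultaneously position- and value-extremal); these are position-adjacent and value-consecutive in $\tro$. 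A short decoration case check then shows that if the $\ttau$-pair were one of the six forbidden compact types, then $(p,q)$ would be a correspondingly forbidden monotone interval of $\tro$ --- except when both $p$ and $q$ are dots, in which case Lemma~\ref{polyclass:lem:intervals} forces $B_a$ and $B_b$ to merge into a single longer dotted run, contradicting maximality. Since both alternatives are impossible for a compact $\tro$, the permutation $\ttau$ is compact, hence clean, which completes the argument.
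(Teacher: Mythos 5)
Your construction is the same as the paper's proof of Lemma~\ref{polyclass:lem:clean}: contract each maximal monotone run of dotted entries into a single signed entry and cap that entry's inflation at the length of the original run, recording the caps as forbidden vectors.

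The paper's proof is only a two-sentence sketch, and your extra verifications are correct and genuinely fill its gaps --- in particular, your basis $\{(\ell_k+1)\vec{e}_k\}$ with one forbidden vector per collapsed block is the right formalization (a single combined vector $\vec{v}$, as the paper's wording suggests, would impose a disjunction rather than a conjunction of the caps, since avoidance of one vector only requires some coordinate to fall below its threshold), and your check that collapsing preserves compactness is exactly what is needed before Proposition~\ref{polyclass:prop:clean} can certify cleanness; the one small slip is in your preliminary reduction, where a non-compact $\tro$ can have an equal-length witness $\ttau' \prec \tro$ (a sign weakened to a dot), so the iteration terminates not because length strictly decreases but because the pair (length, number of signed entries) strictly decreases lexicographically.
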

      \begin{proof}
        To construct $\ttau$, simply contract all of the intervals of dotted
        entries in $\tro$ into signed entries. To construct $\cV$, build a vector
        $\vec v$ such that, if the entry $\ttau_i$ arose from a dotted interval
        of length $k$, let $\vec v_i = k+1$, and take $\cV$ to be the set of
        vectors avoiding $\vec v$.  This ensures that this entry will never be
        inflated by a run longer than the original sequence of dotted entries.
      \end{proof}

      The final step of the algorithm can be described as follows. First, let $\fV$ be
      an empty set, which will be the output. For each peg permutation $\tro \in \tS$, 
      compute the pair $(\ttau, \vec W)$ as described in
      Lemma~\ref{polyclass:lem:clean}, and let $\cV$ be the vector downset with
      basis $\cBV = \{\vec v\}$. 
      If there is no pair $(\ttau, \cW)$ in the
      set $\fV$, add $(\ttau, \cV)$ to $\fV$. 
      Otherwise, replace $(\ttau, \cW)$ with $(\ttau, \cW \cup \cV)$. 

      Since every permutation in the class fills a unique clean and compact
      peg permutation, and since each permutation which fills a compact
      permutation has a unique partition, it follows that the polynomial class is
      in bijection with the set 
      $$ \biguplus_{(\tro,\cV) \in \fV} \I(\tro, \cV).$$

      Letting $\vec m_{\tro}$ be the vector defined by $(\vec m_{\tro})_i
      = 1$ if $\tro_i$ is decorated with a dot, and $(\vec m_{\tro})_i = 2$
      otherwise, and let $s(\tro)$ denote the number of signed (non-dotted)
      entries of $\tro$. The generating function for $\I(\tro)$ is then given by
      inclusion exclusion in conjunction with
      Proposition~\ref{polyclass:prop:vectorunions}, and allows us to efficiently
      enumerate these classes.  
      $$ \sum_{B \subseteq \cBV} (-1)^{|B|} 
          \frac{ z^{\wt \left( \vec m_{\tro} \vee (\bigvee B)\right)}}{%
          {(1-z)}^{ s( \tro ) } }.  $$

    \SetAlFnt{\small\ttfamily}
    \begin{algorithm}
      \DontPrintSemicolon
      \KwIn{Set $\tS$ of Peg Permutations}
      \KwOut{Integer vectors in bijection with the class}

      \tcp{Complete $\tS$}
      \For{$\tro \ \in \tS$}{
        Add to $\tS$ all peg permutations which can be realized by deleting entries of
        $\tro$, or changing a signs to dots\;
      }

      \tcp{Remove all non-compact elements from $\tS$}
      \For{$\tro\ \in \tS$}{
        \If{\ttfamily $\tro$ contains any of the consecutive permutations 
        $\pl{1}\pl{2}, \dt{1}\pl{2}, \pl{1}\dt{2}$ or their symmetries
        }{ 
        Remove $\tro$ from $\tS$\; }
      }

      \tcp{Clean $\tS$ and construct a vector set}
      Initialize the set $\mathfrak{V}$, which will contain pairs $(\tro,
      \cV)$, where $\tro$ is a peg permutation and $\cV$ is a set of
      integer vectors of the same length as $\tro$\;
      \For{$\tro \  \in $ $\tS$}{
        \eIf{\ttfamily $\tro$ contains intervals of the form $\dt{1}\dt{2}$ or
        $\dt{2}\dt{1}$}
        { Let $\ttau$ denote the cleaned $\tro$, and $\cV$ the set of integer
        vectors for which $\{\ttau[\vec{v}] : \vec{v} \in \cV\}  = \{\tro[\vec{v}]
        : \vec{v} \in \mathcal{F}_{\tro}\} $\; Let $(\tro',\cV')
        \leftarrow (\ttau, \cV)$\;
        }
        { 
          Let $(\tro', \cV') \leftarrow  (\tro, \mathcal{F}_{\tro})$\;
        }
        \eIf{\ttfamily $(\tro', \cW) \in \mathfrak{V}$ for some $\cW$}{
          Replace the element $(\tro',\cW)$ with $(\tro', \cW \cup \cV')$\;
        }
        {
          Add $(\tro', \cV')$ to $\mathfrak{V}$\;
        }
      }
      The permutation class is now in bijection with the disjoint union
      $\displaystyle\biguplus_{(\tro,\cV) \in \mathfrak{V}} \{
        \tro[\vec{v}] : \vec{v} \in \cV \}$.\; 
    \SetAlFnt{\normalsize\sffamily}
    \caption{A pseudocode overview of the algorithm.}
    \label{polyclass:fig:pseudocode}
    \end{algorithm}

  \section{Genomics}
  \label{polyclass:sec:data}

    The field of computational biology is a new and rapidly developing field.
    The vast quantities of sequencing data produced by modern geneticists
    necessitate the use of complex mathematical techniques for analysis.
    A common problem, given two related genetic sequences, is to determine the
    most recent evolutionary ancestor. This is generally solved by determining
    the number of mutations required to rearrange one sequence into the other,
    allowing a researcher to determine the midpoint between the two. Determining
    this distance, however, is computationally difficult, but the work presented
    in this chapter can be used to effectively and efficiently perform these and
    other computations. 
    
    This section applies the theory of polynomial classes to the problem of
    evolutionary distance. 
    While the focus is on the combinatorial aspects of genome
    rearrangement, we begin with a rough overview of the biological mechanics.
    For a more complete introduction, see the surveys \cite{CompBio} or
    \cite{GenomeBook}.

    \subsection{Chromosomes and Mutation}
        
      Every living organism encodes its hereditary information in molecules
      called \emph{chromosomes}, the set of which is known as the organism's
      \emph{genome}. The information carried in the genome is passed down from
      organism to organism, and undergoes mutations which can cause both subtle
      and dramatic change between generations. 
    
      Each chromosome is composed of double strands of deoxyribonucleic acid
      (DNA), each strand of which is in turn composed of a sequence of
      \emph{nucleotides}. Nucleotides come in four types (A, C, G, and T), and
      the two strands, arranged in a double helix, are complementary, i.e., a A's
      are always coupled with a T, and G's with C. It follows that DNA can be
      defined as a single sequence - a \emph{word} on the alphabet
      $\{\text{A,C,G,T}\}$. A DNA \emph{sequence} is some consecutive piece of
      this word, while \emph{genes} are the smallest sequences which have some
      independent biological function.
      
      The genome is made up of chromosomes, which are in turn made up of coiled
      DNA strands, which can be broken down into genes sequences, which
      themselves are simply sequences of nucleotides. This complexity leads to
      a variety of errors which can be introduced during replication, and these
      inaccuracies are the basis for genetic evolution. Many of these
      mutations can be viewed as rearranging sequences of genes, and can be
      effectively modelled using permutations. 

      The physical properties of chromosomes lead to a variety of rearrangement
      operations, but they share a common theme: some contiguous segment of the
      gene sequence is removed, reversed and/or relocated, then replaced back in
      the sequence. While there are other mutations possible at both the larger
      and smaller scales, these so called \emph{genome-rearrangements} have
      received much attention in recent research and, most importantly, fall
      under the purview of polynomial classes.

    \subsection{Block Transformations}
    
      Permutations are apt models for rearrangement, and can be used to study
      genetic mutations. Mutations happen in various ways, and a variety of
      permutation transformations have been studied. These operations are known
      collectively as \emph{block transformations}, as each of them acts on
      contiguous subsequences of permutations, henceforth referred to as
      \emph{blocks}. Each of these operations can be viewed as a set of allowable
      moves which transform one permutation into another. 
      
      Treating block transformations as mutations, the basic problem is as
      follows: given two permutations, what is the shortest sequence of moves
      which can transform one into the other? By relabelling the entries, we can
      assume, without any loss of generality, that the target permutation is the
      identity permutation. In this light, the question becomes a \emph{sorting}
      sorting problem, and asks how quickly a sequence can be sorted.  We present
      here some of the more commonly studied operations, but note that other
      varieties and models are biologically significant.  

      \begin{definition} \label{polyclass:def:block}
        Let $\pi = \pi_1 \pi_2 \dots \pi_n$ be a permutation written in one-line
        notation. A \emph{block} of $\pi$ is some contiguous string of entries 
        $\pi_{i} \pi_{i + 1} \dots \pi_{i+k}$. 
        A \emph{prefix} is a block which starts at $\pi_1$. 
      \end{definition}

      Blocks of permutations are models for gene sequences, and each of the block
      permutations below differ only in their treatment of blocks. We define each
      type of sorting by defining a single allowable operation. 

      \begin{definition}[Block Reversal]\label{polyclass:def:blockrev}
        A \emph{block reversal} operation consists of reversing the entries of
        any block of the permutation. This operation was first studied by
        Watterson, Ewens, Hall, and Morgan~\cite{Watterson1982} and further
        investigated by Alpar-Vajk~\cite{Alpar2009}.
      \end{definition}
      
      \begin{definition}[Block Transposition]\label{polyclass:def:blocktrans}
        A \emph{block transposition} operation consists of moving one block
        from its current position to any other location in the permutation. 
        This operation was first studied by Bafna and Pevzner~\cite{Bafna1998}. 
      \end{definition}
    
      \begin{definition}[Block Interchange]\label{polyclass:def:blockint}
        A \emph{block interchange} operation consists of selecting two
        non-intersecting blocks of the permutation and interchanging them. 
        This operation was first studied by Christie~\cite{Christie1996}, and
        further investigated by B\'ona and Flynn~\cite{Bona2009}. 
      \end{definition}
    
      \begin{definition}[Prefix Transposition]\label{polyclass:def:prefixtrans}
        A \emph{prefix transposition} operation consists of moving a prefix of
        the permutation to any other location in the permutation. 
        This operation was first studied by Dias and Meidanis~\cite{Dias2002}.
      \end{definition}
        
      \begin{definition}[Prefix Reversal]\label{polyclass:def:prefixrev}
        A \emph{prefix reversal} operation consists of reversing the entries of a
        prefix of the permutation. This is sometimes referred to as the `pancake
        flipping operation', and was first studied by ``Harry Dweighter''
        (actually, Jacob E. Goodman) as a \emph{Monthly}
        problem~\cite{Dweighter} (and was also studied by Gates~\cite{BillGates}). 
      \end{definition}
    
      \begin{definition}[Cut-Paste Sorting]\label{polyclass:def:cutpaste}
        A \emph{cut-paste} operation consists of moving a block of the
        permutation, with the option to reverse its entries. 
        This operation was first studied by Cranston, Sudborough, and
        West~\cite{Cranston2007}. 
      \end{definition}
    
      For a given block transformation, we refer to the \emph{distance} between
      two permutations $\pi$ and $\sg$ as the minimum number of operations needed
      to transform one into the other. Finding the maximal distance between two
      permutations of a given length is equivalent to finding the maximal
      distance from the identity to any permutation. Further, since each of these
      operations is reversible --- if $\pi$ can be transformed into $\sg$, then
      $\sg$ can be transformed into $\pi$ --- this is equivalent to finding the
      distance from the identity to any permutation.

      Biologically, two permutations with a small distance represent closely
      related organisms, as each transformation represents a mutation which can
      occur from one generation to the next. Understanding the sets of
      permutations at each fixed distance from the identity can help to
      understand how different genomes are related. For any $k \in \Zgeq$, the
      set of permutations which are at distance $\leq k$ from the identity forms
      a polynomial class, and thus can be enumerated by our algorithm. 

      \begin{theorem} \label{polyclass:def:operation-polyclass}
        For each of the operations presented above and for a positive integer
        $k$, the set of permutations with distance at most $k$ from the identity
        forms a polynomial class. 
      \end{theorem}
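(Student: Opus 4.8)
The plan is to invoke Theorem~\ref{polyclass:thm:tfae}, using the criterion that a permutation class is polynomial if and only if it does not contain arbitrarily long patterns of the four ``alternating'' forms depicted in Figure~\ref{polyclass:fig:alternations}. Throughout, write $\C^{(k)}$ for the set of permutations lying at distance at most $k$ from the identity under a fixed one of the block operations defined above. The argument has three stages: show $\C^{(k)}$ is a class, control it by a single ``breakpoint'' statistic, and observe that alternations are breakpoint-rich.

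First I would verify that $\C^{(k)}$ is actually a downset, hence a permutation class. The key observation is that a sorting sequence \emph{projects} onto patterns: if $\pi = \tau_0 \to \tau_1 \to \dots \to \tau_\ell = \mathrm{id}$ with $\ell \le k$, and $\sg$ is the standardization of the subsequence of $\pi$ on a fixed set of positions, then following those entries through the sequence turns each move into a move of the same type on $\sg$ (or a do-nothing step, when the move does not disturb the relative order of the tracked entries). Since the tracked entries of a contiguous block form a contiguous block of $\sg$, a reversal restricts to a reversal, a transposition to a transposition, an interchange to an interchange, and so on, and the terminal state is the identity on the tracked values. Hence $\sg$ is sortable in at most $\ell \le k$ moves, so $\sg \in \C^{(k)}$.

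Next I would introduce a single breakpoint statistic to serve as a monovariant. With the boundary conventions $\pi_0 = 0$ and $\pi_{n+1} = n+1$, set $b(\pi) = \#\{\, i \in \{0,1,\dots,n\} : |\pi_{i+1} - \pi_i| \neq 1 \,\}$, so that $b(\mathrm{id}) = 0$. The heart of the argument is that a single application of any of the six operations changes $b$ by at most a fixed constant $c$ depending only on the operation: each operation cuts the one-line notation at a bounded number of junctions — at most three for reversals, transpositions, prefix moves, and cut-paste, and at most four for block interchanges — and only the adjacencies straddling these junctions can be created or destroyed. Crucially, for the reversal-type moves the internal adjacencies of a reversed block are untouched, since $|\pi_{i+1}-\pi_i|$ is symmetric under reversal. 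Consequently, since $b(\mathrm{id}) = 0$ and every $\pi \in \C^{(k)}$ reaches the identity in at most $k$ steps, we get $b(\pi) \le ck$ for all $\pi \in \C^{(k)}$.

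Finally I would bound the length of any alternation inside $\C^{(k)}$. A direct inspection shows that each of the four families in Figure~\ref{polyclass:fig:alternations} has a number of breakpoints growing linearly in its length: in the two parallel alternations consecutive one-line entries differ in value by two, so every internal adjacency is a breakpoint, while in the two oscillations every second adjacency is a breakpoint; thus an alternation of length $L$ has at least $L/2 - O(1)$ breakpoints. Because $\C^{(k)}$ is a downset, any alternation occurring as a pattern in a member of $\C^{(k)}$ is itself a member, hence has $b \le ck$, forcing $L \le 2ck + O(1)$. Therefore $\C^{(k)}$ contains no arbitrarily long alternations, and Theorem~\ref{polyclass:thm:tfae} yields that $\C^{(k)}$ is a polynomial class. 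The step demanding the most care is the breakpoint-change bound for each operation — in particular handling the block interchange and confirming that reversals leave internal adjacencies intact — since the chosen statistic must simultaneously be bounded on $\C^{(k)}$ and unbounded on every alternation family.
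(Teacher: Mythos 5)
Your proof is correct, but it takes a genuinely different route from the paper's. The paper argues via criterion (4) of Theorem~\ref{polyclass:thm:tfae}: the identity class is $\I(\pl 1)$, a diagonal line, and each of the $k$ operations cuts this line into a bounded number of monotone segments and rearranges them, so the distance-$\leq k$ set is the union $\I(\tS)$ over the finitely many peg permutations read off from the resulting arrays (as in Figures~\ref{polyclass:fig:blockrev} and~\ref{polyclass:fig:twoblockrevs}). You instead use criterion (3), exhibiting the breakpoint count $b(\pi)$ as a monovariant that changes by a bounded constant per move, is therefore at most $ck$ on the whole set, and grows linearly along each of the four alternation families of Figure~\ref{polyclass:fig:alternations}; your junction-counting and the observation that reversals preserve internal adjacencies (with the unsigned convention $|\pi_{i+1}-\pi_i|\neq 1$) both check out. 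Note also that you prove the downset property separately, by projecting a sorting sequence onto the tracked entries --- a point the paper gets for free, since any $\I(\tS)$ is automatically a class, but which your route genuinely needs and which your contiguity argument handles correctly, including the prefix operations. The trade-off is that the paper's proof is constructive: it produces the explicit peg permutations that the enumeration algorithm of Section~\ref{polyclass:sec:algo} consumes to compute the polynomials in the data tables, whereas your argument establishes polynomiality abstractly, with no structural description of the class, but is more elementary and self-contained in that it needs neither the substitution/peg machinery nor the geometric gridding picture.
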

      \begin{proof}
        The class of identity permutations is the inflations of the peg
        permutation $\pl 1$, which can be represented geometrically as a diagonal
        line parallel to $y =x $. Each block transformation can be viewed as
        taking some piece of this line and moving or reversing it. Such an array
        of lines can be translated back into a peg permutation, and it follows
        that the set of distance $\leq k$ permutation can be represented as the
        union of all peg permutations obtained in this way.  See
        Figures~\ref{polyclass:fig:blockrev} and~\ref{polyclass:fig:twoblockrevs}
        for graphical examples. 
      \end{proof}

      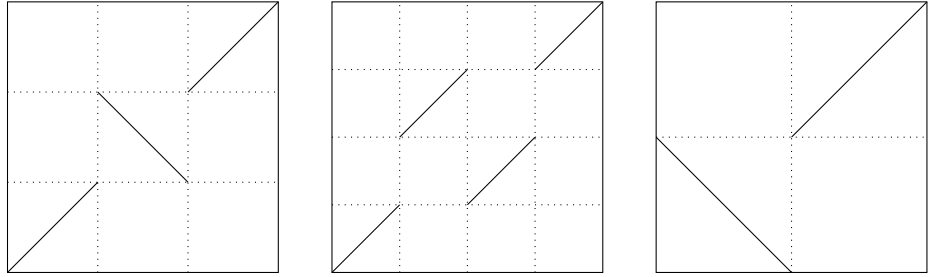
\begin{figure}[t] \centering
        \begin{tikzpicture}[scale=.3]
          \draw[black] (0,0) -- (12,0) -- (12,12) -- (0,12) -- cycle;
          \foreach \i in {4,8}{
            \draw[dotted] (0, \i) -- (12, \i);
            \draw[dotted] (\i, 0) -- (\i, 12);
          }
          \draw (0,0) -- (4,4);
          \draw (4,8) -- (8,4);
          \draw (8,8) -- (12,12);
        \end{tikzpicture}
        \hspace{1pc}
        \begin{tikzpicture}[scale=.3]
          \draw[black] (0,0) -- (12,0) -- (12,12) -- (0,12) -- cycle;
          \foreach \i in {3,6,9}{
            \draw[dotted] (0, \i) -- (12, \i);
            \draw[dotted] (\i, 0) -- (\i, 12);
          }
          \draw (0,0) -- (3,3);
          \draw (3,6) -- (6,9);
          \draw (6,3) -- (9,6);
          \draw (9,9) -- (12,12);
        \end{tikzpicture}
        \hspace{1pc}
        \begin{tikzpicture}[scale=.3]
          \draw[black] (0,0) -- (12,0) -- (12,12) -- (0,12) -- cycle;
          \foreach \i in {6}{
            \draw[dotted] (0, \i) -- (12, \i);
            \draw[dotted] (\i, 0) -- (\i, 12);
          }
          \draw (0,6) -- (6,0);
          \draw (6,6) -- (12,12);

        \end{tikzpicture}
        \caption[The classes of permutations which are at most one block 
        reversal]{The classes of permutations which are at most one block
        reversal, block transposition, and prefix reversal away from the identity
        are given by $\I(\pl 1 \mn 2 \pl 3)$, $\I(\pl 1 \pl 3 \pl 2 \pl 4)$, and 
        $\I(\mn 1 \pl 2)$, respectively.}
        \label{polyclass:fig:blockrev}
      \end{figure}

      \begin{figure}[t] \centering
        \begin{tikzpicture}[scale=.2]
          \draw[black] (0,0) -- (15,0) -- (15,15) -- (0,15) -- cycle;
          \foreach \i in {3,6,9,12}{
            \draw[dotted] (0, \i) -- (15, \i);
            \draw[dotted] (\i, 0) -- (\i, 15);
          }
          \draw (0,0) -- (3,3);
          \draw (12,12) -- (15,15);
          \draw (3, 12) -- (6,9);
          \draw (6,6) -- (9,9);
          \draw (9,6) -- (12,3);

        \end{tikzpicture}
        \hspace{4pt}
        \begin{tikzpicture}[scale=.2]
          \draw[black] (0,0) -- (15,0) -- (15,15) -- (0,15) -- cycle;
          \foreach \i in {3,6,9,12}{
            \draw[dotted] (0, \i) -- (15, \i);
            \draw[dotted] (\i, 0) -- (\i, 15);
          }
          \draw (0,0) -- (3,3);
          \draw (12,12) -- (15,15);
          \draw (3, 6) -- (6,3);
          \draw (6,6) -- (9,9);
          \draw (9,12) -- (12,9);

        \end{tikzpicture}
        \hspace{4pt}
        \begin{tikzpicture}[scale=.2]
          \draw[black] (0,0) -- (15,0) -- (15,15) -- (0,15) -- cycle;
          \foreach \i in {3,6,9,12}{
            \draw[dotted] (0, \i) -- (15, \i);
            \draw[dotted] (\i, 0) -- (\i, 15);
          }
          \draw (0,0) -- (3,3);
          \draw (12,12) -- (15,15);
          \draw (0,0) -- (3,3);
          \draw (12,12) -- (15,15);
          \draw (3, 9) -- (6,12);
          \draw (6,6) -- (9,3);
          \draw (9,9) -- (12,6);

        \end{tikzpicture}
        \hspace{4pt}
        \begin{tikzpicture}[scale=.2]
          \draw[black] (0,0) -- (15,0) -- (15,15) -- (0,15) -- cycle;
          \foreach \i in {3,6,9,12}{
            \draw[dotted] (0, \i) -- (15, \i);
            \draw[dotted] (\i, 0) -- (\i, 15);
          }
          \draw (0,0) -- (3,3);
          \draw (12,12) -- (15,15);
          \draw (3,9) -- (6,6);
          \draw (6,12) -- (9,9);
          \draw (9,3) -- (12,6);

        \end{tikzpicture}
        \caption[The class of permutations which are at most two block reversals]{
        The class of permutations which are at most two block reversals
        from the identity is given by inflations of the four peg permutations
        $\pl 1 \mn 4 \pl 3 \mn 2 \pl 5$, $\pl 1 \mn 2 \pl 3 \mn 4 \pl 5$, 
        $\pl 1 \pl 4 \mn 2 \mn 3 \pl 5$, and $\pl 1 \mn 3 \mn 4 \pl 2 \pl 5$.}
        \label{polyclass:fig:twoblockrevs}
      \end{figure}
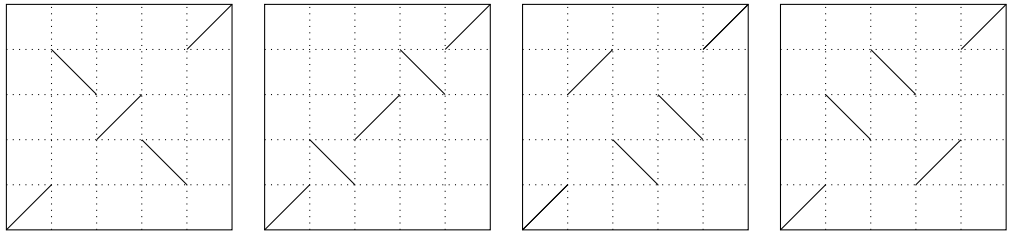

    \subsection{Data}
    
      Calculating the number of permutations of length $n$ which are at most $k$ operations
      away from the identity helps to understand how these block transformations
      differ, and how accurately they model biological mutation. The following
      tables show the numbers of these permutation in each radii from the
      identity, and build on the data presented in~\cite{GenomeBook}. The
      polynomials (in the variable $n$) enumerating these classes have integer
      coefficients when presented with the basis $\{\binom{n}{k}\}_{k \geq 0}$
      (as implied by~\cite{Klazar2003}). These enumerations are presented in the
      tables below.

      \begin{table}[t]
      \caption{Number of permutations of length $n$ within $k$ block transpositions of the
                identity.}
      \begin{footnotesize}
      $$
      \begin{array}{rrrrrrrrrrrc} 
      k &\ 1\ &\ 2\ &\ 3\ &\ 4\ &\ 5\ &\
      6\ &\ 7\ &\ 8\ &\ 9\ &10&\text{\OEISref}\\\hline
      1&1&2&5&11&21&36&57&85&121&166&\text{\OEISlink{A000292}}\\[0.5ex]
      &\multicolumn{10}{c}{{n\choose 0}+{n\choose 2}+{n\choose 3}}&\\[1ex]
      2&1&2&6&23&89&295&827&2017&4405&8812&\text{\OEISlink{A228392}}\\[0.5ex]
      &\multicolumn{10}{c}{{n\choose 0}+{n\choose 2}+2{n\choose 3}+8{n\choose
      4}+18{n\choose 5}+11{n\choose 6}}&\\[1ex]
      3&1&2&6&24&120&675&3527&15484&56917&179719&\text{\OEISlink{A228393}}\\[0.5ex]
      &\multicolumn{10}{c}{\text{\scriptsize $\nc0 + \nc2 + 2\nc3 + 9\nc4 +
      44\nc5 + 220\nc6 + 656\nc7 + 841\nc8 + 369\nc9$} }&\\[1ex]
      \end{array}
      $$
      \end{footnotesize}
      \end{table}

      \begin{table}[t]
      \caption{Number of permutations of length $n$ within $k$ prefix transpositions of the 
      identity.}
      \begin{footnotesize}
      $$
      \begin{array}{rrrrrrrrrrrc}
      k &\ 1\ &\ 2\ &\ 3\ &\ 4\ &\ 5\ &\ 6\ &\ 7\ &\ 8\ &\ 9\
      &10&\text{\OEISref}\\\hline 1& 1& 2& 4& 7& 11& 16& 22& 29& 37&
      46& \text{\OEISlink{A000124}}\\[0.5ex]
      &\multicolumn{10}{c}{\nc0 + \nc2  }&\\[1ex]
      2& 1& 2& 6& 21& 61& 146& 302& 561& 961& 1546& \text{\OEISlink{A228394}}\\[0.5ex]
      &\multicolumn{10}{c}{\nc0 + \nc2  + 2\nc3 + 6\nc4 }&\\[1ex]
      3& 1& 2& 6& 24& 116& 521& 1877& 5531& 13939&
      31156&\text{\OEISlink{A228395}}\\[0.5ex]
      &\multicolumn{10}{c}{\nc0 + \nc2 + 2\nc3 + 9\nc4 + 40\nc5 + 90\nc6
      }&\\[1ex]
      \end{array}
      $$
      \end{footnotesize}
      \end{table}
    
      \begin{table}[t]
      \caption{Number of permutations of length $n$ within $k$ block reversals of the
      identity.}
      \begin{footnotesize}
      $$
      \begin{array}{rrrrrrrrrrrc}
      k &\ 1\ &\ 2\ &\ 3\ &\ 4\ &\ 5\ &\ 6\ &\ 7\ &\ 8\ &\ 9\
      &10&\text{\OEISref}\\\hline
      1&1&2&4&7&11&16&22&29&37&46&\text{\OEISlink{A000124}}\\[0.5ex]
      &\multicolumn{10}{c}{{n\choose 0}+{n\choose 2}}&\\[1ex]
      2&1&2&6&22&63&145&288&516&857&1343&\text{\OEISlink{A228396}}\\[0.5ex]
      &\multicolumn{10}{c}{8{n\choose 0}-3{n\choose 1}+{n\choose 2}+4{n\choose 3}}&\\[1ex]
      3&1&2&6&24&118&534&1851&5158&12264&25943&\text{\OEISlink{A228397}}\\[0.5ex]
      &\multicolumn{10}{c}{\text{ \scriptsize $318\nc0 -214\nc1 +131\nc2 -61\nc3
      +20\nc4 +70\nc5 +35\nc6$  }}&\\[1ex]\end{array}
      $$
      \end{footnotesize}
      \end{table}
    
      \begin{table}[t]
      \caption{Number of permutations of length $n$ within $k$ prefix reversals of the
      identity.}
      \begin{footnotesize}
      $$
      \begin{array}{rrrrrrrrrrrc}
      k &1&2&3&4&5&6&7&8&9&10&\text{\OEISref}\\\hline
      1&1&2&3&4&5&6&7&8&9&10&\text{\OEISlink{A000027}}\\[0.5ex]
      &\multicolumn{10}{c}{{n\choose 1}}&\\[1ex]
      2&1&2&5&10&17&26&37&50&65&82&\text{\OEISlink{A002522}}\\[0.5ex]
      &\multicolumn{10}{c}{2 \nc0 -1 \nc1 + 2\nc2  }&\\[1ex]
      3&1&2&6&21&52&105&186&301&456&657&\text{\OEISlink{A228398}}\\[0.5ex]
      &\multicolumn{10}{c}{ -3 \nc0 + 3 \nc1 - 2\nc2 + 6\nc3}&\\[1ex]
      \end{array}
      $$
      \end{footnotesize}
      \end{table}

      \begin{table}[t]
      \caption{Number of permutations of length $n$ within $k$ cut-paste moves of the
      identity.}
      \begin{footnotesize}
      $$
      \begin{array}{rrrrrrrrrrrc}
      k &1&2&3&4&5&6&7&8&9&10&\text{\OEISref}\\\hline
      1&1&2&6&16&35&66&112&176&261&370&\text{\OEISlink{A060354}}\\[0.5ex]
      &\multicolumn{10}{c}{\nc1 + 3\nc3 }&\\[1ex]
      2&1&2&6&24&120&577&2208&6768&17469&39603&\text{\OEISlink{A228399}}\\[0.5ex]
      &\multicolumn{10}{c}{-18\nc0 + 45\nc1 - 61\nc2 + 70\nc3 - 53\nc4 + 88\nc5
      + 107\nc6}&\\[1ex]
      3&1&2&6&24&120&720&5040&36757&223898&1055479&\text{\OEISlink{A228400}}\\[0.5ex]
      &\multicolumn{10}{c}{508264\nc0 - 280036\nc1 + 140012\nc2 - 57622\nc3 +
      13839\nc4}&\\[1ex]
      &\multicolumn{10}{c}{+ 4136\nc5-5368\nc6 + 531\nc7 + 21125\nc8 +
      12615\nc9} \\ 
      \end{array}
      $$
      \end{footnotesize}
      \end{table}

      \begin{table}[t]
      \caption{Number of permutations of length $n$ within $k$ block interchanges of the
      identity.}
      \begin{footnotesize}
      $$
      \begin{array}{rrrrrrrrrrrc}
      k &1&2&3&4&5&6&7&8&9&10&\text{\OEISref}\\\hline
      1&1&2&6&16&36&71&127&211&331&496&\text{\OEISlink{A145126}}\\[0.5ex]
      &\multicolumn{10}{c}{{n\choose 0}+{n\choose 2}+2{n\choose 3}+{n\choose
      4}}&\\[1ex]
      2&1&2&6&24&120&540&1996&6196&16732&40459&\text{\OEISlink{A228401}}\\[0.5ex]
      &\multicolumn{10}{c}{{n\choose 0}+{n\choose 2}+2{n\choose 3}+9{n\choose
      4}+44{n\choose 5}+85{n\choose 6}+70{n\choose 7}+21{n\choose 8}}&\\
      \end{array}
      $$
      \end{footnotesize}
      \end{table}

\cleardoublepage
\typeout{******************}
\typeout{**  Chapter 5   **}
\typeout{******************}

  \chapter{Fixed-Length Patterns}
  \label{chap:fixpat}

    The set of all permutations, equipped with the pattern ordering, forms an
    infinite graded poset. While much research in this area (and within this
    dissertation) focuses on infinite downsets of this poset, this chapter focuses
    on finite subsets. In particular, we examine the downset induced by a single
    permutation, and investigate the number of distinct patterns.

    In 2003, Herb Wilf raised the question of finding the maximum number of
    distinct patterns which could be contained within a single permutation of
    length $n$, and classifying those permutations which maximize this number. 
    In~\cite{Flynn2007}, the authors showed that the maximum number of patterns
    for a length $n$ pattern is asymptotic to $2^n$, and provided a construction
    which achieves this number. 

    In this chapter we examine the number of distinct patterns of a
    \emph{specified length} which can be contained within a permutation. 
    In the language of posets, Wilf's question asks to find which permutations
    which maximize the size of their downset, while here we seek to maximize the
    \emph{width} of the downset. This chapter can be divided into two parts: in
    the first, we examine the number of $(n-1)$-patterns contained in a random
    permutation of length $n$, and obtain the expectation and variance for this statistic
    by extending a 1945 result of Kaplansky and Wolfowitz~\cite{Kaplansky,
    Wolfowitz}. In the second part, we examine the number of patterns of any
    fixed size within a permutation, and provide a construction which maximizes
    this number. This chapter is based partly on~\cite{me-fixpat}.

  \section{Large Patterns}

    The set of all permutations, equipped with the pattern ordering, forms an
    infinite partially ordered set (see Figure~\ref{prelim:fig:poset}). We focus
    here on the local properties of this poset, namely the number of patterns
    containing and contained in a given pattern. The more general topology of
    this poset was studied by McNamara and
    Steingr\'imsson~\cite{Steingrimsson2013}.

    The set of patterns contained within any fixed permutation forms a partially
    ordered set, in fact a finite downset of the full pattern poset.  To examine
    these downsets, we use a top-down approach: deleting entries one at a time
    from the permutation to obtain the full set of patterns.
    Figure~\ref{fixpat:fig:downsets} shows several examples of these
    downsets\index{downset}. 

    \begin{figure}[t] 
      \centering
      \begin{tikzpicture}
      [scale = .4]
        \node (a1) at (-8,9) {$1234$};
        \node (a2) at (-8,6) {$123$};
        \node (a3) at (-8,3) {$12$};
        \node (a4) at (-8,0) {$1$};
        \draw (a1) -- (a2) -- (a3) -- (a4);

        \node (b1) at (0,9) {$1243$};
        \node (b21) at (-2,6) {$132$};
        \node (b22) at (2,6) {$123$};
        \node (b31) at (-2,3) {$21$};
        \node (b32) at (2,3) {$12$};
        \node (b4) at (0,0) {$1$};
        \draw (b1) -- (b21) -- (b31) -- (b4);
        \draw (b1) -- (b22) -- (b32) -- (b4);
        \draw (b21) -- (b32);

        \node (c1) at (12,9) {$2413$};
        \node (c21) at (7,6) {$312$};
        \node (c22) at (10,6) {$213$};
        \node (c23) at (14,6) {$132$};
        \node (c24) at (17,6) {$231$};
        \node (c31) at (10,3) {$21$};
        \node (c32) at (14,3) {$12$};
        \node (c4) at (12,0) {$1$};
        \draw (c1) -- (c21) -- (c31) -- (c4);
        \draw (c1) -- (c22) -- (c31) -- (c4);
        \draw (c1) -- (c23) -- (c32) -- (c4);
        \draw (c1) -- (c24) -- (c32) -- (c4);
        \draw (c21) -- (c32) --(c22);
        \draw (c23) -- (c31) --(c24);
      \end{tikzpicture}
    \caption{Downsets of 1234, 1243, and 2413} \label{fixpat:fig:downsets}
    \end{figure}
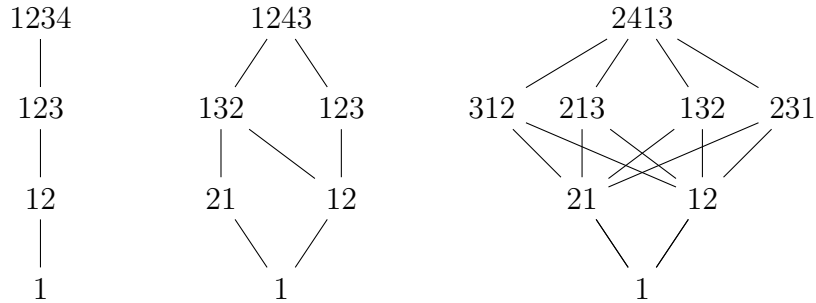

  \subsection{Definitions and Notation}

    It will be convenient to establish some machinery for dealing with large
    patterns. Fix $n \geq 2$, let $\pi$ be a permutation of length $n$, and let $\sg$ be an
    $(n-1)$-permutation. If $\sg$ is contained as a pattern within $\pi$, then it
    follows that $1$ can be obtained by deleting one entry from $\pi$, and
    relabelling with respect to order. Similarly, it follows that $\pi$ can be
    obtained by inserting an appropriate entry into $\sg$. We formalize these ideas
    with the following pair of definitions.

    \begin{definition} \label{fixpat:def:del}
      For any permutation $\pi \in \S_n$, define the function 
      $\del_\pi: [n] \ra \S_{n-1}$, where $\del_\pi(i)$ is the permutation
      obtained by deleting the $i$th entry of $\pi$, and standardizing the
      remaining entries. Let $\del_\pi = \{ \del_\pi(i) : i \in [n]\}$ denote the
      image of $\del_\pi$. 
    \end{definition}

    \begin{definition} \label{fixpat:def:ins}
      For any permutation $\sg \in \S_{n-1}$, define the function
      $\ins_\sg: [n] \times [n] \ra \S_n$, where $\ins_\sg(i,j)$ is the permutation
      obtained by inserting the entry $j - 1/2$ immediately to the left of the
      $i$th entry of $\sg$, and then standardizing the entries. 
      Let $\ins_\sg = \{\ins_\sg(i,j) : i, j \in [n]\}$ denote the image of
      $\ins_\sg$. 
    \end{definition}

    Letting $\pi$ and $\sg$ be an $n$- and $(n-1)$-permutation, respectively, 
    it follows from their definitions that these functions that $\del_\pi$ is the
    set of all $n-1$-patterns contained in $\pi$, and $\ins_\sg$ is the set of
    permutations of length $n$ which contain $\sg$. In addition, these functions satisfy the
    following inverse relationship:
    $$ \del_{\ins_\sg(i,j)}(i) = \sg \text{ and } \ins_{\del_{\pi}(i)}(i, \pi_i).$$

  \section{Plentiful Permutations}
    
      Fix $n \in \Zp$, and let $\pi$ be a permutation of length $n$. Since every pattern
      within $\pi$ can be obtained by deleting elements of $\pi$ one by one,
      the relationship between $\del_\pi$ and $\pi$ can be applied iteratively to
      understand the full downset of $\pi$. It follows directly from the definition
      that $|\del_\pi| \leq n$, and that $|\del_\pi| = n$ if and only if
      $\del_\pi$ is a one-to-one function, i.e., $\del_\pi(i) = \del_\pi(j)$ if
      and only if $i=j$. Before investigating further, we introduce another pair
      of definitions. 

      \begin{definition} \index{permutation!plentiful}
      \label{fixpat:def:plentiful}
        Let $\pi$ be a permutation of length $n$. Say that $\pi$ is \emph{plentiful} if it
        contains $n$ distinct $(n-1)$-patterns. Equivalently, $\pi$ is plentiful
        if and only if $\delpi$ is a one-to-one function. 
      \end{definition}
      
      \begin{definition} \index{bond}
        Let $\pi = \pi_1p_2 \dots \pi_n$ be a permutation, and let $i \in [n-1]$. Say
        that the pair $(\pi_i, \pi_{i+1})$ is a \emph{bond}, of entries of $\pi$ if
        $\pi_i - \pi_{i+1} = \pm 1$. We say that the sequence $(\pi_i, \pi_{i+1}, \dots
        \pi_{i+k-1})$ is a \emph{run} of length $k$ if, for $1 \leq j \leq k-2$,
        the pair $(\pi_{i+j},\pi_{i+j+1})$ is a bond. Denote by $\beta(\pi)$ the
        number of bonds in $\pi$. 
      \end{definition}
    
      Note that runs are necessarily either increasing or decreasing, and that a run
      of length $k$ contains $k-1$ bonds. We can now establish a fundamental
      relationship between bonds and $(n-1)$-patterns. 

      \begin{lemma} \label{fixpat:lem:bonds}
        Let $\pi = \pi_1 \pi_2 \dots \pi_n$. For any $j,k \in [n]$ with $j \neq
        k$, $\del_\pi(i) = \del_\pi(j)$ if and only if $\pi_j$ and $\pi_k$ are
        part of the same run. 
      \end{lemma}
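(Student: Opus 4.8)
The plan is to prove both implications by comparing the two length-$(n-1)$ words obtained from $\pi$ by deletion. Assume without loss of generality that $j<k$, and write $w$ for the word obtained by deleting the $j$th entry of $\pi$ and $v$ for the word obtained by deleting the $k$th entry, so that $\del_\pi(j)=\std(w)$ and $\del_\pi(k)=\std(v)$. A direct bookkeeping of indices shows that $w$ and $v$ agree in positions before $j$ and in positions after $k$, while on the overlapping middle block they read $w_m=\pi_{m+1}$ and $v_m=\pi_m$ for $j\le m\le k-1$. Thus $\del_\pi(j)=\del_\pi(k)$ is exactly the statement that $w$ and $v$ are order isomorphic, and the whole lemma reduces to understanding when this pointwise-shifted pair of words has the same relative order.

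For the easy direction, suppose $\pi_j$ and $\pi_k$ lie in a common run; it suffices to treat a single bond and then chain along the run. If $(\pi_m,\pi_{m+1})$ is a bond, then the words obtained by deleting position $m$ and by deleting position $m+1$ differ only in a single slot, where one reads $\pi_m$ and the other reads $\pi_{m+1}$. Since these two values are consecutive integers, no entry of $\pi$ has value strictly between them, so substituting one for the other in that slot preserves every pairwise comparison; hence the two deletions are order isomorphic. Transitivity along the bonds of the run then yields $\del_\pi(j)=\del_\pi(k)$.

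The harder direction is the converse, and here I would extract the run structure from the hypothesis $\std(w)=\std(v)$ in two steps. First, comparing the adjacent slots $m$ and $m+1$ inside the middle block forces $\pi_{m+1}<\pi_{m+2}$ if and only if $\pi_m<\pi_{m+1}$, so the steps of $\pi_j,\pi_{j+1},\dots,\pi_k$ all point the same way and this subword is monotone; assume it is increasing. Second, suppose some pair $(\pi_m,\pi_{m+1})$ with $j\le m\le k-1$ fails to be a bond. Then monotonicity guarantees a value $t$ with $\pi_m<t<\pi_{m+1}$, and since $t$ is not among $\pi_j,\dots,\pi_k$ it sits at a position $p\notin[j,k]$; crucially, the deletions shift $t$ so that it occupies the \emph{same} slot of $w$ as of $v$. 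Comparing that slot against slot $m$ then gives a contradiction: in $w$ the value $t$ is compared with $\pi_{m+1}>t$, whereas in $v$ it is compared with $\pi_m<t$, so the comparison flips and $\std(w)\neq\std(v)$.

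I expect this flip argument to be the crux; the main care is in tracking how deletion shifts positions so that the intermediate value $t$ genuinely lands in the same slot of both words, which is precisely what makes the two comparisons directly contradictory. Once every consecutive pair in $[j,k]$ is shown to be a bond, the positions $j,j+1,\dots,k$ form a single run, so $\pi_j$ and $\pi_k$ share a run, completing the equivalence.
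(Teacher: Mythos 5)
Your proof is correct, but it takes a genuinely different route from the paper's. The paper proves the hard (converse) direction by induction on $k-j$: the base case compares the $j$th entries of the two \emph{standardized} deletions to force $\pi_{j+1}-1=\pi_j$, and the inductive step compares the $(k-1)$st entries to extract the bond $(\pi_{k-1},\pi_k)$, thereby reducing $\del_\pi(j)=\del_\pi(k)$ to $\del_\pi(j)=\del_\pi(k-1)$ and peeling off one bond at a time. You instead argue directly on the \emph{unstandardized} deletion words $w$ and $v$ (correctly reading the misprinted indices in the statement as $\del_\pi(j)=\del_\pi(k)$): your slot bookkeeping ($w_m=\pi_{m+1}$, $v_m=\pi_m$ on the middle block, agreement elsewhere) is exactly right; comparing adjacent middle slots gives monotonicity of $\pi_j,\dots,\pi_k$ in a single pass; and your flip argument --- any intermediate value $t$ must sit at a position $p\notin[j,k]$, hence occupies the \emph{same} slot of both words ($p$ if $p<j$, $p-1$ if $p>k$, in either case distinct from slot $m$), where its comparison against slot $m$ reverses --- forces every consecutive gap to equal $1$. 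The supporting details all check out: a non-bond step of a monotone block has gap at least $2$, so $t$ exists, and monotonicity puts $t$ outside $\{\pi_j,\dots,\pi_k\}$. Your easy direction also supplies the justification the paper merely waves at, namely that swapping consecutive values in one slot preserves all pairwise comparisons. What your approach buys: no induction and no standardization arithmetic --- which is precisely where the paper's write-up slips, asserting ``$\pi_{k-1}=\pi_k$'' where it means $\pi_{k-1}=\pi_k-1$ --- and the monotonicity of the whole block falls out for free rather than being assembled bond by bond; it is also closer in spirit to the span and minimum-gap reasoning of Lemma~\ref{fixpat:lem:span} later in the chapter, so it generalizes more transparently. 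What the paper's approach buys: each inductive step inspects only a single designated entry, so it is shorter to write and avoids any case analysis over where the witness $t$ lives.
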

      \begin{proof}
        The forward direction is clear, since removing any element of a run
        simply results in a shorter run. 

        The reverse implication takes a bit more work. Suppose that there exist
        $j,k$ with $1 \leq j < k \leq n$ and $\del_\pi(j) = \del_\pi(k)$. We
        proceed by induction on $k-j$. 

        For the base case, suppose that $k =j+ 1$. Assume first that $\pi_j <
        \pi_{j+1}$, and consider the $j$th entry of $\delpi(j)=\delpi(j+1)$. By
        the definition of $\del$, the $j$th entry of $\delpi(j)$ is
        $\pi_{j+1}-1$, and the same entry in $\delpi(j+1)$ is $\pi_j$. Therefore,
        we see that $\pi_{j+1}-1=\pi_j$, which means that $(\pi_j,\pi_{j+1})$ is
        a bond. Again, the case where $\pi_{j+1}<\pi_j$ follows similarly. 

        Now assume by way of induction that the statement holds when $k=j+m-1$,
        and suppose there exists $1\leq j < k \leq n$ such that $k-j = m$ and
        $\delpi(j)=\delpi(k)$. Assume first that $\pi_j < \pi_{k}$.
        $\delpi(j)=\delpi(k)$ implies, in particular, that the $(k-1)$st entries
        on both sides of the equality are equal.  By definition, the $k-1$ entry
        of $\delpi(j)$ is $\pi_k-1$, while the $k-1$ entry of $\delpi(k)$ is
        either $\pi_{k-1}$ or $\pi_{k-1}-1$. The latter case would imply that
        $\pi_{k-1} = \pi_k$, a contradiction, and so it follows that $\pi_{k-1} =
        \pi_k$. 

        By what has already been proved, $\delpi(k-1) = \delpi(k)$ since these
        entries form a bond. But then $\delpi(j) = \delpi(k) = \delpi(k-1)$, and
        so by the induction hypothesis the entries $(\pi_j \pi_{j+1} \dots
        \pi_{k-1})$ form a run. Finally, $\pi_k - 1 = \pi_{k-1}$ implies that
        $(\pi_j \pi_{j+1} \dots \pi_{k-1} \pi_k)$ is a length $m$ run.  Once
        more, the case where $\pi_j > \pi_k$ follows similarly, and the lemma is
        proved.
      \end{proof}

      The size of the set $\delpi$ then depends entirely on $\beta(\pi)$, since
      each bond decreases by one the number of distinct $(n-1)$-patterns
      contained in $\pi$. This leads to the following theorem, and its immediate
      corollary. 

      \begin{theorem} \label{fixpat:thm:bonds}
        Let $\pi \in \S_n$. Then $|\delpi| = n - \beta(\pi)$. 
      \end{theorem}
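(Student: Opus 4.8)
The plan is to derive the theorem directly from Lemma~\ref{fixpat:lem:bonds}, which already does the heavy lifting. The quantity $|\delpi|$ is by definition the number of \emph{distinct} values taken by the function $\del_\pi \colon [n] \to \S_{n-1}$, which is the same as the number of fibers of $\del_\pi$, i.e.\ the number of equivalence classes of the relation on positions given by $j \sim k \iff \delpi(j) = \delpi(k)$. Lemma~\ref{fixpat:lem:bonds} identifies this relation precisely: $j \sim k$ exactly when $\pi_j$ and $\pi_k$ lie in a common run. So the first step is to observe that $|\delpi|$ equals the number of equivalence classes of the ``same run'' relation.

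Next I would check that the ``same run'' relation partitions $[n]$ into its maximal runs. A bond links two adjacent positions whose values differ by $\pm 1$, and a maximal run is a maximal block of consecutive positions each linked to the next by a bond; by maximality these blocks are disjoint and cover all of $[n]$, so they form a genuine partition. Let $r$ denote the number of maximal runs, with lengths $k_1, \dots, k_r$ satisfying $k_1 + \cdots + k_r = n$. By the previous paragraph, $|\delpi| = r$.

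The final step is the bond count. Since a run of length $k_i$ contributes exactly $k_i - 1$ bonds internally, and no bonds occur between consecutive maximal runs (again by maximality), we have
$$ \beta(\pi) = \sum_{i=1}^{r} (k_i - 1) = \Big(\sum_{i=1}^{r} k_i\Big) - r = n - r. $$
Rearranging gives $r = n - \beta(\pi)$, and combining with $|\delpi| = r$ yields $|\delpi| = n - \beta(\pi)$, as claimed.

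There is no serious obstacle here: the content is entirely contained in Lemma~\ref{fixpat:lem:bonds}, and what remains is the elementary bookkeeping that maximal runs tile $[n]$ and that a run of length $k$ carries $k-1$ bonds. The only point demanding a word of care is confirming that the fibers of $\del_\pi$ coincide with maximal runs rather than merely with individual bonds --- that is, that the relation is transitive across a long run --- but this is exactly the equivalence supplied by the ``if and only if'' in the lemma, so no additional argument is needed.
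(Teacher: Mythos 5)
Your proof is correct and follows the same route as the paper, which states the theorem without a separate proof as an immediate consequence of Lemma~\ref{fixpat:lem:bonds} (``each bond decreases by one the number of distinct $(n-1)$-patterns''). Your write-up simply makes explicit the bookkeeping the paper leaves implicit --- that the fibers of $\del_\pi$ are the maximal runs, which partition $[n]$ and carry $k_i - 1$ bonds each --- so nothing further is needed.
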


      \begin{corollary} \label{fixpat:cor:alldistinct}
        A permutation is plentiful if and only if it contains no bonds. 
      \end{corollary}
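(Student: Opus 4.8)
The plan is to read this off directly from Theorem~\ref{fixpat:thm:bonds}, which has already been stated (and may be assumed). The entire content of the corollary is a translation of the equality $|\delpi| = n - \beta(\pi)$ into the language of Definition~\ref{fixpat:def:plentiful}. First I would recall that, by definition, a permutation $\pi \in \S_n$ is \emph{plentiful} precisely when it contains $n$ distinct $(n-1)$-patterns, i.e.\ when $|\delpi| = n$. I have already observed that $|\delpi| \leq n$ always, with equality exactly when $\delpi$ is injective.

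Next I would substitute. Since $\beta(\pi)$ is a nonnegative integer, the formula $|\delpi| = n - \beta(\pi)$ shows that $|\delpi| = n$ holds if and only if $\beta(\pi) = 0$. Unwinding the definition of $\beta$, this says $\pi$ has no bonds. Thus $\pi$ is plentiful $\iff |\delpi| = n \iff \beta(\pi) = 0 \iff \pi$ contains no bonds, which is the claim. The proof is a one-line chain of equivalences.

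The genuine obstacle here is \emph{not} in the corollary itself but in the two results it rests upon, both proved earlier: Lemma~\ref{fixpat:lem:bonds}, which characterizes exactly when two deletions coincide (namely $\del_\pi(j) = \del_\pi(k)$ iff $\pi_j, \pi_k$ lie in a common run), and Theorem~\ref{fixpat:thm:bonds}, which aggregates this into the count $|\delpi| = n - \beta(\pi)$ by noting that each bond collapses exactly one deletion-class. Once those are in hand, the corollary requires no new idea. Accordingly, if I were writing this from scratch I would present Corollary~\ref{fixpat:cor:alldistinct} as an immediate consequence and keep its proof to a single sentence, reserving the real work for the inductive argument inside Lemma~\ref{fixpat:lem:bonds} (the base case handling a single bond, and the induction on $k-j$ that propagates a shared $(n-1)$-pattern along a run).
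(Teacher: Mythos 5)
Your proof is correct and is exactly the paper's route: the corollary is stated there as an immediate consequence of Theorem~\ref{fixpat:thm:bonds}, and your chain ``plentiful $\iff |\delpi| = n \iff \beta(\pi) = 0 \iff$ no bonds'' is precisely that deduction, with the real content residing (as you note) in Lemma~\ref{fixpat:lem:bonds} and the theorem. Nothing is missing.
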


      Theorem~\ref{fixpat:thm:bonds} also provides a simple proof of the
      following local property of the permutation pattern poset \index{pattern
      poset}. 

      \begin{corollary} \label{fixpat:cor:n2+1} 
        If $\sg \in \S_{n-1}$, then $|\ins_\sg| = n^2 - 2n + 2 = (n-1)^2 + 1$. In
        other words, every permutation of length $n$ is contained in exactly $n^2 +1$
        $(n+1)$-permutations. 
      \end{corollary}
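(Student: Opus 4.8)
The plan is to view $\ins_\sg$ as the image of the $n^2$ \emph{labelled insertions} $\ins_\sg(i,j)$ with $(i,j)\in[n]\times[n]$, and to compute $|\ins_\sg|$ by counting these $n^2$ pairs and then accounting for the collisions $\ins_\sg(i,j)=\ins_\sg(i',j')$. First I would record the elementary fact that, for a fixed position $i$, the $n$ maps $j\mapsto\ins_\sg(i,j)$ are distinct (the inserted entry has a different relative value each time), so any collision forces $i\neq i'$. Thus understanding collisions reduces to understanding, for a fixed target $\pi$, the set of positions $i$ with $\del_\pi(i)=\sg$.

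The key structural input is Lemma~\ref{fixpat:lem:bonds} (equivalently the run decomposition behind Theorem~\ref{fixpat:thm:bonds}): the fibres of the deletion map $i\mapsto\del_\pi(i)$ are exactly the maximal runs of $\pi$. Consequently, for each $\pi\in\ins_\sg$ the set $\{\,i:\del_\pi(i)=\sg\,\}$ is a single maximal run $R_\pi$ of $\pi$, and the inverse relation $\ins_{\del_\pi(i)}(i,\pi_i)=\pi$ shows that the labelled insertions producing $\pi$ are precisely those inserting at a position of $R_\pi$. I would then choose a canonical representative: the insertion at the \emph{leftmost} position of $R_\pi$. Because $R_\pi$ is a maximal run, its leftmost entry is the unique entry of the run whose immediate left neighbour (if any) does not form a bond with it. Hence, calling a labelled insertion \emph{canonical} when the newly inserted entry is not bonded to its immediate left neighbour, I obtain a bijection between $\ins_\sg$ and the set of canonical insertions, so $|\ins_\sg|$ equals the number of canonical insertions.

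It then remains to count the \emph{non}-canonical insertions, i.e.\ those whose inserted entry bonds with its left neighbour. Such an insertion must occur at one of the $n-1$ gaps possessing a left neighbour $\sg_g$, and a short standardization check shows that exactly two choices of inserted value create a bond with $\sg_g$ — the value one rank below it and the value one rank above it (producing a descending, resp.\ ascending, bond). This yields $2(n-1)$ non-canonical insertions, whence
$$ |\ins_\sg| = n^2 - 2(n-1) = n^2 - 2n + 2 = (n-1)^2 + 1, $$
and the reindexed statement about $(n+1)$-permutations follows by replacing $n$ with $n+1$.

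I expect the main obstacle to be the bookkeeping in the two structural claims rather than any genuine difficulty: first, pinning down cleanly that the fibre $\{\,i:\del_\pi(i)=\sg\,\}$ is exactly one maximal run and that its leftmost position is the unique canonical one (handling the boundary case of a run beginning at position $1$, which is automatically canonical); and second, the careful but routine standardization argument that each interior gap admits exactly two bond-producing values. As an independent check I would verify the count globally: summing $|\del_\pi| = n-\beta(\pi)$ over $\pi\in\S_n$ and double counting covering pairs gives $\sum_{\sg\in\S_{n-1}}|\ins_\sg| = n\cdot n! - 2(n-1)(n-1)!$, whose average over the $(n-1)!$ choices of $\sg$ is exactly $n^2-2n+2$, consistent with the uniformity across all $\sg$ established above.
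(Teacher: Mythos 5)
Your proof is correct and takes essentially the same route as the paper's: both count the $n^2$ labelled insertions $\ins_\sg(i,j)$ and subtract the $2(n-1)$ insertions placing the new entry immediately to the right of, and in value directly adjacent to, an existing entry of $\sg$, using Lemma~\ref{fixpat:lem:bonds} to identify collisions with membership in a common run. Your leftmost-of-run canonical representative argument (together with the two-values-per-gap standardization check and the global double-counting verification) simply makes rigorous the step the paper states informally, namely that forbidding bonds with the left neighbour selects exactly one labelled insertion per element of $\ins_\sg$.
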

      \begin{proof}

        By definition, the set $\ins_\sg = \{ins_\sg(j,k) : 1 \leq j , k \leq
        n\}$, so we see that $|\ins_\sg| \leq n^2$.

        Now, a permutation $\pi \in \S_n$ is contained in $\ins_\sg$ more than once
        exactly when $\sg$ can be obtained in more than one way by deleting a entry
        of $\pi$. It follows that $\sg$ is contained in a permutation $\pi \in \S_n$
        more than once exactly when $\ins_\sg(j,k) = \ins_\sg(j\p,k\p)$ where
        $(j,k) \neq (j\p,k\p)$. By the lemma, this happens exactly when the $j$th
        entry of
        $\ins_\sg(j,k)$ is a part of the same run as the $j\p$ entry of
        $\ins_\sg(j\p,k\p)$. We can prevent this from occurring by never inserting
        an element just to the right and directly above or below an existing
        element of $\sg$, as this ensures that any new bonds can be created in
        exactly one way. 

        This eliminates exactly $2(n-1)$ choices for inserting an entry into $\sg$,
        and so therefore $|\ins_\sg| = n^2 - 2(n-1) = (n-1)^2 +1$, and the proof
        is complete.  
      \end{proof}

  \section{Distribution of the Number of Patterns}
    \label{fixpat:sec:delpi}

    We now consider let $\pi$ be a (uniformly) randomly \index{random} chosen
    permutation of length $n$, and examine the distribution of the statistic
    $|\delpi|$. The correlation presented in Theorem~\ref{fixpat:thm:bonds}
    allows us to investigate this distribution by analyzing the distribution of
    bonds. This distribution has been examined previously in other contexts, most
    notably by Kaplansky and Wolfowitz~\cite{Kaplansky, Wolfowitz}. In this
    section we extend their asymptotic results by finding \emph{exact} values for
    the expectation and variance of $\beta(\pi)$, and therefore of $|\delpi|$. 

    Throughout this section, fix $n$ and let $\dtan$ and $\btan$ be random
    variables denoting the number of distinct $(n-1)$-patterns and the number of
    bonds in a random permutation of length $n$, respectively.  Our primary tool
    in this investigation will be multivariate generating functions, but first we
    note that $\Ex{\dta}$ can be obtained directly using results from the
    previous section. \index{generating function!multivariate}

    \begin{proposition} \label{fixpat:prop:easyexpectation}
      The expectation of $\dta$ is equal to $n - \frac{2(n-1)}{n}$, which
      approaches $n-2$ as $n$ increases. 
    \end{proposition}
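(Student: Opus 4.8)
The plan is to reduce the computation entirely to linearity of expectation, exactly as in the proof of Proposition~\ref{expat:prop:allperms}. By Theorem~\ref{fixpat:thm:bonds} we have the pointwise identity $|\delpi| = n - \beta(\pi)$ for every $\pi \in \S_n$, and so, as random variables, $\dtan = n - \btan$. Taking expectations gives $\Ex{\dtan} = n - \Ex{\btan}$, so the entire problem collapses to computing the expected number of bonds in a uniformly random permutation of length $n$.

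To evaluate $\Ex{\btan}$, I would first write $\btan = \sum_{i=1}^{n-1} B_i$, where $B_i$ is the indicator random variable for the event that the pair $(\pi_i, \pi_{i+1})$ is a bond, i.e.\ that $|\pi_i - \pi_{i+1}| = 1$. Linearity of expectation then yields $\Ex{\btan} = \sum_{i=1}^{n-1} \Ex{B_i}$, and each $\Ex{B_i}$ is simply the probability that positions $i$ and $i+1$ hold consecutive values.

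The one small computation is this probability, which I would obtain by a direct count. For a fixed position $i$, the ordered pair of values $(\pi_i, \pi_{i+1})$ is equally likely to be any of the $n(n-1)$ ordered pairs of distinct elements of $[n]$. Among these, the bonds correspond to the $n-1$ unordered pairs of consecutive integers $\{a, a+1\}$, each contributing two ordered pairs, for a total of $2(n-1)$; this count already handles the boundary values $1$ and $n$ correctly, since each has a single neighbor. Hence $\Ex{B_i} = \frac{2(n-1)}{n(n-1)} = \frac{2}{n}$, independent of $i$. Summing over the $n-1$ positions gives $\Ex{\btan} = (n-1)\cdot \frac{2}{n} = \frac{2(n-1)}{n}$, and therefore $\Ex{\dtan} = n - \frac{2(n-1)}{n}$. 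Since $\frac{2(n-1)}{n} = 2 - \frac{2}{n} \to 2$, the expectation approaches $n-2$, as claimed.

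There is no genuine obstacle here: the content is entirely front-loaded into Theorem~\ref{fixpat:thm:bonds}, after which the argument is a one-line linearity-of-expectation computation. The only point demanding a moment of care is verifying the bond count $2(n-1)$ --- in particular, confirming that the extreme values contribute the correct number of adjacencies --- but this is immediate from the unordered-pair description.
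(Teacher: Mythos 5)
Your proof is correct, but it takes a genuinely different route from the paper's. The paper proves this proposition by double counting containment pairs: it writes $\Ex{\dta} = \frac{1}{n!}\sum_{\pi \in \S_n} |\delpi|$, observes that this sum equals $\sum_{\sg \in \S_{n-1}} |\ins_\sg|$, and invokes Corollary~\ref{fixpat:cor:n2+1}, which gives $|\ins_\sg| = n^2 - 2n + 2$ for every $\sg \in \S_{n-1}$; the identity $(n^2-2n+2)(n-1)! = \bigl(n - \frac{2(n-1)}{n}\bigr)n!$ then finishes. You instead route everything through Theorem~\ref{fixpat:thm:bonds} and compute $\Ex{\btan}$ by summing indicator variables over the $n-1$ adjacent positions; your count of $2(n-1)$ ordered consecutive-value pairs out of $n(n-1)$, giving bond probability $\frac{2}{n}$ per position, is right, including the treatment of the extreme values $1$ and $n$. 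Notably, the paper states the reduction $\Ex{\dta} = n - \Ex{\bta}$ immediately after this proposition, but only evaluates $\Ex{\btan} = \frac{2(n-1)}{n}$ later (Theorem~\ref{fixpat:thm:exvar}) via the multivariate generating function of Theorem~\ref{fixpat:thm:genfcn}; your indicator argument obtains the same value with no generating-function machinery at all. What the paper's route buys is that the proposition follows in one line from an already-proved poset-local fact about $\ins_\sg$; what yours buys is a self-contained, elementary probabilistic computation — though for the variance and higher moments of $\btan$, where indicator covariances become fiddly, the paper's generating-function approach is the more convenient one.
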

    \begin{proof}
      By the definition of expectation, we have 
      $$ \Ex{\dta} = \frac{\sum_{\pi \in \S_n} |\delpi| }{n!}. $$
      The proposition then follows immediately from
      Corollary~\ref{fixpat:cor:n2+1} and the identity
      $$ (n^2 - 2n + 2) (n-1)! = \left(n - \frac{2(n-1)}{n}\right) n!.$$
    \end{proof}

  \subsection{Generating Functions}

    Generating functions allow us to go several steps further, and obtain
    higher moments for the distributions of these variables. It follows from
    Theorem~\ref{fixpat:thm:bonds} and the linearity of expectation
    \index{expectation!linearity} that 
    $$ \Ex{\dta} = n - \Ex{\bta}.$$
    Therefore we can translate the distribution of $\bta$ to that of $\dta$. 
    We start by building a multivariate generating function which keeps track
    of the distribution of bonds throughout all permutations. We use a method
    similar to the cluster method of Goulden and Jackson~\cite{gouldenjackson1,
    gouldenjackson2}, described by Noonan and Zeilberger~\cite{Noonan1999}. Note
    that this generating function converges nowhere, but still yields useful
    algebraic information. 

    \begin{theorem} \label{fixpat:thm:genfcn}
      Let $a_{n,k}$ be the number of permutations of length $n$ which contain
      exactly $k$ bonds, and let $a_{0,0} = 1$. Then the numbers $a_{n,k}$ have
      the following generating function
      $$ \sum_{n \geq 0} \sum_{k \geq 0} a_{n,k} z^n u^k = 
        \sum_{m \geq 0} m! \left(z + \frac{2z^2(u - 1)}{1 - z(u-1)}\right)^m.$$
      Denote this function by $f(z,u)$. 
    \end{theorem}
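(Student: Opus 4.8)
The plan is to prove this via an inclusion--exclusion on bonds, in the spirit of the cluster method of Goulden and Jackson referenced above. The starting observation is that the weight $u^{\beta(\pi)}$ can be expanded by writing $u = 1 + (u-1)$: since $\beta(\pi) = |\mathcal{B}(\pi)|$, where $\mathcal{B}(\pi)$ denotes the set of bonds of $\pi$, the binomial theorem gives $u^{\beta(\pi)} = \sum_{S \subseteq \mathcal{B}(\pi)} (u-1)^{|S|}$. Substituting this into the target sum and exchanging the order of summation, I would rewrite
$$ f(z,u) = \sum_{\pi} z^{|\pi|} u^{\beta(\pi)} = \sum_{\pi} \sum_{S \subseteq \mathcal{B}(\pi)} z^{|\pi|} (u-1)^{|S|}, $$
so that the object to enumerate is the set of pairs $(\pi, S)$ consisting of a permutation together with a distinguished subset $S$ of its bonds, each weighted by $z^{|\pi|}(u-1)^{|S|}$.

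The core step is a weight-preserving bijection decomposing such a pair into independent \emph{clusters}. Given $(\pi, S)$, the marked bonds in $S$ group the positions of $\pi$ into maximal blocks of entries linked by consecutive marked bonds. Because the entries of a permutation are distinct, a chain of bonds is forced to be strictly monotone through consecutive integer values, so each block is either an ascending or a descending run of consecutive values, a block of size $b$ carrying exactly $b-1$ marked bonds. Contracting each block to a single mega-entry and standardizing by the pairwise disjoint value-intervals of the blocks yields a permutation $\tau \in \S_m$, where $m$ is the number of blocks; conversely, $\tau$ together with the ordered data (size, and direction when $b \geq 2$) of each block reconstructs $(\pi, S)$ uniquely. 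This is precisely an inflation of $\tau$ by monotone runs, with $S$ recovered as the internal bonds of the blocks. The generating function contributed by a single block is
$$ B(z,u) = z + \sum_{b \geq 2} 2\,z^{b}(u-1)^{b-1} = z + \frac{2z^{2}(u-1)}{1 - z(u-1)}, $$
the leading $z$ accounting for singleton blocks and the factor $2$ for the two monotone directions available once $b \geq 2$.

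With the bijection in hand, assembling the answer is immediate: summing over the number of blocks $m$, then over the $m!$ permutations $\tau \in \S_m$, and treating the $m$ blocks as independent copies of $B(z,u)$, I obtain
$$ f(z,u) = \sum_{m \geq 0} m!\, B(z,u)^{m} = \sum_{m \geq 0} m! \left( z + \frac{2z^{2}(u-1)}{1 - z(u-1)} \right)^{m}, $$
with the $m=0$ term giving the empty permutation and matching $a_{0,0}=1$. I would close by noting that, although $\sum_m m!\,x^m$ diverges for every nonzero argument, the identity is valid as an equality of formal power series in $z$: since $B(z,u)$ has no constant term, only the finitely many terms with $m \leq n$ contribute to the coefficient of $z^n$, which is a polynomial in $u$, as required.

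The main obstacle, and the part demanding the most care, is verifying that the cluster decomposition is a genuine bijection rather than merely a heuristic. In particular I must confirm that a maximal chain of marked bonds cannot fail to be monotone (using distinctness of entries), that the value-intervals of distinct blocks are disjoint so standardization produces a well-defined $\tau$, and---most delicately---that an \emph{unmarked} value-adjacency occurring at a junction between two consecutive blocks causes no double counting, since such a junction is simply an unmarked bond and does not merge the blocks. Pinning down these points, together with the per-block direction count ($2$ for $b \geq 2$ and $1$ for $b=1$), is what makes the construction rigorous.
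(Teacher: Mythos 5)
Your proposal is correct and takes essentially the same route as the paper: the paper introduces the auxiliary generating function $g(z,v)$ counting permutations with marked bonds, derives $g(z,v)=\sum_{m\geq 0} m!\left(z+\frac{2z^2v}{1-zv}\right)^m$ by decomposing into monotone marked runs permuted with the remaining entries, and substitutes $v=u-1$ at the end, whereas you perform that substitution up front via $u^{\beta(\pi)}=\sum_{S\subseteq\mathcal{B}(\pi)}(u-1)^{|S|}$ — the identical cluster-method inclusion--exclusion. Your added care (monotonicity of marked chains, disjointness of value intervals, unmarked junction bonds, and the formal-power-series justification of the divergent sum) only makes explicit details the paper leaves implicit.
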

    \begin{proof}
      First we construct a related generating function, then translate it into
      ours using the technique of inclusion-exclusion. 
      Say that a bond in a permutation can be arbitrarily \emph{marked}, and then a  
      \emph{marked permutation} is one in which each bond is either marked or
      unmarked.  Let $b_{n,k}$ be the number of permutations of length $n$ which contain
      exactly $k$ marked bonds. For example, $b_{n,0} = n!$, since every
      permutation can be written with no bonds marked, and no permutation is
      counted more than once. Similarly, $b_{n,n-1} = 1$, since the only
      marked permutation with $n-1$ marked bonds is the decreasing permutation
      with all of its bonds marked. 

      Let 
      $$ g(z,u) := \sum_{n \geq 0}\sum_{k \geq 0} b_{n,k}z^n u^k. $$ 
      This generating
      function is easier to construct, as we can build a permutation of length $n$ with
      $k$ marked bonds by first specifying our marked runs, then permuting
      these runs with the remaining entries. The benefit to this method is that
      we don't have to worry about bonds forms between these runs, as we have
      already specified which ones are marked. A marked run of length $j$
      can be either ascending or descending, and contains $j-1$ bonds. 
      It follows that 

      $$ g(z,v) = \sum_{m \geq 0}
          m! \left(z + \frac{2z^2v}{1 - zv} \right)^m.$$

      Now, we can use this generating function to obtain $f(z,u)$. The variable
      $v$ keeps track of marked bonds, while $u$ keeps track of all bonds.
      Since every bond can either be marked or unmarked, it follows that by
      substituting $u$ for $v + 1$ we can translate $f(z,u)$ to $g(z,v)$.
      Therefore, we have the relation $f(z,v+1) = g(z,v)$, from which we see
      that 
      $$ f(z,u) = g(z,u-1) = 
        \sum_{m \geq 0} m! \left(z + \frac{2z^2(u-1)}{1 - z(u-1)} \right)^m.$$
    \end{proof}

    The following corollary is immediate, and follows from the relationship
    between $\dta$ and $\bta$. 
      
    \begin{corollary} \label{fixpat:cor:gfn2}
      Let $d_{n,k}$ denote the number of permutations of length $n$ containing
      exactly $k$ distinct $(n-1)$ patterns, and let $d_{0,0} = 1$. Then 
      $$ h(z,u) :=  \sum_{n \geq 0}\sum_{ k \geq 0} h_{n,k} z^n u^k = 
        \sum_{m \geq 0}  m! \left( zu + 
        \frac{2zu^2 (1/u - 1)}{1 - zu(1/u - 1)}\right)^m.$$
    \end{corollary}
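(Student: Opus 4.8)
The plan is to reduce the corollary to a single change of variables in the generating function $f(z,u)$ already built in Theorem~\ref{fixpat:thm:genfcn}, exploiting the exact correspondence between bonds and distinct $(n-1)$-patterns. First I would invoke Theorem~\ref{fixpat:thm:bonds}, which asserts $|\delpi| = n - \beta(\pi)$ for every $\pi \in \S_n$. This is an equality of statistics on each \emph{individual} permutation, not merely on averages, so it at once gives a coefficientwise relation between the two enumerating arrays: a permutation of length $n$ with exactly $k$ bonds has precisely $n-k$ distinct $(n-1)$-patterns, whence
$$ d_{n,k} = a_{n,\,n-k} \qquad \text{for all } n, k. $$

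Next I would convert this into a substitution on the bivariate series. Writing
$$ h(z,u) = \sum_{n \geq 0}\sum_{k \geq 0} d_{n,k}\,z^n u^k = \sum_{n \geq 0}\sum_{k \geq 0} a_{n,\,n-k}\,z^n u^k, $$
I would reindex by the number of bonds $j = n-k$, so that each monomial $a_{n,j}\,z^n u^{\,n-j}$ becomes $a_{n,j}\,(zu)^n\,(1/u)^j$. The double sum then collapses to $f$ evaluated at shifted arguments, namely $h(z,u) = f(zu,\,1/u)$. Equivalently, the passage from the bond statistic $\bta$ to the pattern statistic $\dta = n - \bta$ is realized by sending $u \mapsto 1/u$ (reversing the sign of the exponent) and then $z \mapsto zu$ (absorbing the additive shift by $n$, since each of the $n$ entries contributes one extra factor of $u$). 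Substituting $z \to zu$ and $u \to 1/u$ into the closed form of Theorem~\ref{fixpat:thm:genfcn} and simplifying the rational term $\tfrac{2(zu)^2(1/u-1)}{1 - zu(1/u-1)}$ then yields the stated expression for $h(z,u)$.

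The only point needing attention — more bookkeeping than obstacle — is the legitimacy of the substitution $u \mapsto 1/u$ as an operation on formal power series. Because every permutation of length $n \geq 1$ carries at most $n-1$ bonds, $d_{n,k}$ is nonzero only for $k \geq 1$, and in the reindexed sum the total exponent of $u$ in each surviving monomial is $n - j = k \geq 0$. Hence no negative powers of $u$ actually occur, $h(z,u)$ is a genuine element of $\mathbb{Z}[[z,u]]$, and the intermediate Laurent manipulations are justified. I expect no real difficulty: the entire content of the corollary is carried by Theorem~\ref{fixpat:thm:bonds}, and what remains is the tracking of exponents through the two variable shifts.
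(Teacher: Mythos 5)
Your proof is correct and takes essentially the same route as the paper, whose entire argument is the one line ``Since $\dta = n - \bta$, it follows that $h(z,u) = f(zu, 1/u)$''; your coefficientwise identity $d_{n,k} = a_{n,\,n-k}$ and the check that no negative powers of $u$ survive the substitution are welcome details the paper leaves implicit. Note also that your rational term $\tfrac{2(zu)^2(1/u-1)}{1 - zu(1/u-1)}$ is the correct outcome of the substitution; the $2zu^2$ appearing in the stated corollary (like its $h_{n,k}$ where $d_{n,k}$ is meant) is a typo in the paper, not a discrepancy in your argument.
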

    \begin{proof}
      Since $\dta = n - \bta$, it follows that $h(z,u) = f(zu, 1/u)$. 
    \end{proof}

    The remainder of this section will consist of the analysis of the function
    $F(z,u)$, and the translation of this analysis into facts about
    permutations. 
    First, we compute the number of permutations which have no bonds (and
    are therefore plentiful). 

    \begin{proposition} \label{fixpat:cor:chessboard}
      Let $b_n$ be the number of permutations of length $n$ with no bonds. Then 
      $$ \begin{aligned} 
       \sum_{n \geq 0} b_n z^n 
         &= \sum_{m \geq 0} m! z^m \frac{(1 - z)^m}{(1 + z)^m} \\
         &= 1 + z + 2z^4 + 14z^5 + 90z^6 + 646z^7 + 5242z^8 + \dots 
      \end{aligned} $$
    \end{proposition}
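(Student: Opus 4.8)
The plan is to recognize $b_n$ as $a_{n,0}$, the number of length-$n$ permutations with exactly zero bonds, and to extract it from the bivariate generating function $f(z,u)$ of Theorem~\ref{fixpat:thm:genfcn} by setting $u=0$. Since $f(z,u)=\sum_{n,k}a_{n,k}z^n u^k$ and $0^k=0$ for $k\ge 1$ while $0^0=1$, only the $k=0$ terms survive, so $\sum_{n\ge 0}b_n z^n = f(z,0)$. First I would record this observation, since it immediately reduces the whole problem to a single evaluation and simplification of the closed form already proved.

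Next I would substitute $u=0$ directly into the inner expression of $f$. With $u-1=-1$ we get $z+\frac{2z^2(0-1)}{1-z(0-1)}=z-\frac{2z^2}{1+z}$, and combining over the common denominator gives $\frac{z(1+z)-2z^2}{1+z}=\frac{z-z^2}{1+z}=\frac{z(1-z)}{1+z}$. Feeding this back into the sum yields
\[
f(z,0)=\sum_{m\ge 0} m!\left(\frac{z(1-z)}{1+z}\right)^{m}=\sum_{m\ge 0} m!\,z^m\,\frac{(1-z)^m}{(1+z)^m},
\]
which is exactly the first claimed equality. The second equality is then just the formal power-series expansion of this expression; I would verify the low-order coefficients as a sanity check, noting in particular that $b_2=b_3=0$ (every permutation of length two or three contains a bond) and $b_4=2$, the two bond-free permutations being $2413$ and $3142$, matching the stated $2z^4$ term.

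The main obstacle here is conceptual rather than computational. As remarked before the theorem, $f$ converges nowhere as an analytic function, so I must justify that the substitution $u=0$ is legitimate as an operation on formal power series. The key point is that $\left(\frac{z(1-z)}{1+z}\right)^{m}$ has $z$-order at least $m$, so the coefficient of any fixed $z^n$ in $f(z,0)$ receives contributions only from the finitely many terms with $m\le n$. Hence $f(z,0)$ is a well-defined element of $\mathbb{Z}[[z]]$ and coefficient extraction commutes with setting $u=0$; once this is in place, the simplification above completes the proof with no further work.
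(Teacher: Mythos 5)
Your proposal is correct and is exactly the paper's proof, which consists of the single line ``This follows immediately by setting $u=0$ in $f(z,u)$''; your simplification of the inner term to $z(1-z)/(1+z)$ and your coefficient checks just make explicit what the paper leaves implicit. The one genuine addition---observing that $\bigl(z(1-z)/(1+z)\bigr)^m$ has $z$-order at least $m$, so the everywhere-divergent series is still a legitimate element of $\mathbb{Z}[[z]]$ under formal substitution---is a worthwhile justification the paper omits.
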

    \begin{proof}
      This follows immediately by setting $u = 0$ in $f(z,u)$. 
    \end{proof}

    The numbers $b_n$ in Corollary~\ref{fixpat:cor:chessboard} are 
    \OEIS{A002464}. These numbers are also equal to the number of ways of
    placing $n$ non-attacking kings on an $n \times n$ chessboard with one king
    per each row and column, as can be seen by plotting the permutations. It
    was shown in~\cite{Tauraso2006} that this sequence is asymptotic to
    $n!/e^2$, and so Corollary~\ref{fixpat:cor:alldistinct} implies the
    following corollary. 

    \begin{corollary}
      The probability that a randomly selected $n$ permutation is plentiful tends
      to $1/e^2$ as $n$ tends to infinity. 
    \end{corollary}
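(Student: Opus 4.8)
The plan is to reduce the statement directly to the bond-free enumeration already in hand. By Corollary~\ref{fixpat:cor:alldistinct} a permutation of length $n$ is plentiful exactly when it contains no bonds, so the plentiful permutations of length $n$ are precisely the $b_n$ bond-free permutations counted in Proposition~\ref{fixpat:cor:chessboard}. Since a uniformly random element of $\S_n$ is drawn from $n!$ equally likely permutations, the probability that it is plentiful is simply
$$ \frac{b_n}{n!}. $$

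It then remains only to pass to the limit. First I would invoke the asymptotic $b_n \sim n!/e^2$ recorded above for the sequence \OEIS{A002464} (established in~\cite{Tauraso2006}), and conclude immediately that $b_n/n! \to 1/e^2$. This finishes the argument in a single line, so there is essentially no obstacle once the enumeration of Proposition~\ref{fixpat:cor:chessboard} and the identification in Corollary~\ref{fixpat:cor:alldistinct} are in place; the corollary is genuinely a one-line consequence of the two.

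The only substantive content lies in the asymptotic $b_n \sim n!/e^2$, which we are taking as given. Were one to derive it rather than cite it, the cleanest route would be probabilistic: the expected number of bonds $\Ex{\bta}$ tends to $2$ as $n \to \infty$ (which follows from the exact value of $\Ex{\dta}$ in Proposition~\ref{fixpat:prop:easyexpectation} together with $\Ex{\dta} = n - \Ex{\bta}$), and a Poisson approximation with mean $2$ then predicts $\Pr[\bta = 0] \to e^{-2}$, matching the inclusion–exclusion value $\sum_{k \geq 0} (-2)^k/k! = e^{-2}$. Making this rigorous from the divergent generating function of Proposition~\ref{fixpat:cor:chessboard} — extracting the leading coefficient behaviour of $\sum_{m} m!\, z^m (1-z)^m/(1+z)^m$ — would be the main technical step, but it is unnecessary here since the excerpt supplies the asymptotic directly.
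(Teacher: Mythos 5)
Your proposal is correct and follows the paper's own route exactly: both reduce plentifulness to bond-freeness via Corollary~\ref{fixpat:cor:alldistinct}, identify the count $b_n$ from Proposition~\ref{fixpat:cor:chessboard}, and conclude $b_n/n! \to 1/e^2$ by citing the asymptotic $b_n \sim n!/e^2$ from~\cite{Tauraso2006}. Your closing sketch of a Poisson-approximation derivation is a reasonable heuristic aside but, as you note, is not needed, and the paper likewise simply cites the asymptotic rather than proving it.
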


    In addition to exact results, we can use the function $f(z,u)$ to determine
    the \emph{expected} \index{expectation} number of bonds within a randomly
    selected permutation of length $n$, Using techniques described in
    Chapter~\ref{chap:prelim} and in \cite{flajolet}.

    \begin{theorem} \label{fixpat:thm:exvar}
      The expectation and variance of the random variable $\btan$ are as
      follows:
      $$ \begin{aligned}
          \Ex{\btan} &= 2\frac{(n-1)}{n} \\
          \Var{\btan} &= 4\frac{(n-2)^2}{n(n-1)} + 2\frac{n-1}{n} -
          4\frac{(n-1)^2}{n^2} . 
          \end{aligned} $$
    \end{theorem}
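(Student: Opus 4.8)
The plan is to read both moments directly off the bivariate generating function $f(z,u)$ of Theorem~\ref{fixpat:thm:genfcn}, using the factorial-moment technique from Section~\ref{prelim:sec:ascents-example}: differentiating $f$ with respect to $u$ and setting $u=1$ converts the coefficient of $z^n$ into a weighted count of bonds over all length-$n$ permutations, which we then normalize by $n!$. Recall that $\Ex{\dtan} = n - \Ex{\btan}$, so it suffices to handle $\btan$, and that $f(z,1) = \sum_{m\geq 0} m!\, z^m$ recovers $n!$ in the coefficient of $z^n$, confirming the normalization.

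First I would record the behavior of the inner function. Write $w(z,u) = z + \frac{2z^2(u-1)}{1-z(u-1)}$, so that $f(z,u) = \sum_{m\geq 0} m!\, w^m$ and $w(z,1)=z$. Substituting $t = u-1$ and differentiating the rational part shows that $\partial_u w \uisone = 2z^2$ and $\partial_u^2 w \uisone = 4z^3$; the appearance of $u-1$ rather than $u$ in $f$ is exactly what makes these evaluations so clean. Applying the chain rule to $f = \sum_m m!\,w^m$ and using $w(z,1)=z$ gives
$$ \partial_u f \uisone = \sum_{m\geq 0} m!\,m\,z^{m-1}\,(2z^2) = \sum_{m\geq 0} 2\,m\,m!\,z^{m+1}, $$
whose coefficient of $z^n$ is $2(n-1)(n-1)!$, the total number of bonds across all permutations of length $n$. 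Dividing by $n!$ yields $\Ex{\btan} = 2(n-1)/n$, as claimed.

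For the variance I would compute the second factorial moment through $\partial_u^2 f \uisone$, whose coefficient of $z^n$ extracts $\sum_{\pi} \beta(\pi)\bigl(\beta(\pi)-1\bigr)$. The product rule gives
$$ \partial_u^2 f \uisone = \sum_{m\geq 0} m!\left[ m(m-1)z^{m-2}(2z^2)^2 + m\,z^{m-1}(4z^3)\right] = \sum_{m\geq 0} 4\,m^2\,m!\,z^{m+2}, $$
the two terms collapsing neatly since $m(m-1)+m = m^2$. The coefficient of $z^n$ is $4(n-2)^2(n-2)!$, so $\Ex{\btan(\btan-1)} = 4(n-2)^2/\bigl(n(n-1)\bigr)$. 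Finally I would assemble $\Var{\btan} = \Ex{\btan(\btan-1)} + \Ex{\btan} - \Ex{\btan}^2$ to obtain the stated formula. The only genuine obstacle is bookkeeping: carrying the chain and product rules correctly through $f = \sum_m m!\,w^m$ and matching powers of $z$ during coefficient extraction. The generating function does all of the combinatorial work, so once the two derivatives of $w$ at $u=1$ are in hand, the remainder is routine algebra.
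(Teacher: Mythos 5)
Your proposal is correct and follows essentially the same route as the paper: differentiate the generating function of Theorem~\ref{fixpat:thm:genfcn} once and twice with respect to $u$, evaluate at $u=1$, extract the coefficient of $z^n$, normalize by $n!$, and assemble the variance from the factorial moments. In fact you carry out in full the computation the paper dismisses as ``tedious and technical'' --- your values $\partial_u w \uisone = 2z^2$, $\partial_u^2 w \uisone = 4z^3$, and the collapse $m(m-1)+m=m^2$ all check out --- and you correctly evaluate at $u=1$ where the paper's prose mistakenly says $u=0$.
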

    \begin{proof}
      The expectation is obtained by taking the partial derivative with respect
      to $u$, then plugging in $u = 0$ as shown below. 

      $$ \begin{aligned}
        \sum_{n\geq 0}\Ex{\btan}z^n 
          &= \frac{\partial_u f(z,u) \big|_{u=0}}{n!} \\
          &= \sum_{n \geq 0} 2 (n-1)! \cdot (n-1) z^n. 
        \end{aligned}$$
      
      The second factorial moment $\Ex{\btan(\btan - 1)}$ can be computed from
      the generating function as follows:
      
      $$ \sum_{n \geq 0} \Ex{\btan(\btan - 1)}z^n 
        = \frac{\partial_u^2 f(z,u) \uisone}{n!}.$$
      
      The variance can then be computed using linearity of expectation:
      \index{expectation!linearity}
      $$ \Var{\btan^2} = \Ex{\btan^2} - \Ex{\btan}^2 
        = \Ex{\btan(\btan - 1)} + \Ex{\btan} - \Ex{\btan}^2.$$
      
      From here, a tedious and technical computation finishes the proof. 
    \end{proof}

    Higher moments can be computed iteratively. 
    The relationship between the variables $\btan$ and $\dtan$ immediately
    provides the corresponding expectation and variance for $\dtan$. Taking the
    limit as $n \ra \infty$ gives asymptotic values for this distribution,
    which leads to the results found in~\cite{Kaplansky, Wolfowitz}. We
    summarize these ideas in the following corollaries. 

    \begin{corollary}\label{fixpat:cor:exvar}
      The expectation and variance for the variable $\dtan$ are as follows:
      $$ \begin{aligned} 
         \Ex{\btan} &= n - \frac{2(n-1)}{n} \\
          \Var{\btan} &= 4\frac{(n-2)^2}{n(n-1)} + 2\frac{n-1}{n} -
          4\frac{(n-1)^2}{n^2}. 
          \end{aligned} $$
    \end{corollary}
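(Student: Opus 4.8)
The plan is to obtain both formulas immediately from Theorem~\ref{fixpat:thm:exvar} by translating the statistic $\btan$ into $\dtan$. The bridge is Theorem~\ref{fixpat:thm:bonds}, which asserts that $|\delpi| = n - \beta(\pi)$ for every $\pi \in \S_n$. Read at the level of random variables over a uniformly random permutation of length $n$, this says precisely that $\dtan = n - \btan$, a deterministic affine relationship. Everything then follows from elementary properties of expectation and variance, with no further combinatorial input required.

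For the expectation, linearity of expectation gives $\Ex{\dtan} = \Ex{n - \btan} = n - \Ex{\btan}$. Substituting the value $\Ex{\btan} = 2(n-1)/n$ supplied by Theorem~\ref{fixpat:thm:exvar} yields $\Ex{\dtan} = n - \tfrac{2(n-1)}{n}$, exactly as claimed; this also provides a reassuring cross-check, since the same value was derived by an independent counting argument in Proposition~\ref{fixpat:prop:easyexpectation}. For the variance I would invoke the fact that $\Var{\cdot}$ is invariant under adding a constant and under negation: because $\dtan = -\btan + n$, we have $\Var{\dtan} = \Var{-\btan} = \Var{\btan}$, and the right-hand side is then copied verbatim from Theorem~\ref{fixpat:thm:exvar}.

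There is essentially no obstacle to overcome at this stage. All of the genuine work --- the construction of the multivariate generating function $f(z,u)$ in Theorem~\ref{fixpat:thm:genfcn} and the extraction of its first two factorial moments --- was already carried out in proving Theorem~\ref{fixpat:thm:exvar}. The only point demanding even minimal care is recognizing that the affine change of variables relating $\btan$ and $\dtan$ shifts the mean by $n$ while leaving the variance untouched, which is exactly why the two corollary formulas share an identical variance expression but differ in their means.
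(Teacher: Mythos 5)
Your proposal is correct and is exactly the paper's argument: the paper likewise derives the corollary immediately from Theorem~\ref{fixpat:thm:exvar} via the deterministic relation $\dtan = n - \btan$ of Theorem~\ref{fixpat:thm:bonds}, with linearity shifting the mean and affine-invariance leaving the variance untouched. You also correctly read through the paper's typographical slip (the corollary's displayed formulas say $\btan$ where $\dtan$ is meant), so nothing further is needed.
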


    \begin{corollary}
      For large $n$, we have that 
      $$ \Ex{\btan} \sim n - 2 \text{ \quad and \quad }
        \Var{\btan} \sim 2.$$
    \end{corollary}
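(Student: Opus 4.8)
The plan is to read this corollary as the large-$n$ limit of the exact formulas already established in Theorem~\ref{fixpat:thm:exvar} and Corollary~\ref{fixpat:cor:exvar}, so that the whole argument reduces to computing limits of degree-matched rational functions; no new machinery is needed. The one point demanding care is which statistic each half of the asymptotic describes, since the bond count and the distinct-pattern count behave very differently in expectation while sharing a variance.

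For the variance half I would use that $\dtan = n - \btan$ differ by the (fixed-$n$) constant $n$, so that $\Var{\btan} = \Var{\dtan}$. Taking the term-by-term limit of the expression from Theorem~\ref{fixpat:thm:exvar},
\[
  \Var{\btan} = 4\frac{(n-2)^2}{n(n-1)} + 2\frac{n-1}{n} - 4\frac{(n-1)^2}{n^2},
\]
each of the three summands is a rational function whose numerator and denominator have equal degree, so each has a finite limit, namely $4$, $2$, and $4$ respectively. Hence $\Var{\btan} \to 4 + 2 - 4 = 2$, which is exactly the claim $\Var{\btan} \sim 2$ as worded, and the same limit gives $\Var{\dtan} \sim 2$.

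For the expectation half, the leading term $n-2$ pins down the relevant statistic as the distinct-pattern count rather than the bond count. From Theorem~\ref{fixpat:thm:exvar} the bond count has \emph{bounded} expectation, $\Ex{\btan} = 2(n-1)/n \to 2$, so an asymptotic of the form $n-2$ cannot refer to $\btan$; rather, Corollary~\ref{fixpat:cor:exvar} gives $\Ex{\dtan} = n - 2(n-1)/n = (n-2) + 2/n$, and dividing by $n-2$ yields $\Ex{\dtan}/(n-2) = 1 + 2/(n(n-2)) \to 1$, i.e.\ $\Ex{\dtan} \sim n-2$. I would therefore record this half in the form matching the formulas --- the distinct-pattern expectation $\Ex{\dtan} \sim n-2$ together with its complement $\Ex{\btan} \to 2$ --- so that the substitution $\dtan = n - \btan$ is transparent; the $n-2$ in the displayed statement is the same $\dtan$ reading already present in Corollary~\ref{fixpat:cor:exvar}.

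There is no analytic obstacle: the exact expectation and variance are supplied by the preceding results, and passing to the limit is immediate. The only place requiring attention --- and precisely where a careless reading goes wrong --- is the variable bookkeeping. Because $\Ex{\btan}$ is bounded while $\Ex{\dtan}$ grows linearly, the $n-2$ asymptotic can only belong to $\dtan$ and must not be transferred to $\btan$, whereas the variance asymptotic genuinely holds for both. Keeping the complementary limits $\Ex{\btan}\to 2$ and $\Var{\btan}\to 2$ side by side makes the correspondence unambiguous and closes the argument.
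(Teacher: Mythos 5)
Your proposal is correct and takes essentially the same route as the paper, whose (implicit) proof is exactly your computation: pass to the limit in the exact formulas of Theorem~\ref{fixpat:thm:exvar} and Corollary~\ref{fixpat:cor:exvar}, each summand of the variance being a degree-matched rational function with limits $4$, $2$, and $-4$. You also rightly flag the paper's notational slip: since $\Ex{\btan}\to 2$, the asymptotic $n-2$ can only refer to $\dtan = n-\btan$, while $\Var{\btan}=\Var{\dtan}\to 2$ holds for both variables.
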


  \section{Patterns of Other Sizes}
    \label{fixpat:sec:othersizes}
    
    In this section, we examine the number of distinct $(n-k)$-patterns contained
    in a permutation of length $n$. For a given permutation of length $n$ $\pi$, $\delpi$ denotes
    the image of the function $\delpi$, which is exactly the set of
    $(n-1)$-patterns contained in $\pi$. The following definitions generalize
    the Definitions~\ref{fixpat:def:del} and~\ref{fixpat:def:plentiful}. 

    \begin{definition} \label{fixpat:def:delpik}
      Let $S = \{i_1, i_2, \dots i_k\} \subseteq [n]$, with $i_1 < i_2 < \dots
      < i_k$. We denote by $\delpi(S)$ the permutation obtained by deleting the
      entries in positions $i_1, \dots i_k$, and standardizing the remaining
      entries. Denote by $\delpik$ the set of all permutations which can be
      obtained by deleting $k$ entries from $\pi$ and standardizing. 
    \end{definition}

    \begin{definition} \label{fixpat:def:kplentiful}
      Say that a permutation of length $n$ $\pi$ is \emph{$k$-plentiful} if it has the maximal
      number of distinct $(n-k)$-patterns, i.e., if 
      $$|\delpik| = \binom{n}{k}.$$
    \end{definition}

  \subsection{Characterizing k-plentiful Permutations}

    We seek to characterize those permutations which are $k$-plentiful, for an
    arbitrary $k \in [n]$. In Section~\ref{fixpat:sec:delpi} we found that a
    permutation is plentiful if and only if it contains no bonds. By generalizing
    our notion of bonds, we obtain an analogous result here. 

    \begin{definition} \label{fixpat:def:gap}
      Let $\pi = \pi_1 \pi_2 \dots \pi_n \in \S_n$. For any two integers $i,j
      \in [n]$, define the \emph{distance} $d_\pi(i,j)$ between $i$ and $j$ to be 
      $$ d_\pi(i,j) = |i - j| + |\pi_i - \pi_j|.$$
      The \emph{minimum gap} of $\pi$, denoted by $\Gam(\pi)$, is defined to be
      the minimum distance between any two entries. Formally:
      $$ \Gam(\pi) = \min\{ d_\pi(i,j) : 1 \leq i, j \leq n \}.$$
    \end{definition}

    If we plot a permutation $\pi$, then the function $d_\pi$ is just the usual
    taxicab metric on $\{(i, \pi_i) : 1 \leq i \leq n\} \subset \mathbb{R}^2$. 
    It is easy to see that $(\pi_i, \pi_j)$ is a bond if and only if $d_\pi(i,j)
    = 2$. It follows then that $\pi$ is plentiful if and only if $\Gam(\pi) \geq 3$.
    This idea allows us to generalize Corollary~\ref{fixpat:cor:alldistinct}.  We
    start with one more definition, and a simple lemma which will prove useful. 

    \begin{definition} \label{fixpat:def:span}
      Let $\pi = \pi_1 \pi_2 \dots \pi_n \in \S_n$ and let $i,j \in [n]$ with $i
      < j$. The \emph{span} of the indices $i$ and $j$, denoted
      $\pspan_\pi(i,j)$, is defined as the set of indices corresponding to
      entries which are between $(i,\pi_i)$ and $(j,\pi_j)$ either horizontally
      and vertically. Formally, when $\pi_i < \pi_j$ we have
      $$ 
      \pspan_\pi(i,j) = \{k : i < k < j\} \cup \{k : \pi_i < \pi_k < \pi_j \}.
      $$
      The case when $\pi_i > \pi_j$ is defined analogously. 
    \end{definition}

    \begin{lemma} \label{fixpat:lem:span}
      Let $\pi \in \S_n$ be such that $\Gam(\pi) = m$, and let $i,j$ be such that
      $d_\pi(i,j) = m$. Then $|\pspan_\pi(i,j)| = m-2$. Further, deleting one
      entry can reduce the minimum gap by at most one, i.e., $\Gam(\delpi(k))
      \geq k-1$ for all $k \in [n]$. 
    \end{lemma}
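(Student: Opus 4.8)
The plan is to work entirely with the taxicab metric $d_\pi$ on the plot of $\pi$, treating each entry as the lattice point $(i,\pi_i)$. For the first claim, I would fix $i<j$ realizing $d_\pi(i,j)=m$ and assume, after possibly applying the complement symmetry (which preserves $d_\pi$, $\Gam$, and spans), that $\pi_i<\pi_j$, so that $m=(j-i)+(\pi_j-\pi_i)$. The set $\{k:i<k<j\}$ has $j-i-1$ elements and the set $\{k:\pi_i<\pi_k<\pi_j\}$ has $\pi_j-\pi_i-1$ elements, so by inclusion--exclusion it suffices to show these two sets are \emph{disjoint}: then $|\pspan_\pi(i,j)|=(j-i-1)+(\pi_j-\pi_i-1)=m-2$. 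Disjointness is exactly where minimality of the gap enters: if some index $k$ satisfied both $i<k<j$ and $\pi_i<\pi_k<\pi_j$, then $d_\pi(i,k)=(k-i)+(\pi_k-\pi_i)$ would be strictly smaller than $(j-i)+(\pi_j-\pi_i)=m$, contradicting $\Gam(\pi)=m$.

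For the second claim (which I read as $\Gam(\delpi(k))\geq m-1$, the $k-1$ in the statement being a typo for $m-1$), I would analyze how deleting the point $(k,\pi_k)$ and standardizing affects distances. Deletion shifts every surviving position $>k$ left by one and every surviving value $>\pi_k$ down by one; hence for two surviving entries at original positions $a,b$, the horizontal gap $|a-b|$ drops by one precisely when $k$ lies strictly between $a$ and $b$ and is otherwise unchanged, and symmetrically the vertical gap $|\pi_a-\pi_b|$ drops by one precisely when $\pi_k$ lies strictly between $\pi_a$ and $\pi_b$. Thus the value of $d_{\delpi(k)}$ on this pair equals $d_\pi(a,b)$ minus the number of these two events that occur, so it drops by at most two.

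The main obstacle, and the heart of the argument, is ruling out a simultaneous drop of two, which a priori would only give $\Gam\geq m-2$. Here the plan is to observe that both events occurring means exactly that $(k,\pi_k)$ lies in the closed axis-parallel rectangle spanned by $(a,\pi_a)$ and $(b,\pi_b)$. For the taxicab metric, any point of this rectangle lies on a monotone shortest path between the corners, giving the additivity $d_\pi(a,b)=d_\pi(a,k)+d_\pi(k,b)$. Since each summand is at least $m$, we get $d_\pi(a,b)\geq 2m$, so even after two reductions the new distance is at least $2m-2\geq m-1$, using that $m\geq 2$ (any two distinct entries have both coordinate gaps at least one). In the complementary case at most one reduction occurs and the new distance is at least $m-1$ directly. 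Taking the minimum over all surviving pairs then yields $\Gam(\delpi(k))\geq m-1$, completing the proof.
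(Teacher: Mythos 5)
Your proof is correct, and it splits from the paper's proof in an instructive way on the second claim. For the first claim you argue exactly as the paper does: the span is the union of the horizontal and vertical betweenness sets, whose sizes sum to $m-2$, and minimality of $d_\pi(i,j)$ forces these sets to be disjoint (an index $k$ in both would give $d_\pi(i,k)<m$, contradicting $\Gam(\pi)=m$); your inclusion--exclusion phrasing just makes explicit what the paper compresses. For the second claim the paper instead reuses the first: a deletion can lower a pair's distance by two only if the deleted entry lies strictly inside that pair's rectangle, and for the \emph{minimum} gap to fall by two the pair would have to be at distance exactly $m$ --- but the first claim says the open rectangle of a minimally separated pair contains no entry, so this never happens. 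You avoid invoking the first claim altogether: taxicab additivity $d_\pi(a,b)=d_\pi(a,k)+d_\pi(k,b)$ for a point in the rectangle, with each summand at least $\Gam(\pi)=m$, shows that any pair suffering a two-unit drop started at distance at least $2m$, hence ends at $2m-2\geq m-1$. Your route is self-contained, treats all pairs uniformly without the implicit case split on whether the pair is minimally separated, and proves the quantitatively stronger fact that rectangles containing a third entry have perimeter-distance at least $2m$; the paper's route is shorter because it recycles part one. You were also right to read the bound $\Gam(\delpi(k))\geq k-1$ as a typo for $\Gam(\delpi(k))\geq m-1$ (the paper overloads $k$ as the deleted index), which is confirmed by how the lemma is applied later, e.g.\ the step $\Gam(\delpi(a_i))\geq k+1$ in the proof of Theorem~\ref{fixpat:thm:kplentiful}. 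One cosmetic point: both events occurring places $(k,\pi_k)$ in the \emph{open} rectangle, not merely the closed one; this only helps you, since additivity already holds on the closed rectangle.
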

    \begin{proof}
      Clearly $|\pspan_\pi(i,j)| \leq m-2$, since otherwise this would contradict
      $\Gam(\pi) = m$. The only way in which $|\pspan_\pi(i,j)|$ could be less
      than $k-2$ is if there exists an entry $\pi_k$ which lies between
      $(i,\pi_i)$ and $(j,\pi_j)$ both vertically and horizontally. However, this
      would imply that $d_\pi(i,m) < d_\pi(i,j) = m-2$, which contradicts the
      minimality of $d_\pi(i,j)$. Therefore, $|\pspan_\pi(i,j)| = m-2$. 

      For the second part, note that the only way that deleting a single entry
      could reduce the minimum gap by more than one is if that entry lies between
      two minimally separated entries. However, we have just seen that no such
      entry exists. 
    \end{proof}

    We are now able to give a partial characterization of the $k$-plentiful
    permutations in the following generalization of
    Corollary~\ref{fixpat:cor:alldistinct}. 

    \begin{theorem} \label{fixpat:thm:kplentiful}
      A permutation $\pi$ is $k$-plentiful if and only if $\Gam(\pi) \geq k+2$. 
    \end{theorem}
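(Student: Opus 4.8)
The plan is to recast $k$-plentifulness as an injectivity statement and then establish the two implications separately, mirroring the bond analysis that settled the case $k=1$. Since $\delpik$ is by definition the image of the map $S \mapsto \delpi(S)$ sending a $k$-subset $S\subseteq[n]$ to the pattern obtained by deleting those positions, and there are exactly $\binom{n}{k}$ such subsets, we always have $|\delpik|\le\binom{n}{k}$, with equality precisely when this map is injective. Thus $\pi$ is $k$-plentiful if and only if no two distinct $k$-subsets delete to the same pattern, and it suffices to prove: (A) if $\Gam(\pi)\le k+1$ then the map is \emph{not} injective; and (B) if $\Gam(\pi)\ge k+2$ then it \emph{is} injective.

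For implication (A) I would argue constructively. Choose $i<j$ with $d_\pi(i,j)=\Gam(\pi)=m\le k+1$, and assume $\pi_i<\pi_j$ (otherwise replace $\pi$ by its complement). Let $A=\pspan_\pi(i,j)$, so $|A|=m-2$ by Lemma~\ref{fixpat:lem:span}. By definition the span consists of every point whose column lies strictly between $i$ and $j$ or whose value lies strictly between $\pi_i$ and $\pi_j$; hence in $\delpi(A)$ the images of $i$ and $j$ become adjacent both horizontally and vertically, i.e. they form a bond. Lemma~\ref{fixpat:lem:bonds} then gives $\delpi(A\cup\{i\})=\delpi(A\cup\{j\})$. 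The key observation is that any $b\notin A\cup\{i,j\}$ lies outside both strips (in a ``corner''), so its column- and value-rank are unaffected by whether we retain $i$ or $j$; the collision therefore persists after appending any set $B$ disjoint from $A\cup\{i,j\}$. Choosing $|B|=k-(m-1)\ge0$ (possible since $m\le k+1$ and $n\ge k+1$) produces two distinct $k$-subsets with the same image, so $\pi$ is not $k$-plentiful.

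For implication (B) I would prove the equivalent statement ``a collision forces $\Gam(\pi)\le k+1$'' by strong induction on $k$, the base case $k=1$ being Corollary~\ref{fixpat:cor:alldistinct}. Suppose $\delpi(S)=\delpi(T)$ with $S\ne T$ and $|S|=|T|=k$. If $S\cap T\ne\emptyset$, set $A=S\cap T$ and pass to $\pi^{*}=\delpi(A)$: there the images of $S\setminus A$ and $T\setminus A$ give a collision of two \emph{disjoint} sets of size $k-|A|<k$, so by induction $\Gam(\pi^{*})\le(k-|A|)+1$, and the second half of Lemma~\ref{fixpat:lem:span} (each deletion drops the minimum gap by at most one) lifts this to $\Gam(\pi)\le\Gam(\pi^{*})+|A|\le k+1$. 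This reduces the whole converse to the disjoint case $S\cap T=\emptyset$.

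The disjoint case is where I expect the real difficulty. Here I would introduce the rank-matching bijection $\psi\colon\overline S\to\overline T$ between the retained positions; it is an order isomorphism in both coordinates and fixes every point lying outside the position- and value-windows of $D=S\cup T$. Taking $c=\min D$ (say $c\in S$), the maximal block $c,\dots,e-1$ of consecutive $S$-positions has length at most $k$, and $\psi$ carries $P_e$ into the slot vacated by $P_c$, exhibiting a corresponding pair with $e-c\le k$. The main obstacle is upgrading this to the sharp bound $d_\pi(c,e)\le k+1$ — the permutation $2413$ shows $k+1$ is attained and cannot be improved. The crude estimates bound the horizontal and the vertical displacement each by $k$, which only yields $2k$; the crux is that both displacements are charged against the \emph{same} deletion budget of size $k$, so they cannot both be large. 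I would make this precise by following the correspondence across the entire deleted block: under the hypothesis $\Gam\ge k+2$, consecutive deleted positions would be forced to jump by at least $k+1$ in value, rapidly overspending the budget and producing the contradiction. This interaction estimate is the delicate technical heart, and once it is in place the theorem follows by combining (A) and (B).
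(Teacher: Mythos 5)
Your direction (A) is sound and matches the paper's forward argument: both delete $\pspan_\pi(i,j)$ for a minimal pair to manufacture a bond, invoke Lemma~\ref{fixpat:lem:bonds}, and conclude a collision. You are in fact more explicit than the paper on one point, namely padding the collision up to exactly $k$ deletions with an arbitrary disjoint set $B$ and checking (correctly, via Lemma~\ref{fixpat:lem:span}) that positions and values outside the two strips are renumbered identically in both deletions, so the collision persists. Your reduction of direction (B) to the case of disjoint deletion sets is also equivalent to the paper's: the paper strips one shared index at a time, using $\Gam(\delpi(a_i))\geq k+1$ and the inductive hypothesis, where you strip the whole intersection at once using the ``each deletion lowers $\Gam$ by at most one'' half of Lemma~\ref{fixpat:lem:span}; either works.

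The genuine gap is exactly where you flag it: the disjoint case is the crux of the theorem, and you leave it as an unproved ``interaction estimate.'' You correctly observe that the naive counts ($j-a_1\leq k$ from positions and $\pi_j-\pi_{a_1}\leq k$ from values) only give $d_\pi\leq 2k$, but your proposed repair --- iterating $\psi$ along the deleted block so that consecutive deletions jump by at least $k+1$ in value --- is never executed, and it is not how the bound is actually closed. The paper uses a single first-discrepancy anchor rather than an iteration: take $a_1=\min(A\cup B)$, with $a_1\in A$ without loss of generality, and let $j$ be the smallest index greater than $a_1$ not in $A$. Since every element of $B$ exceeds $a_1$, position $a_1$ of the common pattern $\sg$ is filled by the standardization of $\pi_{a_1}$ under the $B$-deletion, but by the standardization of $\pi_j$ under the $A$-deletion (all positions strictly between $a_1$ and $j$ lie in $A$ by choice of $j$). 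For these to agree, every entry of $\pspan_\pi(a_1,j)$ must be deleted, and minimality of $a_1$ together with the choice of $j$ charges these deletions against $A$; since $\Gam(\pi)\geq k+2$ forces this span to contain at least $k$ entries, $A$ would need $a_1$ plus at least $k$ further elements, contradicting $|A|=k$ in one blow. Until you supply this argument (or a completed version of your budget-overspending iteration), your proof of the hard direction does not go through.
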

    \begin{proof}
      First let $\pi = \pi_1 \pi_2 \dots \pi_n$ be a $k$-plentiful permutation, and
      assume by way of contradiction that $\Gam(\pi) = m < k+2$. Let $i<j$ be
      such that $d_\pi(i,j) = m$. By Lemma~\ref{fixpat:lem:span}, we have that
      $\pspan_\pi(i,j) = \{s_1, s_2, \dots s_{m-2}\}$. Let $\sg =
      \delpi(\pspan_\pi(i,j) \in \S_{n-m+2}$, the permutation obtained by
      removing the entries with indices $s_i$ and standardizing the remaining
      entries. If follows then that $\Gam(\sg) = 2$ and so $\sg$ has a bond
      $(\sg_i, \sg_j)$ and is therefore not plentiful. It follows then that
      $\del_\sg(i) = \del_\sg(j)$, and so there are two sets of indices $S$ and
      $S\p$ for which $\del_\pi(S) = \del_\pi(S\p)$. Therefore $|\delpik| <
      \binom{n}{k}$, contradicting the plentifulness of $\pi$. 

      For the other direction, we proceed using induction.
      We have already shown that the theorem holds when $k=1$
      (Corollary~\ref{fixpat:cor:alldistinct}), so let $k>1$ and assume that the
      statement holds for all positive integers less than $k$.
      Let $\pi \in \S_n$ be such that $\Gam(\pi) \geq k+2$. We know by induction
      that this permutation is $m$-plentiful for all $1 \leq m < k$. 

      Suppose by way of contradiction that $\sg \in \S_{n - k}$ can be obtained
      by deleting two different sets of entries from $\pi$. That is, suppose that
      there exist $A = \{a_1, a_2, \dots a_k\} \neq B = \{b_1, b_2, \dots
      b_k\}$, with $a_i < a_j$ and $b_i < b_j$ for $i < j$, such that $\delpi(A)
      = \delpi(B) = \sg$. Claim that $A \cap B = \emptyset$. To see this, suppose
      that $a_i = b_j$, and note that since $A - \{a_i\} \neq B - \{b_j\}$, 
      $\sg$ is contained in $\delpi(a_i)$ in two different ways. However, by
      Lemma~\ref{fixpat:lem:span}, $\Gam(\delpi(a_i)) \geq k + 1$, and so by
      induction $\delpi(a_i)$ is $(k-1)$-plentiful, a contradiction. Therefore $A$ and
      $B$ must be disjoint. 

      Assume without loss of generality that $a_1 < b_1$. Let $j \in [n]$ be the
      smallest integer such that $j > a_1$ but $j \notin A$. Since $\delpi(A) =
      \delpi(B) = \sg = \sg_1 \sg_2 \dots \sg_{n-k}$, it follows that the
      entries $p_{a_1}$ will move to fulfill the role of $\sg_{a_1}$ once the $B$
      entries are deleted. However, the entry $a_j$ will also move to fulfill
      this role once the $A$ entries are deleted. However, this implies that
      every entry in the span of $\pi_{a_1}$ and $\pi_{j}$ must be deleted, but
      there must be at least $k$ such entries by Lemma~\ref{fixpat:lem:span}.
      Therefore, $A$ must contain $a_1$ and $k$ additional entries, contradicting
      $|A| = k$ and proving the theorem. 
    \end{proof}

  \subsection{Constructing k-plentiful Permutations}

    It is not immediately obvious that there exist permutations with arbitrarily
    large minimum gaps. In~\cite{Flynn2007}, the authors constructed a
    permutation of length $(k-1)^2$ which has a minimum gap equal to $k$. 
    We conclude this section with a construction that gives a slightly smaller
    permutation which achieves the same gap size, and prove that this
    construction is the best possible. 

    \begin{figure}[t]
      \centering
      \begin{tikzpicture}
        [scale = .3, line width = .8pt]
        \draw (0,15) -- (0,0) -- (15,0);
        \foreach \num in {2, 4,..., 14}
        {
          \draw (\num, 0) -- (\num, -.3);
          \draw (0, \num) -- (-.3, \num);
        }
        \foreach \y [count = \x] in {3,6,1,4,7,2,5}
          \draw[fill = black] (2*\x,2*\y) circle (4mm);
      \end{tikzpicture} \hspace{4pc}
      \begin{tikzpicture}
        [scale = .3, line width = .8pt]
        \draw (0,15) -- (0,0) -- (15,0);
        \foreach \num in {1,..., 14}
        {
          \draw (\num, 0) -- (\num, -.3);
          \draw (0, \num) -- (-.3, \num);
        }
        \foreach \y [count = \x] in {4,8,12,1,5,9,13,2,6,10,14,3,7,11}
          \draw[fill = black] (\x,\y) circle (2mm);
      \end{tikzpicture}
      \caption{The plots of the permutations $\Theta^{(4)}$ and $\Theta^{(5)}$.}
      \label{fixpat:fig:thetan}
    \end{figure}
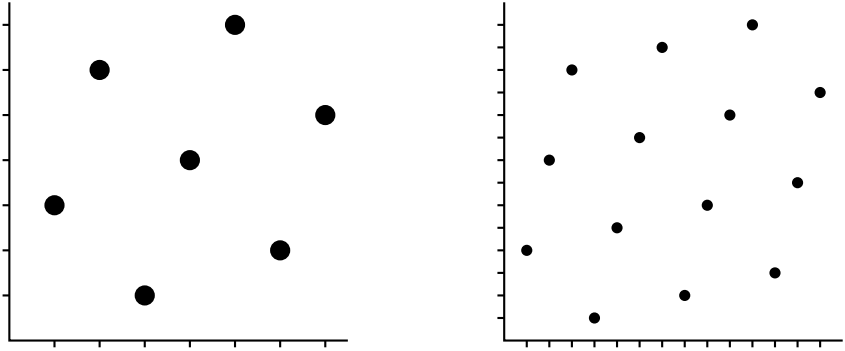

    \begin{definition}
      Let $\pi \in \S_{(k-1)^2}$ be defined by 
      $$\pi_{i(k-1) + j + 1} = i + j(k-1) + 1, \quad 0 \leq i,j \leq k-2.$$
      Then let $\Theta^{(k)} \in \S_{(k-1)^2 - 2}$ be defined by removing the first
      and last entries of $\pi$. 
    \end{definition}

    The permutations $\Theta^{(4)}$ and $\Theta^{(5)}$ are shown in
    Figure~\ref{fixpat:fig:thetan}. It is clear from the figure, and can be
    shown from the definition (with some tedious but simple calculation) that
    $\Gam(\Theta^{(k)}) = k$. It also follows that $\Theta^{(k)}$ is an
    involution, and its reverse is equal to its complement, so its orbit
    under the automorphism group of the pattern poset consists of only two
    elements. 


    By embedding a permutation $\pi$ into the plane, the function $d_\pi$ can be
    extended to the usual taxicab metric $d_1$ on $\mathbb{R}^2$. If $\pi$ has a
    minimum gap size of $k$, then $\pi$ defines a tiling of the plane with angled
    bricks of uniform size and centered on the points of $\mathbb{Z}^2$. It
    is clear that a minimal such permutation will correspond to a maximal tiling
    of this form, with the property that no two centers lie on the same
    horizontal or vertical line. There are exactly two such tilings, corresponding to
    the permutation $\Theta^{(k)}$ and its reverse.
    We summarize this in the following theorem. 

    \begin{theorem} \label{fixpat:thm:tiling}
      The permutation $\Theta^{(k)}$ and its reverse are the shortest
      permutations with minimum gap size equal to $k$. 
    \end{theorem}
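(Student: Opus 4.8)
The statement has two parts: that $\Theta^{(k)}$ and its reverse actually attain minimum gap $k$ at length $(k-1)^2-2$, and that nothing shorter does, together with the claim that these are the \emph{only} extremal examples. The first part is essentially already in hand: it was observed above that $\Gam(\Theta^{(k)})=k$, and since the reversal map $\pi\mapsto\pi^r$ preserves the taxicab distance $d_\pi$ between every pair of plotted points it preserves $\Gam$; thus $(\Theta^{(k)})^r$ is a second permutation of the same length $(k-1)^2-2$ with gap exactly $k$, distinct from $\Theta^{(k)}$ since its orbit under the poset automorphisms has size two. So the real content is the lower bound: every $\pi\in\S_n$ with $\Gam(\pi)\ge k$ satisfies $n\ge(k-1)^2-2$, with equality only for these two permutations.

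The plan is to argue geometrically, exactly as suggested by the tiling heuristic preceding the theorem. Recall that $d_\pi$ is the $\ell_1$ (taxicab) metric on the plotted points $(i,\pi_i)$, so $\Gam(\pi)\ge k$ says precisely that these $n$ points are pairwise at $\ell_1$-distance at least $k$. I would associate to each point the closed $\ell_1$-ball of radius $k/2$ centered at it; since any two centers are at distance $\ge k$, the \emph{interiors} of these diamonds are pairwise disjoint. Diamonds are axis-rotated squares that tile the plane, but the crucial extra ingredient, the one a naive area bound ignores, is that $\pi$ is a permutation, so the centers occupy each of the rows $1,\dots,n$ and each of the columns $1,\dots,n$ exactly once. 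It is this marginal condition, rather than packing density alone, that forces the length to grow quadratically in $k$.

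Concretely, I would track the point set in the rotated coordinates $s_i=i+\pi_i$ and $r_i=i-\pi_i$, under which $d_\pi$ becomes the $\ell_\infty$ metric (using the identity $|a|+|b|=\max(|a+b|,|a-b|)$), so the separation condition reads $\max(|s_i-s_j|,|r_i-r_j|)\ge k$ for all $i\ne j$. Grouping the points by the diagonal value $r_i$ and the antidiagonal value $s_i$ turns this into a statement about how sparsely columns may be stacked along each diagonal; comparing that sparsity with the fact that the column indices $i$ run through the full consecutive range $1,\dots,n$ lets one count the number of diagonals that must be traversed. Carrying out this bookkeeping should force the centers of any extremal configuration onto the lattice generated by $(1,k-1)$ and $(k-1,1)$, which is exactly the lattice underlying the base permutation of $\Theta^{(k)}$, and show that filling every row and column needs at least $(k-1)^2-2$ of its points.

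The main obstacle I anticipate is the uniqueness half, not the bare inequality. Proving $n\ge(k-1)^2-2$ is a packing/counting estimate, but pinning the extremal cases down to precisely $\Theta^{(k)}$ and $(\Theta^{(k)})^r$ requires a delicate boundary analysis: one must show that once the bulk of the configuration is forced onto the lattice, the only remaining freedom lives at the two corners, and corresponds exactly to deleting the first and last entries of the length-$(k-1)^2$ base permutation (the two deletions that distinguish the reflected tilings), while any other defect either creates a pair at distance $<k$ or leaves some row or column unoccupied. I would handle this by an induction that peels off the extreme diagonals one at a time, using Lemma~\ref{fixpat:lem:span} to control how a deletion can lower the minimum gap, and verifying the smallest genuine case (for $k=4$, that $3\,6\,1\,4\,7\,2\,5$ and its reverse are the unique length-$7$ permutations of gap $4$) directly.
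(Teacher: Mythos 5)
Your proposal takes essentially the same route as the paper: the paper's own justification of this theorem is exactly the tiling heuristic you set out to rigorize --- taxicab balls about the plotted points yield a tiling of the plane by angled bricks, and maximality together with the one-point-per-row-and-column constraint is asserted, without further argument, to leave precisely the two tilings corresponding to $\Theta^{(k)}$ and its reverse. If anything your plan is more careful than the source, since the $\ell_1$-to-$\ell_\infty$ rotation, the explicit role of the marginal (permutation) constraints beyond a naive packing bound, and the boundary analysis needed for uniqueness that you flag as the hard steps are exactly the points the paper passes over with ``it is clear.''
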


    We end this chapter with one last theorem, generalizing
    Theorem~\ref{fixpat:thm:bonds}.

    \begin{theorem} \label{fixpat:thm:gappairs}
      Let $\pi \in \S_{n}$ have $\Gam(\pi) = k + 1$, and let $p_k$ be the number
      of pairs $(i,j)$ such that $d_\pi(i,j) = k$. Then 
      $$ | \delpik | = \binom{n}{k} - p_k.$$
    \end{theorem}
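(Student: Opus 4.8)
The plan is to mimic the proof of Theorem~\ref{fixpat:thm:bonds}, replacing a bond by the notion of a \emph{minimal pair}: a pair of indices $(i,j)$ realizing the minimum gap, i.e.\ with $d_\pi(i,j) = \Gam(\pi) = k+1$, so that $p_k$ is their number (the case $k=1$, where minimal pairs are distance-two pairs, recovering the bonds of Theorem~\ref{fixpat:thm:bonds}). By Lemma~\ref{fixpat:lem:span} each minimal pair $(i,j)$ has $|\pspan_\pi(i,j)| = k-1$; write $S_{ij}$ for this span. Since $i,j\notin S_{ij}$, the sets $S_{ij}\cup\{i\}$ and $S_{ij}\cup\{j\}$ are distinct $k$-element subsets of $[n]$. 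First I would record the basic identification: deleting $S_{ij}$ leaves the entries at $i$ and $j$ adjacent in both position and value, since every index lying between them horizontally or vertically belongs to $S_{ij}$; they therefore form a bond of $\del_\pi(S_{ij})$, and by Lemma~\ref{fixpat:lem:bonds} deleting either one yields the same pattern, whence $\del_\pi(S_{ij}\cup\{i\}) = \del_\pi(S_{ij}\cup\{j\})$.

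With this in hand I would form the \emph{swap graph} $G$ on the vertex set of all $\binom{n}{k}$ deletion sets, joining $S_{ij}\cup\{i\}$ to $S_{ij}\cup\{j\}$ for each minimal pair $(i,j)$. The symmetric difference of the two endpoints of such an edge is exactly $\{i,j\}$, so distinct minimal pairs give distinct edges and $G$ has exactly $p_k$ edges. Since $|\delpik|$ is the number of fibres of the map $A\mapsto\del_\pi(A)$, the theorem reduces to the identity (number of components of $G$) $= \binom{n}{k} - p_k$, which holds precisely when $G$ is a forest whose components are the fibres. Two things then remain: that $\del_\pi(A)=\del_\pi(B)$ iff $A,B$ lie in the same component of $G$, and that $G$ is acyclic.

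For the first, the forward direction is the identification above, chained along a path. For the converse I would induct on $k$, the base $k=1$ being Lemma~\ref{fixpat:lem:bonds}. Suppose $\del_\pi(A)=\del_\pi(B)$ with $A\neq B$. If $A$ and $B$ share an index $c$, set $\pi'=\del_\pi(c)$; then $\del_{\pi'}$ identifies $A\setminus\{c\}$ with $B\setminus\{c\}$, so $\pi'$ is not $(k-1)$-plentiful, whence $\Gam(\pi')=k$ exactly by Theorem~\ref{fixpat:thm:kplentiful} (it is $\geq k$ by Lemma~\ref{fixpat:lem:span}); the induction hypothesis connects $A\setminus\{c\}$ and $B\setminus\{c\}$ by swaps across minimal pairs of $\pi'$, each of which lifts, on reinserting $c$ into its span, to a minimal pair of $\pi$. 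The disjoint case $A\cap B=\emptyset$ I would eliminate by the span-counting contradiction used in the proof of Theorem~\ref{fixpat:thm:kplentiful}, now run at the boundary value $\Gam(\pi)=k+1$.

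The acyclicity of $G$ is the main obstacle, and is the step that genuinely uses the metric; it generalizes the elementary fact that, for bonds, each maximal run is a simple path rather than a cycle. The plan is to show that a swap across a minimal pair $(i,j)$ with $i<j$ moves a single deleted index monotonically along a fixed channel: I would orient each edge from the endpoint containing $i$ to the one containing $j$ and argue, using that the gap is exactly $k+1$ so that spans are as large as possible, that three indices cannot be pairwise minimal with a common span—the direct analogue of ``no three entries are pairwise bonds''—which forbids short cycles, while a lexicographic potential on the deleted positions read from one end strictly decreases along a canonical descending swap. This gives each non-minimal fibre element a unique parent, so the $p_k$ edges form a spanning forest; counting components then yields $|\delpik| = \binom{n}{k} - p_k$.
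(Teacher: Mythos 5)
Your swap-graph framework is the right formalization of this theorem (and your silent correction of the statement's typo --- the relevant pairs are those at distance $k+1=\Gam(\pi)$, not $k$ --- matches the paper's proof). It is in fact more honest than the paper's own argument, which claims that any two colliding deletion sets satisfy $|A\cap B|=k-1$; that claim is false as stated, and the count survives only because the swap graph is a forest, which the paper never proves (its closing sentence simply asserts that each minimal pair reduces the count by one). But two of your steps fail concretely. First, the disjoint case is not eliminable: take $\pi=2413$ with $k=2$, so $\Gam(\pi)=3$; then $A=\{1,2\}$ and $B=\{3,4\}$ are disjoint, yet $\del_\pi(A)=\del_\pi(B)=12$. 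The span-counting contradiction you import from Theorem~\ref{fixpat:thm:kplentiful} genuinely needs $\Gam(\pi)\geq k+2$: there the forced containment $\{a_1\}\cup\pspan_\pi(a_1,s)\subseteq A$ has size at least $k+1$, contradicting $|A|=k$, whereas at the boundary $\Gam(\pi)=k+1$ the span has size exactly $k-1$ and one gets the \emph{equality} $A=\{a_1\}\cup\pspan_\pi(a_1,s)$ --- no contradiction, but rather the identification of $A$ as an endpoint of a swap edge. The correct move (implicit in the paper's chain of replacements) is to swap $A$ to $(A\setminus\{a_1\})\cup\{s\}$, which has the same image and strictly larger minimum, and iterate until the two sets share an index; only then does your shared-index induction take over. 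That induction itself is sound, including the lifting step: the deleted index $c$ automatically lies in the span of every minimal pair of $\pi'$, since otherwise that pair would have distance $k<\Gam(\pi)$ already in $\pi$.

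Second, and more seriously, your acyclicity argument --- which you rightly identify as the crux, since fibres of size at least $3$ really occur (a run of length $3$ at $k=1$, or the fibre $\{1,2\},\{2,3\},\{3,4\}$ of $2413$ at $k=2$) --- is a plan, not a proof. Orienting each edge from the $i$-side to the $j$-side and exhibiting a potential that is strictly monotone along oriented edges (e.g.\ the sum of deleted indices increases by $j-i>0$) rules out only \emph{directed} cycles; an undirected cycle in $G$ may alternate orientations, and then no monotone potential applies. Likewise, excluding three pairwise-minimal indices with a common span forbids triangles but says nothing about $4$-cycles or longer cycles whose edges arise from different spans. The statement that each non-minimal fibre element has a ``unique parent'' is exactly the forest property restated, not an argument for it. What would close the gap is a structural claim generalizing the run analysis behind Theorem~\ref{fixpat:thm:bonds}: for instance, that a deletion set of the form $S\cup\{i\}$ admits at most two minimal-pair swaps, one on each side, so that the swap edges within each fibre form a path. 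As it stands, your proposal correctly isolates the step the paper glosses over, but does not supply it.
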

    \begin{proof}
      Let $\pi \in \S_n$ be such that $\Gam(\pi) = k+1$, and let $i,j \in [n]$ be
      such that $d_\pi(i,j) = k+1$ (i.e., $|\pspan_\pi(i,j)| = k-1$). If we let
      $S = \pspan_\pi \cup i$ and $S\p = \pspan_\pi \cup j$, we see that
      $\delpi(S) = \delpi(S\p)$, and so 
      $$\delpik \leq \binom{n}{k} - p_k. $$
      To show equality, let $A = \{a_1, a_2, \dots a_k\} \neq B = \{b_1, b_2,
      \dots b_k\}$, with $a_i < a_j$ and $b_i < b_j$ when $i < j$, and suppose
      that $\delpi(A) = \delpi(B)$. 
      
      Claim that $|A \cap B| = k-1$, i.e., that the two sets differ by exactly
      one element. .  Suppose first that $a_1 \neq b_1$, and let $s$ be the
      smallest integer greater than $a_1$ such that $s \notin A$. Then, as in the
      proof of Theorem~\ref{fixpat:thm:kplentiful}, we have $d_\pi(a_1, s) = k+1$, and
      $A - {a_1} = B - {b_1} = \pspan_\pi(a_1, s)$.  In the case where $a_1 =
      b_1$, let $\pi\p = \delpi(a_1)$, $A\p = A - \{a_1\}$, and $B\p = B -
      \{b_1\}$. Since $\del_{\pi\p} (A\p) = \del_{\pi\p}(B\p)$, by
      Lemma~\ref{fixpat:lem:span} and Theorem~\ref{fixpat:thm:kplentiful} imply that that
      $\Gam(\pi\p) = k$. We now find that either $a_2 = b_2$ or $A\p - \{a_2\} =
      B\p - \{b_2\}$. Iterating this argument shows that the two sets differ by
      at most one element. 

      Finally, let $i,j$ be such that $a_i \in A - B$ and $b_j \in B - A$. It
      follows then that $ A-\{a_i\} = B - \{b_j\} - \{\pspan_\pi(i,j) \}$. But
      since their span has size $k-1$, their distance must be equal to $k+1$,
      an element in between them both horizontally and vertically would
      contradict the size of the minimum gap. Thus, each pair $i,j$ for which
      $d_\pi(i,j) = k+1$ reduces the number of $(n-k)$-patterns by exactly one,
      which completes the proof. 
    \end{proof}



%
%
%
%
%
\backmatter
\bibliographystyle{acm}
\bibliography{library}


\end{document}